\documentclass{article}

\usepackage[australian]{babel}
\usepackage{fullpage}

\usepackage{hyperref}

\usepackage{amssymb,amsmath,amsfonts}
\usepackage{amsthm}
\usepackage[all]{xy}
\usepackage[capitalise]{cleveref}

\allowdisplaybreaks
\numberwithin{equation}{section}

\usepackage{xcolor}
\definecolor{MyBlue}{cmyk}{1,0.13,0,0.63}
\definecolor{MyGreen}{cmyk}{0.91,0,0.88,0.52}
\newcommand{\mylinkcolor}{MyBlue}
\newcommand{\mycitecolor}{MyGreen}

\makeatletter
\def\@endtheorem{\endtrivlist}
\makeatother
\theoremstyle{plain}
\newtheorem{thm}{Theorem}[section]

\newtheorem*{main*}{Main Theorem}
\newtheorem{lem}[thm]{Lemma}
\newtheorem{prop}[thm]{Proposition}
\newtheorem{coro}[thm]{Corollary}

\theoremstyle{definition}
\newtheorem{defn}[thm]{Definition}
\newtheorem{remark}[thm]{Remark}
\newtheorem{assumption}[thm]{Assumption}
\newtheorem*{standing}{Standing Assumptions}
\newtheorem{example}[thm]{Example}

\makeatletter
\newcommand{\customlabel}[2]{%
   \protected@write \@auxout {}{\string \newlabel {#1}{{#2}{\thepage}{#2}{#1}{}} }%
   \hypertarget{#1}{}
}
\makeatother

\renewcommand{\eqref}[1]{\labelcref{#1}}
\crefname{thm}{Theorem}{Theorems}
\crefname{lem}{Lemma}{Lemmas}
\crefname{prop}{Proposition}{Propositions}
\crefname{coro}{Corollary}{Corollaries}
\crefname{defn}{Definition}{Definitions}
\crefname{example}{Example}{Examples}
\crefname{remark}{Remark}{Remarks}

\hypersetup{%
  bookmarksnumbered=true,bookmarksopen=false,%
  plainpages=false,
  linktocpage=true,%
  colorlinks=true,breaklinks=true,%
  linkcolor=\mylinkcolor,citecolor=\mycitecolor,%
  pdfpagelayout=OneColumn,%
  pageanchor=true,%
}

\makeatletter
\def\thm@space@setup{%
  \thm@preskip=4pt plus 2pt minus 2pt
  \thm@postskip=\thm@preskip
}
\renewenvironment{proof}[1][\proofname]{\par
  \pushQED{\qed}%
  \normalfont \topsep4\p@\relax 
  \trivlist
  \item[\hskip\labelsep
        \itshape
    #1\@addpunct{.}]\ignorespaces
}{%
  \popQED\endtrivlist\@endpefalse
}
\makeatother

\usepackage{enumitem}
\setlist{topsep=4pt plus 2pt minus 2pt,partopsep=0pt,itemsep=2pt plus 2pt minus 2pt,parsep=0.5\parskip}
\setenumerate[1]{label=\arabic*)}

\newcommand{\MR}[1]{}

\let\OLDthebibliography\thebibliography
\renewcommand\thebibliography[1]{
  \addcontentsline{toc}{section}{\refname}
  \OLDthebibliography{CPRS06b}
  \setlength{\parskip}{0pt}
  \setlength{\itemsep}{0pt plus 0.3ex}
}

\newcommand{\N}{\mathbb{N}}
\newcommand{\R}{\mathbb{R}}
\newcommand{\C}{\mathbb{C}}
\newcommand{\Z}{\mathbb{Z}}
\newcommand{\A}{\mathcal{A}}
\newcommand{\mH}{\mathcal{H}}
\newcommand{\mK}{\mathcal{K}}
\newcommand{\D}{\mathcal{D}}
\newcommand{\B}{\mathcal{B}}
\newcommand{\E}{\mathcal{E}}
\newcommand{\pS}{\mathcal{S}}

\DeclareMathOperator{\Dom}{Dom}
\DeclareMathOperator{\Ran}{Ran}

\DeclareMathOperator{\End}{End}
\DeclareMathOperator{\FEnd}{FEnd}
\DeclareMathOperator{\Hom}{Hom}
\DeclareMathOperator{\supp}{supp}
\DeclareMathOperator{\Index}{Index}
\DeclareMathOperator{\ev}{ev}
\DeclareMathOperator{\id}{id}

\renewcommand{\bar}[1]{\overline{#1}}

\newcommand{\CCliff}{{\mathbb{C}\mathrm{l}}}

\newcommand{\dvol}{\textnormal{dvol}}
\newcommand{\op}{\textnormal{op}}
\newcommand{\K}{K}
\newcommand{\KK}{K\!K}
\newcommand{\SF}{\textnormal{sf}}
\newcommand{\ASF}{\textnormal{SF}}
\newcommand{\cyl}{\textnormal{cyl}}
\newcommand{\til}[1]{\widetilde{#1}}
\newcommand{\hotimes}{\mathbin{\hat\otimes}}
\newcommand{\hot}{\hotimes}

\newcommand{\mvert}{\,|\,}
\newcommand{\bigmvert}{\,\big|\,}
\newcommand{\la}{\langle}
\newcommand{\ra}{\rangle}
\newcommand{\into}{\hookrightarrow}

\newcommand{\bF}{F}

\newcommand{\mattwo}[4]{
  \left(\!\!\!\begin{array}{c@{~}c}#1&#2\\ #3&#4\\\end{array}\!\!\!\right)
}
\newcommand{\vectwo}[2]{
  \left(\!\!\!\begin{array}{c}#1\\ #2\\\end{array}\!\!\!\right)
}
\newcommand{\matfour}[4]{
  \left(\!\!\begin{array}{c@{~}c@{~}c@{~}c}#1\\ #2\\ #3\\ #4\\\end{array}\!\!\right)
}

\title{The index of generalised Dirac-Schr\"odinger operators}
\author{
Koen van den Dungen%
\footnote{\emph{Present address:} Mathematisches Institut der Universit\"at Bonn, Endenicher Allee 60, D-53115 Bonn, \texttt{kdungen@uni-bonn.de}}\\[4mm]
{\normalsize SISSA (Scuola Internazionale Superiore di Studi Avanzati)}\\ 
{\normalsize Via Bonomea, 265, 34136 Trieste, Italy}
}
\date{}

\begin{document}

\maketitle

\begin{abstract}
\noindent
We study the relation between spectral flow and index theory within the framework of (unbounded) $\KK$-theory. In particular, we consider a generalised notion of `Dirac-Schr\"odinger operators', consisting of a self-adjoint elliptic first-order differential operator $\D$ with a skew-adjoint `potential' given by a (suitable) family of unbounded operators on an auxiliary Hilbert module. 
We show that such Dirac-Schr\"odinger operators are Fredholm, and we prove a relative index theorem for these operators (which allows cutting and pasting of the underlying manifolds). 
Furthermore, we show that the index of a Dirac-Schr\"odinger operator represents the pairing (Kasparov product) of the $\K$-theory class of the potential with the $\K$-homology class of $\D$. 
We prove this result without assuming that the potential is differentiable; instead, we assume that the `variation' of the potential is sufficiently small near infinity. 
In the special case of the real line, we recover the well-known equality of the index with the spectral flow of the potential. 

\vspace{\baselineskip}
\noindent
\emph{Keywords}: Dirac-Schr\"odinger operators; unbounded $\KK$-theory; index theory; spectral flow.

\noindent
\emph{Mathematics Subject Classification 2010}: 
19K35, 
19K56, 
58J20, 
58J30. 
\end{abstract}

\section{Introduction}

A `classical' \emph{Dirac-Schr\"odinger operator} (also called a \emph{Callias-type} operator) is an operator of the form $\D-iV$, where $\D$ is a Dirac-type operator, and the `potential' $V$ is a self-adjoint endomorphism on some auxiliary vector bundle (of finite rank). Under suitable assumptions on the potential $V$, one can then prove that $\D-iV$ is Fredholm (see, for instance, \cite{Cal78,Ang90,BM92,Ang93a,Rad94,Bun95}). 

In this paper we aim to prove a similar statement in the case where the auxiliary vector bundle is of \emph{infinite} rank, and the `potential' consists of a family of unbounded operators. 
Since any locally trivial Hilbert bundle with (separable) infinite-dimensional fibres is in fact globally trivial (see \cite[Theorem 10.8.8]{Dixmier82}), we will restrict our attention to the case of a globally trivial Hilbert bundle with infinite-dimensional fibre $\mH$. 
(In fact, instead of $\mH$ we will more generally consider a Hilbert module over an arbitrary $\sigma$-unital $C^*$-algebra, but in this introduction we limit our attention to the simpler case of a Hilbert space.)

One motivation for studying the case of a potential acting on infinite-dimen\-sional fibres comes from the notion of spectral flow. Namely, there is a well-known `index = spectral flow' equality, which states that the spectral flow of a (suitably continuous) family of unbounded self-adjoint operators $\{\pS(x)\}_{x\in[0,1]}$ (with invertible endpoints) is equal to the index of $\partial_x+\pS(\cdot)$ (see e.g.\ \cite{RS95,Wah07,AW11}). Here, we note that the operator $-i\partial_x-i\pS(\cdot)$ is a special case of a Dirac-Schr\"odinger operator, where the underlying manifold is just the real line. The operator $\pS(\cdot)$ is a regular self-adjoint Fredholm operator on $C_0(\R,\mH)$, which defines a class $[\pS(\cdot)]$ in the odd $\K$-theory group $\KK^1(\C,C_0(\R)) \simeq \K_1(C_0(\R))$. Under the Bott periodicity isomorphism $\beta\colon\K_1(C_0(\R))\xrightarrow{\simeq}\K_0(\C)=\Z$, the class $[\pS(\cdot)]$ corresponds precisely to the spectral flow of $\{\pS(x)\}_{x\in[0,1]}$. By observing that the Bott periodicity isomorphism is implemented as the Kasparov product with the generator $[-i\partial_x]$ of the $\K$-homology group $\KK^1(C_0(\R),\C)$, we see that we have the equality
\[
\Index\big( \partial_x+\pS(\cdot) \big) = [\pS(\cdot)] \otimes_{C_0(\R)} [-i\partial_x] .
\]
Our main goal in this paper is to generalise this equality, replacing $\R$ by an arbitrary manifold $M$, and $-i\partial_x$ by some first-order differential operator on $M$. 
Such a generalisation has already been obtained by Kaad and Lesch \cite[\S8]{KL13}, under the assumption that the family of operators $\{\pS(x)\}_{x\in M}$ is \emph{differentiable} (in a suitable sense). However, on the real line, such a differentiability assumption is not necessary to obtain the above-mentioned `index = spectral flow' equality \cite[Theorem 2.1]{AW11}. 
Our aim is therefore to make a link between these two approaches, by studying Dirac-Schr\"odinger operators on higher-dimensional manifolds (as in \cite{KL13}), without assuming that the family of operators is differentiable (as in \cite{Wah07,AW11}). 

More precisely, we will consider the following setup. 
Let $M$ be a connected Riemannian manifold, and let $\D$ be a self-adjoint elliptic first-order differential operator on $M$.
Let $\{\pS(x)\}_{x\in M}$ be a family of self-adjoint operators with compact resolvents and with common domain $W$ on a Hilbert space $\mH$ such that $\pS\colon M\to\B(W,\mH)$ is norm-continuous, 
and $\pS(x)$ is uniformly invertible outside a compact subset $K\subset M$. 
We then consider the `generalised Dirac-Schr\"odinger operator' $\D_\pS := \D - i \pS(\cdot)$. 

The family $\{\pS(x)\}_{x\in M}$ on $\mH$ defines a regular self-adjoint Fredholm operator $\pS(\cdot)$ on the Hilbert $C_0(M)$-module $C_0(M,\mH)$, which defines a class $[\pS(\cdot)]$ in the odd $\K$-theory group $\KK^1(\C,C_0(M)) \simeq \K_1(C_0(M))$. 
The operator $\D$ yields an odd spectral triple and hence a $\K$-homology class $[\D] \in \KK^1(C_0(M),\C)$. 
The pairing between $[\pS(\cdot)]$ and $[\D]$, given by the Kasparov product, then yields a class in $\KK^0(\C,\C) \simeq \Z$. 
Our main result states that this pairing can be computed as the index of $\D - i \pS(\cdot)$ (see \cref{thm:sum_Fredholm,thm:Kasp_prod_index}), assuming that the `variation' of the family $\{\pS(x)\}_{x\in M}$ is `sufficiently small' outside of $K$, in the following precise sense. 

\begin{main*}
Suppose that there exists a disjoint finite open cover $\{V_j\}$ of $M\backslash K$ with points $x_j\in\bar V_j$ and positive numbers $a_j<1$ such that $\big\| \big( \pS(x)-\pS(x_j) \big) \pS(x_j)^{-1} \big\| \leq a_j$ for all $x\in V_j$. 
Then the Dirac-Schr\"odinger operator $\D_\pS := \D-i\pS(\cdot)$ on $L^2(M,\mH\otimes\bF)$ is Fredholm, and its index computes the pairing (Kasparov product) of the odd $\K$-theory class $[\pS(\cdot)] \in \KK^1(\C,C_0(M))$ with the odd $\K$-homology class $[\D] \in \KK^1(C_0(M),\C)$: 
\[
\Index\left( \D-i\pS(\cdot) \right) = [\pS(\cdot)] \otimes_{C_0(M)} [\D] \in \KK^0(\C,\C) \simeq \Z .
\]
\end{main*}

Our main theorem is complementary to the results of \cite[\S8]{KL13}; although we do not need to assume any differentiability of $\pS(\cdot)$, we do need stronger assumptions on the `variation' of $\pS(\cdot)$ near infinity. 
Furthermore, our theorem generalises the aforementioned `index = spectral flow' equality of \cite{AW11} (which is obtained in the special case $M=\R$). 

The index of $\D-i\pS(\cdot)$ corresponds to the Kasparov class of the self-adjoint Fredholm operator
\begin{align}
\label{eq:prod_op}
\til\D_\pS := \pS(\cdot)\times\D := \mattwo{0}{\D+i\pS(\cdot)}{\D-i\pS(\cdot)}{0} .
\end{align}
This \emph{product operator} is precisely given by the standard formula for the construction of the unbounded Kasparov product \cite{Mes14,KL13,BMS16,MR16}. 
Unfortunately, we cannot directly apply these results on the construction of the unbounded Kasparov product, since we do not assume any differentiability for the potential $\pS(\cdot)$. 
However, it is interesting to note that in our scenario, despite this lack of differentiability, 
the formula \eqref{eq:prod_op} for the unbounded Kasparov product nevertheless remains correct. 

Let us provide a brief outline of this paper. In \cref{sec:KK}, we start by recalling the main facts regarding (unbounded) $\KK$-theory and the Kasparov product. Next, we discuss in detail how an unbounded Fredholm operator defines a class in $\K$-theory, following the approach of \cite{Wah07}. We also provide a purely $\KK$-theoretic description of the notion of spectral flow. 

In \cref{sec:DS} we define our notion of generalised Dirac-Schr\"odinger operators, and we prove that they are self-adjoint. In \cref{sec:index}, we show that these Dirac-Schr\"odinger operators are also Fredholm. Furthermore, we prove a relative index theorem, generalising Bunke's $\K$-theoretic relative index theorem \cite{Bun95}. 

In \cref{sec:Kasp_prod} we prove our main theorem, stating that the index of the Dirac-Schr\"odinger operator $\D_\pS$ is equal to the Kasparov product of $[\pS(\cdot)]$ with $[\D]$. 
First, using the same methods as in \cite{KL13}, we can prove the theorem for a \emph{differentiable} family. 
The general case is then obtained by showing that we can always find a homotopy which replaces our continuous family $\{\pS(x)\}_{x\in M}$ by a differentiable family.
Here we make use of a consequence of the relative index theorem, which shows that the index of a Dirac-Schr\"odinger operator is unaffected, if one replaces a suitable open subset of the manifold by a cylindrical end.

\subsection*{Acknowledgements}

This paper was partially supported by the grant H2020-MSCA-RISE-2015-691246-QUANTUM DYNAMICS, which funded the author's visit to the University of Wollongong in February 2017. 
The author thanks Adam Rennie for his hospitality during this visit, and for several helpful discussions. 
The author also thanks Matthias Lesch for interesting discussions and helpful comments, and for his hospitality during a short visit to the University of Bonn (late June -- early July 2017), which was funded by the COST Action MP1405 QSPACE, supported by COST (European
Cooperation in Science and Technology). 
Finally, thanks to the referee for helpful comments and suggestions.

\section{Index and spectral flow in unbounded KK-theory}
\label{sec:KK}

We start this section by recalling the main definitions and facts regarding Hilbert modules. For a more detailed introduction, we refer to \cite{Blackadar98,Lance95}. 
Let $B$ be a (trivially graded) $\sigma$-unital $C^*$-algebra. Recall that a $\Z_2$-graded Hilbert $B$-module is a vector space $E = E^0\oplus E^1$ equipped with a right action $E\times B\to E$ (satisfying $E^j\times B\to E^j$ for $j=0,1$) and with a $B$-valued inner product $\la\cdot|\cdot\ra\colon E\times E\to B$ (satisfying $\la E^0|E^1\ra=\{0\}$), such that $E$ is complete in the corresponding norm. 
The endomorphisms $\End_B(E)$ are the adjointable linear operators $E\to E$.
For any $\psi,\eta\in E$, the rank-one operators $\theta_{\psi,\eta}$ are defined by $\theta_{\psi,\eta}\xi := \psi \la\eta|\xi\ra$ for $\xi\in E$. 
The compact endomorphisms $\End^0_B(E)$ are given by the closure of the space of finite linear combinations of rank-one operators. 
For two Hilbert $B$-modules $E_1$ and $E_2$, the adjointable linear operators $E_1\to E_2$ are denoted by $\Hom_B(E_1,E_2)$. To avoid confusion, we sometimes denote the norm on $\Hom_B(E_1,E_2)$ as $\|\cdot\|_{E_1\to E_2}$. 

A Hilbert module is called \emph{countably generated} if there is a countable set $\{\psi_n\}\subset E$ such that the linear combinations of elements $\psi_n b$ (with $b\in B$) form a dense subset of $E$. 
The \emph{standard Hilbert module} over $B$ is defined as 
\[
\mH_B := l^2(\N\to B) := \Big\{ \{b_n\}_{n\in\N} : \sum_{n\in\N} b_n^* b_n \text{ is norm-convergent in } B \Big\} , 
\]
equipped with the inner product $\big\la \{a_n\} \bigmvert \{b_n\} \big\ra := \sum_{n\in\N} a_n^* b_n$. Since $B$ is $\sigma$-unital, the standard Hilbert module $\mH_B$ is countably generated. By Kasparov's stabilisation theorem \cite[Theorem 13.6.2]{Blackadar98}, we know that every countably generated Hilbert $B$-module $E$ appears as a direct summand of the standard Hilbert module: to be precise we have $E\oplus\mH_B \simeq \mH_B$. 

Now consider two ($\sigma$-unital) $C^*$-algebras $A$ and $B$. Let $E$ be a Hilbert $A$-module, and let $F$ be a Hilbert $B$-module, equipped with a $*$-homomorphism $\pi\colon A\to\End_B(F)$. 
The algebraic tensor product $E\odot_AF$ is given by finite sums of simple tensors such that $ea\otimes f = e\otimes\pi(a)f$ for all $e\in E$, $f\in F$, and $a\in A$. 
We then construct the \emph{interior tensor product} $E\otimes_AF$ (also called the \emph{balanced tensor product}) as the completion of $E\odot_AF$ with respect to the inner product $\la e_1\otimes f_1 | e_2\otimes f_2 \ra := \big\la f_1 \bigmvert \pi(\la e_1|e_2\ra) f_2 \big\ra$. 

A densely defined operator $S$ on $E$ is called \emph{semi-regular} if the adjoint $S^*$ is densely defined. 
A semi-regular operator $S$ is called \emph{regular} if $S$ is closed and $1+S^*S$ has dense range. 
A densely defined, closed, symmetric operator $S$ is regular and self-adjoint if and only if the operators $S\pm i$ are surjective \cite[Lemma 9.8]{Lance95}. 
We say that a semi-regular operator $S$ is \emph{essentially} regular self-adjoint if its closure $\bar S$ is regular self-adjoint. 
If $B=\C$, then a Hilbert $\C$-module is just a Hilbert space $\mH$, and we write $\B(\mH) = \End_\C(\mH)$ and $\mK(\mH) = \End_\C^0(\mH)$. In this case, any closed operator on $\mH$ is regular. 
Regular operators on a Hilbert $B$-module have similar properties as closed operators on Hilbert spaces. In particular, there is a continuous functional calculus for regular self-adjoint operators \cite{Wor91,Kus97pre,Kuc02}. 

Given a densely defined, symmetric operator $S$ on $E$, we can equip $\Dom S$ with the graph inner product $\la\psi|\psi\ra_S = \la(S\pm i)\psi|(S\pm i)\psi\ra = \la\psi|\psi\ra + \la S\psi|S\psi\ra$. The graph norm of $S$ is then defined as $\|\psi\|_S := \big\|\la\psi|\psi\ra_S\big\|^{\frac12}$. 

We prove a few basic lemmas which will be used in this paper. 
\begin{lem}
\label{lem:rel_bdd}
Let $T$ be a regular self-adjoint operator on a Hilbert $B$-module $E$, and let $S$ be an operator such that $\Dom S\supset\Dom T$. For some $a\geq0$, we have $\|S(T\pm i)^{-1}\| \leq a$ if and only if $\|S\psi\| \leq a \|\psi\|_T$ for any $\psi\in\Dom T$. 
\end{lem}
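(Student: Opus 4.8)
The statement is an elementary equivalence relating operator-norm bounds to graph-norm bounds, so the plan is to exploit the fact that $(T\pm i)^{-1}$ is a bijection from $E$ onto $\Dom T$, and that $\|\psi\|_T = \|(T\pm i)\psi\|$. First I would note that, since $T$ is regular self-adjoint, $T\pm i$ is surjective with bounded inverse $(T\pm i)^{-1}\colon E\to\Dom T$, and moreover $\|(T\pm i)\psi\| = \|\psi\|_T$ for all $\psi\in\Dom T$ (this is just the definition of the graph norm recalled above, using either sign). Hence $\psi\mapsto\xi:=(T\pm i)\psi$ identifies $\Dom T$ (with its graph norm) isometrically with $E$ (with its ordinary norm).

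For the forward direction, suppose $\|S(T\pm i)^{-1}\|\leq a$. Given $\psi\in\Dom T$, set $\xi=(T\pm i)\psi$, so $\psi=(T\pm i)^{-1}\xi$ and $S\psi = S(T\pm i)^{-1}\xi$; therefore $\|S\psi\| \leq a\|\xi\| = a\|(T\pm i)\psi\| = a\|\psi\|_T$, using that $\Dom S\supset\Dom T$ ensures $S\psi$ is defined. For the converse, suppose $\|S\psi\|\leq a\|\psi\|_T$ for all $\psi\in\Dom T$. Given any $\xi\in E$, put $\psi=(T\pm i)^{-1}\xi\in\Dom T$; then $\|S(T\pm i)^{-1}\xi\| = \|S\psi\| \leq a\|\psi\|_T = a\|(T\pm i)\psi\| = a\|\xi\|$, and taking the supremum over $\|\xi\|\leq1$ yields $\|S(T\pm i)^{-1}\|\leq a$. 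One small point worth addressing is that $S(T\pm i)^{-1}$ is a priori only densely defined or defined on all of $E$ with values in $E$, but the hypothesis $\Dom S\supset\Dom T=\Ran\big((T\pm i)^{-1}\big)$ shows it is in fact everywhere-defined, and the bound just established shows it is bounded.

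There is essentially no obstacle here; the only thing to be a little careful about is the role of the two signs $\pm$, but since $\|(T+i)\psi\| = \|(T-i)\psi\| = \|\psi\|_T$ for symmetric $T$ (the cross terms $\langle T\psi|\psi\rangle\pm\langle\psi|T\psi\rangle$ vanish or are purely imaginary and cancel in the norm), the argument is identical for either choice, and the statement holds with the same constant $a$ in both cases. I would phrase the proof so that it applies verbatim to both signs simultaneously.
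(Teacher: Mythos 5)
Your proposal is correct and follows essentially the same route as the paper: both rely on the identity $\|\psi\|_T=\|(T\pm i)\psi\|$ and the bijectivity of $T\pm i\colon\Dom T\to E$ to translate between the two norm bounds, with identical computations in both directions. Your explicit change of variables $\xi=(T\pm i)\psi$ and your remark on the two signs are just minor expository additions to what is otherwise the same argument.
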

\begin{proof}
We note that $\|\psi\|_T = \|(T\pm i)\psi\|$. 
If $\|S(T\pm i)^{-1}\| \leq a$ we calculate
\begin{align*}
\|S\psi\| &= \|S(T\pm i)^{-1}(T\pm i)\psi\| \leq a \|(T\pm i)\psi\| = a \|\psi\|_T .
\end{align*}
Conversely, if $\|S\psi\| \leq a \|\psi\|_T$ we have
\begin{align*}
\|S(T\pm i)^{-1}\| &= \sup_{\|\psi\|=1} \|S(T\pm i)^{-1}\psi\| \leq a \cdot \sup_{\|\psi\|=1} \|(T\pm i)^{-1}\psi\|_T = a .
\qedhere
\end{align*}
\end{proof}

The following lemma is a consequence of the closed graph theorem. 
\begin{lem}[{\cite[Lemma 6.5]{vdD16toappear}}]
\label{lem:dom_inc_rel_bdd}
Let $S$ be a closed operator on a Hilbert $B$-module $E$, and let $T$ be a closable operator such that $\Dom S\subset\Dom T$. Then $T$ is relatively bounded by $S$. 
\end{lem}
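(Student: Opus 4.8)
The plan is to derive the estimate from the closed graph theorem for Banach spaces, exactly as in the Hilbert space case, using only that a Hilbert $B$-module is in particular a complex Banach space.

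First I would make $\Dom S$ into a Banach space. Since $S$ is closed, its graph $\{(\psi,S\psi):\psi\in\Dom S\}$ is a closed subspace of the Banach space $E\oplus E$, so transporting the norm back along the linear homeomorphism $\psi\mapsto(\psi,S\psi)$ shows that $\Dom S$, equipped with the graph norm $\psi\mapsto\|\psi\|+\|S\psi\|$, is a Banach space. Because $\Dom S\subseteq\Dom T$, the assignment $\psi\mapsto T\psi$ defines a linear map $\iota_T\colon\Dom S\to E$, and the desired conclusion --- the existence of constants $a,b\ge0$ with $\|T\psi\|\le a\|S\psi\|+b\|\psi\|$ for all $\psi\in\Dom S$ --- is exactly the statement that $\iota_T$ is bounded with respect to the graph norm.

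The one nontrivial point, and the place where the hypothesis that $T$ is closable is used, is to verify that $\iota_T$ has closed graph. Suppose $\psi_n\to\psi$ in the graph norm of $S$ and $T\psi_n\to\eta$ in $E$; then in particular $\psi_n\to\psi$ in $E$, so $(\psi,\eta)$ lies in the closure of the graph of $T$. Since $T$ is closable, this closure is the graph of the closed operator $\bar T$, whence $\psi\in\Dom\bar T$ and $\bar T\psi=\eta$; but $\psi\in\Dom S\subseteq\Dom T$ and $\bar T$ extends $T$, so $T\psi=\eta$, i.e.\ $(\psi,\eta)$ belongs to the graph of $\iota_T$. The closed graph theorem then provides a constant $c\ge0$ with $\|T\psi\|\le c\big(\|\psi\|+\|S\psi\|\big)$ for all $\psi\in\Dom S$, which is the asserted relative boundedness. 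I do not expect a genuine obstacle here: the classical argument transfers verbatim once one notes that all the spaces in sight are complex Banach spaces, with closability of $T$ supplying precisely the closedness of the graph of $\iota_T$.
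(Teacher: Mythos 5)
Your proof is correct and follows exactly the route the paper indicates: the paper does not reproduce a proof but remarks immediately before the statement that the lemma ``is a consequence of the closed graph theorem'' and cites \cite[Lemma 6.5]{vdD16toappear}, and your argument --- equip $\Dom S$ with the graph norm, verify that closability of $T$ makes $\psi\mapsto T\psi$ a closed operator from this Banach space into $E$, and invoke the closed graph theorem --- is precisely that consequence.
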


\begin{lem}
\label{lem:dom_sum}
Let $S$ and $T$ be closed symmetric operators on a Hilbert $B$-module $E$, such that $\Dom S\cap\Dom T$ is dense in $E$. Let $\Dom(S+T)$ denote the closure of $\Dom S\cap\Dom T$ in the graph norm of $S+T$. If $\Dom(S+T) \subset \Dom S$, then also $\Dom(S+T)\subset\Dom T$, and therefore $\Dom(S+T)\subset\Dom S\cap\Dom T$. 
\end{lem}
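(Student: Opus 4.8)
The plan is to exploit the symmetry of $S$, $T$, and $S+T$ together with \cref{lem:dom_inc_rel_bdd} to bootstrap from the inclusion $\Dom(S+T)\subset\Dom S$ to the inclusion $\Dom(S+T)\subset\Dom T$. Write $R := \overline{S+T}$ for the closure of $S+T$ on $\Dom S\cap\Dom T$; by hypothesis $R$ is a closed symmetric operator with $\Dom R\subset\Dom S$. First I would observe that $R$ is symmetric (being the closure of a symmetric operator), hence closable with closed closure, so \cref{lem:dom_inc_rel_bdd} applies with ``$S$'' there taken to be $R$ and ``$T$'' there taken to be our $S$: since $\Dom R\subset\Dom S$ and $S$ is closed (hence closable), $S$ is relatively bounded by $R$, i.e.\ there are constants $c_1,c_2\geq 0$ with $\|S\psi\|\leq c_1\|R\psi\| + c_2\|\psi\|$ for all $\psi\in\Dom R$.

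Next, the key point is that $T = (S+T) - S$ on $\Dom S\cap\Dom T$, and this identity extends to $\Dom R$: for $\psi\in\Dom R$ we have $\psi\in\Dom S$ by hypothesis, and we can approximate $\psi$ by a sequence $\psi_n\in\Dom S\cap\Dom T$ with $\psi_n\to\psi$ and $(S+T)\psi_n\to R\psi$; since $S$ is closed and $\psi_n\to\psi\in\Dom S$ with $\|S(\psi_n-\psi_m)\|\leq c_1\|R(\psi_n-\psi_m)\|+c_2\|\psi_n-\psi_m\|\to 0$, we get $S\psi_n\to S\psi$, and hence $T\psi_n = (S+T)\psi_n - S\psi_n$ converges to $R\psi - S\psi$. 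Because $T$ is closed, this forces $\psi\in\Dom T$ and $T\psi = R\psi - S\psi$. Thus $\Dom R\subset\Dom T$, which is exactly $\Dom(S+T)\subset\Dom T$; combined with the hypothesis this yields $\Dom(S+T)\subset\Dom S\cap\Dom T$.

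I expect the main obstacle to be the approximation argument: one needs to be careful that the graph-norm convergence $\psi_n\to\psi$ in the graph norm of $S+T$ (which is what density of $\Dom S\cap\Dom T$ in $\Dom(S+T)$ gives) genuinely delivers both $\psi_n\to\psi$ and $(S+T)\psi_n\to R\psi$, and then that the relative boundedness estimate upgrades this to $S$-graph convergence so that closedness of $S$ can be invoked. The symmetry hypotheses are used only insofar as they guarantee $S$, $T$, and $S+T$ are closable with closed closures, so that \cref{lem:dom_inc_rel_bdd} and the closed-graph-type reasoning apply; no self-adjointness or regularity is needed. An alternative, slightly slicker route avoiding the explicit sequence would be to note that $R - S$ (defined on $\Dom R$) is closable since it is a restriction of the closed operator $T$... but verifying that restriction lands in $\Dom T$ still reduces to the same limiting argument, so I would present the direct version above.
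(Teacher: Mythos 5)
Your argument is correct and is essentially the paper's own proof: both apply \cref{lem:dom_inc_rel_bdd} to deduce that $S$ is relatively bounded with respect to $S+T$ on $\Dom(S+T)$, then use the decomposition $T\psi_n = (S+T)\psi_n - S\psi_n$ and closedness of $T$ to conclude $\psi\in\Dom T$. The only cosmetic difference is that the paper applies the estimate directly to $\psi_n-\psi\in\Dom(S+T)$ to get $S\psi_n\to S\psi$, whereas you apply it to $\psi_n-\psi_m$ to get a Cauchy sequence and then invoke closedness of $S$; both are valid.
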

\begin{proof}
Since $\Dom(S+T) \subset \Dom S$, we know 
from \cref{lem:dom_inc_rel_bdd}
that there exists $C>0$ such that $\|S\psi\| \leq C \|\psi\|_{S+T}$ for all $\psi\in\Dom(S+T)$. Let $\psi\in\Dom(S+T)$, and consider a sequence $\psi_n\in\Dom S\cap\Dom T$ 
which converges to $\psi$ in the graph norm of $S+T$. 
Then we also have $S\psi_n\to S\psi$, because $\|S(\psi_n-\psi)\| \leq C \|\psi_n-\psi\|_{S+T}$. Hence $T\psi_n = (S+T)\psi_n - S\psi_n$ also converges, which means that $\psi\in\Dom T$. 
\end{proof}

Finally, we mention the Kato-Rellich theorem for regular self-adjoint operators on Hilbert modules.
\begin{thm}[{\cite[Theorem 4.5]{KL12}}]
Let $T$ be a regular self-adjoint operator on a Hilbert $B$-module $E$, and let $S$ be a symmetric operator such that $\Dom S\supset\Dom T$. Suppose there exist $a\in(0,1)$ and $b\in[0,\infty)$ such that $\|S\psi\| \leq a \|T\psi\| + b \|\psi\|$ for any $\psi\in\Dom T$. 
Then $T+S$ is regular and self-adjoint on the domain $\Dom(T+S) = \Dom T$. 
\end{thm}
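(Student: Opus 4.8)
The plan is to follow the classical proof of the Kato--Rellich theorem, checking at each step that the argument survives in the Hilbert-module setting. First I would note that, since $S$ is symmetric with $\Dom S\supseteq\Dom T$, the operator $T+S$ with domain $\Dom T$ is densely defined and symmetric. To see that it is also closed, I would exploit the relative bound: from $\|S\psi\|\le a\|T\psi\|+b\|\psi\|$ with $a<1$ one gets, for all $\psi\in\Dom T$,
\[
(1-a)\|T\psi\| \le \|(T+S)\psi\| + b\|\psi\| \qquad\text{and}\qquad \|(T+S)\psi\|\le (1+a)\|T\psi\|+b\|\psi\| ,
\]
so the graph norms of $T$ and of $T+S$ are equivalent on $\Dom T$. (Here one uses that for positive elements $x,y$ of a $C^*$-algebra $\max\{\|x\|,\|y\|\}\le\|x+y\|\le\|x\|+\|y\|$, which makes $\|\psi\|_T^2$ comparable to $\|\psi\|^2+\|T\psi\|^2$, and similarly for $T+S$.) Since $T$ is closed, $\Dom T$ is complete in $\|\cdot\|_T$, hence in $\|\cdot\|_{T+S}$, so $T+S$ is closed on $\Dom T$.

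Next I would invoke the surjectivity criterion recalled above (\cite[Lemma 9.8]{Lance95}): the densely defined, closed, symmetric operator $T+S$ is regular and self-adjoint as soon as $(T+S)\pm i$ are both surjective. Since for any $\mu>0$ the operator $(T+S)/\mu$ is again closed and symmetric on $\Dom T$, is regular self-adjoint if and only if $T+S$ is, and has $((T+S)/\mu)\pm i$ surjective if and only if $(T+S)\pm i\mu$ is surjective, it suffices to establish surjectivity of $T+S\pm i\mu$ for a single (large) $\mu>0$. On $\Dom T$ one has the factorisation
\[
T+S\pm i\mu = \big(1+S(T\pm i\mu)^{-1}\big)(T\pm i\mu) ,
\]
valid because $T$ is regular self-adjoint, so $T\pm i\mu$ is a bijection of $\Dom T$ onto $E$ with $(T\pm i\mu)^{-1}E=\Dom T\subseteq\Dom S$. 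It therefore remains to show that $1+S(T\pm i\mu)^{-1}$ is invertible in $\End_B(E)$ for suitable $\mu$.

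For this last point I would use the continuous functional calculus for the regular self-adjoint operator $T$, applied to the bounded continuous functions $t\mapsto t(t\pm i\mu)^{-1}$ and $t\mapsto (t\pm i\mu)^{-1}$ on $\R$ (or, equivalently, a short direct computation with the $B$-valued inner product, noting that the cross terms in $\|(T\pm i\mu)\psi\|^2$ vanish by symmetry of $T$), to conclude $\|T(T\pm i\mu)^{-1}\|\le 1$ and $\|(T\pm i\mu)^{-1}\|\le\mu^{-1}$. Applying the relative bound to $\xi=(T\pm i\mu)^{-1}\phi\in\Dom T$ then yields $\|S(T\pm i\mu)^{-1}\|\le a+b/\mu$, which is strictly less than $1$ once $\mu>b/(1-a)$; for such $\mu$ the operator $1+S(T\pm i\mu)^{-1}$ is invertible via the Neumann series, so $T+S\pm i\mu$ is surjective and hence $T+S$ is regular and self-adjoint. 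Finally $\Dom(T+S)=\Dom T$ is automatic, since a regular self-adjoint operator admits no proper closed symmetric extension. I expect the only genuine subtlety to be the bookkeeping involved in passing from the Hilbert-space-style norm inequalities to honest Hilbert-module statements, together with the appeal to the functional calculus for regular self-adjoint operators over $B$; everything else proceeds exactly as in the scalar case.
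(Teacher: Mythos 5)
The paper does not prove this statement; it is quoted as \cite[Theorem~4.5]{KL12} and the only commentary the paper adds is the remark immediately after, that a bound $\|S(T\pm i)^{-1}\|<1$ produces the hypothesis with $a=b=\|S(T\pm i)^{-1}\|$. Your blind argument is correct and is the standard Kato--Rellich proof transported to Hilbert modules, which is exactly the route taken in \cite{KL12}: closedness of $T+S$ on $\Dom T$ from equivalence of graph norms (with the small but necessary observation about $B$-valued inner products that you make), reduction of regular self-adjointness to surjectivity of $T+S\pm i\mu$ for a single large $\mu$ via \cite[Lemma~9.8]{Lance95} and a rescaling, then the factorisation $T+S\pm i\mu=(1+S(T\pm i\mu)^{-1})(T\pm i\mu)$ together with the estimate $\|S(T\pm i\mu)^{-1}\|\le a+b/\mu<1$ obtained from the functional-calculus bounds $\|T(T\pm i\mu)^{-1}\|\le1$ and $\|(T\pm i\mu)^{-1}\|\le\mu^{-1}$. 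One phrase deserves a touch more care: you say $1+S(T\pm i\mu)^{-1}$ is ``invertible in $\End_B(E)$,'' but with $S$ merely symmetric the bounded operator $S(T\pm i\mu)^{-1}$ is not a priori adjointable, since in a Hilbert module boundedness does not imply adjointability. Fortunately this is harmless: the Neumann series gives a bounded two-sided inverse, and since $T\pm i\mu\colon\Dom T\to E$ is a bijection, surjectivity of $T+S\pm i\mu$ follows; that is all Lance's criterion requires, and the adjointability of the resolvent of $T+S$ is then a consequence of the regularity you have just established rather than an ingredient.
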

We remark in particular that, if $\|S(T\pm i)^{-1}\| < 1$, then we obtain the inequality $\|S\psi\| \leq a \|T\psi\| + b \|\psi\|$ with $a=b=\|S(T\pm i)^{-1}\|$, so the Kato-Rellich theorem applies.

\subsection{Kasparov modules}

We consider two $C^*$-algebras $A$ and $B$. 
In this article, almost all $C^*$-algebras will be trivially graded. The only exception is an algebra of the form $A\otimes\CCliff_1$, where $A$ is trivially graded and $\CCliff_1$ is the Clifford algebra with one odd generator. 

Kasparov \cite{Kas80b} defined the abelian group $\KK(A,B)$ as a set of homotopy equivalence classes of certain Kasparov $A$-$B$-modules. 
Below we briefly recall the main definitions. 
More details can be found in e.g.\ \cite[\S17]{Blackadar98}. 

\begin{defn}
\label{defn:Kasp_mod}
An (even, bounded) \emph{Kasparov $A$-$B$-module} $(A,{}_{\pi}E_B,F)$ is given by a $\Z_2$-graded, countably generated, right Hilbert $B$-module $E$, a ($\Z_2$-graded) $*$-homomorphism $\pi\colon A\to\End_B(E)$, and an odd adjointable endomorphism $F\in\End_B(E)$ such that $\pi(a)(F-F^*)$, $[F,\pi(a)]_\pm$, and $\pi(a)(F^2-1)$ are compact endomorphisms. 

An \emph{odd} Kasparov $A$-$B$-module $(A,{}_{\pi}E_B,F)$ is defined in the same way as above, except that $A$, $B$, and $E$ are assumed to be trivially graded, and $F$ is not required to be odd. 

Two Kasparov modules $(A,{}_{\pi_1}{E_1}_B,F_1)$ and $(A,{}_{\pi_2}{E_2}_B,F_2)$ are called \emph{unitarily equivalent} if there exists an even unitary in $\Hom_B(E_1,E_2)$ intertwining the $\pi_j$ and $F_j$ (for $j=1,2$). 
A \emph{homotopy} between $(A,{}_{\pi_0}{E_0}_B,F_0)$ and $(A,{}_{\pi_1}{E_1}_B,F_1)$ is given by a Kasparov $A$-$C([0,1],B)$-module $(A,{}_{\til\pi}{\til E}_{C([0,1],B)},\til F)$ such that (for $j=0,1$)
\[
\ev_j(A,{}_{\til\pi}{\til E}_{C([0,1],B)},\til F) \simeq (A,{}_{\pi_j}{E_j}_B,F_j) .
\]
Here $\simeq$ denotes unitary equivalence, and $\ev_t(A,{}_{\til\pi}\til E,\til F) := (A,{}_{\til\pi\hot1}\til E\hot_{\rho_t}B,\til F\hot1)$, where the $*$-homomorphism $\rho_t\colon C([0,1],B) \to B$ is given by $\rho_t(b) := b(t)$. 

The (even) $\KK$-theory $\KK(A,B) = \KK^0(A,B)$ of $A$ and $B$ is defined as the set of homotopy equivalence classes of (even, bounded) Kasparov $A$-$B$-modules. 
Furthermore, the odd $\KK$-theory of $A$ and $B$ is defined as $\KK^1(A,B) := \KK(A\otimes\CCliff_1,B)$. 
\end{defn}

Since homotopy equivalence respects direct sums, the direct sum induces a (commutative and associative) binary operation (`addition') on the elements of $\KK(A,B)$. Given any Kasparov module $(A,{}_{\pi}E_B,F)$, let $E^\op:=E$ be equipped with the opposite grading (i.e.\ $(E^\op)^0 := E^1$ and $(E^\op)^1 := E^0$), and for $a=a^0+a^1\in A=A^0\oplus A^1$ define $\pi^\op(a) := \pi(a^0) - \pi(a^1)$. Then $(A,{}_{\pi^\op}E^\op_B,-F)$ is the additive inverse of $(A,{}_{\pi}E_B,F)$, i.e.\ the direct sum $(A,{}_{\pi}E_B,F) \oplus (A,{}_{\pi^\op}E^\op_B,-F)$ represents the trivial element in $\KK(A,B)$. Hence we find that $\KK(A,B)$ is in fact an abelian group \cite[Proposition 17.3.3]{Blackadar98}. 

In this article we will mostly focus on the unbounded representatives of $\KK$-elements introduced by Baaj and Julg \cite{BJ83}. 

\begin{defn}[\cite{BJ83}]
An (even) \emph{unbounded Kasparov $A$-$B$-module} $(\A,{}_{\pi}E_B,\D)$ is given by a $\Z_2$-graded, countably generated, right Hilbert $B$-module $E$, a ($\Z_2$-graded) $*$-homomorphism $\pi\colon A\to\End_B(E)$, a separable dense $*$-subalgebra $\A\subset A$, and a regular self-adjoint odd operator $\D\colon\Dom\D\subset E\to E$ such that 
\begin{enumerate}
\item we have the inclusion $\pi(\A)\cdot\Dom\D\subset\Dom\D$, and $[\D,\pi(a)]_\pm$ is bounded on $\Dom\D$ for each $a\in\A$; 
\item the resolvent of $\D$ is \emph{locally compact}, i.e.\ $\pi(a) (\D\pm i)^{-1}$ is compact for each $a\in A$.
\end{enumerate}
If no confusion arises, we will usually write $(\A,E_B,\D)$ instead of $(\A,{}_{\pi}E_B,\D)$ and $a$ instead of $\pi(a)$. 
If $B=\C$ and $A$ is trivially graded, we will write $E=\mH$ and refer to $(\A,\mH,\D)$ as a \emph{spectral triple} over $A$ (see \cite{Connes94}). 
\end{defn}

Given an \emph{unbounded Kasparov $A$-$B$-module} $(\A,E_B,\D)$, the `bounded transform' $F_\D := \D(1+\D^2)^{-\frac12}$ defines a (bounded) Kasparov module $(A,E_B,F_\D)$, representing a class $[\D] := [(A,E_B,F_\D)] \in \KK(A,B)$. 

An \emph{odd} unbounded Kasparov $A$-$B$-module $(\A,{}_{\pi}E_B,\D)$ is defined in the same way as above, except that $A$, $B$, and $E$ are assumed to be trivially graded, and $\D$ is not required to be odd. 
We can then consider the `even double' given by
\begin{align}
\label{eq:even_double}
&\left( A\otimes\CCliff_1 , \til E_B := (E\oplus E)_B , \til\D := \mattwo{0}{-i\D}{i\D}{0} \right) , & 
e &= \mattwo{0}{1}{1}{0} ,
\end{align} 
where $e$ denotes the generator of $\CCliff_1$. Then $(\A,E_B,\D)$ represents a class $[\D] := [(A\otimes\CCliff_1,\til E_B,F_{\til\D})] \in \KK^1(A,B)$.

\subsubsection{The unbounded Kasparov product}

Let $A$ be a separable $C^*$-algebra, and let $B,C$ be $\sigma$-unital $C^*$-algebras. 
There exists a bilinear associative pairing $\KK(A,B) \times \KK(B,C) \to \KK(A,C)$ \cite{Kas80b}. We refer to this pairing as the (internal) \emph{Kasparov product} over $B$. Given two $\KK$-classes $[\D_1] \in \KK(A,B)$ and $[\D_2] \in \KK(B,C)$, the Kasparov product is denoted by $[\D_1]\otimes_B[\D_2]$. The Kasparov product can be extended to a map $\KK(A_1,C_1\otimes B) \times \KK(B\otimes A_2,C_2) \to \KK(A_1\otimes A_2,C_1\otimes C_2)$ (where $A_1,A_2$ are assumed to be separable, and $B,C_1,C_2$ are $\sigma$-unital). 

The $\KK$-groups satisfy the formal periodicity $\KK(A\otimes\CCliff_2,B) \simeq \KK(A,B)$. Since the graded tensor product $\CCliff_1\otimes\CCliff_1$ equals $\CCliff_2$, the Kasparov product of two \emph{odd} $\KK$-classes yields a map
\begin{align*}
\KK^1(A,B) \times \KK^1(B,C) 
&\simeq \KK(A\otimes\CCliff_1,B) \times \KK(B\otimes\CCliff_1,C) 
\to \KK(A\otimes\CCliff_2,C) \simeq \KK(A,C) .
\end{align*}

By \cite[\S5, Theorem 7]{Kas80b}, we have the \emph{Bott periodicity} isomorphism 
$
\beta\colon\KK^1(A,C_0(\R,B)) \to \KK^0(A,B) . 
$
Consider the standard spectral triple over $C_0(\R)$ given by $(C_0(\R) , L^2(\R) , -i\partial_x)$. The Bott periodicity isomorphism can be implemented by taking the Kasparov product with the class $[-i\partial_x]$, i.e.\ for any class $\alpha\in\KK^1(A,C_0(\R,B))$ we have $\alpha \otimes_{C_0(\R)} [-i\partial_x] = \beta(\alpha)$ (indeed, the class $[-i\partial_x]$ is equal to the class $\alpha_1$ used in Kasparov's proof of Bott periodicity \cite[\S5, Theorem 7]{Kas80b}). 

The following theorem by Kucerovsky gives sufficient conditions for when an unbounded Kasparov module represents the Kasparov product. 

\begin{thm}[{\cite[Theorem 13]{Kuc97}}]
\label{thm:Kucerovsky}
Let $(\A, {}_{\pi_1}(E_1)_B,\D_1)$ and $(\B,{}_{\pi_2}(E_2)_C,\D_2)$ be unbounded Kasparov modules. 
Suppose that $(\A,{}_{\pi_1\hot\id}(E_1\hot_BE_2)_C,\D)$ is an unbounded Kasparov module such that:
\begin{enumerate}
\item for all $\psi$ in a dense subspace of $\pi_1(A)E_1$, the graded commutators 
$$
\left[\mattwo{\D}{0}{0}{\D_2},\mattwo{0}{T_{\psi}}{T_{\psi}^*}{0}\right]_\pm
$$
are bounded on $\Dom(\D\oplus \D_2)\subset (E_1\hot_BE_2)\oplus E_2$, where $T_{\psi}\colon E_2 \rightarrow E_1\hot_BE_2$ is given by $T_{\psi}(\xi) = \psi\hot\xi$;
\item we have the domain inclusion $\Dom\D\subset\Dom\D_1\hot1$;
\item there exists $c\in\R$ such that $\la(\D_1\hot1)\psi \mvert \D \psi\ra + \la\D \psi \mvert (\D_1\hot1)\psi\ra \geq c \la\psi|\psi\ra$ for all $\psi\in\Dom(\D)$. 
\end{enumerate}
Then $(\A,{}_{\pi_1\hot\id}(E_1\hot_BE_2)_C,\D)$ represents the Kasparov product of $(\A, {}_{\pi_1}(E_1)_B,\D_1)$ and $(\B,{}_{\pi_2}(E_2)_C,\D_2)$. 
\end{thm}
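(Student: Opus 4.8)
\textbf{Proof proposal for Kucerovsky's theorem (\cref{thm:Kucerovsky}).}

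The plan is to follow Kucerovsky's original argument \cite{Kuc97}, reducing the unbounded statement to Connes--Skandalis's bounded characterisation of the Kasparov product. Recall that a bounded Kasparov module $(A,{}_{\pi_1\hot\id}(E_1\hot_BE_2)_C,F)$ represents the product of the bounded modules $(A,(E_1)_B,F_1)$ and $(B,(E_2)_C,F_2)$ if and only if: (i) $F$ is an $F_2$-connection (in the sense of Connes--Skandalis), and (ii) the positivity condition $\pi_1(a)[F_1\hot 1, F]\pi_1(a)^* \geq 0$ holds modulo compacts for all $a$ in a dense subalgebra. So the task is to show that the bounded transform $F_\D = \D(1+\D^2)^{-1/2}$ satisfies these two conditions, given hypotheses (1)--(3) on the unbounded operators. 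First I would set $F_1 = F_{\D_1} = \D_1(1+\D_1^2)^{-1/2}$ and $F_2 = F_{\D_2}$, and record the integral formula $(1+\D^2)^{-1/2} = \frac{1}{\pi}\int_0^\infty \lambda^{-1/2}(1+\lambda+\D^2)^{-1}\,d\lambda$, which will be the workhorse for all the resolvent estimates.

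\textbf{The connection condition from hypotheses (1) and (2).} For each $\psi$ in the relevant dense subspace, the creation operator $T_\psi\colon E_2\to E_1\hot_BE_2$, $\xi\mapsto\psi\hot\xi$, is adjointable with $T_\psi^*(\eta\hot\zeta) = \pi_2(\la\psi|\eta\ra)\zeta$. Being an $F_2$-connection means precisely that $F\circ T_\psi - T_\psi\circ F_2$ and $T_\psi\circ F_2 - F\circ T_\psi$ (equivalently the graded commutator of $F\oplus F_2$ with the off-diagonal operator built from $T_\psi$) are compact from $E_2$ to $E_1\hot_BE_2$. The strategy is to pass from $F = F_\D$ and $F_2 = F_{\D_2}$ to the unbounded operators via the integral formula: writing $F_\D T_\psi - T_\psi F_{\D_2}$ as an integral over $\lambda$ of terms involving $(1+\lambda+\D^2)^{-1}$, $(1+\lambda+\D_2^2)^{-1}$, and the operator $\D T_\psi - T_\psi\D_2$, one uses hypothesis (2) (domain inclusion $\Dom\D\subset\Dom\D_1\hot 1$, which via \cref{lem:dom_inc_rel_bdd} gives relative boundedness of $\D_1\hot 1$ by $\D$) together with hypothesis (1) (boundedness of the graded commutator $[\D\oplus\D_2, \text{off-diag}(T_\psi)]_\pm$) to see that the ``leading'' difference $\D T_\psi - T_\psi\D_2$ extends to a bounded operator. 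Then the standard trick: each factor $(1+\lambda+\D^2)^{-1}\pi_1(a)$ is compact (local compactness of the product module) and the $\lambda$-integral converges in norm after one pulls out a compact factor and estimates the remainder by $O(\lambda^{-3/2})$; hence the commutator difference is a norm-limit of compacts, so compact. One must also check $F_\D T_\psi - T_\psi F_{\D_2}$ itself (not just modulo lower-order terms) lands in the compacts, which is where the adjointability of $T_\psi$ and the identity $T_\psi(1+\D_2^2) \subset (1+\D^2)T_\psi + (\text{bounded})$ coming from hypothesis (1) get used.

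\textbf{The positivity condition from hypothesis (3).} This is the part I expect to be the main obstacle, since it requires converting the unbounded semiboundedness estimate $\la(\D_1\hot1)\psi|\D\psi\ra + \la\D\psi|(\D_1\hot1)\psi\ra \geq c\la\psi|\psi\ra$ on $\Dom\D$ into the statement that $\pi_1(a)[F_{\D_1}\hot 1, F_\D]\pi_1(a)^* \geq -(\text{compact})$. The approach: expand $[F_{\D_1}\hot 1, F_\D]$ using the integral representations of both bounded transforms, so that $[F_{\D_1}\hot 1, F_\D] = \frac{1}{\pi^2}\iint \lambda^{-1/2}\mu^{-1/2} \, [\,(D_1\hot1)(1+\lambda+(D_1\hot1)^2)^{-1},\ \D(1+\mu+\D^2)^{-1}\,]\,d\lambda\,d\mu$; commuting a resolvent past $\D$ produces, up to compact/lower-order terms, a factor of the ``anticommutator defect'' $(D_1\hot1)\D + \D(D_1\hot1)$, whose compression by $\pi_1(a)(1+\ldots)^{-1}$ is $\geq c\,\pi_1(a)(\ldots)\pi_1(a)^*$ modulo compacts by hypothesis (3). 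Carefully tracking which error terms are compact — again using local compactness $\pi_1(a)(\D\pm i)^{-1}\in\End^0_C$ and that $[\D,\pi_1(a)]_\pm$, $[(D_1\hot 1), \pi_1(a)]_\pm$ are bounded — one arrives at $\pi_1(a)[F_{\D_1}\hot 1, F_\D]\pi_1(a)^* + (\text{compact}) \geq 0$. The sign of $c$ is irrelevant: only positivity-modulo-compacts is needed, and one can absorb a negative constant times a compact. Finally, one invokes the Connes--Skandalis theorem to conclude that the bounded module $(A,(E_1\hot_BE_2)_C,F_\D)$ is the Kasparov product $[(A,(E_1)_B,F_{\D_1})]\otimes_B[(B,(E_2)_C,F_{\D_2})]$, which by definition of the classes $[\D_1],[\D_2],[\D]$ is exactly the assertion that $(\A,{}_{\pi_1\hot\id}(E_1\hot_BE_2)_C,\D)$ represents $[\D_1]\otimes_B[\D_2]$.
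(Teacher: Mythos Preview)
The paper does not prove this theorem at all: it is stated as \cite[Theorem 13]{Kuc97} and used as a black box, so there is no ``paper's own proof'' to compare your proposal against. Your sketch is a reasonable outline of Kucerovsky's original argument (reduce to the Connes--Skandalis criteria via bounded transforms, derive the connection condition from hypotheses (1)--(2) and the positivity condition from hypothesis (3) using the integral formula for $(1+\D^2)^{-1/2}$), but that comparison lives in \cite{Kuc97}, not in the present paper.
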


Let $\phi\colon B\to C$ be a $C^*$-algebra homomorphism, and let $[\phi]\in\KK^0(B,C)$ be the corresponding class represented by the (even) unbounded Kasparov $B$-$C$-module $(B,{}_\phi C_C,0)$. 
The homomorphism $\phi$ induces maps $\phi_*\colon\KK^\bullet(A,B)\to\KK^\bullet(A,C)$ and $\phi^*\colon\KK^\bullet(C,A)\to\KK^\bullet(B,A)$ given by
\begin{align*}
\phi_*(x) &= x \otimes_B [\phi] , & 
\phi^*(y) &= [\phi] \otimes_C y ,
\end{align*}
for any $x\in\KK^\bullet(A,B)$ and $y\in\KK^\bullet(C,A)$. 
The following lemma will be useful to us later. 

\begin{lem}
\label{lem:Kasp_prod_homom}
Let $A$, $B$, and $C$ be (trivially graded) $C^*$-algebras. 
Let $(\A,E_B,\D)$ be an odd unbounded Kasparov $A$-$B$-module, representing the class $[\D]\in\KK^1(A,B)$. 
Then the odd unbounded Kasparov $A$-$C$-module
\[
\left( \A , E \otimes_\phi C , \D\otimes1 \right) 
\]
represents the Kasparov product of $[\D]$ with $[\phi]$. 
\end{lem}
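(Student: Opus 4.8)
The plan is to verify the three hypotheses of \cref{thm:Kucerovsky} for the candidate operator $\D \otimes 1$ on $E \otimes_\phi C$, with the first factor being the odd unbounded Kasparov module $(\A,E_B,\D)$ and the second factor being the (even) unbounded Kasparov $B$-$C$-module $(B,{}_\phi C_C,0)$ representing $[\phi]$. Since we are pairing an odd class with an even class, strictly speaking one should pass to the even double \eqref{eq:even_double} of $(\A,E_B,\D)$ and tensor with $(B,{}_\phi C_C,0)$; the resulting operator is $\til\D \otimes 1$ on $\til E \otimes_\phi C \simeq (E\otimes_\phi C)\oplus(E\otimes_\phi C)$, which is precisely the even double of $\D\otimes1$ on $E\otimes_\phi C$. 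So it suffices to check Kucerovsky's conditions at the level of the odd operators, reading the matrices in \cref{thm:Kucerovsky} as the corresponding $2\times2$ amplifications.

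First I would identify the interior tensor product: because the module $C$ carries the zero operator $\D_2 = 0$ and $\phi\colon B\to\End_C(C)$ acts by left multiplication, we have $E\otimes_\phi C$ with $\D_2=0$, and the connection formalism degenerates. Condition (2), the domain inclusion $\Dom(\D\otimes1)\subset\Dom(\D\otimes1)$, is then a tautology since the candidate operator \emph{is} $\D_1\otimes1$ (here $\D_1=\D$). Condition (3), the positivity estimate $\la(\D\otimes1)\psi\mvert(\D\otimes1)\psi\ra + \la(\D\otimes1)\psi\mvert(\D\otimes1)\psi\ra \geq c\la\psi|\psi\ra$, holds trivially with $c=0$ for the same reason: the two operators coincide, so the left-hand side is $2\la(\D\otimes1)\psi\mvert(\D\otimes1)\psi\ra \geq 0$. (When one writes this out for the even doubles the cross-terms are again manifestly of this shock-absorbing positive form.)

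The only genuine point is condition (1): for $\psi$ in a dense subspace of $\pi(A)E$ one needs the creation operators $T_\psi\colon C\to E\otimes_\phi C$, $T_\psi(\xi)=\psi\otimes\xi$, to satisfy that the graded commutator of $\mattwo{\D\otimes1}{0}{0}{0}$ with $\mattwo{0}{T_\psi}{T_\psi^*}{0}$ is bounded. Computing, the relevant off-diagonal entries are $(\D\otimes1)T_\psi - T_\psi\cdot 0 = (\D\otimes1)T_\psi$ and its adjoint. Now for $\psi\in\pi(\A)\Dom\D$ (a dense subspace of $\pi(A)E$, using the inclusion $\pi(\A)\Dom\D\subset\Dom\D$ from the definition of an unbounded Kasparov module), one has $(\D\otimes1)(\psi\otimes\xi) = (\D\psi)\otimes\xi$, so $(\D\otimes1)T_\psi = T_{\D\psi}$, which is bounded with $\|T_{\D\psi}\| \leq \|\D\psi\|_{E}$ (a standard estimate for creation operators on interior tensor products, since $\|\psi'\otimes\xi\|^2 = \la\xi\mvert\phi(\la\psi'|\psi'\ra)\xi\ra \leq \|\psi'\|^2\|\xi\|^2$). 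Hence the commutator is bounded on the appropriate domain. I expect this creation-operator boundedness to be the main (and essentially the only) obstacle, and it is mild: it reduces to the observation that $T_\psi$ maps $\Dom(0)=C$ into $\Dom(\D\otimes1)$ precisely when $\psi\in\Dom\D$, together with the elementary norm bound on simple tensors. Having verified (1)–(3), \cref{thm:Kucerovsky} yields that $(\A,E\otimes_\phi C,\D\otimes1)$ represents $[\D]\otimes_B[\phi]$, which by definition of $\phi_*$ is the asserted product; passing back through the even-double identification \eqref{eq:even_double} gives the statement for the odd class.
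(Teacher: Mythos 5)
Your proposal is correct and follows essentially the same route as the paper: apply Kucerovsky's criterion to the even doubles, note that conditions (2) and (3) are automatic because the candidate operator coincides with $\D_1\hot1$ (with $\D_2=0$), and verify condition (1) via the boundedness of $T_{\D\psi}$ for $\psi\in\Dom\D$. The only point the paper adds that you leave implicit is a citation (to Lance's book) for the preliminary check that $\D\otimes1$ is regular self-adjoint with locally compact resolvent, i.e.\ that $(\A,E\otimes_\phi C,\D\otimes1)$ is itself an unbounded Kasparov module, which is a hypothesis of Kucerovsky's theorem.
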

\begin{proof}
Using \cite[Propositions 4.7 \& 9.10]{Lance95}, one checks that $\D\otimes1$ indeed defines an odd unbounded Kasparov module. To show that $[\D\otimes1]$ represents the Kasparov product, we apply Kucerovsky's \cref{thm:Kucerovsky} to the `even doubles'. The second and third conditions are obviously satisfied, while the first condition holds because $T_{\D\psi}$ is bounded for any $\psi\in\Dom\D$. 
\end{proof}

\subsubsection{Symmetric elliptic operators}
\label{sec:symm_ell}

Let $\D$ be a symmetric elliptic first-order differential operator on a (possibly $\Z_2$-graded) vector bundle $\bF$ over a Riemannian manifold $M$. It is described in \cite[\S10.8]{Higson-Roe00} how $\D$ defines a $\K$-homology class $[\D]$ in $\KK^0(C_0(M),\C)$ or $\KK^1(C_0(M),\C)$ (depending on whether $\D$ is odd or even). If $\D$ is self-adjoint, this class $[\D]$ agrees with the usual $\K$-homology class of the spectral triple given by $\D$. 

For any open subset $U\subset M$, let $\D|_U$ denote (the closure of) the restriction of $\D$ to the initial domain $\Gamma_c^\infty(U,\bF|_U)$. We then obtain a well-defined class $[\D|_U] \in \KK^1(C_0(U),\C)$. Let $\iota_U\colon C_0(U)\into C_0(M)$ denote the obvious inclusion. 
\begin{prop}[{\cite[Proposition 10.8.8]{Higson-Roe00}}]
\label{prop:D_2_res}
We have the equality $[\D|_U] = \iota_U^*[\D] = [\iota_U] \otimes_{C_0(M)} [\D]$. 
\end{prop}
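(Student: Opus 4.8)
The second equality, $\iota_U^*[\D]=[\iota_U]\otimes_{C_0(M)}[\D]$, is just the definition of the pullback along the homomorphism $\iota_U$, so the content of the statement is the first equality $[\D|_U]=\iota_U^*[\D]$ — a form of excision. The plan is to unwind both sides as bounded Kasparov modules and then compare them via the obvious isometric inclusion. Since $[\iota_U]$ is represented by $(C_0(U),{}_{\iota_U}C_0(M)_{C_0(M)},0)$, the Kasparov product $\iota_U^*[\D]$ is obtained simply by restricting the action: it is represented by $(C_0(U),L^2(M,\bF),\chi(\D))$, where $C_0(U)$ acts on $L^2(M,\bF)$ through extension of functions by zero and $\chi(\D)$ denotes the bounded operator produced from a normalizing (chopping) function $\chi$ in the construction of $[\D]$ (this is $\D(1+\D^2)^{-1/2}$ when $\D$ is self-adjoint). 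The class $[\D|_U]$ is likewise represented by $(C_0(U),L^2(U,\bF|_U),\chi(\D|_U))$. I would then use the isometry $V\colon L^2(U,\bF|_U)\into L^2(M,\bF)$ that extends sections by zero; it intertwines the two $C_0(U)$-actions, and by a standard lemma — an isometric intertwiner whose commutator with the Fredholm operators is locally compact identifies the two $K$-homology classes (cf.\ \cite{Higson-Roe00}) — everything reduces to showing that $V\chi(\D|_U)-\chi(\D)V$ is locally compact relative to the $C_0(U)$-action, i.e.\ that $f\big(V\chi(\D|_U)-\chi(\D)V\big)$ and $\big(V\chi(\D|_U)-\chi(\D)V\big)f$ are compact for every $f\in C_0(U)$.

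The key tool for this is finite propagation speed. For any $R>0$, split $\chi=\chi_1+\chi_2$ with $\widehat{\chi_1}$ supported in $[-R,R]$ and $\chi_2\in C_0(\R)$. Ellipticity of $\D$ and of $\D|_U$ makes $\chi_2(\D)$ and $\chi_2(\D|_U)$ locally compact, so the $\chi_2$-part of $V\chi(\D|_U)-\chi(\D)V$ contributes only locally compact operators. For the $\chi_1$-part, both $\chi_1(\D)$ and $\chi_1(\D|_U)$ have propagation controlled by $R$; hence if $f\in\Gamma_c^\infty(U)$ is supported at distance greater than $R$ from $M\backslash U$, then — because $\D$ and $\D|_U$ are literally the same differential operator on $U$ — finite propagation gives $\chi_1(\D)Vf=V\chi_1(\D|_U)f$ exactly, so the $\chi_1$-contribution to $f\big(V\chi(\D|_U)-\chi(\D)V\big)$ vanishes and this operator is compact. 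Every $f\in C_0(U)$ is a norm limit of such ``deep'' functions; choosing the appropriate $R$ for each approximant (with the bump functions taken of uniformly bounded $L^1$-norm, so that the operators $\chi_1(\D)$, $\chi_1(\D|_U)$ stay uniformly bounded), a routine approximation exhibits $f\big(V\chi(\D|_U)-\chi(\D)V\big)$ as a norm limit of compact operators, hence compact, and the estimate for $\big(V\chi(\D|_U)-\chi(\D)V\big)f$ is symmetric. This gives the required local compactness, and hence $[\D|_U]=\iota_U^*[\D]$.

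The step I expect to be the main obstacle is coping with the non-self-adjointness of $\D|_U$: since $U$ is typically incomplete, $\D|_U$ need not be essentially self-adjoint, so ``$\chi(\D|_U)$'' is not a spectral-calculus expression but rather the operator constructed in the definition of the $K$-homology class of a symmetric elliptic operator, and the finite-propagation and local-compactness properties used above must be re-established for that object; this is precisely the analytic content carried by the wave-equation and elliptic-regularity estimates developed earlier in \cite[\S10.8]{Higson-Roe00}. A secondary, routine point is that the passage from functions supported deep inside $U$ to arbitrary $f\in C_0(U)$ requires an approximate unit for $C_0(U)$ adapted to an exhaustion of $U$.
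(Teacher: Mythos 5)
The paper gives no proof here: the proposition is a direct citation of \cite[Proposition 10.8.8]{Higson-Roe00}, and your sketch essentially recapitulates that argument --- the extension-by-zero isometry $V\colon L^2(U,\bF|_U)\to L^2(M,\bF)$, the split of a normalizing function $\chi=\chi_1+\chi_2$ with $\widehat{\chi_1}$ compactly supported, finite propagation speed for the $\chi_1$-part, local compactness of $\chi_2(\D)$ and $\chi_2(\D|_U)$ for the $\chi_2$-part, and the correctly flagged subtlety that $\D|_U$ need only be symmetric, so that $\chi(\D|_U)$ must be built by the wave-equation construction of \cite[\S10.8]{Higson-Roe00} rather than by self-adjoint spectral calculus. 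You also correctly note that the second equality is definitional.

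One step is stated too loosely to stand on its own: an isometric intertwiner with locally compact commutator does not, by itself, identify two $K$-homology classes. What actually makes the reduction work here is the extra structure of this particular $V$: the projection $VV^*$ is multiplication by the indicator function of $U$, so $C_0(U)$ acts by zero on the orthogonal complement $L^2(M\backslash U,\bF)$, and the resulting Kasparov module on that complement is degenerate (with the zero representation all the required compactness conditions hold vacuously). Combined with the local compactness of $[VV^*,\chi(\D)]$ obtained from finite propagation, this is what identifies $\iota_U^*[\D]$ with $[\D|_U]$ plus a degenerate summand. With that made explicit, your outline matches the Higson--Roe proof.
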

In particular, the class $\iota_U^*[\D]$ is defined intrinsically on $U$.

\subsection{Fredholm operators}

Let $B$ be a (trivially graded) $\sigma$-unital $C^*$-algebra, and let $E$ be a (possibly $\Z_2$-graded) Hilbert $B$-module. 
In \cref{sec:KK_sf}, we will consider a (suitably continuous) family of regular self-adjoint operators $\{\D(x)\}_{x\in X}$ on $E$ parametrised by a locally compact Hausdorff space $X$, such that we obtain a regular self-adjoint operator $\D(\cdot)$ on the Hilbert $C_0(X,B)$-module $C_0(X,E)$. We would like to associate to $\D(\cdot)$ a class in $\KK^1(\C,C_0(X,B))$, without assuming that $\D(\cdot)$ has compact resolvents (i.e.\ $\D(\cdot)$ does \emph{not} define an unbounded Kasparov module). Instead we only assume that $\D(\cdot)$ is Fredholm, following the approach of \cite[\S2]{Wah07}. 
In this subsection, we recall the notion of Fredholm operators on Hilbert modules, and we describe how a Fredholm operator determines a class in $\KK$-theory. 

\begin{defn}[{\cite[Definition 2.1]{Joa03}}]
Let $\D$ be a regular operator on a Hilbert $B$-module $E$. A \emph{right parametrix} for $\D$ is an adjointable endomorphism $Q_R \in \End_B(E)$ such that the  closed operator $\D Q_R$ is adjointable, and $\D Q_R - 1$ is compact. Similarly, $Q_L\in\End_B(E)$ is a \emph{left parametrix} if $Q_L\D$ is closable, $\bar{Q_L\D}$ is adjointable, and $\bar{Q_L\D} - 1$ is compact. We say $Q$ is a \emph{parametrix} for $\D$ if $Q$ is both a left and a right parametrix. The operator $\D$ is called \emph{Fredholm} if there exists a parametrix. 
\end{defn}

Given a regular operator $\D$ on a Hilbert $B$-module $E$, the following statements are equivalent:
\begin{enumerate}
\item $\D$ is Fredholm;
\item $\D\colon W\to E$ is a bounded Fredholm operator, where $W := \Dom\D$ is a Hilbert $B$-module equipped with the graph norm of $\D$; 
\item the bounded transform $\D(1+\D^*\D)^{-\frac12}$ on $E$ is Fredholm.
\end{enumerate}
The equivalence of 1) and 3) is proven in \cite[Lemma 2.2]{Joa03}, and the equivalence of 2) and 3) follows because $(1+\D^*\D)^{-\frac12}\colon E\to W$ is a unitary map. 

Let $\FEnd_B(E)$ denote the set of adjointable Fredholm operators on a Hilbert $B$-module $E$. By Kasparov stabilisation \cite[Theorem 13.6.2]{Blackadar98}, we have $E\oplus\mH_B \simeq \mH_B$, so we can embed $\FEnd_B(E) \into \FEnd_B(\mH_B)$ via the map $F \mapsto F\oplus1$. 
Let $q$ denote the quotient map of $\End_B(\mH_B)$ onto $\End_B(\mH_B) / \End_B^0(\mH_B)$. Then $q(F\oplus1)$ is invertible for any $F\in\FEnd_B(E)$, and therefore we obtain an odd $\K$-theory class $[q(F\oplus1)] \in \K_1\big(\End_B(\mH_B) / \End_B^0(\mH_B)\big)$. 
Since $\K_0(\End_B(\mH_B)) = \K_1(\End_B(\mH_B)) = 0$, 
the connecting map 
gives an isomorphism \cite[\S8.3]{Blackadar98} 
\[
\partial\colon \K_1\big(\End_B(\mH_B) / \End_B^0(\mH_B)\big) \to \K_0\big(\End_B^0(\mH_B)\big) \simeq \K_0(B) .
\]
We then define the index of $F\in\FEnd_B(E)$ by (cf.\ \cite[\S1.12]{Min87})
\[
\Index F := \partial[q(F\oplus1)] \in \K_0(B) .
\]
For the standard Hilbert $B$-module $\mH_B$, the index map $\Index\colon\FEnd_B(\mH_B)\to\K_0(B)$ is surjective. 
Writing $[\FEnd_B(\mH_B)]$ for the group of path components of $\FEnd_B(\mH_B)$, the index induces an isomorphism $\Index\colon[\FEnd_B(\mH_B)]\to\K_0(B)$. 
For more details, including an alternative definition of the Fredholm index which is more analogous to the usual Fredholm theory on Hilbert spaces, we refer to \cite{Min87} and \cite[Ch.\ 17]{Wegge-Olsen93}. 

Given a regular Fredholm operator $\D$ on $E_B$, we define $\Index(\D)$ as the index of $\D(1+\D^*\D)^{-\frac12}$. 

\begin{prop}[{\cite[\S2]{Wah07}}]
\label{prop:Fredholm_sa}
Let $\D$ be a regular self-adjoint operator on a Hilbert $B$-module $E$. Then the following statements are equivalent:
\begin{enumerate}
\item $\D$ is Fredholm;
\item there is $\epsilon>0$ such that for all $\phi\in C_c(-\epsilon,\epsilon)$ we have that $\phi(\D)\in\End_B^0(E)$;
\item there is $\epsilon>0$ such that for any continuous $\chi\colon\R\to\R$ with $\chi|_{(-\infty,-\epsilon]} = -1$ and $\chi|_{[\epsilon,\infty)} = 1$ we have $\chi(\D)^2-1\in\End_B^0(E)$.
\end{enumerate}
\end{prop}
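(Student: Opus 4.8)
The plan is to prove the three statements equivalent by establishing the chain $1)\Rightarrow 3)\Rightarrow 2)\Rightarrow 1)$, throughout using the continuous functional calculus for regular self-adjoint operators on Hilbert modules together with the characterisation of Fredholmness via the bounded transform $F_\D := \D(1+\D^2)^{-\frac12}$ recorded just above the proposition. The key elementary observation is that for a function $f\in C_0(\R)$, whether or not $f(\D)$ is a compact endomorphism depends only on the behaviour of $f$ near $0$: indeed if $f$ vanishes on a neighbourhood of $0$ then $f = g\cdot h$ with $g\in C_0(\R)$ and $h$ a bounded continuous function vanishing near $0$, and since $h$ can be written in the form $h(t) = k(t)(1+t^2)^{-1/2}\cdot t^{-1}\cdot(\text{something bounded})$... more cleanly, any $f\in C_0(\R)$ vanishing near $0$ lies in the ideal generated by $(1+t^2)^{-1/2}$, so $f(\D)$ factors through $(1+\D^2)^{-1/2}$; this is the mechanism that will let us move between compactness of $\phi(\D)$ for small-support $\phi$ and compactness of expressions built from the bounded transform.

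For $1)\Rightarrow 3)$: assume $F_\D$ is Fredholm, so $F_\D^2 - 1 \in\End_B^0(E)$, i.e.\ $-(1+\D^2)^{-1}\in\End^0_B(E)$, hence $(1+\D^2)^{-1}$ is compact. Given a continuous $\chi$ with $\chi|_{(-\infty,-\epsilon]}=-1$, $\chi|_{[\epsilon,\infty)}=1$, the function $\chi(t)^2 - 1$ is supported in $[-\epsilon,\epsilon]$, so it is of the form $(\chi^2-1)(t) = r(t)(1+t^2)^{-1}$ for a bounded continuous $r$ (since $(1+t^2)^{-1}$ is bounded below on $[-\epsilon,\epsilon]$ and we may extend $r$ continuously by $0$ outside). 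Then $\chi(\D)^2-1 = r(\D)(1+\D^2)^{-1}\in\End^0_B(E)$ by the compactness of $(1+\D^2)^{-1}$ and the ideal property. (Here $\epsilon$ is arbitrary, so the statement holds for every $\epsilon>0$; in particular for some $\epsilon>0$.) For $3)\Rightarrow 2)$: given such an $\epsilon$ and a $\phi\in C_c(-\epsilon,\epsilon)$, choose a $\chi$ as in 3) (any value of $\epsilon$ works since 3) for one $\epsilon$ implies it for all, by the same argument as above applied to $(1+\D^2)^{-1} = -(\,\chi(\D)^2-1\,) + (\text{compact})$ once one knows $\chi(\D)^2-1$ compact for one $\chi$); then $\phi = \phi\cdot(1-\chi^2)/(1-\chi^2)$ is problematic only where $\chi^2 = 1$, i.e.\ outside $(-\epsilon,\epsilon)$, where $\phi$ vanishes, so $\phi = s\cdot(1-\chi^2)$ for a bounded continuous $s$, whence $\phi(\D) = s(\D)(1-\chi(\D)^2)\in\End^0_B(E)$.

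For $2)\Rightarrow 1)$: assume $\phi(\D)\in\End^0_B(E)$ for all $\phi\in C_c(-\epsilon,\epsilon)$. Pick $\phi\in C_c(-\epsilon,\epsilon)$ with $\phi\equiv 1$ on $[-\epsilon/2,\epsilon/2]$. The function $t\mapsto (1+t^2)^{-1}(1-\phi(t))$ is in $C_0(\R)$ and vanishes near $0$, so it factors as $g(t)\cdot t(1+t^2)^{-1/2}$ times a bounded function, giving $(1+\D^2)^{-1}(1-\phi(\D)) = g(\D)F_\D\cdot(\text{bounded})$ — more directly, $(1+t^2)^{-1}(1-\phi(t))$ equals $h(t)$ for some $h\in C_0(\R)$, and I want to conclude $(1+\D^2)^{-1/2}$ times this is still only "compact modulo $\phi(\D)$"; the clean route is: $(1+\D^2)^{-1} = (1+\D^2)^{-1}\phi(\D) + (1+\D^2)^{-1}(1-\phi(\D))$, where the first term is compact by hypothesis (it is $\tilde\phi(\D)$ with $\tilde\phi(t) = (1+t^2)^{-1}\phi(t)\in C_c(-\epsilon,\epsilon)$), and for the second term note $(1+t^2)^{-1}(1-\phi(t)) = \psi(t)\cdot(1+t^2)^{-1/2}$ where $\psi(t) := (1+t^2)^{-1/2}(1-\phi(t))$ is bounded continuous; but this only shows it lies in the module $\overline{\End_B(E)\cdot(1+\D^2)^{-1/2}}$, not that it is compact. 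The genuinely correct argument here is Proposition~\ref{prop:Fredholm_sa}'s source \cite[\S2]{Wah07}: one shows $F_\D^2 - 1 = -(1+\D^2)^{-1}$ is compact by observing that $(1+t^2)^{-1}\in C_0(\R)$ and, modulo the compact operator $\phi(\D)$ (equivalently modulo functions supported near $0$), $(1+t^2)^{-1}$ can be written as a product involving another function in $C_c(-\epsilon,\epsilon)$; iterating/approximating, every $f\in C_0(\R)$ has $f(\D)$ compact once $2)$ holds, in particular $f(t) = (1+t^2)^{-1}$. So the main obstacle — and the step I would expect to need the most care — is precisely this last implication $2)\Rightarrow 1)$: turning "compactness for all compactly-supported functions vanishing outside a fixed small interval" into compactness of the full resolvent $(1+\D^2)^{-1}$, which requires an approximation argument showing that $C_0(\R)$ is generated (as a $C^*$-algebra, or as an ideal after multiplying by a resolvent power) by the functions to which the hypothesis applies together with functions bounded away from $0$, whose functional calculus images are manifestly adjointable. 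I would carry this out by citing \cite[\S2]{Wah07} for the precise estimates, after reducing, as above, to showing $(1+\D^2)^{-1}$ compact; the other two implications are the short functional-calculus manipulations sketched above.
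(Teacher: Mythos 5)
The paper offers no proof of this proposition; it is imported from \cite[\S2]{Wah07}, so there is no in-text argument to compare against. Your proposal, however, has a genuine error at the very first step. You deduce $1)\Rightarrow 3)$ from the claim that $F_\D$ Fredholm forces $F_\D^2-1 = -(1+\D^2)^{-1}$ to be compact. This is false: Fredholmness of $F_\D$ only says that the image $q(F_\D)$ in $\End_B(E)/\End_B^0(E)$ is invertible, not that it is a symmetry. Already for $B=\C$, $E=\ell^2(\N)$, $\D=\mathrm{id}$, the operator $\D$ is Fredholm (indeed invertible) while $(1+\D^2)^{-1}=\tfrac12$ is not compact; nevertheless $2)$ and $3)$ do hold for $\epsilon<1$, since any $\phi\in C_c(-\epsilon,\epsilon)$ or $\chi^2-1$ supported in $(-1,1)$ yields the zero operator. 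The same example refutes your parenthetical assertion that $3)$ holds for every $\epsilon>0$: the existential quantifier on $\epsilon$ is essential, and is precisely what separates Fredholmness from the strictly stronger compact-resolvent condition. The correct route from $1)$ is spectral: $q(F_\D)$ is a self-adjoint invertible element of norm at most $1$, so its spectrum avoids $(-\delta,\delta)$ for some $\delta>0$; hence $g(F_\D)\in\End_B^0(E)$ for every $g\in C_0(\R)$ supported in $(-\delta,\delta)$, and since $\phi(\D)=(\phi\circ b^{-1})(F_\D)$ with $b(t)=t(1+t^2)^{-1/2}$, one obtains $2)$ for any $\epsilon$ with $b(\epsilon)<\delta$.

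Your step $3)\Rightarrow 2)$ is essentially sound. For $2)\Rightarrow 1)$, which you rightly flag as the sticking point and leave open, the strategy you attempt (establishing compactness of $(1+\D^2)^{-1}$) cannot succeed for the reason above; the standard device is instead to exhibit a parametrix directly. Fix $\phi\in C_c(-\epsilon,\epsilon)$ with $0\le\phi\le1$ and $\phi\equiv1$ near $0$. Then $t^2+\phi(t)^2$ is bounded below, so $g(t):=(t+i\phi(t))^{-1}$ lies in $C_0(\R)$ and $Q:=g(\D)$ is adjointable; writing $g(t)=(1+t^2)^{-1/2}h(t)$ with $h$ bounded shows $Q$ maps into $\Dom\D$, and $\D Q$ corresponds to the bounded continuous function $t/(t+i\phi(t))=1-i\phi(t)g(t)$, so $\D Q-1=-i\phi(\D)Q$ is compact because $\phi(\D)$ is. The same holds on the left, so $Q$ is a parametrix and $\D$ is Fredholm.
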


\begin{defn}[{\cite[Definition 2.3]{Wah07}}]
A \emph{normalising function} for a regular self-adjoint operator $\D$ on a Hilbert $B$-module $E$ is an odd non-decreasing smooth function $\chi\colon\R\to\R$ with $\chi(0) = 0$, $\chi'(0) > 0$, $\lim_{x\to\infty} \chi(x) = 1$, and $\chi(\D)^2 - 1 \in \End_B^0(E)$. 
\end{defn}

From \cref{prop:Fredholm_sa}, it follows that a regular self-adjoint Fredholm operator on a Hilbert module admits many normalising functions. 

\begin{defn}
Let $\D_0$ and $\D_1$ be (even or odd) regular self-adjoint Fredholm operators on $\Z_2$-graded Hilbert $B$-modules $E_0$ and $E_1$, respectively. 
A \emph{homotopy} between $(E_0,\D_0)$ and $(E_1,\D_1)$ is an (even or odd) regular self-adjoint Fredholm operator $\til\D$ on a $\Z_2$-graded Hilbert $C([0,1],B)$-module $\til E$ such that $\ev_j(\til E,\til\D) \simeq (E_j,\D_j)$ (for $j=0,1$). 
Here $\simeq$ denotes unitary equivalence, and $\ev_t(\til E,\til\D) := (\til E\hot_{\rho_t}B,\til\D\hot1)$ (with $\rho_t$ as in \cref{defn:Kasp_mod}). 
We say that $\D_0$ and $\D_1$ are \emph{homotopic} if there exists a homotopy between $(E_0,\D_0)$ and $(E_1,\D_1)$. 
\end{defn}

\begin{prop}[cf.\ {\cite[Definition 2.4]{Wah07}}]
\label{prop:Fred_KK}
An odd resp.\ even, regular self-adjoint Fredholm operator $\D$ on a $\Z_2$-graded Hilbert $B$-module $E$ yields a well-defined class $[\D] := [\chi(\D)]$ in $\KK^0(\C,B)$ resp.\ $\KK^1(\C,B)$, where $\chi$ is any normalising function for $\D$.
Furthermore, if two such operators $\D$ and $\D'$ are homotopic, then $[\D]=[\D']$. 
\end{prop}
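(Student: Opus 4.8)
The plan is to verify that $[\chi(\D)]$ defines an honest bounded Kasparov $\C$-$B$-module, that it is independent of the choice of normalising function $\chi$, and that it is a homotopy invariant. For the first point, fix a normalising function $\chi$. Since $\D$ is regular self-adjoint, the continuous functional calculus gives a self-adjoint (and odd, if $\D$ is odd) endomorphism $\chi(\D) \in \End_B(E)$; the definition of a normalising function says precisely that $\chi(\D)^2 - 1 \in \End^0_B(E)$. Taking $A = \C$, the conditions $\pi(a)(F - F^*)$ and $[F,\pi(a)]_\pm$ are automatically zero (the algebra is scalars and $\chi(\D)^*=\chi(\D)$), so the only content is $\chi(\D)^2 - 1$ compact, which we have. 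Hence $(\C, E_B, \chi(\D))$ is a (bounded) Kasparov module, and in the odd case we pass to the even double as in \cref{defn:Kasp_mod}; either way we obtain a class in $\KK^0(\C,B)$ (even case) or $\KK^1(\C,B)$ (odd case).

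Next, independence of $\chi$. Given two normalising functions $\chi_0,\chi_1$ for $\D$, the straight-line path $\chi_t := (1-t)\chi_0 + t\chi_1$ consists of odd functions with $\chi_t(0)=0$, $\chi_t'(0)>0$, and $\lim_{x\to\infty}\chi_t(x)=1$, and $\chi_t(\D)^2 - 1$ is compact for every $t$ because $\chi_t(\D)^2 - \chi_0(\D)^2 = t\big(\chi_1(\D)^2 - \chi_0(\D)^2\big) + (\text{cross terms that are functions vanishing at }\pm\infty)$ — more cleanly, $\chi_t^2 - 1 = (1-t)(\chi_0^2-1) + t(\chi_1^2-1) + t(1-t)(\chi_1-\chi_0)^2$, and each summand, evaluated on $\D$, lies in $\End^0_B(E)$ (the first two by hypothesis; the last because $(\chi_1-\chi_0)$ is a $C_0$ function near $\pm\infty$ and \cref{prop:Fredholm_sa} together with the Fredholm hypothesis gives $\phi(\D)$ compact for $\phi \in C_0$ vanishing suitably — or one simply notes $\chi_1 - \chi_0$ has compact support up to scalars, handled by functional calculus continuity). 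Continuity of the functional calculus in the sup-norm then shows $t \mapsto \chi_t(\D)$ is a norm-continuous path of Fredholm transforms, assembling into a Kasparov $\C$-$C([0,1],B)$-module realising the requisite homotopy; hence $[\chi_0(\D)] = [\chi_1(\D)]$ in $\KK$.

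Finally, homotopy invariance of the class in the operator. Let $\til\D$ be a regular self-adjoint Fredholm operator on a $\Z_2$-graded Hilbert $C([0,1],B)$-module $\til E$ with $\ev_j(\til E,\til\D) \simeq (E_j,\D_j)$. Choose a normalising function $\chi$ for $\til\D$ (one exists by \cref{prop:Fredholm_sa} applied over $C([0,1],B)$). Then $(\C, \til E_{C([0,1],B)}, \chi(\til\D))$ — or its even double in the odd case — is a Kasparov module providing a homotopy between $(\C, (E_0)_B, \chi(\til\D)\hot_{\rho_0}1)$ and $(\C, (E_1)_B, \chi(\til\D)\hot_{\rho_1}1)$; since evaluation commutes with continuous functional calculus, $\chi(\til\D)\hot_{\rho_j}1 = \chi(\ev_j(\til\D)) = \chi(\D_j)$ up to the given unitary equivalence, and by the previous paragraph $[\chi(\D_j)] = [\D_j]$. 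Hence $[\D_0] = [\D_1]$. The main obstacle is the compactness bookkeeping in the $\chi$-independence step: one must check carefully that differences of normalising functions evaluated on $\D$ are compact, which relies on \cref{prop:Fredholm_sa} (so that $\phi(\D)$ is compact for $\phi$ supported near $0$) combined with the fact that the difference of two normalising functions has compact support — a minor but essential point, and the place where the Fredholm hypothesis on $\D$ is actually used.
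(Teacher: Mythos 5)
Your overall architecture matches the paper's (verify the Kasparov-module axioms, then independence of the normalising function via a Kasparov homotopy, then homotopy invariance by applying a normalising function to $\til\D$), and the first and third steps are fine. In the independence step you take a genuinely different route: a straight-line path $\chi_t := (1-t)\chi_0 + t\chi_1$, whereas the paper first splits $\chi = \chi_- + \chi_0 + \chi_+$ by a partition of unity supported near $0$ and on the two half-lines, and interpolates the outer pieces by $\chi_\pm^t(\D) := \pm\sqrt{(1-t)\chi_\pm(\D)^2 + t\chi_\pm'(\D)^2}$ --- a construction designed precisely to avoid the cross term $\chi_0\chi_1$ that your linear path produces. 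Your straight-line homotopy is the more natural and economical choice, but your justification of the crucial compactness claim contains two errors, one cosmetic and one substantive, so the argument as written does not close.

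The cosmetic error: the algebraic identity should be $\chi_t^2 - 1 = (1-t)(\chi_0^2-1) + t(\chi_1^2-1) - t(1-t)(\chi_1-\chi_0)^2$, with a minus sign (harmless for compactness). The substantive error is the justification that $(\chi_1-\chi_0)^2(\D)$ is compact. It is \emph{not} true that $\chi_1 - \chi_0$ has compact support: a normalising function need only converge to $\pm1$ at $\pm\infty$, not be eventually constant. Nor does \cref{prop:Fredholm_sa} say that $\phi(\D)$ is compact for every $\phi\in C_0(\R)$; it gives this only for $\phi\in C_c(-\epsilon,\epsilon)$, and a $C_0$ function supported away from $0$ generally gives a non-compact operator of a Fredholm $\D$. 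To repair the step, take $\phi_0\in C_c(-\epsilon,\epsilon)$ with $\phi_0\equiv1$ near $0$ and $0\leq\phi_0\leq1$. Then $(\chi_1-\chi_0)^2\phi_0 \in C_c(-\epsilon,\epsilon)$ gives a compact operator by \cref{prop:Fredholm_sa}, while on $\supp(1-\phi_0)\subset\{|x|\geq\epsilon/2\}$ one has $|\chi_0+\chi_1|$ bounded below (both $\chi_i$ are odd, non-decreasing, with $\chi_i'(0)>0$), so
\[
(\chi_1-\chi_0)^2(1-\phi_0) \;=\; \frac{1-\phi_0}{(\chi_0+\chi_1)^2}\,\big[(\chi_1^2-1)-(\chi_0^2-1)\big]^2 ,
\]
which on $\D$ is a bounded function of $\D$ times the square of a compact operator. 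With this fix your linear path does work, and is arguably cleaner than the paper's square-root interpolation.
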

\begin{proof}
Let $\chi$ be any normalising function for $\D$. By assumption, $\chi(\D)^2-1$ is a compact endomorphism. Since $\chi(\D)$ is also self-adjoint, we see that $(E_B,\chi(\D))$ is a (bounded) Kasparov $\C$-$B$-module (since any normalising function is odd, we note that $\chi(\D)$ is odd whenever $\D$ is odd). 

We need to show that $[\chi(\D)]$ is independent of the choice of $\chi$. 
By \cref{prop:Fredholm_sa}, there exists an $\epsilon>0$ such that $\phi(\D)$ is compact for any $\phi\in C_c(-\epsilon,\epsilon)$. 
Pick a positive function $\phi_0\in C_c(-\epsilon,\epsilon)$ such that $\phi_0(x)=1$ for all $x\in(-\frac\epsilon2,\frac\epsilon2)$. Define $\phi_+ := (1-\phi_0)H$ and $\phi_- := (1-\phi_0)(1-H)$, where $H$ denotes the Heaviside function (i.e., the characteristic function of $[0,\infty)$). Furthermore, define $\chi_-,\chi_0,\chi_+\in C^\infty(\R)$ by $\chi_- := \chi \phi_-$, $\chi_0 := \chi \phi_0$, and $\chi_+ := \chi \phi_+$. 
Then $\chi_0(\D)$ is compact. 
If $\chi'$ is another normalising function for $\D$, then $\chi'_0(\D)$ is also compact. 
Consider $\chi_\pm^t(\D) := \pm \sqrt{(1-t)\chi_\pm(\D)^2 + t\chi_\pm'(\D)^2}$. For each $t\in[0,1]$ we have that 
\begin{align*}
\chi_\pm^t(\D)^2 - \phi_\pm(\D)^2 &= (1-t)\chi_\pm(\D)^2 + t\chi_\pm'(\D)^2 - \phi_\pm(\D)^2 \\
&= (1-t) (\chi(\D)^2-1) \phi_\pm(\D)^2 + t (\chi'(\D)^2-1) \phi_\pm(\D)^2
\end{align*}
is compact. 
Finally, define $\chi^t(\D) := \chi_-^t(\D) + \chi_+^t(\D) + (1-t)\chi_0(\D) + t\chi_0'(\D)$. 
Since $\chi_0(\D)$, $\chi_0'(\D)$, and $1-\phi_+(\D)^2-\phi_-(\D)^2$ are compact, and since $\chi_+^t(\D)\chi_-^t(\D)=0$, we see that $\chi^t(\D)^2-1$ is equal to $\chi_-^t(\D)^2 + \chi_+^t(\D)^2 - \phi_-(\D)^2 - \phi_+(\D)^2$ modulo compact operators, and therefore $\chi^t(\D)^2-1$ is compact. 
Hence $\chi^t(\D)$ provides a homotopy (in the sense of Kasparov) between $\chi(\D)$ and $\chi'(\D)$, which shows that the class $[\chi(\D)]$ is independent of the choice of $\chi$. 

Finally, let $(\til E,\til\D)$ be a homotopy between $(E,\D)$ and $(E',\D')$, and let $\chi$ be a normalising function for $\til\D$. Then $\chi(\til\D)$ is a homotopy (in the sense of Kasparov) for $\chi(\D)$ and $\chi(\D')$, and therefore $[\D]=[\D']$. 
\end{proof}

Consider the special case when $\D$ is not only Fredholm, but in fact has compact resolvents. Then $(\C,E_B,\D)$ is an unbounded Kasparov module. In this case, the `bounded transform' $b\colon x\mapsto x(1+x^2)^{-\frac12}$ is a normalising function for $\D$. Hence the class $[\D]$ defined above agrees with the usual class associated to the unbounded Kasparov module $(\C,E_B,\D)$. 

We consider the following standard isomorphisms of $\KK^\bullet(\C,B)$ with $\K_\bullet(B)$ (see \cite[\S2.1]{Wah07}). 
In the even ($\Z_2$-graded) case, consider an odd Fredholm operator 
\[
\D = \mattwo{0}{\D_-}{\D_+}{0} 
\]
on the standard $\Z_2$-graded Hilbert module $\hat\mH_B = \mH_B\oplus\mH_B$. The standard isomorphism $\KK^0(\C,B) \to \K_0(B)$ maps the class $[\D]$ to the Fredholm index of $\D_+$. 
In the odd case, the standard isomorphism $\KK^1(\C,B) \to \K_1(\End_B^0(\mH_B)) \simeq \K_1(B)$ assigns to the class represented by a bounded Kasparov module $(\mH_B,T)$ the element $\left[ e^{i\pi(T+1)} \right] \in \K_1(\End_B^0(\mH_B))$. 
We will see in \cref{sec:KK_sf} how the class $[\D]\in\KK^1(\C,B)$ of an unbounded regular self-adjoint Fredholm operator is related to the spectral flow.

\subsection{Spectral flow and the Kasparov product}
\label{sec:KK_sf}

Let $X$ be a locally compact, paracompact space, and consider a family of operators $\{\D(x)\}_{x\in X}$ on a Hilbert $B$-module $E$. We define the operator $\D(\cdot)$ on the Hilbert $C_0(X,B)$-module $C_0(X,E)$ by
\[
\big(\D(\cdot)\psi\big)(x) := \D(x) \psi(x) ,
\]
for all $\psi$ in the domain 
\begin{align}
\label{eq:dom_fam}
\Dom\D(\cdot) := \big\{ \psi\in C_0(X,E) : \psi(x)\in\Dom\D(x) \text{ and } \D(\cdot)\psi\in C_0(X,E) \big\} .
\end{align}
In order for $\D(\cdot)$ to be a densely defined operator on $C_0(X,E)$, we of course need to assume that the family $\{\D(x)\}_{x\in X}$ is suitably continuous. The following lemma characterises the continuity required for regular self-adjoint operators. 

\begin{lem}[{\cite[Proposition 2.5]{Wah07}}]
\label{lem:reg_sa_cts_fam}
Consider a family of regular self-adjoint operators $\{\D(x)\}_{x\in X}$ on a Hilbert $B$-module $E$. 
Then the operator $\D(\cdot)$ on the Hilbert $C_0(X,B)$-module $C_0(X,E_B)$ is regular self-adjoint if and only if the resolvents $\{(\D(x)\pm i)^{-1}\}_{x\in X}$ are strongly continuous. 
\end{lem}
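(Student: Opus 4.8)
The plan is to prove both implications; the forward implication (regular self-adjointness $\Rightarrow$ strong continuity) is essentially pointwise bookkeeping, whereas the reverse implication is the substantial one. This is \cite[Proposition 2.5]{Wah07}, and I would argue as follows. Suppose first that $\D(\cdot)$ is regular self-adjoint. Then $(\D(\cdot)\pm i)^{-1}$ are bounded adjointable endomorphisms of $C_0(X,E)$, and I would first note that they act pointwise: for $\psi\in C_0(X,E)$ the section $\phi:=(\D(\cdot)+i)^{-1}\psi$ lies in $\Dom\D(\cdot)$, so by \eqref{eq:dom_fam} we have $\phi(x)\in\Dom\D(x)$ and $(\D(x)+i)\phi(x)=\psi(x)$, whence $\phi(x)=(\D(x)+i)^{-1}\psi(x)$ by injectivity of $\D(x)+i$. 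Now fix $x_0\in X$ and $\xi\in E$; by local compactness choose $f\in C_c(X)$ with $f\equiv 1$ on a neighbourhood $U$ of $x_0$, so that $x\mapsto f(x)\xi$ lies in $C_0(X,E)$. Applying $(\D(\cdot)+i)^{-1}$ gives a continuous section of $E$ whose restriction to $U$ is $x\mapsto(\D(x)+i)^{-1}\xi$, which is therefore continuous at $x_0$. The same argument with $\D(\cdot)-i$ handles the other resolvent.

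For the converse, suppose the resolvents $R_\pm(x):=(\D(x)\pm i)^{-1}$ are strongly continuous. Each $R_\pm(x)$ is adjointable with $\|R_\pm(x)\|\leq 1$ and $R_+(x)^*=R_-(x)$ (as $\D(x)$ is self-adjoint), so strong continuity makes $x\mapsto R_\pm(x)\psi(x)$ continuous for $\psi\in C_0(X,E)$, and it vanishes at infinity since $\|R_\pm(x)\psi(x)\|\leq\|\psi(x)\|$. Hence $R_\pm(\cdot)$ defines an adjointable endomorphism of $C_0(X,E)$ with $R_+(\cdot)^*=R_-(\cdot)$, injective because each $R_\pm(x)$ is. A pointwise computation using \eqref{eq:dom_fam} then gives $\Ran R_+(\cdot)=\Dom\D(\cdot)$, with $R_+(\cdot)$ and $\D(\cdot)+i$ mutually inverse on this domain (and similarly $R_-(\cdot)$ and $\D(\cdot)-i$). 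Consequently $\D(\cdot)$ is closed (it is $R_+(\cdot)^{-1}-i$), the operators $\D(\cdot)\pm i$ are surjective, and $\D(\cdot)$ is symmetric because every $\D(x)$ is self-adjoint. By \cite[Lemma 9.8]{Lance95} it then only remains to show that $\Dom\D(\cdot)$ is dense in $C_0(X,E)$.

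For density I would use the cut-offs $g_n(t):=(1+t^2/n)^{-1}$. Since $g_n$ and $t\mapsto tg_n(t)$ both lie in $C_0(\R)$, and strong continuity of the resolvents implies that $x\mapsto h(\D(x))$ is strongly continuous for every $h\in C_0(\R)$ (approximate $h$ in sup-norm by polynomials in $(\cdot\pm i)^{-1}$), each $g_n(\D(\cdot))$ is an adjointable endomorphism of $C_0(X,E)$ mapping into $\Dom\D(\cdot)$. It thus suffices to show $g_n(\D(\cdot))\psi\to\psi$ for $\psi$ in the dense subspace $C_c(X,E)$. Fixing such a $\psi$ with $\supp\psi=K$, the image $\psi(K)\subset E$ is compact and
\[
\big\|g_n(\D(\cdot))\psi-\psi\big\| = \sup_{x\in K}\big\|\big(1-g_n(\D(x))\big)\psi(x)\big\| \leq \sup_{x\in K,\ \eta\in\psi(K)}\big\|\big(1-g_n(\D(x))\big)\eta\big\| .
\]
The functions $(x,\eta)\mapsto\big\|(1-g_n(\D(x)))\eta\big\|^2$ are continuous on the compact set $K\times\psi(K)$, tend to $0$ pointwise (dominated convergence against the spectral measure of $\D(x)$), and are non-increasing in $n$, since $0\leq 1-g_{n+1}\leq 1-g_n\leq 1$ pointwise forces $(1-g_{n+1}(\D(x)))^2\leq(1-g_n(\D(x)))^2$ in $\End_B(E)$. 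By Dini's theorem the supremum tends to $0$, so $\Dom\D(\cdot)$ is dense and $\D(\cdot)$ is regular self-adjoint.

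I expect density of $\Dom\D(\cdot)$ to be the one genuinely delicate point: the pointwise picture is immediate, but one needs the spectral cut-offs $g_n(\D(x))$ to converge to the identity \emph{uniformly} in the parameter $x$ and in the relevant vectors, which is precisely where the compactness of $K$ and $\psi(K)$ together with Dini's theorem enter.
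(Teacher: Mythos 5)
The paper does not prove this lemma at all: it is cited directly as \cite[Proposition 2.5]{Wah07}, so there is no in-paper argument to compare yours against. Judged on its own, your proof is essentially complete and correct. The forward direction via the pointwise identification of $(\D(\cdot)\pm i)^{-1}$ and a compactly supported bump is fine, and in the converse direction the reduction (via \cite[Lemma 9.8]{Lance95}) to closedness, symmetry, surjectivity of $\D(\cdot)\pm i$, and density of the domain is the right decomposition. The Dini argument for density is also sound, and you are right that it is the genuinely delicate point.

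The one spot I would tighten is the parenthetical justification that $\big\|(1-g_n(\D(x)))\eta\big\|^2\to 0$ pointwise ``by dominated convergence against the spectral measure of $\D(x)$.'' In a Hilbert $B$-module the quantity $\la\eta\mvert(1-g_n(\D(x)))^2\eta\ra$ is $B$-valued, so there is no scalar spectral measure to integrate against, and that phrasing does not literally apply. The statement is still true, but the module-friendly argument is: since $(1-g_n(t))(1+t^2)^{-1}=\tfrac{t^2}{(n+t^2)(1+t^2)}\leq\tfrac1n$ uniformly in $t$, one has $\big\|(1-g_n(\D(x)))(1+\D(x)^2)^{-1}\big\|\leq\tfrac1n$, so $g_n(\D(x))\to1$ strongly on the dense submodule $\Ran(1+\D(x)^2)^{-1}$, and hence on all of $E$ by the uniform bound $\|g_n(\D(x))\|\leq1$. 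With that replacement, your proof stands. (The same density argument could in fact bypass Dini altogether if you wished, but the Dini route is fine.)
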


The notion of spectral flow for a path of self-adjoint operators (typically pa\-ra\-metrised by the unit interval) was first defined by Atiyah and Lusztig, and it appeared in the work of Atiyah, Patodi, and Singer \cite[\S7]{APS76}. Heuristically, the spectral flow of a path of self-adjoint Fredholm operators counts the net number of eigenvalues which pass through zero. An analytic definition of spectral flow was given by Phillips in \cite{Phi96}. 
For a path of Fredholm operators on a Hilbert module, the most general definition of spectral flow (to the author's knowledge) is given in \cite{Wah07}.\footnote{There are other notions of spectral flow which are based on different notions of the Fredholm property, e.g.\ replacing the Fredholm property by invertibility up to some closed ideal (not necessarily the compacts) \cite{KNR12}, but here we only consider the case of Fredholm operators.} 
\begin{defn}[{\cite[Definition 3.9]{Wah07}}]
Let $X$ be a locally compact, paracompact space, and consider a regular self-adjoint operator $\D(\cdot) = \{\D(x)\}_{x\in X}$ on the Hilbert $C_0(X,B)$-module $C_0(X,E)$. 
We say there exist \emph{locally trivialising families} for $\D(\cdot)$ if for each $x\in X$ there exist a precompact open neighbourhood $O_x$ of $x$ and a family of bounded operators $\{A(y)\}_{y\in O_x}$ such that $A(\cdot)$ is self-adjoint on the Hilbert $C_0(O_x,B)$-module $C_0(O_x,E)$, $A(\cdot)\colon\Dom\D(\cdot)|_{O_x}\to C_0(O_x,E)$ is compact, and $\D(\cdot)|_{O_x}+A(\cdot)$ is invertible.
\end{defn}
It follows that $(\D(\cdot)|_{O_x}+A(\cdot))^{-1}$ is a parametrix for $\D(\cdot)|_{O_x}$, so $\D(\cdot)|_{O_x}$ is Fredholm. If $X$ is compact, the existence of locally trivialising families for $\{\D(x)\}_{x\in X}$ then implies that $\D(\cdot)$ is Fredholm. 

Now consider $X = [0,1]$. 
The spectral flow of $\{\D(x)\}_{x\in[0,1]}$ is then defined and depends (in principle) on trivialising operators $A(0)$ of $\D(0)$ and $A(1)$ of $\D(1)$. 
If the endpoints $\D(0)$ and $\D(1)$ are invertible, we can take $A(0)=A(1)=0$ to obtain a canonical definition of spectral flow, i.e.\ we have a map $\SF$ which assigns, to the family $\{\D(x)\}_{x\in[0,1]}$, a class $\SF(\{\D(x)\}_{x\in[0,1]}) \in \K_0(B)$ in the even $\K$-theory of $B$. 
We refer to \cite[Definition 3.10]{Wah07} for the definition of this spectral flow. 

\begin{lem}
\label{lem:loc_triv_Fred}
Let $X$ be a locally compact, paracompact space. Consider a regular self-adjoint operator $\D(\cdot) = \{\D(x)\}_{x\in X}$ on the Hilbert $C_0(X,B)$-module $C_0(X,E)$ for which locally trivialising families exist. 
Suppose that there exists a compact set $K\subset X$ such that $\D(\cdot)|_{X\backslash K}$ is Fredholm. 
Then $\D(\cdot)$ is Fredholm. 
\end{lem}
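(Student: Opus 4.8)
\textit{Proof plan.}
The plan is to verify the second criterion of \cref{prop:Fredholm_sa}: I must produce an $\epsilon>0$ such that $\phi(\D(\cdot))\in\End_{C_0(X,B)}^0\big(C_0(X,E)\big)$ for every $\phi\in C_c(-\epsilon,\epsilon)$. First I would reduce to a finite situation. Since $K$ is compact and the neighbourhoods $O_x$ supplied by the locally trivialising families cover $X$, I can choose finitely many of them, say $O_1,\dots,O_n$, covering $K$, each with an associated bounded self-adjoint family $A_i(\cdot)$ on $C_0(O_i,E)$ for which $\D(\cdot)|_{O_i}+A_i(\cdot)$ is invertible; and I put $O_0:=X\backslash K$, which is open because $K$ is closed. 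Then $\{O_0,O_1,\dots,O_n\}$ is a finite open cover of $X$. For $i\geq 1$ the operator $\big(\D(\cdot)|_{O_i}+A_i(\cdot)\big)^{-1}$ is a parametrix for $\D(\cdot)|_{O_i}$, so $\D(\cdot)|_{O_i}$ is Fredholm; and $\D(\cdot)|_{O_0}=\D(\cdot)|_{X\backslash K}$ is Fredholm by hypothesis. Each $\D(\cdot)|_{O_i}$ is regular self-adjoint on $C_0(O_i,E)$ by \cref{lem:reg_sa_cts_fam}, so by \cref{prop:Fredholm_sa} there is an $\epsilon_i>0$ such that $\phi(\D(\cdot)|_{O_i})$ is a compact endomorphism of $C_0(O_i,E)$ for all $\phi\in C_c(-\epsilon_i,\epsilon_i)$. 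Set $\epsilon:=\min_{0\leq i\leq n}\epsilon_i$.

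Next I would glue these local statements with a partition of unity. A locally compact Hausdorff paracompact space is normal, so there is a partition of unity $\{\rho_i\}_{i=0}^n$ with $\supp\rho_i\subset O_i$; writing $\phi_i:=\sqrt{\rho_i}\in C_b(X)$ we get $\phi_i\geq 0$, $\supp\phi_i\subset O_i$, and $\sum_i\phi_i^2=1$. The operators $m_{\phi_i}$ of multiplication by $\phi_i$ are adjointable and self-adjoint on $C_0(X,E)$, they preserve $\Dom\D(\cdot)$, and---since $m_{\phi_i}$ is multiplication by a scalar function while $\D(\cdot)$ acts fibrewise---we have $m_{\phi_i}\D(\cdot)=\D(\cdot)m_{\phi_i}$ on $\Dom\D(\cdot)$. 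Consequently $m_{\phi_i}$ commutes with the resolvents of $\D(\cdot)$, and hence with $\phi(\D(\cdot))$ for every $\phi\in C_0(\R)$ (all of which act fibrewise, $(\phi(\D(\cdot))\psi)(x)=\phi(\D(x))\psi(x)$). Therefore, for $\phi\in C_c(-\epsilon,\epsilon)$,
\[
\phi(\D(\cdot)) = \sum_{i=0}^n m_{\phi_i}^2\,\phi(\D(\cdot)) = \sum_{i=0}^n m_{\phi_i}\,\phi(\D(\cdot))\,m_{\phi_i}.
\]
Because $\phi_i$ vanishes off $O_i$ and $\phi(\D(\cdot))$ acts fibrewise, $m_{\phi_i}\,\phi(\D(\cdot))\,m_{\phi_i}$ only sees the restricted family and equals $m_{\phi_i}\,\phi(\D(\cdot)|_{O_i})\,m_{\phi_i}$, reading $m_{\phi_i}$ as a map $C_0(X,E)\to C_0(O_i,E)$ and back via extension by zero. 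Since $\phi\in C_c(-\epsilon,\epsilon)\subset C_c(-\epsilon_i,\epsilon_i)$, the endomorphism $\phi(\D(\cdot)|_{O_i})$ of $C_0(O_i,E)$ is compact, and conjugating a compact endomorphism of $C_0(O_i,E)$ by the cutoffs $m_{\phi_i}$ produces a compact endomorphism of $C_0(X,E)$---it suffices to check this on rank-one operators, where $m_{\phi_i}\theta_{\psi,\eta}m_{\phi_i}=\theta_{\phi_i\psi,\phi_i\eta}$. A finite sum of compact endomorphisms is compact, so $\phi(\D(\cdot))\in\End_{C_0(X,B)}^0\big(C_0(X,E)\big)$ for all $\phi\in C_c(-\epsilon,\epsilon)$, and \cref{prop:Fredholm_sa} then gives that $\D(\cdot)$ is Fredholm.

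The only genuinely delicate points are the two \emph{localisation} facts: that cutting $\phi(\D(\cdot))$ down by $\phi_i$ legitimately replaces the whole family by its restriction $\D(\cdot)|_{O_i}$ (immediate from the fibrewise action of $\D(\cdot)$ and of the continuous functional calculus), and that compactness is preserved under the extension-by-zero identification between $\End^0$ of $C_0(O_i,E)$ and of $C_0(X,E)$ (reduced to the rank-one identity above). Neither is deep; the bookkeeping with restrictions and extensions is the main thing to get right. Alternatively one could glue the local parametrices directly---$(\D(\cdot)|_{O_i}+A_i(\cdot))^{-1}$ for $i\geq 1$ together with a fixed parametrix $Q_0$ of $\D(\cdot)|_{X\backslash K}$---into $Q:=\sum_i m_{\phi_i}\,Q_i\,m_{\phi_i}$ and verify directly that $Q$ is a two-sided parametrix, but routing through \cref{prop:Fredholm_sa} spares us from separately handling left and right parametrices.
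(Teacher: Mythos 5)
Your proof is correct, but it takes a genuinely different route from the paper's. The paper directly patches parametrices: with a partition of unity $\{\chi_n\}_{n=0}^N$ subordinate to the finite cover $\{U_n\}_{n=0}^N$ (where $U_0 := X\backslash K$), it takes local parametrices $Q_n := (\D(\cdot)|_{U_n}+A_n(\cdot))^{-1}$ for $n\geq1$ and a parametrix $Q_0$ for $\D(\cdot)|_{X\backslash K}$, and asserts that $Q := \sum_{n=0}^N Q_n\chi_n$ is a (two-sided) parametrix for $\D(\cdot)$, leaving the verification implicit. Your route instead invokes the functional-calculus criterion of \cref{prop:Fredholm_sa}: you exhibit $\epsilon>0$ with $\phi(\D(\cdot))$ compact for all $\phi\in C_c(-\epsilon,\epsilon)$, by writing $\phi(\D(\cdot)) = \sum_i m_{\phi_i}\,\phi(\D(\cdot))\,m_{\phi_i}$ using the commutation of cut-offs with the fibrewise functional calculus, identifying each summand with $m_{\phi_i}\,\phi(\D(\cdot)|_{O_i})\,m_{\phi_i}$, and checking (via the rank-one identity $m_{\phi_i}\theta_{\psi,\eta}m_{\phi_i}=\theta_{\phi_i\psi,\phi_i\eta}$) that the cut-offs carry compacts of $C_0(O_i,E)$ to compacts of $C_0(X,E)$. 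Your approach buys a self-adjointly symmetric criterion, so you never have to verify left- and right-parametrix properties separately; the trade-off is a small amount of bookkeeping with the extension-by-zero embeddings $C_0(O_i,E)\into C_0(X,E)$, which you handle correctly. The paper's approach is shorter on the page but leaves more to the reader (in particular, that $\D(\cdot)Q-1$ and $\bar{Q\D(\cdot)}-1$ are compact, which requires controlling commutators with the cut-off functions). You even note the paper's route as an alternative at the end, so you clearly see both. Either argument is sound.
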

\begin{proof}
By assumption, for each $x\in X$ there exist a precompact open neighbourhood $O_x$ of $x$ and a trivialising operator $A(\cdot)$ on $C_0(O_x,E)$ such that $\D(\cdot)|_{O_x}+A(\cdot)$ is invertible. 
Then $(\D(\cdot)|_{O_x}+A(\cdot))^{-1}$ is a parametrix for $\D(\cdot)|_{O_x}$, so $\D(\cdot)|_{O_x}$ is Fredholm. 
Using compactness of $K$, we have a finite open refinement $\{U_n\}_{n=1}^N$ of $\{O_x\}_{x\in K}$, such that $K\subset \bigcup_{n=1}^N U_n$. 
For each $n$ we have a trivialising family $\{A_n(x)\}_{x\in U_n}$ as above, and we write $Q_n := (\D(\cdot)|_{U_n}+A_n(\cdot))^{-1}$. 
Let $U_0 := X\backslash K$, and let $Q_0$ be a parametrix for $\D(\cdot)|_{X\backslash K}$. 
Let $\chi_n$ be a partition of unity for the finite open cover $\{U_n\}_{n=0}^N$ of $X$. 
Then $Q := \sum_{n=0}^N Q_n \chi_n$ is a parametrix for $\D(\cdot)$. 
\end{proof}

Thus, if there exist locally trivialising families for $\D(\cdot)$, and $\D(\cdot)$ is `Fredholm near infinity', then $\D(\cdot)$ is Fredholm and therefore defines a class $[\D(\cdot)]\in\KK^1(\C,C_0(\R,B)) \simeq \KK^0(\C,B) \simeq \K_0(B)$ in the $\K$-theory of $B$. 
This suggests the following generalisation of the notion of spectral flow. 
\begin{defn}
Let $X$ be a locally compact, paracompact space, and let $\D(\cdot)$ be a regular self-adjoint Fredholm operator on $C_0(X,E_B)$. 
Then we define the \emph{$\KK$-theoretic spectral flow} of $\D(\cdot)$ as 
\[
\ASF(\D(\cdot)) := [\D(\cdot)] \in \KK^1(\C,C_0(X,B)) \simeq \K_1(C_0(X,B)) .
\]
\end{defn}
In the special case $X=\R$, and using
the Bott periodicity $\beta\colon\K_1(C_0(\R,B))\xrightarrow{\simeq}\K_0(B)$, we can view the $\KK$-theoretic spectral flow of $\D(\cdot)$ as an element in the even $\K$-theory of $B$. 
In this case, the $\KK$-theoretic spectral flow agrees with the usual notion of spectral flow under the Bott periodicity isomorphism. 
This was already shown by Wahl \cite{Wah07} for the spectral flow of a family on the unit interval. 

We point out that the spectral flow is also well-defined on the whole real line $\R$, as long as the family of operators is uniformly invertible outside a bounded interval $[a,b]\subset\R$ (i.e.\ $\sup_{x\in \R\backslash[a,b]}\|\D(x)^{-1}\| < \infty$). 
Indeed, in this case we can simply define the spectral flow by restricting to this bounded interval: 
$\SF\big(\{\D(x)\}_{x\in\R}\big) := \SF\big(\{\D(x)\}_{x\in[a,b]}\big)$. 
This is well-defined, because the invertible part of the family does not contribute to the spectral flow. 

\begin{prop}
\label{prop:ASF_R}
Let $\D(\cdot)$ be a regular self-adjoint operator on the Hilbert $C_0(\R,B)$-module $C_0(\R,E)$. Suppose that $\D(x)$ is uniformly invertible outside a bounded interval $[a,b]\subset\R$, and that there exist locally trivialising families for $\{\D(x)\}_{x\in\R}$. 
Then $\D(\cdot)$ is Fredholm, and we have the equality
\[
\beta\big(\ASF(\D(\cdot))\big) = \SF\big(\{\D(x)\}_{x\in\R}\big) \in \K_0(B) .
\]
\end{prop}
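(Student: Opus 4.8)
The plan is to first settle the Fredholmness and then reduce the index identity to the case of a bounded interval, where it is already due to Wahl \cite{Wah07}. For the Fredholmness, note that since $\D(x)$ is uniformly invertible on $\R\backslash[a,b]$ and the resolvents are strongly continuous, the map $x\mapsto\D(x)^{-1}$ is an adjointable two-sided inverse of $\D(\cdot)|_{\R\backslash[a,b]}$ on $C_0(\R\backslash[a,b],E)$; hence $\D(\cdot)|_{\R\backslash[a,b]}$ is Fredholm, and \cref{lem:loc_triv_Fred} (with $K=[a,b]$) shows that $\D(\cdot)$ is Fredholm on $C_0(\R,E)$. In particular $\ASF(\D(\cdot)) = [\D(\cdot)] \in \KK^1(\C,C_0(\R,B))$ is defined.

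After enlarging $[a,b]$ within the region of uniform invertibility (which changes neither side of the asserted identity) we may assume $\D(a)$ and $\D(b)$ invertible, so that $\D(\cdot)$ is uniformly invertible on $\R\backslash V$ with $V:=(a,b)$, and $\SF(\{\D(x)\}_{x\in\R}) = \SF(\{\D(x)\}_{x\in[a,b]})$. The crucial step is to show that
\[
\ASF(\D(\cdot)) = (\iota_V)_*\big(\ASF(\D(\cdot)|_V)\big),
\]
where $\iota_V\colon C_0(V)\into C_0(\R)$ is the ideal inclusion. To prove this I would pick, using \cref{prop:Fredholm_sa}, an $\epsilon>0$ small enough that every odd non-decreasing smooth $\chi$ equal to $\pm1$ outside $(-\epsilon,\epsilon)$ is a normalising function for $\D(\cdot)$, and also small enough that the spectrum of $\D(x)$ avoids $(-\epsilon,\epsilon)$ for all $x\notin V$. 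For such a $\chi$ the endomorphism $\chi(\D(\cdot))^2-1$ vanishes fibrewise outside $V$; since a compact endomorphism of $C_0(\R,E)$ vanishing fibrewise outside $V$ restricts to a compact endomorphism of $C_0(V,E)$, it follows that $\D(\cdot)|_V$ is Fredholm and $\chi$ is a normalising function for it as well. Passing to the standard picture $\KK^1(\C,C_0(\R,B))\simeq\K_1(C_0(\R,B))$, the class $\ASF(\D(\cdot))$ is represented by the unitary $u:=e^{i\pi(\chi(\D(\cdot))+1)}$; as $e^{i\pi(t+1)}=1$ for $t=\pm1$, we have $u(x)=1$ for $x\notin V$, so $u-1$ lies in the ideal corresponding to $V$ and $[u]$ is the $(\iota_V)_*$-image of $[u|_V]$, which represents $\ASF(\D(\cdot)|_V)$.

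Given the above, applying the Bott map $\beta=\,\cdot\,\otimes_{C_0(\R)}[-i\partial_x]$, using associativity of the Kasparov product and \cref{prop:D_2_res} (with $M=\R$, $U=V$, $\D=-i\partial_x$), yields
\[
\beta\big(\ASF(\D(\cdot))\big) = \ASF(\D(\cdot)|_V)\otimes_{C_0(V)}\big([\iota_V]\otimes_{C_0(\R)}[-i\partial_x]\big) = \ASF(\D(\cdot)|_V)\otimes_{C_0(V)}\big[(-i\partial_x)|_V\big].
\]
An orientation-preserving diffeomorphism $V\simeq(0,1)$ identifies $C_0(V)$ with $C_0((0,1))$, carries $\D(\cdot)|_V$ to the reparametrised family, and carries $[(-i\partial_x)|_V]$ to the generator of $\KK^1(C_0((0,1)),\C)$; since spectral flow on an interval is invariant under such reparametrisations, the right-hand side is the Bott-periodicity image of the $\KK$-theoretic spectral flow of a family on $(0,1)$, which by \cite{Wah07} equals $\SF(\{\D(x)\}_{x\in[0,1]})$, and hence $\SF(\{\D(x)\}_{x\in[a,b]}) = \SF(\{\D(x)\}_{x\in\R})$. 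I expect the main obstacle to be the displayed identity in the second paragraph — that is, controlling the restriction of the $\K$-theory class to the open interval $V$ together with the attendant compatibility of the Bott maps on $C_0(\R)$ and on the ideal $C_0(V)\cong C_0(\R)$, which is precisely where \cref{prop:D_2_res} enters; the reparametrisation invariance of spectral flow and the matching with Wahl's conventions on the unit interval should be routine.
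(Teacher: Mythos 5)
Your argument takes a genuinely different route from the paper's. The paper normalises $[a,b]=[0,1]$ and explicitly constructs a homotopy $\D^t(\cdot)$ that freezes the family outside $[0,1]$ (checking regularity via strong continuity of resolvents and Fredholmness by patching parametrices), thereby reducing directly to the case Wahl treats. You instead prove an excision statement, $\ASF(\D(\cdot)) = (\iota_V)_*\big(\ASF(\D(\cdot)|_V)\big)$ with $V=(a,b)$, using the unitary picture of $\K_1$ and the fact that $u=e^{i\pi(\chi(\D(\cdot))+1)}$ is identically $1$ outside $V$, and then push the Bott class through the restriction via \cref{prop:D_2_res}. The excision step itself is correct and is arguably the more conceptual statement (it is the $X=\R$ analogue of \cref{lem:pS_res}).

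However, the last paragraph leans on a version of Wahl's theorem that is stronger than what the paper cites. Wahl's result, as invoked in the paper's proof, is that $\beta([\D^0(\cdot)]) = \SF(\{\D(x)\}_{x\in[0,1]})$ for a family $\D^0$ on all of $\R$ that is \emph{constant} outside $[0,1]$ — not a statement about the $\KK$-theoretic spectral flow of a family on an open interval $(0,1)$. To close this gap you can re-use your own excision argument: let $\til\D$ be the constant-outside-$[a,b]$ extension of $\D|_{[a,b]}$; since $\til\D|_V = \D|_V$, your excision gives $\ASF(\D(\cdot)) = (\iota_V)_*\ASF(\D(\cdot)|_V) = (\iota_V)_*\ASF(\til\D(\cdot)|_V) = \ASF(\til\D(\cdot))$, and Wahl applies directly to $\til\D$. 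Once you see this, the diffeomorphism $V\simeq(0,1)$ and the Bott-compatibility computation become unnecessary — you never need to identify the generator of $\KK^1(C_0(V),\C)$. So the proposal is sound in substance but the final step as written is not quite justified by the cited result, and the route can be streamlined by applying the excision lemma twice rather than transporting to $(0,1)$.
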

\begin{proof}
From \cref{lem:loc_triv_Fred} we know that $\D(\cdot)$ is Fredholm, so the $\KK$-theoretic spectral flow is well-defined. 
We know that $\D(x)$ is uniformly invertible outside a bounded interval $[a,b]\subset\R$. Without loss of generality, we may assume that $[a,b]=[0,1]$. Consider then the homotopy
\[
\D^t(x) := 
\begin{cases}
\D(tx) & x<0 , \\
\D(x) & 0\leq x\leq 1 , \\
\D(t(x-1)+1) & x>1 .
\end{cases}
\]
Since $\D(\cdot)$ is regular self-adjoint, we know from \cref{lem:reg_sa_cts_fam} that the resolvents $(\D(x)\pm i)^{-1}$ are strongly continuous. By construction, the resolvents $(\D^t(x)\pm i)^{-1}$ are then strongly continuous as well (jointly in $t$ and $x$). Hence we know (again from \cref{lem:reg_sa_cts_fam}) that $\D^\bullet(\cdot)$ is a regular self-adjoint operator on $C_0([0,1]\times\R,E)$. 
Let $Q$ be a parametrix for $\D(\cdot)$. 
Then $\D^\bullet(\cdot)$ is also Fredholm, because using a partition of unity we can `patch together' the parametrix $Q^\bullet = Q$ on $[0,1]\times[0,1]$ with the inverse of $\D^\bullet(\cdot)$ outside of $[0,1]\times[0,1]$. 
Hence $\D^\bullet(\cdot)$ indeed gives a homotopy between $\D^0(\cdot)$ and $\D^1(\cdot)=\D(\cdot)$, and we have reduced the problem to the case where $\D(\cdot)$ is constant outside a bounded interval. For this case, it has been shown in \cite[Proposition 4.2 \& Theorem 4.4]{Wah07} that $\beta([\D^0(\cdot)]) = \SF(\{\D(x)\}_{x\in[0,1]}) \in \K_0(B)$. 
\end{proof}

\begin{example}
\label{eg:sf_cyl}
Suppose that $B=\C$ (so $E=\mH$ is a Hilbert space) and $X = \R\times Y$, where $Y$ is a compact Hausdorff space. 
For simplicity, let us assume that $\pS(r,y) = \pS(0,y)$ for all $r\leq0$, and $\pS(r,y) = \pS(1,y)$ for all $r\geq1$. 
Using the isomorphism $C_0(X) \simeq C_0(\R,C(Y))$, and writing $\pS^y(r) := \pS(r,y)$, we can alternatively view the operators $\pS(r,y)$ as defining a family $\{\pS^\bullet(r)\}_{r\in\R}$ of regular self-adjoint operators on the Hilbert $C(Y)$-module $C(Y,\mH)$. 
Assuming there exist locally trivialising families for $\{\pS^\bullet(r)\}_{r\in\R}$, we see from \cref{prop:ASF_R} that the $\KK$-theoretic spectral flow $\ASF\big(\pS(\cdot,\cdot)\big) \in \KK^1(\C,C_0(X))$ is equal (under the Bott periodicity isomorphism) to the (usual) spectral flow $\SF\big(\{\pS^\bullet(r)\}_{r\in[0,1]}\big) \in \K_0(C(Y))$. 
\end{example}

We would like to describe the spectral flow in terms of the Kasparov product. Consider the standard spectral triple $(C_0^1(\R), L^2(\R), -i\partial_x)$ on the real line, whose $\K$-homology class $[-i\partial_x]$ is the generator of $\KK^1(C_0(\R),\C) \simeq \Z$. 

\begin{prop}
\label{prop:SF_Kasp_prod}
Given a regular self-adjoint Fredholm operator $\D(\cdot)$ on the Hilbert $C_0(\R)$-module $C_0(\R,E)$, we have
\[
\beta\big(\ASF(\D(\cdot))\big) = [\D(\cdot)] \otimes_{C_0(\R)} [-i\partial_x] \in K_0(B) .
\]
If $\D(x)$ is uniformly invertible outside a bounded interval $[a,b]\subset\R$, and there exist locally trivialising families for $\{\D(x)\}_{x\in\R}$, then
\[
\SF\big(\{\D(x)\}_{x\in\R}\big) = [\D(\cdot)] \otimes_{C_0(\R)} [-i\partial_x] \in K_0(B) .
\]
\end{prop}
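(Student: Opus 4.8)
The plan is to obtain both equalities as immediate consequences of material already collected in \cref{sec:KK}, so that no genuinely new analytic argument is required.

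For the first equality, I would unwind the definition of the $\KK$-theoretic spectral flow. By hypothesis $\D(\cdot)$ is a regular self-adjoint Fredholm operator on $C_0(\R,E)$, so (by \cref{prop:Fred_KK}, or simply by the definition of $\ASF$) $\ASF(\D(\cdot)) = [\D(\cdot)]$ is a well-defined class in $\KK^1(\C,C_0(\R,B))$. It was recalled in \cref{sec:KK} that the Bott periodicity isomorphism $\beta\colon\KK^1(A,C_0(\R,B))\to\KK^0(A,B)$ is implemented by the (external) Kasparov product with the generator $[-i\partial_x]\in\KK^1(C_0(\R),\C)$, i.e.\ $\beta(\alpha) = \alpha\otimes_{C_0(\R)}[-i\partial_x]$ for every class $\alpha$. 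Taking $A=\C$ and $\alpha = \ASF(\D(\cdot)) = [\D(\cdot)]$ yields $\beta(\ASF(\D(\cdot))) = [\D(\cdot)]\otimes_{C_0(\R)}[-i\partial_x]$, which is the first claim. The one point needing care is that this product is read as the extension of the Kasparov product over $C_0(\R)$, namely $\KK^1(\C,B\otimes C_0(\R))\times\KK^1(C_0(\R),\C)\to\KK^0(\C,B)$ (using $C_0(\R,B)\cong B\otimes C_0(\R)$ and the odd--odd convention $\CCliff_1\otimes\CCliff_1=\CCliff_2$, so that the product of two odd classes lands in the even group $\KK^0(\C,B)\simeq\K_0(B)$).

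For the second equality, I would invoke \cref{prop:ASF_R}: under the extra hypotheses that $\D(x)$ is uniformly invertible outside a bounded interval $[a,b]$ and that locally trivialising families exist for $\{\D(x)\}_{x\in\R}$, that proposition already asserts that $\D(\cdot)$ is Fredholm and that $\beta(\ASF(\D(\cdot))) = \SF(\{\D(x)\}_{x\in\R})$ in $\K_0(B)$. Substituting the first equality into the left-hand side gives $\SF(\{\D(x)\}_{x\in\R}) = [\D(\cdot)]\otimes_{C_0(\R)}[-i\partial_x]$, as required. Since both steps are purely a matter of combining statements already in hand, I do not anticipate a real obstacle; the only thing to be vigilant about is consistency of the $\KK$-product conventions (in particular the external product and the Clifford-algebra periodicity used to pair two odd classes), and verifying that the hypotheses of \cref{prop:ASF_R} and of the Bott-periodicity implementation statement are met verbatim, which they are.
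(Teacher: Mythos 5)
Your proposal matches the paper's proof exactly: the first equality is the definition of $\ASF$ combined with the fact that Bott periodicity is implemented as a Kasparov product with $[-i\partial_x]$ (already recorded in Section 2.1.1), and the second follows by substituting this into \cref{prop:ASF_R}. The extra care you take about the odd--odd Clifford convention is sound but not a point the paper's proof dwells on.
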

\begin{proof}
For the first statement, we observe that the Bott periodicity isomorphism $\beta\colon\KK^1(A,C_0(\R,B)) \to \KK^0(A,B)$ is implemented by taking the Kasparov product with the class $[-i\partial_x]$ (see \cref{sec:KK}). 
The second statement then follows from \cref{prop:ASF_R}. 
\end{proof}

\section{Dirac-Schr\"odinger operators}
\label{sec:DS}

\subsection{Families of unbounded operators}

Let $X$ be a locally compact, paracompact, topological space, let $B$ be a (trivially graded) $\sigma$-unital $C^*$-algebra, and let $E$ be a countably generated Hilbert $B$-module. 
Let $\{\pS(x)\}_{x\in X}$ be a family of regular self-adjoint operators on $E$ satisfying the following assumptions. 
\begin{enumerate}
\item[(a1)]
\customlabel{ass:a1}{(a1)}
The domain $W := \Dom\pS(x)$ is independent of $x\in X$, and the inclusion $W\into E$ is compact (where $W$ is viewed as a Hilbert $B$-module equipped with the graph norm of $\pS(x_0)$, for some $x_0\in X$). 
\item[(a2)]
\customlabel{ass:a2}{(a2)}
The map $\pS\colon X\to\Hom_B(W,E)$ is norm-continuous. 
\item[(a3)]
\customlabel{ass:a3}{(a3)}
There exists a compact subset $K\subset X$ such that $\pS(x)$ is uniformly invertible on $X\backslash K$. 
\end{enumerate}
Here we say that $\pS(x)$ is uniformly invertible on $X\backslash K$ if $\pS(x)$ is invertible for all $x\in X\backslash K$ and we have a uniform bound $\sup_{x\in X\backslash K}\|\pS(x)^{-1}\| < \infty$. 
Furthermore, we say that the graph norms of $\pS(x)$ are uniformly equivalent (to the norm of $W$) if there exist constants $C_1,C_2>0$ such that $C_1\|\xi\|_W \leq \|\xi\|_{\pS(x)} \leq C_2\|\xi\|_W$ for all $\xi\in W$ and all $x\in X$. 

\begin{lem}
\label{lem:cts_resolvent}
The resolvent operators $(\pS(x)\pm i)^{-1}\in\End_B(E)$ depend norm-continuously on $x\in X$.
\end{lem}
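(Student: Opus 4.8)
The plan is to deduce the norm-continuity of the resolvents from the second resolvent identity together with the norm-continuity hypothesis \eqref{ass:a2}. Fix $y\in X$; we want continuity of $x\mapsto(\pS(x)\pm i)^{-1}$ at $y$. The first step is to equip $W$ with the graph norm of $\pS(y)$, and to note --- via \cref{lem:dom_inc_rel_bdd} applied to the closed operators $\pS(x_0)$ and $\pS(y)$, which share the domain $W$ --- that this norm is equivalent to the graph norm of the reference operator $\pS(x_0)$ from \eqref{ass:a1}. Hence \eqref{ass:a2} equally asserts that $\pS\colon X\to\Hom_B(W,E)$ is norm-continuous when $W$ carries the graph norm of $\pS(y)$; in particular $\pS(x)\in\Hom_B(W,E)$ for every $x$, so the differences $\pS(y)-\pS(x)$ are adjointable maps $W\to E$.

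The second step is the resolvent identity
\[
(\pS(x)+i)^{-1} - (\pS(y)+i)^{-1} = (\pS(x)+i)^{-1}\,\big(\pS(y)-\pS(x)\big)\,(\pS(y)+i)^{-1} ,
\]
which holds because $(\pS(y)+i)^{-1}$ maps $E$ onto $W\subset\Dom\pS(x)$, so that on its range one may write $\pS(y)-\pS(x)=(\pS(y)+i)-(\pS(x)+i)$ and the identity follows at once. The third step is to estimate the three factors: $\|(\pS(x)+i)^{-1}\|_{E\to E}\leq 1$ by the continuous functional calculus for regular self-adjoint operators; $(\pS(y)+i)^{-1}$ has norm exactly $1$ as an adjointable map $E\to W$, since the graph norm satisfies $\|\psi\|_{\pS(y)}=\|(\pS(y)+i)\psi\|$; and $\|\pS(y)-\pS(x)\|_{W\to E}\to 0$ as $x\to y$ by \eqref{ass:a2}. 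Multiplying these bounds gives
\[
\big\|(\pS(x)+i)^{-1}-(\pS(y)+i)^{-1}\big\|_{E\to E}\leq\|\pS(y)-\pS(x)\|_{W\to E}\xrightarrow[x\to y]{}0 ,
\]
and the argument for the resolvent at $-i$ is identical.

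I do not anticipate a genuine obstacle: this is a routine resolvent-estimate argument. The only point deserving some care is the bookkeeping with graph norms, since the norms $\|\cdot\|_{\pS(x)}$ attached to different parameters $x$ are a priori distinct; one must invoke the closed graph theorem (in the form of \cref{lem:dom_inc_rel_bdd}) to see that they are pairwise equivalent on $W$. This is exactly what ensures both that hypothesis \eqref{ass:a2} is insensitive to the choice of reference operator, and that $(\pS(y)+i)^{-1}$ is bounded as a map $E\to W$ with the stated constant.
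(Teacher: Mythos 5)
Your proof is correct and follows essentially the same route as the paper: appeal to \cref{lem:dom_inc_rel_bdd} to see the graph norms of $\pS(x)$ and $\pS(x_0)$ on $W$ are equivalent (so that \ref{ass:a2} can be read with the graph norm at the point of interest), then apply the second resolvent identity and bound the factors. The paper's proof fixes $x$ and bounds $\|(\pS(y)\pm i)^{-1}\|\,\|(\pS(y)-\pS(x))(\pS(x)\pm i)^{-1}\|$; you fix $y$ and factor through $E\to W\to E\to E$, which is the same estimate written with $x$ and $y$ swapped.
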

\begin{proof}
Let $x\in X$, and consider $W_x := \Dom\pS(x)$ as a Hilbert $B$-module equipped with the graph norm. 
Since $W_x = W = \Dom\pS(x_0)$, we know from \cref{lem:dom_inc_rel_bdd} that the graph norm of $\pS(x)$ is equivalent to the norm of $W$. 
Therefore we know that $\pS$ is also continuous as a map $X\to\Hom_B(W_x,E)$. In particular, $(\pS(y)-\pS(x)) (\pS(x)-i)^{-1}$ depends continuously on $y$. The statement then follows from the inequality
\begin{align*}
\big\| (\pS(x)\pm i)^{-1} - (\pS(y)\pm i)^{-1} \big\| 
&\leq \big\| (\pS(y)\pm i)^{-1} \big\| \; \big\| ( \pS(y) - \pS(x) ) (\pS(x)\pm i)^{-1} \big\| .
\qedhere
\end{align*}
\end{proof}

\begin{lem}
\label{lem:pS_reg-sa}
The family $\{\pS(x)\}_{x\in X}$ gives an essentially regular self-adjoint operator $\pS(\cdot)$ on the Hilbert $C_0(X,B)$-module $C_0(X,E)$ given by
\[
\big(\pS(\cdot)\psi\big)(x) := \pS(x)\psi(x) ,
\]
for all $\psi$ in the initial domain $C_c(X,W)$. 
Furthermore, if the graph norms of $\{\pS(x)\}_{x\in X}$ are uniformly equivalent, then the closure of $\pS(\cdot)$ is regular self-adjoint on the domain $\Dom\pS(\cdot) = C_0(X,W)$. 
\end{lem}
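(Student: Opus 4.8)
The plan is to verify the two claims separately. For the first claim, I would show that $\pS(\cdot)$ is densely defined and symmetric on the initial domain $C_c(X,W)$, and then establish essential regular self-adjointness by invoking the criterion recalled in the preliminaries (namely \cite[Lemma~9.8]{Lance95}): a densely defined closed symmetric operator on a Hilbert module is regular and self-adjoint if and only if $\pS(\cdot)\pm i$ are surjective. Rather than working directly with the closure, I would produce an explicit bounded inverse. For each $x$, set $R_\pm(x) := (\pS(x)\pm i)^{-1}\in\End_B(E)$; by \cref{lem:cts_resolvent} the map $x\mapsto R_\pm(x)$ is norm-continuous, and each $R_\pm(x)$ has norm $\le 1$ since $\pS(x)$ is regular self-adjoint. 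Hence $R_\pm(\cdot)$ defines an adjointable endomorphism of $C_0(X,E)$ of norm $\le 1$. The key points are then: (i) $R_\pm(\cdot)$ maps $C_0(X,E)$ into $\Dom\bar{\pS(\cdot)}$, which follows because for $\psi\in C_c(X,E)$ the section $x\mapsto R_\pm(x)\psi(x)$ lies in $C_c(X,W)$ (continuity into $W$ comes from norm-continuity of $\pS$ as a map into $\Hom_B(W_x,E)$, established in the proof of \cref{lem:cts_resolvent}, together with $\pS(x)R_\pm(x) = 1 \mp iR_\pm(x)$), and general $\psi\in C_0(X,E)$ are handled by approximating in norm and using that $\bar{\pS(\cdot)}$ is closed; (ii) $(\bar{\pS(\cdot)}\pm i)R_\pm(\cdot) = 1$ and $R_\pm(\cdot)(\bar{\pS(\cdot)}\pm i) = 1$ on the respective domains, checked pointwise. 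This gives surjectivity of $\bar{\pS(\cdot)}\pm i$, hence regular self-adjointness of the closure, i.e.\ essential regular self-adjointness of $\pS(\cdot)$.

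For the second claim, suppose the graph norms of $\{\pS(x)\}_{x\in X}$ are uniformly equivalent to the norm of $W$, with constants $C_1,C_2>0$. I would show directly that $\Dom\bar{\pS(\cdot)} = C_0(X,W)$. For the inclusion $C_0(X,W)\subset\Dom\bar{\pS(\cdot)}$: given $\psi\in C_0(X,W)$, the section $x\mapsto\pS(x)\psi(x)$ lies in $C_0(X,E)$ (continuity by \cref{ass:a2}, vanishing at infinity by the uniform bound $\|\pS(x)\psi(x)\|\le\|\psi(x)\|_{\pS(x)}\le C_2\|\psi(x)\|_W$), so $\psi\in\Dom\pS(\cdot)$ in the sense of \eqref{eq:dom_fam}; and $\psi$ is a graph-norm limit of elements of $C_c(X,W)$ (truncate by a compactly supported cutoff, using that the uniform equivalence makes the graph norm of $\pS(\cdot)$ comparable to the $C_0(X,W)$-norm), so $\psi\in\Dom\bar{\pS(\cdot)}$. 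For the reverse inclusion, the uniform equivalence means there are constants with $C_1\|\psi\|_{C_0(X,W)}\le\|\psi\|_{\pS(\cdot)}\le C_2\|\psi\|_{C_0(X,W)}$ on $C_c(X,W)$, so the graph norm of $\pS(\cdot)$ and the $C_0(X,W)$-norm are equivalent on this common core; passing to completions, $\Dom\bar{\pS(\cdot)}$ — the closure of $C_c(X,W)$ in the graph norm — is identified with the closure of $C_c(X,W)$ in $C_0(X,W)$, and since $C_c(X,W)$ is dense in $C_0(X,W)$ this is exactly $C_0(X,W)$.

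The main obstacle I anticipate is the careful handling of the domain and range statements in the first part: verifying that $R_\pm(\cdot)$ genuinely lands in the domain of the \emph{closure} $\bar{\pS(\cdot)}$ (not merely that $\pS(x)R_\pm(x)\psi(x)$ makes sense pointwise), and that the pointwise identities $(\pS(x)\pm i)R_\pm(x)=1$ assemble into operator identities on $C_0(X,E)$. The crux is the continuity-into-$W$ argument: one needs that $x\mapsto R_\pm(x)$ is continuous not just as a map into $\End_B(E)$ but into $\Hom_B(E,W_x)\cong\Hom_B(E,W)$ — equivalently, that $\pS(x)R_\pm(x)$ depends continuously on $x$, which it does since $\pS(x)R_\pm(x)=1\mp iR_\pm(x)$ and \cref{lem:cts_resolvent} applies. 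Everything else is routine: density of $C_c(X,W)$, symmetry, and the closed-graph/approximation arguments. The uniform-equivalence half is essentially bookkeeping with norms on the common core $C_c(X,W)$.
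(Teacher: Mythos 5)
Your route is genuinely different from the paper's: you build the resolvent $R_\pm(\cdot)$ explicitly from the pointwise resolvents and check Lance's surjectivity criterion \cite[Lemma~9.8]{Lance95} directly, whereas the paper invokes Wahl's characterisation (\cref{lem:reg_sa_cts_fam}, \cite[Proposition~2.5]{Wah07}) to conclude that the pointwise family defines a regular self-adjoint operator on the natural domain \eqref{eq:dom_fam}, and then only needs to verify that $C_c(X,W)$ is a core for it. Both are viable, and your approach is more self-contained. The treatment of the second claim (uniform equivalence of graph norms $\Rightarrow\Dom=C_0(X,W)$) coincides with the paper's and is fine.

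However, the step you yourself flag as the crux contains a genuine error. You assert that continuity of $x\mapsto R_\pm(x)$ into $\Hom_B(E,W_x)\cong\Hom_B(E,W)$ is ``equivalently, that $\pS(x)R_\pm(x)$ depends continuously on $x$, which it does since $\pS(x)R_\pm(x)=1\mp iR_\pm(x)$''. This is not the right quantity. To get continuity into $\Hom_B(E,W)$ with $W$ carrying the fixed graph norm of $\pS(x_0)$, you must show that $x\mapsto\pS(x_0)R_\pm(x)$ is norm-continuous in $\End_B(E)$, and
\[
\pS(x_0)R_\pm(x) \;=\; \big(1\mp iR_\pm(x)\big) \;+\; \big(\pS(x_0)-\pS(x)\big)R_\pm(x).
\]
Your identity only handles the first summand, which is trivially continuous; the real content is in the correction term $(\pS(x_0)-\pS(x))R_\pm(x)$, where \emph{both} factors move with $x$ and $R_\pm(x)$ must be controlled in the $E\to W$ norm — exactly what you are trying to establish, so there is a circularity. (The identification $\Hom_B(E,W_x)\cong\Hom_B(E,W)$ with a \emph{varying} norm is also only available with uniform constants in the second part of the lemma, not in the first.) The gap is repairable — e.g.\ fix a point $x$, use that $\|(\pS(x)-\pS(y))(\pS(x)\pm i)^{-1}\|<1$ for $y$ near $x$, and bootstrap a local bound on $\|R_\pm(y)\|_{E\to W}$ via a Neumann-series estimate, or alternatively sidestep the issue entirely by approximating $R_\pm(\cdot)$ by piecewise-constant families $\sum_k\chi_k R_\pm(x_k)$ (for which landing in $C_c(X,W)$ is immediate) when verifying that $\pS(\cdot)\pm i$ has dense range on $C_c(X,W)$, as the paper's shorter core argument implicitly does. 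But as written, the justification of your condition (i) does not close.
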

\begin{proof}
By \cref{lem:cts_resolvent}, the resolvents $(\pS(x)\pm i)^{-1}$ are norm-continuous, so we already know from \cref{lem:reg_sa_cts_fam} that (the closure of) $\pS(\cdot)$ is regular self-adjoint, with the domain given in \cref{eq:dom_fam}. We need to check that $C_c(X,W)$ is a core for $\pS(\cdot)$. 

Since $W$ is dense in $E$, $C_c(X,W)$ is dense in $C_0(X,E)$. For $\psi\in C_c(X,W)$ we have 
\begin{align*}
\|\pS(x)\psi(x)-\pS(y)\psi(y)\| &\leq \|(\pS(x)-\pS(y))\psi(x)\| + \|\pS(y)(\psi(x)-\psi(y))\| \\
&\leq \|\pS(x)-\pS(y)\|_{W\to E} \; \|\psi(x)\|_W + \|\pS(y)\|_{W\to E} \; \|\psi(x)-\psi(y)\|_W .
\end{align*}
Since $\psi\in C_c(X,W)$ is continuous, and $\pS(x)$ depends norm-continuously on $x$ as a map $W\to E$, the above inequality shows that $\pS(x)\psi(x)\in E$ also depends norm-continuously on $x$. 
Hence $\pS(\cdot)\cdot C_c(X,W) \subset C_0(X,E)$, and therefore $C_c(X,W) \subset \Dom\pS(\cdot)$. 
Since $\pS(x)\pm i$ is surjective for each $x\in X$ and the resolvents $(\pS(x)\pm i)^{-1}$ are norm-continuous, it follows that $\pS(\cdot)\pm i\colon C_c(X,W)\to C_0(X,E)$ has dense range. Hence $C_c(X,W)$ is a core for $\pS(\cdot)$. 
Finally, if the graph norms of $\pS(x)$ are uniformly equivalent, it follows that the graph norm of $\pS(\cdot)$ is equivalent to the supremum-norm on $C_c(X,W)$, so that $\Dom\pS(\cdot) = C_0(X,W)$. 
\end{proof}

\begin{lem}
\label{lem:pS_loc-cpt-res}
The operator $\pS(\cdot)$ has locally compact resolvents, i.e.\ $f(\pS(\cdot)\pm i)^{-1}$ is compact for every $f\in C_0(X)$.
\end{lem}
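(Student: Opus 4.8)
The plan is to reduce the statement to a local computation using a partition of unity, exploiting that $f \in C_0(X)$ can be approximated by compactly supported functions and that, on a precompact open set, the operator $\pS(\cdot)$ "looks like" a single fixed operator with compact resolvent. First I would note that $\End_B^0(C_0(X,E))$ is closed in norm, so it suffices to prove the claim for $f \in C_c(X)$; by a further partition-of-unity argument (covering $\supp f$ by finitely many precompact opens), it suffices to treat $f$ with support contained in a single precompact open set $O \subset X$.

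Next I would fix such an $O$ and a point $x_0 \in O$, and compare $(\pS(\cdot)\pm i)^{-1}$ with the "constant" resolvent $(\pS(x_0)\pm i)^{-1}$ acting fibrewise on $C_0(O,E)$. By \cref{lem:dom_inc_rel_bdd} the graph norms of $\pS(x)$ are all equivalent to the norm of $W$, and by \cref{lem:cts_resolvent} together with the norm-continuity assumption \eqref{ass:a2}, the map $x \mapsto (\pS(x)\pm i)^{-1} - (\pS(x_0)\pm i)^{-1}$ is norm-continuous on the precompact set $\bar O$, hence can be made uniformly small after shrinking $O$ (or, without shrinking, it is simply a norm-continuous $\End_B(E)$-valued function on $\bar O$, so multiplication by $f$ lands in $C_0(O,\End_B(E))$ acting on $C_0(O,E)$). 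The key point is then that $f \cdot (\pS(x_0)\pm i)^{-1}$ — meaning the operator $\psi \mapsto \big(x \mapsto f(x)(\pS(x_0)\pm i)^{-1}\psi(x)\big)$ on $C_0(O,E)$ — is compact: indeed $(\pS(x_0)\pm i)^{-1}$ factors as the compact inclusion $W \into E$ composed with a bounded map (assumption \eqref{ass:a1}), so $(\pS(x_0)\pm i)^{-1} \in \End_B^0(E)$, and tensoring a compact endomorphism of $E$ with the element $f \in C_0(O)$ gives a compact endomorphism of $C_0(O,E) \simeq C_0(O)\otimes E$.

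Finally I would assemble these pieces: writing
\[
f(\pS(\cdot)\pm i)^{-1} = f\big((\pS(\cdot)\pm i)^{-1} - (\pS(x_0)\pm i)^{-1}\big) + f(\pS(x_0)\pm i)^{-1},
\]
the second term is compact by the previous paragraph, and the first term is a norm-continuous, compactly-supported $\End_B(E)$-valued function composed with the compact endomorphism-valued... — more precisely, $(\pS(x)\pm i)^{-1}-(\pS(x_0)\pm i)^{-1}$ is again a compact endomorphism of $E$ for each $x$ (difference of two compacts, using that $(\pS(x)\pm i)^{-1}$ also factors through $W\into E$), depending norm-continuously on $x$, so multiplying by $f \in C_c(O)$ yields an element of $C_0(O,\End_B^0(E)) \subset \End_B^0(C_0(O,E))$. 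Hence $f(\pS(\cdot)\pm i)^{-1}$ is compact on $C_0(O,E)$, and patching over the finite cover of $\supp f$ gives compactness on $C_0(X,E)$.

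I expect the main obstacle to be the bookkeeping around the identification $C_0(O,\End_B^0(E)) \subset \End_B^0(C_0(O,E))$ and the precise sense in which a norm-continuous compactly supported family of compact endomorphisms of $E$ defines a compact endomorphism of $C_0(X,E)$ — this requires approximating the continuous function on the precompact set uniformly by simple (step-like) functions with values that are rank-one-type operators, so that each approximant is manifestly a finite sum of rank-one endomorphisms of $C_0(X,E)$; everything else is a routine combination of \cref{lem:dom_inc_rel_bdd}, \cref{lem:cts_resolvent}, and assumptions \eqref{ass:a1}–\eqref{ass:a2}.
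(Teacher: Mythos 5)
Your proof is correct, and the key identification it ultimately relies on, namely that a norm-continuous family of compact endomorphisms of $E$ decaying at infinity defines a compact endomorphism $C_0(X,\End_B^0(E)) = \End_{C_0(X,B)}^0(C_0(X,E))$, is precisely what the paper's proof uses. However, the detours are unnecessary: there is no need for the partition-of-unity reduction to compactly supported $f$ on precompact opens, nor for the split
\[
f(\pS(\cdot)\pm i)^{-1} = f\big((\pS(\cdot)\pm i)^{-1} - (\pS(x_0)\pm i)^{-1}\big) + f(\pS(x_0)\pm i)^{-1} .
\]
You already observe (in the last paragraph) that $(\pS(x)\pm i)^{-1}$ is compact for \emph{every} $x$, since it factors through the compact inclusion $W\into E$ of assumption \ref{ass:a1}, and \cref{lem:cts_resolvent} gives norm-continuity of $x\mapsto(\pS(x)\pm i)^{-1}$ on all of $X$. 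These two facts together say $(\pS(\cdot)\pm i)^{-1} \in C_b(X,\End_B^0(E))$, and multiplying by any $f\in C_0(X)$ lands directly in $C_0(X,\End_B^0(E))$, which is the compact endomorphisms. That is the paper's entire argument, with no localisation or comparison-operator step needed. Your concern about the identification $C_0(X,\End_B^0(E)) \subset \End_B^0(C_0(X,E))$ is a standard fact about tensor products of Hilbert modules (writing $C_0(X,E) \simeq C_0(X)\otimes E$), and the paper treats it as such.
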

\begin{proof}
We know from \cref{lem:cts_resolvent} that the family $(\pS(x)\pm i)^{-1}$ is norm-continuous. By assumption, $(\pS(x)\pm i)^{-1}$ is compact for each $x\in X$, so we find that $(\pS(\cdot)\pm i)^{-1} \in C_b(X,\End_B^0(E))$. Multiplying by $f\in C_0(X)$ we thus obtain $f(\pS(\cdot)\pm i)^{-1} \in C_0(X,\End_B^0(E)) = \End_{C_0(X,B)}^0(C_0(X,E))$. 
\end{proof}

\begin{prop}
\label{prop:pS_Fred}
The operator $\pS(\cdot)$ on the Hilbert $C_0(X,B)$-module $C_0(X,E)$ is regular self-adjoint and Fredholm, and hence it defines a class 
\[
[\pS(\cdot)] \in \KK^1(\C,C_0(X,B)) . 
\]
\end{prop}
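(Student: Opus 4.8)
The plan is to treat the three claims of the proposition in turn, the only substantive one being Fredholmness. Regular self-adjointness of $\pS(\cdot)$ (understood as the closure of the operator with initial domain $C_c(X,W)$) is exactly \cref{lem:pS_reg-sa}. For the Fredholm property I will verify condition 2) of \cref{prop:Fredholm_sa}: setting $C := \sup_{x\in X\backslash K}\|\pS(x)^{-1}\|$ (finite by \ref{ass:a3}) and $\epsilon := C^{-1}$, it suffices to show that $\phi(\pS(\cdot))$ is a compact endomorphism of $C_0(X,E)$ for every $\phi\in C_c(-\epsilon,\epsilon)$. Once Fredholmness is in hand, \cref{prop:Fred_KK} applied to the regular self-adjoint Fredholm operator $\pS(\cdot)$ on the (trivially graded) Hilbert $C_0(X,B)$-module $C_0(X,E)$ immediately yields the class $[\pS(\cdot)] = [\chi(\pS(\cdot))] \in \KK^1(\C,C_0(X,B))$ for any normalising function $\chi$.

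The heart of the argument is that such a $\phi(\pS(\cdot))$ is ``supported on $K$''. For $x\in X\backslash K$ the bounded positive operator $\pS(x)^{-2}$ satisfies $\pS(x)^{-2}\leq C^2=\epsilon^{-2}$, hence $\pS(x)^2\geq\epsilon^2$; writing $|\phi|^2(t) = (\epsilon^2-t^2)_+\,\tau(t)$ with $\tau\in C_c(\R)$ (possible since $|\phi|^2\in C_c(-\epsilon,\epsilon)$), the continuous functional calculus gives $\phi(\pS(x))^*\phi(\pS(x)) = |\phi|^2(\pS(x)) = (\epsilon^2-\pS(x)^2)_+\,\tau(\pS(x)) = 0$, so $\phi(\pS(x))=0$ for every $x\notin K$. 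Choosing $g\in C_c(X)$ with $0\leq g\leq1$ and $g\equiv1$ on $K$, it follows fibrewise (at $x\in K$ use $g(x)=1$, at $x\notin K$ use the vanishing just proved) that $\phi(\pS(\cdot)) = g\,\phi(\pS(\cdot))$. Finally I factor $\phi(t) = \psi(t)(t-i)^{-1}$ with $\psi(t):=\phi(t)(t-i)\in C_c(\R)$; since the bounded operator $\psi(\pS(\cdot))$ acts fibrewise it commutes with multiplication by $g$, so
\[
\phi(\pS(\cdot)) \;=\; g\,\psi(\pS(\cdot))\,(\pS(\cdot)-i)^{-1} \;=\; \psi(\pS(\cdot))\cdot\big(g\,(\pS(\cdot)-i)^{-1}\big).
\]
Here $g\,(\pS(\cdot)-i)^{-1}$ lies in $\End^0_{C_0(X,B)}(C_0(X,E))$ by \cref{lem:pS_loc-cpt-res} (as $g\in C_c(X)\subset C_0(X)$), and a bounded operator followed by a compact one is compact. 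Hence $\phi(\pS(\cdot))$ is compact, \cref{prop:Fredholm_sa} shows $\pS(\cdot)$ is Fredholm, and the proof concludes as above.

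I expect the step requiring the most care to be the vanishing $\phi(\pS(x))=0$ for $x\notin K$: this is precisely where one cannot appeal to a spectral decomposition as over a Hilbert space, and must instead argue via the order properties of the continuous functional calculus for regular self-adjoint operators on Hilbert modules (the factorisation $|\phi|^2 = (\epsilon^2-\cdot^2)_+\,\tau$ is a convenient way to make this clean). By contrast, the fibrewise bookkeeping with $g$, the factorisation through $(\pS(\cdot)-i)^{-1}$, and the invocations of \cref{lem:pS_loc-cpt-res,prop:Fredholm_sa,prop:Fred_KK} are routine. (An alternative route would construct locally trivialising families for $\pS(\cdot)$ and apply \cref{lem:loc_triv_Fred} together with the observation that $\pS(\cdot)|_{X\backslash K}$ is boundedly invertible by \ref{ass:a3}; but producing trivialising families at points of $K$ — perturbing an operator with compact resolvent to an invertible one, uniformly over a neighbourhood — is more delicate than the functional-calculus argument above, so I would avoid it.)
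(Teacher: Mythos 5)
Your proof is correct, and it takes a genuinely different route from the paper's. The paper directly constructs a parametrix in one line: picking $\phi\in C_c(X)$ with $\phi\equiv1$ on $K$, it sets $Q := (\pS(\cdot)-i)^{-1}\phi + \pS(\cdot)^{-1}(1-\phi)$, where the first summand is compact modulo bounded operators by \cref{lem:pS_loc-cpt-res} and the second uses the invertibility of $\pS(x)$ outside $K$ from \ref{ass:a3}; one checks $\pS(\cdot)Q - 1 = i(\pS(\cdot)-i)^{-1}\phi$ is compact (and similarly on the other side, using that $\phi$ is central and $\pS(\cdot)$ acts fibrewise). You instead pass through the intrinsic characterisation of Fredholmness in \cref{prop:Fredholm_sa}\,2), showing $\phi(\pS(\cdot))$ is compact for $\phi\in C_c(-\epsilon,\epsilon)$ by the same two ingredients deployed differently: the uniform invertibility gives $\phi(\pS(x))=0$ fibrewise off $K$ (your factorisation $|\phi|^2=(\epsilon^2-\cdot^2)_+\,\tau$ is a clean way to extract this from the order inequality $\pS(x)^{-2}\leq\epsilon^{-2}$ without appealing to a spectral measure), and then local compactness of the resolvent together with the central cutoff $g$ upgrades this pointwise vanishing to compactness. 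The paper's argument is shorter and more hands-on; yours is a little longer but has the merit of making explicit \emph{why} Fredholmness holds at the level of the spectral picture rather than just exhibiting a parametrix, and it avoids any discussion of whether $\pS(\cdot)^{-1}(1-\phi)$ is a well-defined adjointable family (which, though unproblematic, is glossed over in the paper). Your remark that the route via locally trivialising families and \cref{lem:loc_triv_Fred} would be more delicate is also accurate — that route is not what the paper does either.
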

\begin{proof}
We have seen in \cref{lem:pS_reg-sa} that $\pS(\cdot)$ is regular self-adjoint. 
By \cref{lem:pS_loc-cpt-res}, we know that $\pS(\cdot)$ has locally compact resolvents. Consider $\phi\in C_c(X)$ such that $\phi(x) = 1$ for all $x\in K$. Then $Q := (\pS(\cdot)-i)^{-1} \phi + \pS(\cdot)^{-1} (1-\phi)$ is a parametrix for $\pS(\cdot)$. 
\end{proof}

\begin{lem}
\label{lem:pS_loc_triv_fam}
Let $\mH$ be a separable Hilbert space, and consider a family of self-adjoint operators $\{\pS(x)\}_{x\in X}$ on $\mH$, satisfying assumptions \ref{ass:a1}-\ref{ass:a3}. 
Then there exist locally trivialising families for $\pS(\cdot)$. 
\end{lem}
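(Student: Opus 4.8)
The plan is to construct, near each point $x\in X$, a compactly-supported correction that makes the restricted family invertible; the construction depends on whether $x$ lies in the compact set $K$ or not. For $x\notin K$, assumption \ref{ass:a3} already gives that $\pS(x)$ is invertible with a uniform resolvent bound, so on a small precompact neighbourhood $O_x$ disjoint from $K$ we may simply take $A(\cdot)=0$: the family $\pS(\cdot)|_{O_x}$ is itself invertible, hence trivially a locally trivialising family. The substance of the proof is therefore the case $x\in K$, where $\pS(x)$ may fail to be invertible.

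For $x\in K$, fix $x$ and use that $\pS(x)$ is self-adjoint with compact resolvent (by \ref{ass:a1}, the inclusion $W\into\mH$ is compact, and $\mH$ is a Hilbert space), so its spectrum is discrete. Choose $\epsilon>0$ such that $\pm\epsilon\notin\spec(\pS(x))$ and let $P$ be the spectral projection of $\pS(x)$ onto $(-\epsilon,\epsilon)$; since the resolvent is compact, $P$ is a finite-rank projection. Then $\pS(x)+\epsilon P'$ — or more precisely the operator obtained by shifting the finitely many eigenvalues in $(-\epsilon,\epsilon)$ away from zero, say $\pS(x)+ c\,P$ for a suitable scalar $c$, or the finite-rank self-adjoint operator $R_x := -\pS(x)P + (\text{invertible replacement on }\Ran P)$ — is invertible. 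Concretely, set $A_0 := c P$ with $c>0$ large enough that $\pS(x)+A_0$ has no spectrum in a neighbourhood of $0$; since $A_0$ is finite-rank it is a compact endomorphism, and since $W\into\mH$ is continuous, $A_0\colon W\to\mH$ is compact as well.

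Next I would use the norm-continuity of $\pS$ as a map $X\to\B(W,\mH)$ (assumption \ref{ass:a2}, together with \cref{lem:cts_resolvent}) to propagate this to a neighbourhood: there is a precompact open neighbourhood $O_x$ of $x$ such that for all $y\in O_x$, $\|(\pS(y)-\pS(x))(\pS(x)+A_0\pm i)^{-1}\|$ is small — say less than $\tfrac12\|(\pS(x)+A_0)^{-1}\|^{-1}$ after also controlling the resolvent perturbation — so that $\pS(y)+A_0$ remains invertible with a uniform bound on $O_x$, by a Neumann-series argument. Shrinking $O_x$ if necessary, multiply $A_0$ by a cutoff $\phi\in C_c(O_x)$ with $\phi\equiv1$ near $x$ to obtain a family $A(y):=\phi(y)A_0$; then $A(\cdot)$ is self-adjoint and bounded on $C_0(O_x,\mH)$, the map $A(\cdot)\colon\Dom\pS(\cdot)|_{O_x}=C_0(O_x,W)\to C_0(O_x,\mH)$ is compact because $A_0\colon W\to\mH$ is compact and $\phi$ has compact support (so $A(\cdot)$ factors through $C_0(O_x)\cdot(W\into\mH)$, which is compact as in \cref{lem:pS_loc-cpt-res}), and $\pS(\cdot)|_{O_x}+A(\cdot)$ is invertible since it equals the invertible family $\pS(y)+A_0$ where $\phi=1$ and equals the (still invertible, by the Neumann estimate) family $\pS(y)+\phi(y)A_0$ elsewhere on $O_x$. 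This exhibits a locally trivialising family at $x$, completing the proof.

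The main obstacle I anticipate is the compactness of the map $A(\cdot)\colon C_0(O_x,W)\to C_0(O_x,\mH)$: one must be careful that although $A_0$ is finite-rank on $\mH$, the induced operator on the module $C_0(O_x,\mH)$ is not finite-rank but only compact, and this compactness comes precisely from combining finite-rankness of $A_0$ with the compactness of the support of $\phi$ (the same mechanism that makes multiplication by $C_0(X)$ functions turn $C_b$-families of compacts into module-compact operators, used already in \cref{lem:pS_loc-cpt-res}). A secondary point requiring care is ensuring the neighbourhood $O_x$ can be chosen precompact and, for $x$ near the boundary of $K$, that the Neumann-series perturbation estimate is uniform; both follow from \ref{ass:a2} and \cref{lem:cts_resolvent} but should be stated explicitly.
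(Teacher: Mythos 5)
Your core construction is the same as the paper's: use the compact resolvent of $\pS(x_0)$ (assumption \ref{ass:a1}) to produce a finite-rank spectral projection near zero, shift $\pS(x_0)$ by a multiple of it to make it invertible, and propagate invertibility to a precompact neighbourhood by continuity of $x\mapsto\pS(x)$. The paper takes $A_0 := P_{[-1,1]}(\pS(x_0))$ and uses the \emph{constant} family $2A_0$ on $O_{x_0}$; your $cP$ with $P = P_{(-\epsilon,\epsilon)}(\pS(x))$ is the same modulo cosmetics. The case split into $x\in K$ and $x\notin K$ is unnecessary: if $\pS(x_0)$ is already invertible the projection $P_{[-1,1]}(\pS(x_0))$ may or may not vanish, but $\pS(x_0)+2A_0$ is invertible regardless, so the single construction handles every $x_0$.

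The gap is the cutoff. You are right to flag that a constant family of compact operators lies in $C_b(O_x,\mK(W,\mH))$ rather than in $C_0(O_x,\mK(W,\mH))$, so that module-compactness of $A(\cdot)\colon\Dom\pS(\cdot)|_{O_x}\to C_0(O_x,\mH)$ is not automatic (the paper does not remark on this; the natural reading is that restriction to a precompact $O_x$ is really a statement over the compact $\bar O_x$, over which constant families of compacts \emph{are} module-compact). However, replacing $A_0$ by $\phi(\cdot)A_0$ with $\phi\in C_c(O_x)$ equal to $1$ only near $x$ destroys the invertibility. Where $\phi(y)$ is small, near $\partial O_x$, the operator $\pS(y)+\phi(y)A_0$ is close to $\pS(y)$, and $\pS(y)$ is precisely what the lemma allows to be non-invertible inside $K$. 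Your Neumann-series estimate controls $\|(\pS(y)-\pS(x))(\pS(x)+A_0\pm i)^{-1}\|$, which yields invertibility of $\pS(y)+A_0$ on $O_x$; it says nothing about $\pS(y)+\phi(y)A_0$ for $\phi(y)<1$. No cutoff can be both compactly supported in the open set $O_x$ and bounded away from zero on all of $O_x$, so this cannot be patched by shrinking $O_x$ or choosing $\phi$ differently. The paper sidesteps the issue entirely by keeping the family constant.
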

\begin{proof}
Consider a point $x_0\in X$. Since $\pS(x_0)$ has compact resolvents, the spectral projection $A_0 := P_{[-1,1]}(\pS(x_0))$ is compact. The operator $\pS(x_0)+2A_0$ is invertible. By continuity, there exists a neighbourhood $O_0$ of $x_0$ such that $\pS(x)+2A_0$ is invertible for all $x\in O_0$. Hence $2A_0$ provides a (constant) trivialising family for $\{\pS(x)\}_{x\in O_0}$. 
\end{proof}
\begin{remark}
In the special case $X=\R$, the above lemma shows that the spectral flow of $\{\pS(x)\}_{x\in\R}$ is well-defined. From \cref{prop:ASF_R} we then know that the class $[\pS(\cdot)] \in \KK^1(\C,C_0(\R)) \simeq \Z$ is given by the spectral flow of $\{\pS(x)\}_{x\in\R}$. However, it is not clear if the statement of the above lemma remains valid for families of operators on Hilbert \emph{modules} (rather than Hilbert spaces). 
\end{remark}

In \cref{sec:Kasp_prod}, we will compute a Kasparov product with the class $[\pS(\cdot)] \in \KK^1(\C,C_0(X,B))$. 
However, the existing literature on the unbounded Kasparov product deals only with unbounded Kasparov modules. 
Although the Fredholm operator $\pS(\cdot)$ represents a class in $\KK^1(\C,C_0(X,B))$, in general it does not define an unbounded Kasparov $\C$-$C_0(X,B)$-module, because it might not have compact resolvents. Therefore we show next that we can replace $\pS(\cdot)$ by an operator which does have compact resolvents (following \cite[\S8]{KL13}), so that we can make use of existing results on the Kasparov product of unbounded Kasparov modules. 

\begin{lem}
\label{lem:pS_cpt_res}
Let $f\in C_0(X)$ be a strictly positive function vanishing at infinity, such that $f(x)=1$ for all $x\in K$. 
Then the operator $f^{-1}\pS(\cdot)$ corresponding to the family $\{f(x)^{-1}\pS(x)\}_{x\in X}$ defines an odd unbounded Kasparov module $(\C,C_0(X,E)_{C_0(X,B)},f^{-1}\pS(\cdot))$, and $[f^{-1}\pS(\cdot)] = [\pS(\cdot)] \in \KK^1(\C,C_0(X,B))$. 
\end{lem}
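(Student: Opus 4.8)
The plan is to verify the two assertions separately: first that $(\C,C_0(X,E)_{C_0(X,B)},f^{-1}\pS(\cdot))$ is an odd unbounded Kasparov module, and second that it represents the same $\KK$-class as $\pS(\cdot)$.

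For the first assertion, the key point is that $g := f^{-1}\pS(\cdot)$ should have \emph{compact} (not merely locally compact) resolvents. First I would observe that $f^{-1}\pS(\cdot)$ is the operator associated to the family $\{f(x)^{-1}\pS(x)\}_{x\in X}$, each member of which is regular self-adjoint with domain $W$ (scaling by the nonzero scalar $f(x)^{-1}$ does not change the domain), and that the resolvents $(f(x)^{-1}\pS(x)\pm i)^{-1} = f(x)(\pS(x)\pm if(x))^{-1}$ depend norm-continuously on $x$ by the same argument as in \cref{lem:cts_resolvent} (using \cref{ass:a2} and continuity of $f$). Hence by \cref{lem:reg_sa_cts_fam} the closure of $f^{-1}\pS(\cdot)$ is regular self-adjoint on $C_0(X,E)$. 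For compactness of the resolvent: each $(f(x)^{-1}\pS(x)\pm i)^{-1}$ is compact (it equals $f(x)$ times the compact resolvent of $\pS(x)$ at the point $\pm if(x)$, using \cref{ass:a1}), so $(f^{-1}\pS(\cdot)\pm i)^{-1} \in C_b(X,\End^0_B(E))$ as in \cref{lem:pS_loc-cpt-res}; the extra ingredient is that this section \emph{vanishes at infinity}, which follows because outside $K$ we have $f(x)^{-1}\pS(x)$ with $\pS(x)$ uniformly invertible (\cref{ass:a3}) and $f(x)\to 0$, so $\|(f(x)^{-1}\pS(x)\pm i)^{-1}\| = \|f(x)(\pS(x)\pm if(x))^{-1}\| \leq f(x)\,\|\pS(x)^{-1}\|\cdot(\text{bounded factor}) \to 0$. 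Thus $(f^{-1}\pS(\cdot)\pm i)^{-1}\in C_0(X,\End^0_B(E)) = \End^0_{C_0(X,B)}(C_0(X,E))$, so the resolvent is compact, and with $\A=\C$ the commutator condition is vacuous. This gives the unbounded Kasparov module.

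For the second assertion, I would exhibit a homotopy between $\pS(\cdot)$ and $f^{-1}\pS(\cdot)$ through regular self-adjoint Fredholm operators, and invoke \cref{prop:Fred_KK}. The natural candidate is the straight-line family of scaling functions $f_t := (1-t) + t f^{-1}$, equivalently the operator on $C_0([0,1]\times X, E)$ given by the family $\{f_t(x)\,\pS(x)\}_{(t,x)\in[0,1]\times X}$ (note $f_t(x) = (1-t) + t f(x)^{-1} \geq 1 - t \geq 0$ and is $\geq 1$ wherever $f(x)\leq 1$, in particular on $K$; and is strictly positive everywhere since $f_0 \equiv 1$). Each $f_t(x)^{-1}$... rather, each $f_t(x)\pS(x)$ is regular self-adjoint on $W$, resolvents depend norm-continuously jointly on $(t,x)$, so by \cref{lem:reg_sa_cts_fam} this defines a regular self-adjoint operator on $C_0([0,1]\times X,E)$. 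It is Fredholm: a parametrix is patched together from $(\text{something})^{-1}$ away from $K$ — where $f_t(x)\pS(x)$ is uniformly invertible since $f_t(x)\geq 1-t$ might degenerate as $t\to 1$, so one should instead use that $f_t(x)\pS(x)$ is invertible with $\|(f_t(x)\pS(x))^{-1}\| = f_t(x)^{-1}\|\pS(x)^{-1}\| \leq f(x)^{-1}\|\pS(x)^{-1}\|$ when $t$ is near $1$ and a constant bound when $t$ is near $0$, and in any case the resolvent $(f_t(x)\pS(x)\pm i)^{-1}$ is norm-continuous and compact for each $x$ — together with the locally compact resolvent property and a cutoff $\phi\in C_c(X)$, $\phi\equiv1$ on $K$, exactly as in \cref{prop:pS_Fred}: $Q := (f_\bullet(\cdot)\pS(\cdot)-i)^{-1}\phi + (f_\bullet(\cdot)\pS(\cdot))^{-1}(1-\phi)$. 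Evaluating at $t=0$ recovers $\pS(\cdot)$ and at $t=1$ recovers $f^{-1}\pS(\cdot)$, so by \cref{prop:Fred_KK} the two classes in $\KK^1(\C,C_0(X,B))$ coincide.

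The main obstacle is handling the behaviour near infinity carefully in the parametrix construction: one must check that $(f_t(x)\pS(x))^{-1}$, while possibly blowing up as $t\to 0$ on the region where $f<1$, is genuinely bounded over $[0,1]\times(X\setminus K)$ — which it is, since there $f_t(x)\pS(x)$ interpolates between $\pS(x)$ (at $t=0$) and $f(x)^{-1}\pS(x)$ (at $t=1$), both of which are uniformly invertible off $K$ with norm bounds $\sup_{X\setminus K}\|\pS(x)^{-1}\|$ and $\sup_{X\setminus K} f(x)\|\pS(x)^{-1}\| \leq \sup_{X\setminus K}\|\pS(x)^{-1}\|$ respectively — combined with confirming that the patched $Q$ really is a two-sided parametrix (the commutator of $\phi$ with $f_\bullet(\cdot)\pS(\cdot)$ contributes a locally compact, hence compact after the cutoff, error term). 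This is entirely analogous to \cref{prop:pS_Fred} and \cref{lem:loc_triv_Fred}, so it is routine but is the step requiring the most bookkeeping.
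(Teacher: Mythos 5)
Your proof is correct and follows the same two-step strategy as the paper: establish compactness of the resolvent of $f^{-1}\pS(\cdot)$ (the paper simply cites \cite[Proposition 8.7]{KL13}, which you re-derive directly via the estimate $\|(f^{-1}\pS(\cdot)\pm i)^{-1}\|\to 0$ at infinity), then connect $\pS(\cdot)$ to $f^{-1}\pS(\cdot)$ by a one-parameter family of rescalings that still satisfies \ref{ass:a1}--\ref{ass:a3}, invoke \cref{prop:pS_Fred} (or, as you do, re-run its parametrix construction) to see the homotopy runs through regular self-adjoint Fredholm operators, and conclude via \cref{prop:Fred_KK}. The only substantive difference is that you use the arithmetic interpolation $f_t=(1-t)+tf^{-1}$ where the paper uses the geometric one $f^{-t}$; both work for the same reason, though note the minor slip in your intermediate estimate ($\|(f_t(x)\pS(x))^{-1}\|\leq\max(1,f(x))\|\pS(x)^{-1}\|$, not $f(x)^{-1}\|\pS(x)^{-1}\|$), which does not affect the conclusion.
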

\begin{proof}
The compactness of the resolvent of $f^{-1}\pS(\cdot)$ is shown in \cite[Proposition 8.7]{KL13}. We obtain a homotopy between $\pS^0(\cdot) = \pS(\cdot)$ and $\pS^1(\cdot) = f^{-1}\pS(\cdot)$ from the operators $\pS^t(x) := f^{-t}(x) \pS(x)$ for $t\in[0,1]$. 
Indeed, since $f^{-1}$ is uniformly positive and continuous, the family $\{\pS^t(x)\}_{(t,x)\in[0,1]\times X}$ again satisfies assumptions \ref{ass:a1}-\ref{ass:a3}. By \cref{prop:pS_Fred}, the operator $\pS^\bullet(\cdot)$ on the Hilbert $C_0([0,1]\times X,B)$-module $C_0([0,1]\times X,E)$ is therefore regular self-adjoint and Fredholm. 
\end{proof}

Next we will show that the class $[\pS(\cdot)] \in \KK^1(\C,C_0(X,B))$ is completely determined by the family $\{\pS(x)\}_{x\in U}$ for any open neighbourhood $U$ of $K$. 
Let $\iota_U\colon C_0(U)\into C_0(X)$ denote the obvious inclusion. We associate to it the class $[\iota_U]\in\KK^0(C_0(U),C_0(X))$ represented by the module $(C_0(U) , {}_{\iota_U}C_0(X)_{C_0(X)} , 0 )$. 

\begin{lem}
\label{lem:pS_res}
Let $U$ be an open neighbourhood of $K$. Then 
\[
[\pS(\cdot)] = [\pS(\cdot)|_U] \otimes_{C_0(U)} [\iota_U] \in \KK^1(\C,C_0(X,B)) ,
\]
where $[\pS(\cdot)|_U] \in \KK^1(\C,C_0(U))$ is the class corresponding to the family $\{\pS(x)\}_{x\in U}$. 
\end{lem}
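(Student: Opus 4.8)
The plan is to compute the right-hand side as an explicit Kasparov module supported on a submodule of $C_0(X,E)$, and then to connect it to $[\pS(\cdot)]$ by a homotopy that exploits the uniform invertibility of $\pS(\cdot)$ away from $K$.

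First I would record that the family $\{\pS(x)\}_{x\in U}$ again satisfies \ref{ass:a1}--\ref{ass:a3} (with the same compact set $K\subset U$), so that $\pS(\cdot)|_U$ is regular self-adjoint and Fredholm by \cref{prop:pS_Fred} and $[\pS(\cdot)|_U]$ is defined. Fix $c$ with $\|\pS(x)^{-1}\|\le c$ for $x\in X\backslash K$, and choose a normalising function $\chi$ with $\chi\equiv\pm1$ on $\R\backslash(-c^{-1},c^{-1})$; since the spectrum of $\pS(x)$ is contained in $\R\backslash(-c^{-1},c^{-1})$ for $x\notin K$, this gives $\chi(\pS(x))^2=1$ for all $x\notin K$, and $\chi$ is a normalising function for both $\pS(\cdot)$ and $\pS(\cdot)|_U$ (the defect $\chi^2-1$ has compact support and each $\pS(x)$ has compact resolvent by \ref{ass:a1}). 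Thus $[\pS(\cdot)]$ is represented by $(\C,C_0(X,E)_{C_0(X,B)},\chi(\pS(\cdot)))$ and $[\pS(\cdot)|_U]$ by $(\C,C_0(U,E)_{C_0(U,B)},\chi(\pS(\cdot)|_U))$. The Kasparov product with $[\iota_U]$ is extension of scalars along the ideal inclusion $C_0(U,B)\hookrightarrow C_0(X,B)$ (alternatively, after first replacing the operators by ones with compact resolvent via \cref{lem:pS_cpt_res}, one invokes \cref{lem:Kasp_prod_homom}); hence it is represented by $(\C,(C_0(U,E)\otimes_{\iota_U}C_0(X,B))_{C_0(X,B)},\chi(\pS(\cdot)|_U)\otimes1)$. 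Because the ideal $C_0(U,B)$ acts on $C_0(X,B)$ with range $C_0(U,B)$, this interior tensor product is canonically isometrically isomorphic to the closed $C_0(X,B)$-submodule $C_0(U,E)\subset C_0(X,E)$ of sections vanishing outside $U$, and under this isomorphism $\chi(\pS(\cdot)|_U)\otimes1$ becomes the restriction of $\chi(\pS(\cdot))$ to this submodule. So $[\pS(\cdot)|_U]\otimes_{C_0(U)}[\iota_U]$ is represented by $(\C,C_0(U,E)_{C_0(X,B)},\chi(\pS(\cdot)|_U))$, with $C_0(U,E)$ regarded as a submodule of $C_0(X,E)$.

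It then remains to show this represents the same class as $(\C,C_0(X,E)_{C_0(X,B)},\chi(\pS(\cdot)))$, and here I would use the mapping cylinder $\mathcal{E}:=\{\Psi\in C([0,1],C_0(X,E)):\Psi(1)\in C_0(U,E)\}$, a countably generated Hilbert $C([0,1],C_0(X,B))$-module, together with the bounded self-adjoint operator $\chi(\pS(\cdot))$ acting pointwise in $[0,1]$; it preserves $\mathcal{E}$ since $\chi(\pS(\cdot))$ acts fibrewise over $X$ and hence preserves $C_0(U,E)$, and its evaluations at $t=0$ and $t=1$ are unitarily equivalent to $(\C,C_0(X,E),\chi(\pS(\cdot)))$ and $(\C,C_0(U,E)_{C_0(X,B)},\chi(\pS(\cdot)|_U))$. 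The one substantive point is that $\chi(\pS(\cdot))^2-1$ is a \emph{compact} endomorphism of $\mathcal{E}$: writing $\chi(\pS(\cdot))^2-1=-\psi_0(\pS(\cdot))$ with $\psi_0\in C_c(-c^{-1},c^{-1})$ and choosing $\rho\in C_c(U)$ with $\rho\equiv1$ on $K$, one has $\psi_0(\pS(\cdot))=M_\rho\,\psi_0(\pS(\cdot))\,M_\rho$ (as $\psi_0(\pS(x))=0$ for $x\notin K$ and $\rho(x)=1$ for $x\in K$); approximating $\psi_0(\pS(\cdot))$ in $\End^0_{C_0(X,B)}(C_0(X,E))$ by finite rank-one sums and sandwiching by $M_\rho$ — which maps $C_0(X,E)$ into $C_0(U,E)$ — yields approximants $\sum_i\theta_{a_i,b_i}$ with $a_i,b_i\in C_0(U,E)$, whose constant sections lie in $\mathcal{E}$, so after tensoring with $1_{C([0,1])}$ and restricting to $\mathcal{E}$ this exhibits $\chi(\pS(\cdot))^2-1$ in $\End^0_{C([0,1],C_0(X,B))}(\mathcal{E})$. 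Then $(\C,\mathcal{E},\chi(\pS(\cdot)))$ is a Kasparov homotopy and $[\pS(\cdot)]=[\pS(\cdot)|_U]\otimes_{C_0(U)}[\iota_U]$.

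The main obstacle is exactly this last compactness statement: a compact endomorphism of $C([0,1],C_0(X,E))$ need not restrict to a compact endomorphism of the (non-complemented) submodule $\mathcal{E}$, and what saves the argument is that $\chi(\pS(\cdot))^2-1$ is supported in $K\subset U$ and therefore ``factors through the $U$-part'', allowing its rank-one approximants to be taken with both legs inside $C_0(U,E)\subset\mathcal{E}$. The identification of the interior tensor product, the self-adjointness of the restricted operator, and the computation of the two evaluations are all routine.
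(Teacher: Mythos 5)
Your proof is correct, and the central device — a homotopy of Kasparov modules over a half‑open cylinder that interpolates between the module over $X$ and the submodule supported on $U$ — is exactly the one the paper uses. The paper takes $\til X = ([0,1]\times U)\cup\big((0,1]\times X\big)$ and $\til E := C_0(\til X,E)$; your $\mathcal{E}$ is the same module with the $t$‑parameter reversed, so the two homotopies coincide up to relabelling. What differs is the technical execution. You work entirely in the bounded picture: you choose a normalising function $\chi$ adapted to the uniform spectral gap of $\pS(x)$ off $K$, identify the product with $[\iota_U]$ directly as extension of scalars along the ideal inclusion $C_0(U,B)\into C_0(X,B)$, and establish compactness of $\chi(\pS(\cdot))^2-1$ on $\mathcal{E}$ by sandwiching rank‑one approximants of $\psi_0(\pS(\cdot))$ with $M_\rho$ (with $\rho\in C_c(U)$, $\rho\equiv1$ on $K$) so that both legs land in $C_0(U,E)\subset\mathcal{E}$ — a concrete, self‑contained verification that correctly handles the subtlety that $\mathcal{E}$ is not complemented. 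The paper instead stays in the unbounded picture: it first replaces $\pS(\cdot)|_U$ by $f^{-1}\pS(\cdot)|_U$ (compact resolvents, via \cref{lem:pS_cpt_res}) so that \cref{lem:Kasp_prod_homom} computes the product with $[\iota_U]$, and then observes that the constant‑in‑$t$ family on $\til X$ again satisfies \ref{ass:a1}--\ref{ass:a3}, so regular self‑adjointness and Fredholmness of the homotopy operator follow at once from \cref{prop:pS_Fred}. Your route is more elementary and avoids re‑invoking the unbounded Fredholm machinery; the paper's reuses it and thereby avoids the explicit choice of normalising function and the $M_\rho$‑sandwiching step. Both arguments are sound and give the same result.
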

\begin{proof}
First, we note that the restricted family $\{\pS(x)\}_{x\in U}$ also satisfies assumptions \ref{ass:a1}-\ref{ass:a3}, so that $[\pS(\cdot)|_U]$ is well-defined. 
Let $f\in C_0(U)$ be a strictly positive function such that $f(x)=1$ for all $x\in K$. 
By \cref{lem:pS_cpt_res}, we know that $f^{-1}\pS(\cdot)|_U$ defines an unbounded Kasparov module and $[f^{-1}\pS(\cdot)|_U] = [\pS(\cdot)|_U]]$. 
From \cref{lem:Kasp_prod_homom} we know that the Kasparov product $[f^{-1}\pS(\cdot)|_U] \otimes [\iota_U]$ is represented by 
$(\C,C_0(U,E)\otimes_{\iota_U} C_0(X),f^{-1}\pS(\cdot)|_U\otimes1)$. Furthermore, $f^{-1}\pS(\cdot)|_U\otimes1$ is homotopic to the Fredholm operator $\pS(\cdot)|_U\otimes1$. 
We need to show that the latter operator is homotopic to $\pS(\cdot)$. 

Let $\til X$ be the subspace of $[0,1]\times X$ given by the union of $[0,1]\times U$ and $(0,1]\times X$. 
Define the Hilbert $C_0([0,1]\times X,B)$-module $\til E := C_0(\til X,E)$. 
Consider the operator $\pS^\bullet(\cdot)$ on $\til E$ 
given by $\pS^t(x) := \pS(x)$ for all $(t,x)\in\til X$. 
Then one easily sees that the family $\{\pS^t(x)\}_{(t,x)\in\til X}$ also satisfies the assumptions \ref{ass:a1}-\ref{ass:a3} (we note that this family is uniformly invertible outside the compact subset $[0,1]\times K\subset\til X$). Hence $\pS^\bullet(\cdot)$ is Fredholm by \cref{prop:pS_Fred}. 
We observe that the restriction of $\pS^\bullet(\cdot)$ to $\{0\}\times U$ acts precisely as $\pS(\cdot)|_U\otimes1$ on $C_0(U,E)\otimes_{\iota_U} C_0(X)$, while the restriction to $\{1\}\times X$ is simply $\pS(\cdot)$ on $C_0(X,E)$. Thus $\pS^\bullet(\cdot)$ is a homotopy between $\pS(\cdot)$ and $\pS(\cdot)|_U\otimes1$. 
\end{proof}

\subsection{The product operator}
\label{sec:product_op}

From here on, we make the following standing assumptions. 
\begin{standing}
\customlabel{ass:standing}{Standing Assumptions}
Let $B$ be a (trivially graded) $\sigma$-unital $C^*$-algebra, and let $E$ be a (trivially graded) countably generated Hilbert $B$-module. 
Let $M$ be a connected Riemannian manifold, and let $\D$ be an essentially self-adjoint elliptic first-order differential operator on a hermitian vector bundle $\bF\to M$.
Let $\{\pS(x)\}_{x\in M}$ be a family of regular self-adjoint operators on $E$ satisfying the following assumptions. 
\begin{enumerate}
\item[(A1)]
\customlabel{ass:A1}{(A1)}
The domain $W := \Dom\pS(x)$ is independent of $x\in M$, and the inclusion $W\into E$ is compact (where $W$ is viewed as a Hilbert $B$-module equipped with the graph norm of $\pS(x_0)$, for some $x_0\in M$). 
\item[(A2)]
\customlabel{ass:A2}{(A2)}
The map $\pS\colon M\to\Hom_B(W,E)$ is norm-continuous. 
\item[(A3)]
\customlabel{ass:A3}{(A3)}
There exists a compact subset $K\subset M$ such that $\pS(x)$ is uniformly invertible on $M\backslash K$. 
\item[(A4)]
\customlabel{ass:A4}{(A4)}
There exists a finite open cover $\{V_j\}$ of $M\backslash K$ with points $x_j\in\bar V_j$ and positive numbers $a_j<1$ such that $\big\| \big( \pS(x)-\pS(x_j) \big) \pS(x_j)^{-1} \big\| \leq a_j$ for all $x\in V_j$. 
\end{enumerate}
\end{standing}

\begin{lem}
\label{lem:uegn}
The graph norms of $\pS(x)$ are uniformly equivalent. 
\end{lem}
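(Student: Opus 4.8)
The plan is to rephrase uniform equivalence of the graph norms as a pair of uniform operator-norm bounds, and then to establish each bound using \ref{ass:A2} on the compact part $K$ and \ref{ass:A4} on $M\backslash K$. Fix $x_0\in M$ as in \ref{ass:A1}, so that $\|\xi\|_W = \|(\pS(x_0)+i)\xi\|$ and, since every $\pS(x)$ is symmetric, $\|\xi\|_{\pS(x)} = \|(\pS(x)+i)\xi\|$ for $\xi\in W$. Inserting $1=(\pS(x_0)+i)^{-1}(\pS(x_0)+i)$, resp.\ $1=(\pS(x)+i)^{-1}(\pS(x)+i)$, one sees that the two inequalities $C_1\|\xi\|_W \le \|\xi\|_{\pS(x)} \le C_2\|\xi\|_W$ amount to
\[
\sup_{x\in M} \big\| (\pS(x)+i)(\pS(x_0)+i)^{-1} \big\| < \infty
\qquad\text{and}\qquad
\sup_{x\in M} \big\| (\pS(x_0)+i)(\pS(x)+i)^{-1} \big\| < \infty ,
\]
with $C_2$ the first supremum and $C_1$ the reciprocal of the second. (Throughout I use that, for each fixed $x$, the graph norm of $\pS(x)$ is equivalent to that of $W$, as observed in the proof of \cref{lem:cts_resolvent}, so that $(\pS(x)+i)^{-1}\colon E\to W$ is bounded; moreover $(\pS(x_0)+i)^{-1}\colon E\to W$ is isometric.)

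For the first supremum, write $(\pS(x)+i)(\pS(x_0)+i)^{-1} = 1 + (\pS(x)-\pS(x_0))(\pS(x_0)+i)^{-1}$; as $(\pS(x_0)+i)^{-1}\colon E\to W$ is isometric it suffices to bound $\|\pS(x)-\pS(x_0)\|_{W\to E}$ uniformly. On the compact set $K$ this is immediate from \ref{ass:A2}. For $x$ in a patch $V_j$ of \ref{ass:A4} I would factor $\pS(x)-\pS(x_j) = \big[(\pS(x)-\pS(x_j))\pS(x_j)^{-1}\big]\pS(x_j)$, so that $\|\pS(x)-\pS(x_j)\|_{W\to E} \le a_j\|\pS(x_j)\|_{W\to E}$ and hence $\|\pS(x)-\pS(x_0)\|_{W\to E} \le a_j\|\pS(x_j)\|_{W\to E} + \|\pS(x_j)-\pS(x_0)\|_{W\to E}$. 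Since there are only finitely many $j$, this bounds $\|\pS(x)-\pS(x_0)\|_{W\to E}$ uniformly on $M\backslash K$, hence on all of $M$.

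The second supremum is the delicate one, and the step I expect to be the main obstacle, since the naive estimate of $\|(\pS(x_0)+i)(\pS(x)+i)^{-1}\|$ in terms of $\|\pS(x)-\pS(x_0)\|_{W\to E}$ is circular. I would first show that $g(x) := \|(\pS(x_0)+i)(\pS(x)+i)^{-1}\|$ is locally bounded: given $y\in M$, \ref{ass:A2} together with finiteness of $\|(\pS(y)+i)^{-1}\|_{E\to W}$ yields a neighbourhood of $y$ on which $\|(\pS(x)-\pS(y))(\pS(y)+i)^{-1}\| < \tfrac12$; there $(\pS(x)+i)(\pS(y)+i)^{-1} = 1 + (\pS(x)-\pS(y))(\pS(y)+i)^{-1}$ is invertible with $\|(\pS(y)+i)(\pS(x)+i)^{-1}\| \le 2$, and the factorisation $(\pS(x_0)+i)(\pS(x)+i)^{-1} = \big[(\pS(x_0)+i)(\pS(y)+i)^{-1}\big]\big[(\pS(y)+i)(\pS(x)+i)^{-1}\big]$ gives $g(x) \le 2g(y)$. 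Local boundedness together with compactness of $K$ then gives $\sup_{x\in K} g(x) < \infty$.

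Finally, for $x\in V_j$ I would use the same factorisation through $x_j$, namely $(\pS(x_0)+i)(\pS(x)+i)^{-1} = \big[(\pS(x_0)+i)(\pS(x_j)+i)^{-1}\big]\big[(\pS(x_j)+i)(\pS(x)+i)^{-1}\big]$, whose first factor is the fixed finite number $g(x_j)$. For the second factor, combining \ref{ass:A4} with the elementary bound $\|\pS(x)(\pS(x)+i)^{-1}\| \le 1$ gives a short bootstrap: from $\pS(x_j)(\pS(x)+i)^{-1} = (\pS(x_j)-\pS(x))(\pS(x)+i)^{-1} + \pS(x)(\pS(x)+i)^{-1}$ and $\|(\pS(x_j)-\pS(x))(\pS(x)+i)^{-1}\| \le a_j\|\pS(x_j)(\pS(x)+i)^{-1}\|$ (again by \ref{ass:A4}) one deduces $\|\pS(x_j)(\pS(x)+i)^{-1}\| \le (1-a_j)^{-1}$ and hence $\|(\pS(x_j)+i)(\pS(x)+i)^{-1}\| \le 1 + a_j(1-a_j)^{-1} = (1-a_j)^{-1}$. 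Finitely many $j$ then bound $g$ uniformly on $M\backslash K$, which combined with the estimate on $K$ completes the argument.
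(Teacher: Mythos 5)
Your proof is correct and takes essentially the same approach as the paper's: both rely on the closed-graph-theorem observation that the graph norms are pairwise equivalent, then establish the uniform bound by combining continuity and compactness on $K$ with the assumption \ref{ass:A4} and a Neumann-series-type estimate (your bootstrap on $\pS(x_j)(\pS(x)+i)^{-1}$ is the same computation the paper phrases as $\|\pS(x_j)\pS(x)^{-1}\| \le (1-a_j)^{-1}$) on the patches $V_j$. The only differences are cosmetic: you split the argument explicitly into the two operator-norm suprema and spell out the local-boundedness-on-$K$ step that the paper leaves terse, while the paper factors through $\pS(x_j)\pS(x)^{-1}$ directly rather than bootstrapping.
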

\begin{proof}
For any $x,y\in M$, since $\Dom\pS(x)=\Dom\pS(y)$, we know 
from \cref{lem:dom_inc_rel_bdd}
that the graph norms of $\pS(x)$ and $\pS(y)$ are equivalent. 
Using compactness of $K$ and continuity of $\pS(x)$, 
it follows that the graph norms of $\pS(x)$ are uniformly equivalent for all $x\in K$. For $x\in M\backslash K$, we know from assumption \ref{ass:A4} that $x\in V_j$ for some $j$. Then 
\[
\big\| (\pS(x)\pm i)(\pS(x_1)\pm i)^{-1} \big\| \leq (a_j+1) \big\| (\pS(x_j)\pm i)(\pS(x_1)\pm i)^{-1} \big\| .
\]
Using a Neumann series argument, we know that $\big\| \pS(x_j) \pS(x)^{-1} \big\| \leq \frac1{1-a_j}$. Therefore
\begin{multline*}
\big\| (\pS(x_1)\pm i)(\pS(x)\pm i)^{-1} \big\| \\
\leq \big\| (\pS(x_1)\pm i)(\pS(x_j)\pm i)^{-1} \big\| \; \big\| (\pS(x_j)\pm i) \pS(x_j)^{-1} \big\| \cdot \frac1{1-a_j} \cdot \big\| \pS(x)(\pS(x)\pm i)^{-1} \big\| .
\end{multline*}
These inequalities show that for $x\in M\backslash K$ the graph norms of $\pS(x)$ are uniformly equivalent to the graph norm of $\pS(x_1)$. 
\end{proof}

We consider the balanced tensor product $L^2(M,E\otimes\bF) := C_0(M,E) \otimes_{C_0(M)} L^2(M,\bF)$. 
The operator $\pS(\cdot)\otimes1$ is well-defined on $\Dom\pS(\cdot) \otimes_{C_0(M)} L^2(M,\bF) \subset L^2(M,E\otimes\bF)$, and is denoted simply by $\pS(\cdot)$ as well. By \cite[Proposition 9.10]{Lance95}, $\pS(\cdot)$ is regular self-adjoint on $L^2(M,E\otimes\bF)$. 

The operator $1\otimes\D$ is not well-defined on $L^2(M,E\otimes\bF)$. Instead, using the canonical isomorphism $L^2(M,E\otimes\bF) \simeq E \otimes L^2(M,\bF)$, we consider the operator $1\otimes\D$ on $E \otimes L^2(M,\bF)$ with domain $E\otimes\Dom\D$. 
Alternatively, we can extend the exterior derivative on $C_0^1(M)$ to an operator 
\[
d \colon C_0^1(M,E) \xrightarrow{\simeq} E\otimes C_0^1(M) \xrightarrow{1\otimes d} E\otimes\Gamma_0(T^*M) \xrightarrow{\simeq} \Gamma_0(E\otimes T^*M) .
\]
Denoting by $\sigma$ the principal symbol of $\D$, we can define an operator $1\otimes_d\D$ on $C_0(M,E) \otimes_{C_0(M)} L^2(M,\bF)$ by setting
$$
(1\otimes_d\D)(\xi\otimes\psi) := \xi\otimes\D\psi + (1\otimes\sigma)(d\xi)\psi .
$$
Under the isomorphism $C_0(M,E) \otimes_{C_0(M)} L^2(M,\bF) \simeq E \otimes L^2(M,\bF)$, the operator $1\otimes\D$ on $E \otimes L^2(M,\bF)$ agrees with $1\otimes_d\D$ on $C_0(M,E) \otimes_{C_0(M)} L^2(M,\bF)$. We will denote this operator on $L^2(M,E\otimes\bF)$ simply as $\D$. The operator $\D$ is regular self-adjoint on $L^2(M,E\otimes\bF)$ (see also \cite[Theorem 5.4]{KL13}). 

\begin{defn}
Given $M$, $\D$, and $\pS(\cdot)$ satisfying the \ref{ass:standing}, we define the (generalised) \emph{Dirac-Schr\"odinger operator} 
\[
\D_\pS := \D - i \pS(\cdot) 
\]
on the initial domain $C_c^1(M,W) \otimes_{C_0^1(M)} \Dom\D$. We denote by $\Dom\D_\pS$ the closure of this domain in the graph norm of $\D_\pS$. 
We also define the \emph{product operator}
\[
\til\D_\pS := \pS(\cdot)\times\D := \mattwo{0}{\D+i\pS(\cdot)}{\D-i\pS(\cdot)}{0} .
\]
\end{defn}
We note that, despite our use of the term `Dirac-Schr\"odinger' operator, we do not assume that the operator $\D$ is of Dirac-type (although a Dirac-type operator is of course a typical example). 

We introduce some further notation. 
For any $x\in M$, 
we write $\pS^x(\cdot)$ for the operator corresponding to the constant family $\pS^x(y) := \pS(x)$ (for all $y\in M$). 
We can then consider the product operator $\til\D_\pS^x := \pS^x(\cdot)\times\D$ defined as above. 
Lastly, we define on $L^2(M,E\otimes\bF)^{\oplus2}$ the operators
\begin{align*}
\til\D &:= \mattwo{0}{\D}{\D}{0} , & 
\til\pS(\cdot) &:= \mattwo{0}{+i\pS(\cdot)}{-i\pS(\cdot)}{0} , & \til\pS^x(\cdot) &:= \mattwo{0}{+i\pS^x(\cdot)}{-i\pS^x(\cdot)}{0} , 
\end{align*}
on the initial domain $\big(C_c^1(M,W) \otimes_{C_0^1(M)} \Dom\D\big)^{\oplus2}$. 

\begin{remark}
The product operator $\til\D_\pS$ is given by the standard formula for the (odd) unbounded Kasparov product of $\pS(\cdot)$ with $\D$ \cite{Mes14,KL13,BMS16,MR16}. In these references, the proof that this formula indeed represents the Kasparov product, relies on the condition that the commutator $[\D,\pS(\cdot)]$ is well-behaved. However, since we only assume that $\pS(\cdot)$ is continuous (but not necessarily differentiable), we have no direct control on this commutator. In \cref{sec:Kasp_prod} we will show that the operator $\til\D_\pS$ nevertheless represents the Kasparov product. 
\end{remark}

\subsubsection{Regularity and self-adjointness}

Our first task is to prove that the product operator $\til\D_\pS := \pS(\cdot)\times\D$ is regular self-adjoint. 
Our proof is a generalisation of the methods used in \cite[\S2]{AW11} for the case $M=[0,1]\subset\R$. 
We start with the following well-known lemma (for more general statements, see e.g.\ \cite[Theorem 6.1.8]{Mes14} or \cite[Theorem 7.10]{KL12}). 

\begin{lem}
\label{lem:sum_sa}
For $j=1,2$, let $T_j$ be a regular self-adjoint operator on the Hilbert $B_j$-module $E_j$. Then $T_1\otimes1 \pm i\otimes T_2$ is regular on the tensor product $E_1\otimes E_2$, and $(T_1\otimes1 \pm i\otimes T_2)^* = T_1\otimes1 \mp i\otimes T_2$. 
\end{lem}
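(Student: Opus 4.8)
The plan is to use the characterisation that a closed symmetric operator on a Hilbert module is regular self-adjoint if and only if both operators $S \pm i$ are surjective (Lance, Lemma 9.8, quoted in the excerpt). So the strategy splits into three parts: (i) show $T := T_1 \otimes 1 + i \otimes T_2$ is closed and compute a candidate for its adjoint; (ii) observe that $T_1 \otimes 1$ and $i \otimes T_2$ anticommute appropriately on the natural core, so that $T^* T$ is (the closure of) $T_1^2 \otimes 1 + 1 \otimes T_2^2$ on $\Dom(T_1^2)\otimes_{\mathrm{alg}}\Dom(T_2^2)$-type domains; (iii) deduce surjectivity of $T \pm i$. The sign variant with $-i \otimes T_2$ is handled identically.

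First I would fix the algebraic core: since $T_1$ is regular self-adjoint on $E_1$, it has a continuous functional calculus, and the subspace $\mathcal{E}_1 := (1+T_1^2)^{-1}E_1 \subset \Dom T_1^2$ is a core for $T_1$ (and for $T_1^2$); similarly $\mathcal{E}_2 \subset \Dom T_2^2$ is a core for $T_2$. On the algebraic tensor product $\mathcal{E}_1 \odot \mathcal{E}_2$, the operators $T_1\otimes1$ and $i\otimes T_2$ are both defined, and a direct computation gives, for $\psi \in \mathcal{E}_1\odot\mathcal{E}_2$,
\[
\la (T_1\otimes1)\psi \mvert (i\otimes T_2)\psi \ra + \la (i\otimes T_2)\psi \mvert (T_1\otimes1)\psi \ra = 0 ,
\]
because $\la (T_1\otimes1)\psi \mvert (i\otimes T_2)\psi \ra$ is the adjoint (up to sign from the $i$) of $\la (i\otimes T_2)\psi \mvert (T_1\otimes1)\psi \ra$, using self-adjointness of each $T_j$ and the fact that $(T_1\otimes1)$ and $(1\otimes T_2)$ commute on this domain. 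Consequently $\|T\psi\|^2 = \|(T_1\otimes1)\psi\|^2 + \|(1\otimes T_2)\psi\|^2$, which shows in particular $\|\psi\|_{T} = \|\psi\|_{T_1\otimes1} + \text{(the }T_2\text{ part)}$ controls both graph norms; this identity is the engine of the whole argument. Taking closures, $T$ is a well-defined closed operator whose domain contains $\Dom(T_1\otimes1)\cap\Dom(1\otimes T_2)$, and the same Pythagorean identity persists on the closure.

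For surjectivity of $T\pm i$: I would argue that $(1 + T^*T)$ has dense range — in fact I claim $T^*T$ restricted to the core equals $T_1^2\otimes1 + 1\otimes T_2^2$, so $1 + T^*T = (1+T_1^2)\otimes1 + 1\otimes T_2^2$, whose inverse can be built via the functional calculus of the commuting regular self-adjoint operators $T_1^2\otimes1$ and $1\otimes T_2^2$ (e.g.\ using that $1+T_1^2$ is already invertible and a resolvent expansion, or directly from the joint functional calculus available for commuting regular operators). This gives that $T$ is regular self-adjoint by the Lance criterion; alternatively, and perhaps more cleanly, I would invoke the Kato–Rellich-type stability: $i\otimes T_2$ is "relatively bounded with relative bound $0$" with respect to $T$ in the sense that $\|(1\otimes T_2)\psi\| \le \|T\psi\|$ from the Pythagorean identity — but this doesn't immediately apply since $T$ isn't a perturbation of $T_1\otimes1$ by a bounded-relative-bound-$<1$ operator. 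So the functional-calculus route for $1+T^*T$ is the one I'd commit to. Finally, for the adjoint: one inclusion $T_1\otimes1 - i\otimes T_2 \subseteq T^*$ is the algebraic identity on the core together with closedness; the reverse inclusion follows because both sides are regular self-adjoint (once regularity of $T_1\otimes1 - i\otimes T_2$ is established by the identical argument) and a regular self-adjoint operator extending another regular self-adjoint operator must equal it.

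The main obstacle I anticipate is the functional calculus step for the unbounded operator $1 + T^*T = (1+T_1^2)\otimes1 + 1\otimes T_2^2$ on the Hilbert \emph{module} $E_1\otimes E_2$: one must be careful that $T^*T$ really is the closure of this algebraic sum (not a proper extension) and that its resolvent exists as an adjointable operator. This is where I expect the paper to cite the general results it flags — \cite[Theorem 6.1.8]{Mes14} or \cite[Theorem 7.10]{KL12} — since constructing the inverse of a sum of two commuting positive regular operators on a module, and identifying it with the closure of the algebraic sum, is exactly the kind of technical lemma those references establish. For the Hilbert space case ($B_1 = B_2 = \C$) this is classical (it is essentially self-adjointness of $T_1^2\otimes 1 + 1\otimes T_2^2$ on the algebraic tensor domain, a standard Nelson-type result), so the genuinely new content is the module-theoretic bookkeeping, which the cited theorems handle.
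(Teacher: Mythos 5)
The paper offers no proof of this lemma: it is stated as ``well-known'' with pointers to \cite[Theorem 6.1.8]{Mes14} and \cite[Theorem 7.10]{KL12}, and the argument is simply delegated to those references. Your proposal sketches what a proof would have to look like, and in broad strokes it is the right sketch: the Pythagorean identity $\|T\psi\|^2 = \|(T_1\otimes1)\psi\|^2 + \|(1\otimes T_2)\psi\|^2$ on the algebraic tensor core is indeed the engine, the candidate adjoint $T_1\otimes1 - i\otimes T_2$ is identified correctly by a calculation on that core, and you correctly isolate the genuine technical content --- identifying $T^*T$ with the closure of the algebraic operator $T_1^2\otimes1 + 1\otimes T_2^2$ (a ``Nelson-type'' essential self-adjointness statement on the module level) and showing that $1 + T^*T$ has an adjointable inverse --- as exactly what the cited theorems supply. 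Since the paper does not reproduce that argument either, your proposal is not in conflict with it; it fills in the scaffolding around the citation.

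Two imprecisions are worth flagging, because as written they would mislead a reader. First, you open by invoking the criterion that a closed \emph{symmetric} operator is regular self-adjoint iff $S\pm i$ are surjective, and later write that this ``gives that $T$ is regular self-adjoint.'' But $T = T_1\otimes1 + i\otimes T_2$ is not symmetric (its adjoint is $T_1\otimes1 - i\otimes T_2 \neq T$ in general), so neither that criterion nor the conclusion ``self-adjoint'' applies to $T$ itself --- the lemma only claims that $T$ is \emph{regular} with the stated adjoint. The correct criterion, which you do switch to mid-proposal, is the definition of regularity: $T$ closed, $T^*$ densely defined, and $1+T^*T$ with dense range. Second, the suggested ``resolvent expansion'' for inverting $(1+T_1^2)\otimes1 + 1\otimes T_2^2$ does not converge (the perturbation $1\otimes T_2^2$ is unbounded), so that alternative should be discarded; the joint functional calculus route is the one that works, and that is again precisely what the cited theorems package. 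With those two corrections the plan is sound and is, up to phrasing, the same argument the references carry out.
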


\begin{lem}
\label{lem:sum_sa_small_var}
Suppose there exist $x_0\in M$ and $a<1$ such that $\big\| \big( S(x)-S(x_0) \big) \big(S(x_0)\pm i\big)^{-1} \big\| \leq a$ for all $x\in M$. Then $\til\D_\pS$ is regular self-adjoint on the domain $\Dom\til\D_\pS = \Dom\til\pS(\cdot) \cap \Dom\til\D$. 
\end{lem}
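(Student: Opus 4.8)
The goal is to show that $\til\D_\pS = \pS(\cdot)\times\D$ is regular self-adjoint, and the hypothesis — smallness of the variation of $\pS$ relative to a single reference point $x_0$ — suggests comparing $\til\D_\pS$ with the \emph{constant-potential} product operator $\til\D_\pS^{x_0} := \pS^{x_0}(\cdot)\times\D$, for which regularity and self-adjointness on $\Dom\til\pS^{x_0}(\cdot)\cap\Dom\til\D$ is essentially immediate. Indeed, writing $\til\D_\pS^{x_0}$ (up to the relevant grading conventions) in the form $\pS(x_0)\otimes1 \pm i\otimes\D$-type expression on the tensor product $E\otimes L^2(M,\bF)$, \cref{lem:sum_sa} (applied after absorbing the family structure into a tensor factor, as in the construction of $\pS(\cdot)$ and $\D$ on $L^2(M,E\otimes\bF)$) gives that $\til\D_\pS^{x_0}$ is regular self-adjoint. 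The idea is then to treat $\til\D_\pS$ as a perturbation of $\til\D_\pS^{x_0}$ by the operator $\til\pS(\cdot) - \til\pS^{x_0}(\cdot)$, whose entries are (the closures of the families) $\pm i(\pS(\cdot) - \pS(x_0))$.

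\emph{First}, I would record that the perturbation $R := \pS(\cdot) - \pS(x_0)$ is relatively bounded by $\pS(x_0)$ with relative bound $a < 1$: by hypothesis $\|R(\pS(x_0)\pm i)^{-1}\| \leq a$ pointwise in $x$, hence the same bound holds for the induced operator on $C_0(M,E)$, and therefore (by \cref{lem:rel_bdd}) $\|R\psi\| \leq a\|\psi\|_{\pS(x_0)}$ for all $\psi\in\Dom\pS^{x_0}(\cdot)$. After tensoring with $L^2(M,\bF)$ this reads $\|R\psi\| \leq a\|\psi\|_{\pS^{x_0}(\cdot)} \leq a\|\psi\|_{\til\D_\pS^{x_0}}$ on the relevant domain. \emph{Second}, the perturbation $\til R := \til\pS(\cdot) - \til\pS^{x_0}(\cdot)$ is symmetric (each off-diagonal entry $\pm iR$ with $R$ symmetric makes the $2\times2$ matrix symmetric) and, by the above, satisfies $\|\til R\psi\| \leq a\|\til\D_\pS^{x_0}\psi\|$ with $a < 1$. \emph{Third}, I would invoke the Kato–Rellich theorem for regular self-adjoint operators on Hilbert modules (stated in the excerpt, from \cite{KL12}): with $T = \til\D_\pS^{x_0}$ and $S = \til R$, the bound $\|S\psi\| \leq a\|T\psi\|$ (so we may take $b = 0$) gives that $T + S = \til\D_\pS$ is regular self-adjoint on $\Dom(T+S) = \Dom\til\D_\pS^{x_0} = \Dom\til\pS^{x_0}(\cdot)\cap\Dom\til\D$.

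\emph{Finally}, I need to identify this domain with $\Dom\til\pS(\cdot)\cap\Dom\til\D$, as claimed in the statement. Since $\til R$ is $\til\D_\pS^{x_0}$-bounded with bound $< 1$, the two graph norms of $\til\D_\pS^{x_0}$ and $\til\D_\pS = \til\D_\pS^{x_0} + \til R$ are equivalent, so $\Dom\til\D_\pS = \Dom\til\D_\pS^{x_0}$. Moreover $R = \pS(\cdot) - \pS(x_0)$ being $\pS(x_0)$-bounded with bound $<1$ likewise makes the graph norms of $\pS^{x_0}(\cdot)$ and $\pS(\cdot)$ equivalent on $C_0(M,E)$ (the relevant closures), whence $\Dom\pS(\cdot) = \Dom\pS^{x_0}(\cdot)$ and so $\Dom\til\pS(\cdot) = \Dom\til\pS^{x_0}(\cdot)$; and $\Dom\til\D$ is unchanged. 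Therefore $\Dom\til\D_\pS = \Dom\til\pS^{x_0}(\cdot)\cap\Dom\til\D = \Dom\til\pS(\cdot)\cap\Dom\til\D$, as required.

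\emph{Main obstacle.} The only genuinely delicate point is the bookkeeping around \emph{cores and closures}: the operators $\pS(\cdot)$, $\til\D$, $\til\pS(\cdot)$ are initially defined on the small domain $C_c^1(M,W)\otimes_{C_0^1(M)}\Dom\D$, and one must make sure that the relative-boundedness estimate, which is clean on the pointwise/algebraic level, passes correctly to the closures so that Kato–Rellich applies to the closed operators and the resulting domain really is the closure of the initial domain in the $\til\D_\pS$-graph norm. This is where \cref{lem:dom_sum} and the uniform graph-norm comparisons (and the fact, already used in the excerpt, that $\pS(\cdot)$ is regular self-adjoint with $C_c(M,W)$ a core) do the work; the rest is a direct application of the quoted Kato–Rellich theorem. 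The case hypothesis ($a<1$ against a \emph{single} reference point $x_0$, valid for \emph{all} $x\in M$) is exactly what is needed to get a \emph{global} relative bound below $1$; the reduction from the genuine assumption \ref{ass:A4} (finitely many local reference points) to this single-reference-point situation is presumably handled separately, by the relative index / cutting-and-pasting arguments, and is not part of this lemma.
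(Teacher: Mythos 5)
Your proof follows essentially the same route as the paper's: reduce to the constant-potential operator $\til\D_\pS^{x_0}$, observe that it is regular self-adjoint via \cref{lem:sum_sa}, show that the perturbation $\til R := \til\pS(\cdot)-\til\pS^{x_0}(\cdot)$ is $\til\D_\pS^{x_0}$-bounded with relative bound $<1$, and invoke the Kato--Rellich theorem. Two small bookkeeping slips are worth flagging, although neither affects the validity of the argument.

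First, the middle inequality in your chain $\|R\psi\| \le a\,\|\psi\|_{\pS^{x_0}(\cdot)} \le a\,\|\psi\|_{\til\D_\pS^{x_0}}$ is asserted but not justified. The paper derives it from the anticommutativity of $\til\pS^{x_0}(\cdot)$ and $\til\D$: on the common domain one has $\la\til\D_\pS^{x_0}\psi\mvert\til\D_\pS^{x_0}\psi\ra = \la\til\pS^{x_0}(\cdot)\psi\mvert\til\pS^{x_0}(\cdot)\psi\ra + \la\til\D\psi\mvert\til\D\psi\ra$, and hence $\big\|(\til\pS^{x_0}(\cdot)\pm i)\psi\big\| \le \|\psi\|_{\til\D_\pS^{x_0}}$, equivalently $\|(\til\pS^{x_0}(\cdot)\pm i)(\til\D_\pS^{x_0}\pm i)^{-1}\|\le 1$ by \cref{lem:rel_bdd}. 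You need this cross term to vanish; it should be stated explicitly.

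Second, in your ``Second'' and ``Third'' steps you slide from the graph-norm bound $\|\til R\psi\|\le a\,\|\psi\|_{\til\D_\pS^{x_0}}$ (which is what the hypothesis and the anticommutativity actually deliver, since the hypothesis involves the \emph{resolvent} $(S(x_0)\pm i)^{-1}$, not the bare inverse $S(x_0)^{-1}$) to the bare bound $\|\til R\psi\|\le a\,\|\til\D_\pS^{x_0}\psi\|$ with $b=0$. The bare bound is not established and would require a different hypothesis; what you have gives $a=b<1$ in the Kato--Rellich inequality, exactly as in the remark the paper records after quoting the theorem. This still yields the conclusion, so the slip is harmless.

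A minor stylistic difference: for the final identification $\Dom\til\pS^{x_0}(\cdot)=\Dom\til\pS(\cdot)$ you re-apply Kato--Rellich directly to $\pS(\cdot)$, whereas the paper cites \cref{lem:uegn}. In the present single-reference-point situation your self-contained argument is perfectly adequate and arguably more transparent. Your closing remark on cores/closures is also appropriate; the paper handles this implicitly by working with the resolvent formulations, which are insensitive to the choice of core.
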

\begin{proof}
The operator $\pS^{x_0}(\cdot)$ on $L^2(M,E\otimes\bF)$ can be viewed as the operator $\pS(x_0)\otimes1$ on $E\otimes L^2(M,\bF)$. 
Applying \cref{lem:sum_sa} to $\pS(x_0)$ and $\D$, it follows that $\til\D_\pS^{x_0}(\cdot)$ is self-adjoint on the domain $(\Dom\pS(x_0)\otimes\Dom\D)^{\oplus2} = \Dom\til\pS^{x_0}(\cdot)\cap\Dom\til\D$. 
Since $\til\pS^{x_0}(\cdot)$ and $\til\D$ anti-commute, we have $\la\til\D_\pS^{x_0}\psi|\til\D_\pS^{x_0}\psi\ra = \la\til\pS^{x_0}(\cdot)\psi|\til\pS^{x_0}(\cdot)\psi\ra + \la\til\D\psi|\til\D\psi\ra$ for any $\psi\in\Dom\til\pS^{x_0}(\cdot)\cap\Dom\til\D$. Therefore
\begin{align*}
\big\| \big(\til\pS^{x_0}(\cdot)\pm i\big) \psi \big\|^2 &= \big\| \la\psi|\psi\ra + \big\la\til\pS^{x_0}(\cdot)\psi \mvert \til\pS^{x_0}(\cdot)\psi\big\ra \big\| 
\leq \big\| \la\psi|\psi\ra + \big\la\til\D_\pS^{x_0}\psi \mvert \til\D_\pS^{x_0}\psi\big\ra \big\| = \|\psi\|_{\til\D_\pS^{x_0}}^2 .
\end{align*}
It follows from \cref{lem:rel_bdd} that $(\til\pS^{x_0}(\cdot)\pm i)(\til\D_\pS^{x_0}\pm i)^{-1}$ is bounded with norm at most $1$.
Therefore we also have 
\begin{align*}
\|(\til\D_\pS-\til\D_\pS^{x_0}) (\til\D_\pS^{x_0}\pm i)^{-1}\| 
&\leq \|(\til\pS(\cdot)-\til\pS^{x_0}(\cdot)) (\til\pS^{x_0}(\cdot)\pm i)^{-1}\| \; \|(\til\pS^{x_0}(\cdot)\pm i) (\til\D_\pS^{x_0}\pm i)^{-1}\| \nonumber\\
&\leq \sup_{x\in M} \|(\pS(x)-\pS(x_0)) (\pS(x_0)\pm i)^{-1}\| \leq a < 1 . 
\end{align*}
By the Kato-Rellich theorem, it follows that $\til\D_\pS$ is regular self-adjoint on the domain $\Dom\til\D_\pS^{x_0} = \Dom\til\pS^{x_0}(\cdot)\cap\Dom\til\D$. 
Finally, we know from \cref{lem:uegn} that $\Dom\til\pS^{x_0}(\cdot) = \Dom\til\pS(\cdot)$. 
\end{proof}

We will make use of the following generalisation of Tietze's extension theorem. 
\begin{thm}[{\cite[Theorem 4.1]{Dug51}}]
\label{thm:Dug}
Let $X$ be an arbitrary metric space, $A$ a closed subset of $X$, $L$ a locally convex linear space, and $f\colon A\to L$ a continuous map. Then there exists a continuous extension $F\colon X\to L$ of $f$ such that $F(X)$ is contained in the convex hull of $f(A)$.
\end{thm}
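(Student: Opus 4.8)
The plan is to run the classical Dugundji construction. Write $U := X\setminus A$; this is open, and $r_x := d(x,A)>0$ for every $x\in U$. The balls $\{B(x,r_x/2)\}_{x\in U}$ cover $U$, which is metrizable and hence paracompact by Stone's theorem, so this cover admits a locally finite open refinement $\{U_\alpha\}_{\alpha\in\Lambda}$ with a subordinate partition of unity $\{\phi_\alpha\}_{\alpha\in\Lambda}$ (each $\phi_\alpha$ continuous with $\supp\phi_\alpha\subset U_\alpha$, the family of supports locally finite, and $\sum_\alpha\phi_\alpha\equiv1$ on $U$). For each $\alpha$ I would fix $y_\alpha\in U$ with $U_\alpha\subset B(y_\alpha,r_{y_\alpha}/2)$, pick any $x_\alpha\in U_\alpha$, and then choose $a_\alpha\in A$ with $d(x_\alpha,a_\alpha)<2\,d(x_\alpha,A)$ (possible since $d(x_\alpha,A)$ is the infimum of distances from $x_\alpha$ to $A$).

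Then define $F\colon X\to L$ by $F|_A:=f$ and $F(x):=\sum_{\alpha\in\Lambda}\phi_\alpha(x)\,f(a_\alpha)$ for $x\in U$. On $U$ this is, near each point, a finite sum of the continuous $L$-valued maps $x\mapsto\phi_\alpha(x)f(a_\alpha)$, hence continuous there; and for $x\in U$ the value $F(x)$ is a finite convex combination of points of $f(A)$, so $F(X)\subset\mathrm{conv}\big(f(A)\big)$ as required. It thus remains only to check continuity of $F$ at points of $A$ — this is the one step I expect to be genuinely delicate.

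For $x_0\in A$ the key is a geometric estimate on which $a_\alpha$ can contribute near $x_0$. If $x\in U$ with $\phi_\alpha(x)\neq0$, then $x\in\supp\phi_\alpha\subset U_\alpha\subset B(y_\alpha,r_{y_\alpha}/2)$, whence $r_x\geq r_{y_\alpha}-d(x,y_\alpha)>r_{y_\alpha}/2$, so $r_{y_\alpha}<2r_x\leq 2\,d(x,x_0)$; chasing the triangle inequality through $y_\alpha$ and $x_\alpha$ then yields $d(x_0,a_\alpha)\leq C\,d(x,x_0)$ for an absolute constant $C$. Hence, as $x\to x_0$, every $a_\alpha$ occurring in $F(x)$ approaches $x_0$ within $A$. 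Now given a neighbourhood of $0$ in $L$, local convexity supplies a \emph{convex} such neighbourhood $V$; continuity of $f$ at $x_0$ gives $\delta>0$ with $f(a)-f(x_0)\in V$ for all $a\in A$ with $d(a,x_0)<\delta$; and by the estimate there is $\eta>0$ so that $d(x,x_0)<\eta$ forces $d(a_\alpha,x_0)<\delta$ for every contributing $\alpha$. Since $F(x)-f(x_0)=\sum_\alpha\phi_\alpha(x)\big(f(a_\alpha)-f(x_0)\big)$ is then a convex combination of elements of $V$, it lies in $V$. This shows $F(x)\to f(x_0)=F(x_0)$, completing the proof. The two essential ingredients are paracompactness of metric spaces (used to obtain the locally finite refinement and partition of unity) and local convexity of $L$ (used precisely in the final convex-combination step).
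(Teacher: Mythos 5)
The paper does not prove this statement; it cites it directly as \cite[Theorem 4.1]{Dug51}, so there is no in-paper proof to compare against. Your argument is a correct reconstruction of Dugundji's original construction: paracompactness of the metric open set $U=X\setminus A$ gives a locally finite refinement of the cover by balls $B(x,r_x/2)$ and a subordinate partition of unity, the anchor points $a_\alpha\in A$ are chosen to nearly realise $d(x_\alpha,A)$, and the only place where local convexity of $L$ enters is the last step, where $F(x)-f(x_0)$ is recognised as a convex combination of elements of a convex neighbourhood $V$ of $0$. The triangle-inequality chase is sound — with your normalisations one gets $d(x_0,a_\alpha)\le 9\,d(x,x_0)$ for every contributing $\alpha$ — and the routine cases (continuity on $U$ from local finiteness, continuity from within $A$, $F(X)\subset\mathrm{conv}(f(A))$) are all handled or trivially checked. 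In short: correct, and the same approach as the cited source.
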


\begin{lem}
\label{lem:extension}
Let $a\in[0,1)$, and consider a point $x\in M$ with an open neighbourhood $U\subset M$. 
\begin{enumerate}
\item If $\sup_{y\in\bar U} \|(\pS(y)-\pS(x)) (\pS(x)\pm i)^{-1}\| \leq a$, then there exists a family of regular self-adjoint operators $\{\pS^U(y)\}_{y\in M}$ satisfying \ref{ass:A1} and \ref{ass:A2} such that $\pS^U(y) = \pS(y)$ for all $y\in\bar U$, and 
$\sup_{y\in M} \|(\pS^U(y)-\pS(x)) (\pS(x)\pm i)^{-1}\| \leq a$. 
\item If $S(x)$ is invertible and $\sup_{y\in\bar U} \|(\pS(y)-\pS(x)) \pS(x)^{-1}\| \leq a$, then there is a family of regular self-adjoint operators $\{\pS^U(y)\}_{y\in M}$ satisfying \ref{ass:A1} and \ref{ass:A2} such that $\pS^U(y) = \pS(y)$ for all $y\in\bar U$, 
$\sup_{y\in M} \|(\pS^U(y)-\pS(x)) \pS(x)^{-1}\| \leq a$, and 
$\sup_{y\in M} \|\pS^U(y)^{-1}\| \leq \frac1{1-a} \|\pS(x)^{-1}\|$. 
\end{enumerate}
\end{lem}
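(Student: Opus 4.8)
The plan is to deduce both parts from the generalised Tietze extension theorem (\cref{thm:Dug}) together with the Kato--Rellich theorem. The naive idea of interpolating with a scalar cut-off, $\pS^U(y) := \pS(x) + \rho(y)\big(\pS(y) - \pS(x)\big)$, does not work: the variation $\big\|\big(\pS(y)-\pS(x)\big)(\pS(x)\pm i)^{-1}\big\|$ is completely uncontrolled for $y$ outside $\bar U$, so one cannot force the relative bound $<1$ that self-adjointness needs. Instead I would extend the operator-valued variation itself, exploiting that \cref{thm:Dug} keeps the extension inside the convex hull of the prescribed values, and hence inside the closed ball of radius $a$.

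For part 1, set $T(y) := \pS(y) - \pS(x)$; by \ref{ass:A1} each $T(y)$ is a symmetric element of $\Hom_B(W,E)$, and by \ref{ass:A2} the map $y \mapsto T(y)$ is norm-continuous from $M$ to $\Hom_B(W,E)$. The set
\[
\mathcal{C} := \big\{ R \in \Hom_B(W,E) : R \text{ symmetric},\ \|R(\pS(x)\pm i)^{-1}\| \leq a \big\}
\]
is a norm-closed convex subset of the Banach space $\Hom_B(W,E)$, and the hypothesis gives $T(\bar U) \subset \mathcal{C}$. Since $M$ (a Riemannian manifold) is a metric space and $\bar U$ is closed in it, \cref{thm:Dug} provides a norm-continuous extension $T^U \colon M \to \Hom_B(W,E)$ of $T|_{\bar U}$ taking values in the convex hull of $T(\bar U)$, hence in $\mathcal{C}$. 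I would then set $\pS^U(y) := \pS(x) + T^U(y)$. Since $T^U(y)$ is symmetric with $\|T^U(y)(\pS(x)\pm i)^{-1}\| \leq a < 1$, the Kato--Rellich theorem shows $\pS^U(y)$ is regular self-adjoint with $\Dom\pS^U(y) = \Dom\pS(x) = W$, so \ref{ass:A1} holds with the same (compact) inclusion $W \into E$, and \ref{ass:A2} holds because $\pS^U = \pS(x) + T^U$ is norm-continuous. On $\bar U$ we have $T^U = T$, hence $\pS^U = \pS$ there; and $\sup_{y\in M}\big\|\big(\pS^U(y)-\pS(x)\big)(\pS(x)\pm i)^{-1}\big\| = \sup_{y\in M}\|T^U(y)(\pS(x)\pm i)^{-1}\| \leq a$, as required.

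For part 2 I would run the same argument with $\mathcal{C}$ replaced by $\mathcal{C}' := \big\{ R \in \Hom_B(W,E) : R \text{ symmetric},\ \|R\pS(x)^{-1}\| \leq a \big\}$, which is again norm-closed, convex, and contains $T(\bar U)$; \cref{thm:Dug} gives an extension $T^U \colon M \to \mathcal{C}'$. For $\pS^U(y) := \pS(x) + T^U(y)$, the estimate $\|T^U(y)\xi\| \leq \|T^U(y)\pS(x)^{-1}\|\,\|\pS(x)\xi\| \leq a\|\pS(x)\xi\|$ shows $T^U(y)$ has relative bound $a<1$ with respect to $\pS(x)$, so Kato--Rellich again yields regularity, self-adjointness and $\Dom\pS^U(y) = W$; \ref{ass:A1}, \ref{ass:A2}, and the equality $\pS^U = \pS$ on $\bar U$ follow exactly as before. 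For the uniform invertibility I would factor $\pS^U(y) = \big(1 + T^U(y)\pS(x)^{-1}\big)\pS(x)$; since $\|T^U(y)\pS(x)^{-1}\| \leq a < 1$, a Neumann series shows $1 + T^U(y)\pS(x)^{-1}$ is invertible with inverse of norm $\leq (1-a)^{-1}$, whence $\pS^U(y)^{-1} = \pS(x)^{-1}\big(1 + T^U(y)\pS(x)^{-1}\big)^{-1}$ and $\sup_{y\in M}\|\pS^U(y)^{-1}\| \leq (1-a)^{-1}\|\pS(x)^{-1}\|$.

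The only genuinely delicate point --- and the reason ordinary Tietze extension does not suffice --- is keeping the extended variation inside the ball of radius $a$: this is exactly what \cref{thm:Dug} provides, since convexity of $\mathcal{C}$ (resp.\ $\mathcal{C}'$) forces the bound to survive passage to the convex hull, and it is this surviving bound that lets the Kato--Rellich theorem handle the whole family $\{\pS^U(y)\}_{y\in M}$ at once. Checking symmetry of the perturbations, constancy of the domain, and parameter-continuity of the extended family is then routine, using \cref{lem:dom_inc_rel_bdd} to see that all the relevant graph norms are equivalent.
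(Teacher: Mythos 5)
Your proposal is correct and follows essentially the same route as the paper: apply Dugundji's extension theorem (\cref{thm:Dug}) to the norm-continuous $\Hom_B^s(W,E)$-valued map on the closed set $\bar U$, use the convexity of the closed ball of radius $a$ (in the graph norm of $\pS(x)$ for part 1, in the norm $\|\pS(x)\cdot\|$ for part 2) to keep the extension inside that ball, invoke the Kato--Rellich theorem for self-adjointness, and a Neumann series for the uniform invertibility bound in part 2. The only cosmetic difference is that you extend the centred variation $T(y)=\pS(y)-\pS(x)$ rather than $\pS(y)$ itself, which is just a translation of the same argument.
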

\begin{proof}
We view $W_x := \Dom\pS(x)$ as a Hilbert $B$-module equipped with the graph norm of $\pS(x)$. 
Let $\Hom_B^s(W_x,E)$ denote the (real) Banach space of symmetric operators $T$ on $E$ with $\Dom T = W_x$ (equipped with the operator norm of maps $W_x\to E$). 
We then have a metric space $X=M$, a closed subset $A=\bar U$, a Banach space $L=\Hom_B^s(W_x,E)$, and a continuous map $\pS\colon A\to L$ whose image is contained in the ball of radius $a$ around $\pS(x)$. 
By \cref{thm:Dug} there then exists a continuous extension $\pS^U\colon M\to\Hom_B^s(W_x,E)$ whose image is contained in the same ball, i.e.\ such that $\sup_{y\in M} \|(\pS^U(y)-\pS(x)) (\pS(x)\pm i)^{-1}\| \leq a$. 
It follows from the Kato-Rellich theorem that $\pS^U(y)$ is regular self-adjoint (on the domain $W_x$) for all $y\in M$. This proves the first statement. 

The second statement is proven similarly, by equipping $W_x$ with the equivalent norm $\|\psi\|_{W_x} := \|\pS(x)\psi\|$ (in this case, we note that the Kato-Rellich theorem still applies, because $\|\pS(x) (\pS(x)\pm i)^{-1}\| \leq 1$). Furthermore, in this case, using the inequality $\|(\pS^U(y)-\pS(x)) \pS(x)^{-1}\| \leq a$, a Neumann series argument shows that $\|\pS^U(y)^{-1}\| \leq \frac1{1-a} \|\pS(x)^{-1}\|$ (for any $y\in M$). 
\end{proof}

\begin{prop}
\label{prop:sum_sa}
The product operator $\til\D_\pS := \pS(\cdot)\times\D$ is regular self-adjoint on $\Dom\til\D_\pS = \Dom\til\pS(\cdot) \cap \Dom\til\D$. 
\end{prop}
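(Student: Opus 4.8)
The plan is to localise on $M$: cover $M$ by finitely many open sets on each of which $\pS(\cdot)$ varies only slightly around a fixed reference point, apply \cref{lem:sum_sa_small_var} on each patch (after extending the local family by \cref{lem:extension}), and then glue the resulting local model operators with a partition of unity.

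\emph{Step 1: a good finite cover.} For $x\in K$, the norm-continuity of $\pS$ (\ref{ass:A2}) together with \cref{lem:uegn} gives an open neighbourhood of $x$ on whose closure $\sup_y\big\|(\pS(y)-\pS(x))(\pS(x)\pm i)^{-1}\big\|<1$; by compactness finitely many of these cover $K$. For $M\backslash K$ we use the cover $\{V_j\}$ of \ref{ass:A4}: since $\|\pS(x_j)(\pS(x_j)\pm i)^{-1}\|\le1$, the bound $\|(\pS(x)-\pS(x_j))\pS(x_j)^{-1}\|\le a_j$ yields $\|(\pS(x)-\pS(x_j))(\pS(x_j)\pm i)^{-1}\|\le a_j$ on $V_j$. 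Relabelling (and shrinking slightly so the estimate survives on the closures) we obtain a finite open cover $\{U_k\}_{k=1}^N$ of $M$, points $x_k\in U_k$ and constants $a_k\in[0,1)$ with $\sup_{y\in\bar U_k}\big\|(\pS(y)-\pS(x_k))(\pS(x_k)\pm i)^{-1}\big\|\le a_k$.

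\emph{Step 2: local model operators.} For each $k$, the first part of \cref{lem:extension} produces a family $\{\pS^{(k)}(y)\}_{y\in M}$ satisfying \ref{ass:A1}--\ref{ass:A2}, agreeing with $\pS$ on $\bar U_k$, and with $\sup_{y\in M}\big\|(\pS^{(k)}(y)-\pS(x_k))(\pS(x_k)\pm i)^{-1}\big\|\le a_k$. By \cref{lem:sum_sa_small_var} the product operator $T_k:=\til\D_{\pS^{(k)}}$ is regular self-adjoint on $\Dom\til\pS^{(k)}(\cdot)\cap\Dom\til\D$. Since the relative bound $a_k<1$ now holds for \emph{all} $y\in M$, the graph norms of the $\pS^{(k)}(y)$ are uniformly equivalent, so (as in \cref{lem:pS_reg-sa}, using \cref{lem:uegn}) $\Dom\til\pS^{(k)}(\cdot)=\Dom\til\pS(\cdot)$, whence $\Dom T_k=\Dom\til\pS(\cdot)\cap\Dom\til\D$ for every $k$. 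Moreover, because $\pS^{(k)}=\pS$ on $\bar U_k$, the operators $T_k$ and $\til\D_\pS$ coincide on elements supported in $U_k$, and for $\phi\in C_c^\infty(U_k)$ one has $[T_k,\phi]=[\til\D_\pS,\phi]=[\til\D,\phi]$, a bounded endomorphism (the fibrewise potentials commute with the scalar $\phi$).

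\emph{Step 3: patching.} Fix a smooth partition of unity $\{\phi_k^2\}_{k=1}^N$ with $\phi_k\in C_c^\infty(U_k)$ and $\sum_k\phi_k^2=1$. The localisation of Step~2 first identifies $\Dom\til\D_\pS$ with $\Dom\til\pS(\cdot)\cap\Dom\til\D$: if $\psi$ lies in the intersection then each $\phi_k\psi\in\Dom T_k$ is approximable in the $T_k$-graph norm by initial-domain elements supported in $U_k$, hence in the $\til\D_\pS$-graph norm, so $\psi=\sum_k\phi_k(\phi_k\psi)\in\Dom\til\D_\pS$; conversely, for a $\til\D_\pS$-graph-Cauchy sequence $\psi_n$ from the initial domain the identity $T_k(\phi_k\psi_n)=\phi_k\til\D_\pS\psi_n+[\til\D,\phi_k]\psi_n$ shows $\{\phi_k\psi_n\}$ is $T_k$-graph-Cauchy, forcing its limit $\phi_k\psi$ into $\Dom T_k$, and summing gives $\psi\in\Dom\til\pS(\cdot)\cap\Dom\til\D$. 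It remains to prove $\til\D_\pS\pm i$ surjective; since $\til\D_\pS$ is a closure of a symmetric operator, \cite[Lemma 9.8]{Lance95} then gives regular self-adjointness. For $\lambda>0$ put $P_\lambda\eta:=\sum_k\phi_k(T_k\pm i\lambda)^{-1}(\phi_k\eta)$. Using $T_k(\phi_k u)=\phi_k T_k u+[\til\D,\phi_k]u$ and that $\til\D_\pS$ agrees with $T_k$ on the support of $\phi_k$, a direct computation gives $(\til\D_\pS\pm i\lambda)P_\lambda=\id+R_\lambda$ with $R_\lambda:=\sum_k[\til\D,\phi_k](T_k\pm i\lambda)^{-1}\phi_k$. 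As each $T_k$ is self-adjoint, $\|(T_k\pm i\lambda)^{-1}\|\le\lambda^{-1}$, hence $\|R_\lambda\|\le\lambda^{-1}\sum_k\|[\til\D,\phi_k]\|\to0$ as $\lambda\to\infty$. Choosing $\lambda$ with $\|R_\lambda\|<1$, the bounded operator $P_\lambda(\id+R_\lambda)^{-1}$ is a right inverse of $\til\D_\pS\pm i\lambda$, so $\til\D_\pS\pm i\lambda$ is surjective; applying \cite[Lemma 9.8]{Lance95} to the closed symmetric operator $\lambda^{-1}\til\D_\pS$ finishes the proof.

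\emph{Expected main obstacle.} The delicate point is Step~3: a partition-of-unity parametrix for $\til\D_\pS\pm i$ inevitably leaves the error term $R$ built from the (in general not small) commutators $[\til\D,\phi_k]$, so one cannot immediately invert. Working at spectral parameter $\pm i\lambda$ with $\lambda$ large turns this error into a strict contraction --- this is the mechanism underlying the estimates of \cite[\S2]{AW11} in the case $M=[0,1]$ --- and the rest is the bookkeeping that multiplication by the cutoffs preserves $\Dom\til\pS(\cdot)\cap\Dom\til\D$ and that $T_k$ and $\til\D_\pS$ genuinely coincide over $U_k$, which is where assumptions \ref{ass:A1}--\ref{ass:A2} and \cref{lem:uegn,lem:sum_sa} are used.
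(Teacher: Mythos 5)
Your proposal is correct and follows essentially the same strategy as the paper: build a finite cover of $M$ on whose members $\pS$ has small variation relative to a fixed reference point (converting the assumption~\ref{ass:A4} estimate $\|(\pS(x)-\pS(x_j))\pS(x_j)^{-1}\|\le a_j$ into the resolvent form needed by \cref{lem:extension,lem:sum_sa_small_var}), extend each local family globally via \cref{lem:extension} and invoke \cref{lem:sum_sa_small_var} for the local model operators, and then assemble a partition-of-unity parametrix at large spectral parameter $\pm i\lambda$ to make the commutator error term $\sum_k[\til\D,\phi_k](T_k\pm i\lambda)^{-1}\phi_k$ a strict contraction. The only difference is cosmetic: where you justify $\Dom\til\pS(\cdot)\cap\Dom\til\D\subset\Dom\til\D_\pS$ by a cutoff-and-approximate argument patch by patch, the paper does it in one step by using \cref{lem:uegn} to bound $(\til\D_\pS-\til\D_\pS^{x_0})(\til\D_\pS^{x_0}\pm i)^{-1}$ globally for a single reference point $x_0$, and obtains the reverse inclusion for free once surjectivity of $\til\D_\pS\pm i\lambda$ is established; your version is a bit more work but equally valid.
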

\begin{proof}
By assumption \ref{ass:A4}, there exists a finite open cover $\{V_j\}_{j=1}^l$ of $M\backslash K$ with points $x_j\in V_j$ and positive numbers $a_j<1$ such that $\big\| \big( \pS(x)-\pS(x_j) \big) \pS(x_j)^{-1} \big\| \leq a_j$ for all $x\in V_j$. 
For $x\in K$, we know by continuity that there exists a precompact open neighbourhood $U_x$ of $x$ such that $\sup_{y\in U_x} \|(\pS(y)-\pS(x)) (\pS(x)\pm i)^{-1}\| \leq \frac12$. 
Since $K$ is compact, there exist finitely many points $x_{l+k}\in K$ such that the open sets $V_{l+k} := U_{x_{l+k}}$ cover $K$. Setting $a_{l+k} := \frac12$, we therefore have a finite open cover $\{V_j\}$ of $M$ and numbers $a_j<1$ such that $\big\| \big( \pS(x)-\pS(x_j) \big) \big(\pS(x_j)\pm i\big)^{-1} \big\| \leq a_j$ for any $x\in V_j$. 

By \cref{lem:extension}, there exists for each $j$ a family of self-adjoint operators $\{\pS^{V_j}(x)\}_{x\in M}$ such that $\pS^{V_j}(x) = \pS(x)$ for all $x\in\bar{V_j}$, and $\sup_{x\in M} \|(\pS^{V_j}(x)-\pS(x_j)) (\pS(x_j)\pm i)^{-1}\| \leq a_j$. 
By \cref{lem:sum_sa_small_var}, the corresponding product operators $\til\D_\pS^{V_j}$ are regular self-adjoint on $\Dom\til\D_\pS^{V_j} = \Dom\til\pS(\cdot) \cap \Dom\til\D$. 
Let $\{\chi_j^2\}$ be a smooth partition of unity subordinate to the open cover $\{V_j\}$ of $M$. 
For some $\lambda\in\R\backslash\{0\}$, consider the operator
\[
R_\pm(\lambda) := \sum_j \chi_j \big( \til\D_\pS^{V_j} \pm i\lambda \big)^{-1} \chi_j .
\]
For any $x_0\in M$, we have $\Ran R_\pm(\lambda) \subset \Dom\til\pS(\cdot) \cap \Dom\til\D = \Dom\til\D_\pS^{x_0} \subset \Dom\til\D_\pS$, where the second inclusion follows because
\begin{align*}
\big\| (\til\D_\pS-\til\D_\pS^{x_0}) (\til\D_\pS^{x_0}\pm i)^{-1} \big\| 
&\leq \big\| (\til\pS(\cdot)-\til\pS^{x_0}(\cdot)) (\til\pS^{x_0}(\cdot)\pm i)^{-1} \big\| \; \big\| (\til\pS^{x_0}(\cdot)\pm i) (\til\D_\pS^{x_0}\pm i)^{-1} \big\| \\
&\leq \big\| (\til\pS(\cdot)-\til\pS^{x_0}(\cdot)) (\til\pS^{x_0}(\cdot)\pm i)^{-1} \big\|
\end{align*}
is bounded by \cref{lem:uegn}. 
For any $\psi\in L^2(M,E\otimes\bF)^{\oplus2}$ we have
\[
\big(\til\D_\pS\pm i\lambda\big) R_\pm(\lambda) \psi = \psi + \sum_j [\til\D,\chi_j] \big(\til\D_\pS^{V_j}\pm i\lambda\big)^{-1} \chi_j \psi .
\]
We can pick $\lambda$ sufficiently large, such that the norm of $\mK_\pm(\lambda) := \sum_j [\til\D,\chi_j] \big(\til\D_\pS^{V_j}\pm i\lambda\big)^{-1} \chi_j$ is less than one. Then $1+\mK_\pm(\lambda)$ is invertible, and $R_\pm(\lambda)(1+\mK_\pm(\lambda))^{-1}$ is a right inverse of $\til\D_\pS\pm i\lambda$. Similarly, we can also obtain a left inverse, which proves that $\til\D_\pS$ is regular self-adjoint on the domain $\Dom\til\D_\pS = \Dom\til\pS(\cdot) \cap \Dom\til\D$. 
\end{proof}

\section{Index theory}
\label{sec:index}

\subsection{The Fredholm property}

In this section, we will show that the operator $\til\D_\pS$ is Fredholm, and therefore $\Index(\D-i\pS(\cdot))$ is well-defined. 
The Fredholm property will be a consequence of the local compactness of the resolvents and the `invertibility near infinity'. 

\begin{prop}[{\cite[Theorem 6.7]{KL13}}]
\label{prop:cpt_res}
Let $\phi\in C_0(M)$. 
Then $\phi(\til\D_\pS\pm i)^{-1}$ is a compact operator on $L^2(M,E\otimes\bF)^{\oplus2}$. 
Moreover, if $(\pS(\cdot)\pm i)^{-1}$ is compact on $C_0(M,E)$, then $(\til\D_\pS\pm i)^{-1}$ is also compact. 
\end{prop}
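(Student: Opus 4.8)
This is \cite[Theorem 6.7]{KL13}; here is the shape of the argument I would give. It suffices to treat $\phi\in C_c(M)$, since $\phi\mapsto\phi(\til\D_\pS\pm i)^{-1}$ is norm-continuous and $\End_B^0$ is norm-closed, and it suffices to find \emph{one} large $\lambda>0$ for which $\phi(\til\D_\pS\pm i\lambda)^{-1}$ is compact, because $\phi(\til\D_\pS\pm i)^{-1}=\phi(\til\D_\pS\pm i\lambda)^{-1}\bigl(1\pm i(\lambda-1)(\til\D_\pS\pm i)^{-1}\bigr)$. The plan is then to reduce, via a partition of unity followed by a perturbation argument, to the case of a \emph{constant} potential, and to handle that case by a subordination (heat-semigroup) formula.

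Concretely, I would reuse from the proof of \cref{prop:sum_sa} the finite open cover $\{V_j\}$ of $M$, the points $x_j$, the constants $a_j<1$, the extended families $\pS^{V_j}(\cdot)$, and the identity $(\til\D_\pS\pm i\lambda)^{-1}=R_\pm(\lambda)(1+\mK_\pm(\lambda))^{-1}$ (valid for $\lambda$ large), with $R_\pm(\lambda)=\sum_j\chi_j(\til\D_\pS^{V_j}\pm i\lambda)^{-1}\chi_j$; thus it is enough that each $\phi\chi_j(\til\D_\pS^{V_j}\pm i\lambda)^{-1}$ be compact. Since $\til\pS^{V_j}(\cdot)-\til\pS^{x_j}(\cdot)$ is relatively bounded by $\til\D_\pS^{x_j}$ with bound $a_j+a_j/\lambda<1$ for $\lambda$ large (using that $\til\D$ and $\til\pS^{x_j}(\cdot)$ anticommute, so $(\til\D_\pS^{x_j})^2\ge(\til\pS^{x_j}(\cdot))^2$), one may write $(\til\D_\pS^{V_j}\pm i\lambda)^{-1}=(\til\D_\pS^{x_j}\pm i\lambda)^{-1}\bigl(1+(\til\pS^{V_j}(\cdot)-\til\pS^{x_j}(\cdot))(\til\D_\pS^{x_j}\pm i\lambda)^{-1}\bigr)^{-1}$, which reduces everything to showing that $\psi(\til\D_\pS^{x_0}\pm i\lambda)^{-1}$ is compact for every $\psi\in C_c(M)$ and every $x_0\in M$.

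For the constant potential, $\pS^{x_0}(\cdot)$ corresponds to $\pS(x_0)\otimes1$ under $L^2(M,E\otimes\bF)\simeq E\otimes L^2(M,\bF)$, so $\til\D$ and $\til\pS^{x_0}(\cdot)$ strongly anticommute and $(\til\D_\pS^{x_0})^2=\til\D^2+(\til\pS^{x_0}(\cdot))^2$ corresponds on each block to $1\otimes\D^2+\pS(x_0)^2\otimes1$. Writing $\psi(\til\D_\pS^{x_0}\pm i\lambda)^{-1}$ as $\psi\bigl((\til\D_\pS^{x_0})^2+\lambda^2\bigr)^{-1/2}$ times a bounded operator, I would use
\[
\psi\bigl((\til\D_\pS^{x_0})^2+\lambda^2\bigr)^{-1/2}=\tfrac1{\sqrt\pi}\int_0^\infty t^{-1/2}e^{-t\lambda^2}\bigl(e^{-t\pS(x_0)^2}\otimes\psi e^{-t\D^2}\bigr)\,dt .
\]
For each $t>0$, $e^{-t\pS(x_0)^2}\in\End_B^0(E)$ (continuous functional calculus applied to the compact resolvent of $\pS(x_0)$) and $\psi e^{-t\D^2}\in\mK(L^2(M,\bF))$ (since $\psi(\D\pm i)^{-1}$ is compact by local compactness of the resolvent of the elliptic operator $\D$, cf.\ \cref{sec:symm_ell}, and $(\D\pm i)e^{-t\D^2}$ is bounded), so the integrand lies in $\End_B^0(E)\otimes\mK(L^2(M,\bF))=\End_B^0\bigl(E\otimes L^2(M,\bF)\bigr)$; its norm is at most $\|\psi\|_\infty e^{-t\lambda^2}$, so the integral converges in norm and is therefore compact. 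The ``moreover'' follows by the same scheme with $\psi\equiv1$, which is legitimate when $C_0(M)$ is unital; and under the \ref{ass:standing}, compactness of $(\pS(\cdot)\pm i)^{-1}$ on $C_0(M,E)$ in fact forces $M$ to be compact, because \ref{ass:A4} yields uniform two-sided bounds on $\|\pS(x)^{-1}\|$ over each $V_j$, so that $(\pS(x)\pm i)^{-1}$ cannot vanish at infinity unless there is no infinity.

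I expect the constant-potential step to be the real obstacle. The point is that \emph{neither} ``half'' of the resolvent — $\psi(\til\D^2+\lambda^2)^{-1/2}$, which is $1_E\otimes(\text{Rellich-compact})$, nor $(\pS(x_0)^2+\lambda^2)^{-1/2}\otimes1$, which is $(\text{compact})\otimes1_{L^2}$ — is a compact endomorphism of the tensor product, and a naive product of two such ``one-sided-compact'' operators need not be compact either; the subordination formula circumvents this by exhibiting the resolvent as a norm-convergent integral of \emph{genuine} tensor products of compacts. This works only because a constant potential commutes with $\D$ (so that the heat semigroup factorises), which is precisely why the detour through the partition of unity is needed: the non-constant $\pS(\cdot)$ does not commute with $\D$, and, lacking differentiability, one has no control over $[\D,\pS(\cdot)]$.
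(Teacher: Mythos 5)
Your argument for the first claim is sound and is a genuinely different route from the paper's: the paper simply cites \cite[Theorem 6.7]{KL13} and asserts that the argument survives without differentiability, whereas you actually supply the missing mechanism. You trade KL13's control of $[\D,\pS(\cdot)]$ for the \ref{ass:A4}-cover $\{V_j\}$, the extensions $\pS^{V_j}$ of \cref{lem:extension}, and the identity $(\til\D_\pS\pm i\lambda)^{-1}=R_\pm(\lambda)(1+\mK_\pm(\lambda))^{-1}$ from \cref{prop:sum_sa}; the Neumann-series step $(\til\D_\pS^{V_j}\pm i\lambda)^{-1}=(\til\D_\pS^{x_j}\pm i\lambda)^{-1}\bigl(1+(\til\pS^{V_j}(\cdot)-\til\pS^{x_j}(\cdot))(\til\D_\pS^{x_j}\pm i\lambda)^{-1}\bigr)^{-1}$ is valid (for $\lambda\ge1$ one already has $\|(\til\pS^{x_j}(\cdot)\pm i)(\til\D_\pS^{x_j}\pm i\lambda)^{-1}\|\le1$, so the relative bound is just $a_j<1$); and the heat-subordination step for the constant potential is correct, since $e^{-t\pS(x_0)^2}\in\End^0_B(E)$ by \ref{ass:A1}, $\psi\, e^{-t\D^2}\in\mK(L^2(M,\bF))$ by ellipticity, the integrand lives in $\End^0_B(E)\hotimes\mK(L^2(M,\bF))=\End^0_B(E\otimes L^2(M,\bF))$, and the integral converges in norm. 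This is in effect a proof that \ref{ass:A4} can replace differentiability here, which is precisely what the paper's one-line proof leaves to the reader.

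The \emph{moreover} clause is where you have a genuine gap. You are right that, under the full \ref{ass:standing} (in particular \ref{ass:A4} plus uniform invertibility and \cref{lem:uegn}), $\|(\pS(x)\pm i)^{-1}\|$ is bounded away from $0$, so compactness of $(\pS(\cdot)\pm i)^{-1}$ on $C_0(M,E)$ forces $M$ compact — but this observation does not actually cover the case in which the paper later invokes the clause. In \cref{prop:diff_Kasp_prod} the proposition is applied to $\pS'(\cdot)=f^{-1}\pS(\cdot)$ with $f$ vanishing at infinity, on a noncompact complete manifold, under \cref{ass:diff} together with a uniformly bounded weak derivative; that family does \emph{not} satisfy \ref{ass:A4} (one has $(\pS'(x)-\pS'(x_j))\pS'(x_j)^{-1}=\tfrac{f(x_j)}{f(x)}\pS(x)\pS(x_j)^{-1}-1$, which blows up on a noncompact $V_j$), and $(\pS'(\cdot)\pm i)^{-1}$ is genuinely compact. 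In that setting your subordination trick with $\psi\equiv1$ breaks outright: $e^{-t\D^2}$ is not compact on $L^2(M,\bF)$ for noncompact $M$, so the integrand $e^{-t\pS(x_0)^2}\otimes e^{-t\D^2}$ is not compact and the partition-of-unity reduction (which relied on \ref{ass:A4}) is no longer available. So the second sentence of the proposition is not vacuous in practice and needs the original KL13 argument — which uses the boundedness of $[\D,\pS'(\cdot)](\pS'(\cdot)\pm i)^{-1}$ available in the differentiable setting — rather than the shortcut you propose. (There is arguably a mild looseness in the paper here, since the proposition is stated under \ref{ass:standing} but applied outside them; but a correct proof of the intended statement still has to treat the noncompact, compact-resolvent case.)
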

\begin{proof}
The statement follows by observing that the argument in \cite[Theorem 6.7]{KL13} remains valid without assuming any differentiability for the family $\{\pS(x)\}_{x\in M}$. 
Moreover, if $(\pS(\cdot)\pm i)^{-1}$ is in fact compact, then we can repeat the argument with $\phi=1$ to conclude that also $(\til\D_\pS\pm i)^{-1}$ is compact. 
\end{proof}

\begin{lem}
\label{lem:inv_glob}
Suppose that $\pS(x)$ is uniformly invertible for all $x\in M$, and there exist $x_0\in M$ and $a<1$ such that $\big\| \big( S(x)-S(x_0) \big) S(x_0)^{-1} \big\| \leq a$ for all $x\in M$. Then $\til\D_\pS$ is invertible. 
\end{lem}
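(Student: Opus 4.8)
The plan is to treat $\til\D_\pS$ as a relatively bounded perturbation of the ``frozen'' operator $\til\D_\pS^{x_0} := \pS^{x_0}(\cdot)\times\D$, with relative bound strictly less than $1$ and zero additive constant; since $\til\D_\pS^{x_0}$ is invertible, invertibility will be preserved. To see that $\til\D_\pS^{x_0}$ is invertible with $\big\|(\til\D_\pS^{x_0})^{-1}\big\| \leq \|\pS(x_0)^{-1}\|$, note that under the isomorphism $L^2(M,E\otimes\bF)\simeq E\otimes L^2(M,\bF)$ the operator $\pS^{x_0}(\cdot)$ is $\pS(x_0)\otimes1$, which is invertible because $\pS(x_0)$ is. As in the proof of \cref{lem:sum_sa_small_var}, $\til\pS^{x_0}(\cdot)$ and $\til\D$ anticommute on $\Dom\til\pS^{x_0}(\cdot)\cap\Dom\til\D$, so
\[
\|\til\D_\pS^{x_0}\psi\|^2 = \|\til\pS^{x_0}(\cdot)\psi\|^2 + \|\til\D\psi\|^2 \geq \|\til\pS^{x_0}(\cdot)\psi\|^2 \geq \|\pS(x_0)^{-1}\|^{-2}\|\psi\|^2
\]
for all $\psi$ in this domain. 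Hence the regular self-adjoint operator $\til\D_\pS^{x_0}$ is bounded below, and therefore invertible (for instance, $(\til\D_\pS^{x_0})^2$ is a positive regular self-adjoint operator bounded below by $\|\pS(x_0)^{-1}\|^{-2}$, so $0$ lies outside its spectrum and $\til\D_\pS^{x_0}\big((\til\D_\pS^{x_0})^2\big)^{-1}$ is a bounded two-sided inverse).

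Next I would estimate the perturbation $\til\pS(\cdot)-\til\pS^{x_0}(\cdot)$. Since $\|\pS(x_0)(\pS(x_0)\pm i)^{-1}\|\leq1$, the hypothesis implies $\big\|(\pS(x)-\pS(x_0))(\pS(x_0)\pm i)^{-1}\big\|\leq a<1$ for all $x\in M$, so \cref{lem:sum_sa_small_var} applies and (using \cref{lem:uegn}) shows that $\til\D_\pS$ is regular self-adjoint with $\Dom\til\D_\pS = \Dom\til\pS(\cdot)\cap\Dom\til\D = \Dom\til\D_\pS^{x_0}$. The operator on $C_0(M,E)$ determined by the bounded family $\{(\pS(x)-\pS(x_0))\pS(x_0)^{-1}\}_{x\in M}$ has norm $\sup_{x\in M}\|(\pS(x)-\pS(x_0))\pS(x_0)^{-1}\|\leq a$, and this remains true after tensoring with $L^2(M,\bF)$ and passing to the matrix ampliation, i.e.\ $\big\|(\til\pS(\cdot)-\til\pS^{x_0}(\cdot))\,\til\pS^{x_0}(\cdot)^{-1}\big\|\leq a$. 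Hence, for every $\psi\in\Dom\til\D_\pS$,
\[
\big\|(\til\pS(\cdot)-\til\pS^{x_0}(\cdot))\psi\big\| \leq \big\|(\til\pS(\cdot)-\til\pS^{x_0}(\cdot))\,\til\pS^{x_0}(\cdot)^{-1}\big\| \, \|\til\pS^{x_0}(\cdot)\psi\| \leq a\,\|\til\D_\pS^{x_0}\psi\| .
\]

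Finally, since $\til\D_\pS = \til\D_\pS^{x_0} + (\til\pS(\cdot)-\til\pS^{x_0}(\cdot))$, for $\psi\in\Dom\til\D_\pS$ the two displays give
\[
\|\til\D_\pS\psi\| \geq \|\til\D_\pS^{x_0}\psi\| - \big\|(\til\pS(\cdot)-\til\pS^{x_0}(\cdot))\psi\big\| \geq (1-a)\|\til\D_\pS^{x_0}\psi\| \geq \frac{1-a}{\|\pS(x_0)^{-1}\|}\,\|\psi\| ,
\]
so $\til\D_\pS$ is a regular self-adjoint operator which is bounded below, hence invertible by the same functional-calculus argument applied to $(\til\D_\pS)^2$ (alternatively, $\til\D_\pS\,(\til\D_\pS^{x_0})^{-1} = 1 + (\til\pS(\cdot)-\til\pS^{x_0}(\cdot))(\til\D_\pS^{x_0})^{-1}$ is a Neumann-invertible perturbation of the identity since the second summand has norm $\leq a<1$, which yields a bounded right inverse, and self-adjointness then promotes it to a bounded two-sided inverse). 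I do not expect a genuine analytic obstacle here; the only care needed is bookkeeping, namely matching the domains $\Dom\til\D_\pS = \Dom\til\D_\pS^{x_0} = \Dom\til\pS(\cdot)\cap\Dom\til\D$ via \cref{lem:sum_sa_small_var} and \cref{lem:uegn}, so that the factorisation $(\til\pS(\cdot)-\til\pS^{x_0}(\cdot))\,\til\pS^{x_0}(\cdot)^{-1}\,\til\pS^{x_0}(\cdot)$ is legitimate on it, and verifying the family-level norm identity used above.
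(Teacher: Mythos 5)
Your proof is correct and follows essentially the same route as the paper: establish invertibility of the frozen operator $\til\D_\pS^{x_0}$ via the anti-commutation of $\til\pS^{x_0}(\cdot)$ and $\til\D$, derive $\|(\til\pS(\cdot)-\til\pS^{x_0}(\cdot))\til\pS^{x_0}(\cdot)^{-1}\|\leq a<1$ from the hypothesis, and conclude by a Neumann-series perturbation (the paper writes the series explicitly, while its first step factors $\pS^{x_0}(\cdot)^2+\D^2=(\D\mp i\pS^{x_0}(\cdot))(\D\pm i\pS^{x_0}(\cdot))$ rather than invoking the lower bound directly, but these are cosmetic variants of the same idea). One small slip: in a Hilbert $B$-module the norm of a sum of positive elements of $B$ is not the sum of the norms, so the display $\|\til\D_\pS^{x_0}\psi\|^2 = \|\til\pS^{x_0}(\cdot)\psi\|^2 + \|\til\D\psi\|^2$ should be stated as the $B$-valued identity $\la\til\D_\pS^{x_0}\psi\mvert\til\D_\pS^{x_0}\psi\ra = \la\til\pS^{x_0}(\cdot)\psi\mvert\til\pS^{x_0}(\cdot)\psi\ra + \la\til\D\psi\mvert\til\D\psi\ra$ (as in the paper), or as the inequality $\|\til\D_\pS^{x_0}\psi\|^2 \geq \|\til\pS^{x_0}(\cdot)\psi\|^2$; either form yields the lower bound and operator inequality you need, so your conclusion stands.
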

\begin{proof}
Since $\pS^{x_0}(\cdot)$ is invertible, the positive operator $\pS^{x_0}(\cdot)^2 +\D^2 = \big(\D-i\pS^{x_0}(\cdot)\big)\big(\D+i\pS^{x_0}(\cdot)\big)$ is invertible and therefore $\D\pm i\pS^{x_0}(\cdot)$ is invertible. 
Thus we know that $\til\D_\pS^{x_0}$ is invertible. 

Since $\til\pS^{x_0}(\cdot)$ and $\til\D$ anti-commute, we have $\la\til\D_\pS^{x_0}\psi|\til\D_\pS^{x_0}\psi\ra = \la\til\pS^{x_0}(\cdot)\psi|\til\pS^{x_0}(\cdot)\psi\ra + \la\til\D\psi|\til\D\psi\ra$ for any $\psi\in\Dom\til\pS^{x_0}(\cdot)\cap\Dom\til\D$, which implies that $\|\til\pS^{x_0}(\cdot)\psi\| \leq \|\til\D_\pS^{x_0}\psi\|$. 
It follows that $\|\til\pS^{x_0}(\cdot) (\til\D_\pS^{x_0})^{-1}\| \leq 1$, and we obtain the inequality
\begin{align}
\|(\til\D_\pS-\til\D_\pS^{x_0}) (\til\D_\pS^{x_0})^{-1}\| &\leq \|(\til\pS(\cdot)-\til\pS^{x_0}(\cdot)) \til\pS^{x_0}(\cdot)^{-1}\| \; \|\til\pS^{x_0}(\cdot) (\til\D_\pS^{x_0})^{-1}\| \nonumber\\
&\leq \sup_{x\in M} \|(\pS(x)-\pS(x_0)) \pS(x_0)^{-1}\| \leq a < 1 . 
\end{align}
A Neumann series argument then shows that $\til\D_\pS$ is also invertible, and that its inverse is given by
\[
\big( \til\D_\pS \big)^{-1} = \big( \til\D_\pS^{x_0} \big)^{-1} \sum_{n=0}^\infty \Big( \big( \til\D_\pS^{x_0}-\til\D_\pS \big) \big( \til\D_\pS^{x_0} \big)^{-1} \Big)^n .
\qedhere
\]
\end{proof}

\begin{thm}
\label{thm:sum_Fredholm}
Let $M$, $\D$, and $\pS(\cdot)$ be as in the \ref{ass:standing}. 
Then the product operator $\til\D_\pS$ is regular self-adjoint and Fredholm. 
Hence it defines a class $[\til\D_\pS] \in \KK^0(\C,B) \simeq \K_0(B)$ given by the index of the Dirac-Schr\"odinger operator $\D-i\pS(\cdot)$. 
\end{thm}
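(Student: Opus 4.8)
The regularity and self-adjointness of $\til\D_\pS$ have already been established in \cref{prop:sum_sa}, so the substance of the theorem is the Fredholm property; the class $[\til\D_\pS]\in\KK^0(\C,B)$ then comes for free from \cref{prop:Fred_KK}. My plan for Fredholmness is to construct a parametrix for $\til\D_\pS$ by combining the locally compact resolvent near $K$ (\cref{prop:cpt_res}) with honest \emph{inverses} of frozen versions of $\til\D_\pS$ away from $K$. Concretely, enlarge the cover $\{V_j\}$ of $M\backslash K$ from \ref{ass:A4} by a precompact open set $V_0\supset K$ to a finite open cover $\{V_j\}_{j=0}^{l}$ of $M$, with reference points $x_j\in\bar V_j$ and $a_j<1$ for $j\geq 1$. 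By \cref{lem:extension}(2) each restriction $\pS(\cdot)|_{\bar V_j}$ ($j\geq 1$) extends to a family $\pS^{V_j}(\cdot)$ on all of $M$ that agrees with $\pS(\cdot)$ on $\bar V_j$, still satisfies \ref{ass:A3} (now with empty exceptional set), and is uniformly invertible on $M$; then \cref{lem:inv_glob} shows that the product operator $\til\D_\pS^{V_j}$ is boundedly invertible, so in particular $0\notin\mathrm{spec}(\til\D_\pS^{V_j})$.

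Now pick a smooth partition of unity $\{\chi_j^2\}_{j=0}^{l}$ subordinate to $\{V_j\}$, write $Q_0:=(\til\D_\pS-i)^{-1}$ and $Q_j:=(\til\D_\pS^{V_j})^{-1}$ for $j\geq 1$, and set
\[
Q := \sum_{j=0}^{l}\chi_j\,Q_j\,\chi_j .
\]
Two facts drive the computation: $[\til\D_\pS,\chi_j]=[\til\D,\chi_j]$ is a bounded bundle endomorphism supported in $\supp d\chi_j$ (since $\pS(\cdot)$ commutes with functions on $M$), and $\chi_j$ annihilates $\til\D_\pS-\til\D_\pS^{V_j}=\til\pS(\cdot)-\til\pS^{V_j}(\cdot)$, which is a fibrewise operator vanishing on $\bar V_j\supset\supp\chi_j$. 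Commuting $\til\D_\pS$ past each $\chi_j$ and using $\sum_j\chi_j^2=1$, $\til\D_\pS^{V_j}(\til\D_\pS^{V_j})^{-1}=1$ and $\til\D_\pS(\til\D_\pS-i)^{-1}=1+i(\til\D_\pS-i)^{-1}$ gives
\[
\til\D_\pS Q - 1 = i\,\chi_0(\til\D_\pS-i)^{-1}\chi_0 + \sum_{j=0}^{l}[\til\D,\chi_j]\,Q_j\,\chi_j .
\]
The first term is compact by \cref{prop:cpt_res}. For each remaining term, factor $[\til\D,\chi_j]=[\til\D,\chi_j]\,g_j$ with $g_j\in C_0(M)$ equal to $1$ on $\supp d\chi_j$, and apply \cref{prop:cpt_res} to $\til\D_\pS^{V_j}$ — a family satisfying the standing hypotheses — to see that $g_jQ_j$, hence $[\til\D,\chi_j]Q_j\chi_j$, is compact. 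Thus $\til\D_\pS Q-1$ is compact; by self-adjointness of $\til\D_\pS$ the adjoint $Q^*$ is a left parametrix as well, so $\til\D_\pS$ is Fredholm. It remains only to identify the class: under the standard isomorphism $\KK^0(\C,B)\simeq\K_0(B)$, the class of the odd operator $\mattwo{0}{\D+i\pS(\cdot)}{\D-i\pS(\cdot)}{0}$ is the Fredholm index of the off-diagonal entry $\D-i\pS(\cdot)$, which is itself a regular Fredholm operator because $\til\D_\pS$ is and $(\D+i\pS(\cdot))^*=\D-i\pS(\cdot)$.

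The only genuinely delicate point is the compactness of the error terms $[\til\D,\chi_j]Q_j\chi_j$ with $j\geq 1$: choosing $g_j\in C_0(M)$ forces $\supp d\chi_j$ to be precompact, which is not automatic for a partition of unity subordinate to an arbitrary finite cover of a non-compact $M$, since a transition region can escape to infinity along an overlap $V_j\cap V_k$. I would handle this either by shrinking and refining the cover (using \ref{ass:A4} and the uniform equivalence of the graph norms of $\pS(x)$, \cref{lem:uegn}, to keep the $V_j$'s — and hence the overlaps — suitably bounded), or by replacing the patching step with a finite-propagation-speed estimate, comparing $\phi(\til\D_\pS)$ with $\phi(\til\D_\pS^{V_j})=0$ for $\phi$ of small spectral support and exploiting that $\til\D_\pS$ and $\til\D_\pS^{V_j}$ literally coincide on $\bar V_j$. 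Everything else is routine bookkeeping with the results of the previous sections.
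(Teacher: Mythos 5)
Your argument is the same in structure and in all essential ingredients as the paper's: a parametrix $Q = \chi_0(\til\D_\pS-i)^{-1}\chi_0 + \sum_{j\geq 1}\chi_j(\til\D_\pS^{V_j})^{-1}\chi_j$ built from a partition of unity, with the $\til\D_\pS^{V_j}$ made invertible via \cref{lem:extension} and \cref{lem:inv_glob}, and compactness of the error terms obtained from \cref{prop:cpt_res}. Your computation of $\til\D_\pS Q - 1$, your observation that $\chi_j(\til\D_\pS - \til\D_\pS^{V_j})=0$, and your identification of $[\til\D_\pS,\chi_j]=[\til\D,\chi_j]$ are exactly what the paper does. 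The paper settles the left parametrix by ``a similar calculation'' rather than by taking adjoints, but that is cosmetic.

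The genuine contribution of your write-up is the ``only genuinely delicate point'' you flag: the need for $\supp d\chi_j$ to be precompact, so that $[\til\D,\chi_j]\in C_0(M,\End\bF^{\oplus 2})$ and \cref{prop:cpt_res} can be applied. The paper simply asserts that the $[\til\D,\chi_j]$ are ``smooth and compactly supported'' without comment. This assertion is automatic under \ref{ass:A4'} (disjoint $V_j$): there each $V_j$ with $j\geq 1$ is open and closed in $M\backslash K$, so the only transitions happen inside the precompact $V_0\cap V_j$, and one can take $\chi_j = \psi_j\sqrt{1-\chi_0^2}$ with $\psi_j$ locally constant on $M\backslash K$. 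Under the weaker \ref{ass:A4} assumed in the theorem statement, however, an overlap $V_j\cap V_k$ ($j,k\geq 1$) can be non-precompact, and then no choice of partition of unity subordinate to $\{V_j\}$ will make all $d\chi_j$ compactly supported. Your instinct here is correct, and you should be aware that the paper's proof, as written, tacitly relies on \ref{ass:A4'} (or on an unstated additional choice) at this point; this is not a defect in your proposal but a gap in the source. Of your two suggested repairs, the first (shrinking the cover to bound the overlaps) does not work in general — a non-precompact overlap is a topological feature of the cover, not something that shrinking removes — and the second (finite propagation speed) invokes a hypothesis that is not among the \ref{ass:standing}. The cleanest fix, and the one consistent with the rest of the paper (in particular \cref{thm:Kasp_prod_index}, \cref{prop:index_cut-and-paste}, and the Main Theorem in the introduction, all of which use \ref{ass:A4'}), is simply to prove \cref{thm:sum_Fredholm} under \ref{ass:A4'}.
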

\begin{proof}
We have seen in \cref{prop:sum_sa} that $\til\D_\pS$ is regular self-adjoint. 
Let $V_0$ be a precompact open neighbourhood of $K$, which combined with the open subsets $V_j$ from assumption \ref{ass:A4} provides a finite open cover of $M$. 
Let $\{\chi_j^2\}$ be a partition of unity subordinate to $\{V_j\}$, such that $\chi_j(x)=0$ for $x\in K$ and for all $j\neq0$. 
By \cref{lem:inv_glob,lem:extension} we have invertible operators $\til\D_\pS^{V_j}$ for $j\neq0$. 
Define the operator 
$$
Q := \chi_0 (\til\D_\pS-i)^{-1} \chi_0 + \sum_{j\neq0} \chi_j \big(\til\D_\pS^{V_j}\big)^{-1} \chi_j .
$$
We then calculate that 
\begin{align*}
\til\D_\pS Q - 1 &= \big[\til\D_\pS,\chi_0\big] (\til\D_\pS-i)^{-1} \chi_0 + \chi_0^2 + i \chi_0 (\til\D_\pS-i)^{-1} \chi_0 + \sum_{j\neq0} \big[\til\D_\pS^{V_j},\chi_j\big] \big(\til\D_\pS^{V_j}\big)^{-1} \chi_j + \sum_{j\neq0} \chi_j^2 - 1 \\
&= \big[\til\D,\chi_0\big] (\til\D_\pS-i)^{-1} \chi_0 + i \chi_0 (\til\D_\pS-i)^{-1} \chi_0 + \sum_{j\neq0} \big[\til\D,\chi_j\big] \big(\til\D_\pS^{V_j}\big)^{-1} \chi_j . 
\end{align*}
The operators $[\til\D,\chi_j]$ are smooth and compactly supported, 
and therefore bounded. Since $(\til\D_\pS^{V_j}-i) \big(\til\D_\pS^{V_j}\big)^{-1} \chi_j$ is also bounded (for $j\neq0$), it follows from \cref{prop:cpt_res} that $\til\D_\pS Q - 1$ is compact. 
Hence $Q$ is a right parametrix for $\til\D_\pS$. A similar calculation shows that $Q$ is also a left parametrix, and therefore $\til\D_\pS$ is Fredholm. 
\end{proof}

\begin{lem}
\label{lem:Fredholm_rescaling}
For any $\lambda>0$, we have $\Index\big(\D-i\lambda\pS(\cdot)\big) = \Index\big(\D-i\pS(\cdot)\big)$. 
\end{lem}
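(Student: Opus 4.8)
The plan is to realise $\D-i\pS(\cdot)$ and $\D-i\lambda\pS(\cdot)$ as the two endpoints of a homotopy of regular self-adjoint Fredholm operators, and then to invoke \cref{prop:Fred_KK}. The starting observation is that for every $s>0$ the rescaled family $\{s\pS(x)\}_{x\in M}$ again satisfies the \ref{ass:standing}: \ref{ass:A1} and \ref{ass:A2} are immediate, since $s\pS(x)$ has the same domain $W$ as $\pS(x)$ and is a fixed positive multiple of it; \ref{ass:A3} holds because $\|(s\pS(x))^{-1}\|=s^{-1}\|\pS(x)^{-1}\|$; and, crucially, \ref{ass:A4} holds \emph{with the same cover $\{V_j\}$, the same points $x_j$, and the same constants $a_j$}, because the rescaling cancels in the relevant product, $\big(s\pS(x)-s\pS(x_j)\big)\big(s\pS(x_j)\big)^{-1}=\big(\pS(x)-\pS(x_j)\big)\pS(x_j)^{-1}$. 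Hence \cref{thm:sum_Fredholm} applies to $s\pS(\cdot)$ for every $s>0$, so $\Index\big(\D-is\pS(\cdot)\big)$ is well-defined; in particular $\Index\big(\D-i\lambda\pS(\cdot)\big)$ makes sense.

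To connect $s=1$ with $s=\lambda$, I would put $c(t):=(1-t)+t\lambda$ for $t\in[0,1]$, which is affine with $c(0)=1$ and $c(1)=\lambda$, hence $c(t)\geq\min(1,\lambda)>0$ throughout $[0,1]$. For each $x\in M$ the norm-continuous family $\{c(t)\pS(x)\}_{t\in[0,1]}$ defines, by \cref{lem:reg_sa_cts_fam}, a regular self-adjoint operator $\Sigma(x)$ on the Hilbert $C([0,1],B)$-module $\til E:=C([0,1],E)$. I claim that the family $\{\Sigma(x)\}_{x\in M}$ satisfies the \ref{ass:standing}, now with coefficient algebra $C([0,1],B)$ and module $\til E$, but with the \emph{same} manifold $M$ and operator $\D$. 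Indeed, $\Dom\Sigma(x)=C([0,1],W)$ is independent of $x$, and its inclusion into $\til E$ is compact because $\End^0_{C([0,1],B)}(\til E)=C([0,1],\End^0_B(E))$ (as already used in the proof of \cref{lem:pS_loc-cpt-res}) and $W\into E$ is compact, so \ref{ass:A1} holds; the bound $\|\Sigma(x)-\Sigma(y)\|_{C([0,1],W)\to\til E}\leq\big(\max_{[0,1]}c\big)\,\|\pS(x)-\pS(y)\|_{W\to E}$ gives \ref{ass:A2}; the estimate $\|\Sigma(x)^{-1}\|=\big(\min_{[0,1]}c\big)^{-1}\sup_{x\in M\backslash K}\|\pS(x)^{-1}\|<\infty$ gives \ref{ass:A3} with the same compact set $K$; and, exactly as in the first paragraph, the rescaling cancels, $\big(\Sigma(x)-\Sigma(x_j)\big)\Sigma(x_j)^{-1}=\big(\pS(x)-\pS(x_j)\big)\pS(x_j)^{-1}$, so \ref{ass:A4} holds with the same $\{V_j\}$, $x_j$, $a_j$.

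By \cref{thm:sum_Fredholm}, the product operator $\til\D_\Sigma:=\Sigma(\cdot)\times\D$ is therefore a regular self-adjoint Fredholm operator on $L^2(M,\til E\otimes\bF)^{\oplus2}$. Under the canonical isomorphism $L^2(M,\til E\otimes\bF)\simeq C\big([0,1],L^2(M,E\otimes\bF)\big)$ this exhibits $\til\D_\Sigma$ as a regular self-adjoint Fredholm operator on a $\Z_2$-graded Hilbert $C([0,1],B)$-module. Since $\D$ carries no $t$-dependence while $\Sigma(\cdot)$ restricts at the $t$-fibre to $c(t)\pS(\cdot)$, one checks that $\ev_t(\til\D_\Sigma)\simeq\til\D_{c(t)\pS}$ for each $t\in[0,1]$, where $\til\D_{c(t)\pS}$ denotes the product operator of the rescaled family $\{c(t)\pS(x)\}_{x\in M}$; in particular $\ev_0(\til\D_\Sigma)\simeq\til\D_\pS$ and $\ev_1(\til\D_\Sigma)\simeq\til\D_{\lambda\pS}$. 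Thus $\til\D_\Sigma$ is a homotopy between $\til\D_\pS$ and $\til\D_{\lambda\pS}$ in the sense of the definition preceding \cref{prop:Fred_KK}, and that proposition gives $[\til\D_\pS]=[\til\D_{\lambda\pS}]$ in $\KK^0(\C,B)$, which is precisely the asserted equality $\Index\big(\D-i\pS(\cdot)\big)=\Index\big(\D-i\lambda\pS(\cdot)\big)$.

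The argument is short, and the only genuinely important point is the scale-invariance of the hypothesis \ref{ass:A4}: it constrains the \emph{relative} variation $\big(\pS(x)-\pS(x_j)\big)\pS(x_j)^{-1}$, which is unchanged under $\pS\mapsto s\pS$ — a fact that would be obscured if one wrote the homotopy naively as $(1-t)\pS+t\lambda\pS$. The remaining verifications — the identification of the compact endomorphisms of $C([0,1],E)$ over $C([0,1],B)$, and the identification of $\ev_t(\til\D_\Sigma)$ with $\til\D_{c(t)\pS}$ via the interior tensor products involved — are routine bookkeeping, and I expect the latter to be the only mildly delicate step.
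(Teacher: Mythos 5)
Your proof is correct and is essentially the same as the paper's: both observe that the Standing Assumptions, and in particular \ref{ass:A4}, are invariant under the rescaling $\pS\mapsto s\pS$ (because the scaling cancels in $\big(s\pS(x)-s\pS(x_j)\big)\big(s\pS(x_j)\big)^{-1}$), and then apply \cref{thm:sum_Fredholm} to the affine family $\pS^t(\cdot)=(1-t+t\lambda)\pS(\cdot)$ on $C([0,1],E)$, which again satisfies \ref{ass:A1}--\ref{ass:A4}, to obtain a homotopy of regular self-adjoint Fredholm product operators and hence equal indices via \cref{prop:Fred_KK}. One small slip worth flagging: your closing remark that writing the homotopy ``naively'' as $(1-t)\pS+t\lambda\pS$ would obscure the cancellation in \ref{ass:A4} is a non-distinction, since $(1-t)\pS+t\lambda\pS=(1-t+t\lambda)\pS=c(t)\pS$ is literally the homotopy you (and the paper) use.
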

\begin{proof}
Observing that the rescaled $\lambda\pS(\cdot)$ still satisfies the assumptions \ref{ass:A1}-\ref{ass:A4}, we know from \cref{thm:sum_Fredholm} that $\D-i\lambda\pS(\cdot)$ is Fredholm for any $\lambda>0$. Similarly, the family $\{\pS^\bullet(\cdot)\}_{x\in M}$ on $C([0,1],E)$, given by $\pS^t(x) := (1-t+t\lambda)\pS(x)$, satisfies the assumptions \ref{ass:A1}-\ref{ass:A4}, so $\D-i\pS^\bullet(\cdot)$ provides a homotopy between $\D-i\lambda\pS(\cdot)$ and $\D-i\pS(\cdot)$. 
\end{proof}

\begin{remark}
\begin{enumerate}
\item We mention a special case in which the index of the Dirac-Schr\"odinger operator $\D-i\pS(\cdot)$ vanishes automatically.
Suppose there exists a self-adjoint unitary endomorphism $\Gamma\in\Gamma^\infty(M,\End\bF)$ such that $\Gamma\cdot\Dom\D\subset\Dom\D$, and $\Gamma\D=-\D\Gamma$. 
Then
\begin{align*}
\Index\big( \D-i\pS(\cdot) \big) &= \Index\big( \Gamma \big( \D-i\pS(\cdot) \big) \Gamma \big) = \Index\big( \D+i\pS(\cdot) \big) = - \Index\big( \D-i\pS(\cdot) \big) ,
\end{align*}
hence $\Index\big( \D-i\pS(\cdot) \big) = 0$. 
Such $\Gamma$ exists, for instance, if $\D$ is of Dirac-type and $M$ is even-dimensional. 

\item Consider the \emph{classical} case of a Dirac-Schr\"odinger operator $\D-iV$ (also called a \emph{Callias-type} operator), where $\D$ is a Dirac-type operator, and the potential $V$ is a self-adjoint endomorphism on some auxiliary vector bundle (of \emph{finite rank}). Under suitable assumptions on the potential $V$, one can then prove that $\D-i\lambda V$ is Fredholm for \emph{sufficiently large} $\lambda\in(0,\infty)$ (see, for instance, \cite{BM92,Ang93a}). 

In our case, $\pS(\cdot)$ plays the role of the potential. 
As observed in \cref{lem:Fredholm_rescaling}, $\D-i\lambda\pS(\cdot)$ is Fredholm for \emph{any} $\lambda\in(0,\infty)$.
However, we stress here that this does not provide a generalisation of the classical result. Indeed, our theorem applies to a `potential' $\pS(\cdot)$ acting on 
the Hilbert module $C_0(M,E)$. 
Restricting to the case of finite-rank bundles, this means our theorem only applies to \emph{trivial} bundles. 
In the case of non-trivial bundles, there are examples in which the Fredholm property of $\D-i\lambda V$ fails to hold for some $\lambda\in(0,\infty)$ (see, e.g., \cite[\S4]{BM92}). 

\item In the classical case of a Dirac-Schr\"odinger operator $\D-iV$, the index vanishes whenever the manifold is compact. Indeed, in this case $V$ is bounded and $\D$ has compact resolvents. Therefore $\D-iV$ is a relatively compact perturbation of $\D$ (which is self-adjoint), so that $\Index(\D-iV)=\Index(\D)=0$. 

In our setup, the Hilbert $B$-module $E$ is in general not finitely generated projective (i.e., in the case $B=\C$, the Hilbert space $E=\mH$ is in general not finite-dimensional). 
Therefore, although $\D$ has compact resolvents on $L^2(M,\bF)$, the operator $\D\otimes1$ on $L^2(M,E\otimes\bF)$ in general does \emph{not} have compact resolvents. Hence the index may be non-zero on compact manifolds. 
Consider for instance the simple case of a circle $M=S^1$ (with $\D=-i\partial_x$). Then the index of $\partial_x+\pS(\cdot)$ is equal to the spectral flow of $\{\pS(x)\}_{x\in S^1}$ (as we will discuss at the start of \cref{sec:Kasp_prod}), and this spectral flow could certainly be non-zero. 
\end{enumerate}
\end{remark}

\subsection{A relative index theorem}

Before we describe the relative index theorem, we will first prove an auxiliary lemma, stating that the class of an unbounded Fredholm operator vanishes if there exists a suitable symmetry. A similar statement is given in \cite[Lemma 1.15]{Bun95} in terms of bounded Kasparov modules. 

\begin{lem}[cf.\ {\cite[Lemma 1.15]{Bun95}}]
\label{lem:cliff_triv}
Let $\D$ be an odd regular self-adjoint Fredholm operator on a $\Z_2$-graded Hilbert $B$-module $E = E^0\oplus E^1$. 
Suppose there exists an odd self-adjoint unitary endomorphism $e\in\End_B(E)$ such that $e\cdot\Dom\D\subset\Dom\D$, and the graded commutator $[\D,e]_\pm$ is bounded and relatively compact (i.e.\ $[\D,e]_\pm(\D\pm i)^{-1}$ is compact). 
Then $[\D] = 0 \in \KK^0(\C,B)$. 
\end{lem}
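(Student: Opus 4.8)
The strategy follows Bunke's: realise $[\D]$ as $[\chi(\D)]\in\KK^0(\C,B)$ for a normalising function $\chi$ with $\chi\equiv\pm1$ near $\pm\infty$ (such $\chi$ exists since $\D$ is Fredholm, by \cref{prop:Fredholm_sa} and \cref{prop:Fred_KK}), show that the graded commutator $[\chi(\D),e]_\pm=\chi(\D)e+e\chi(\D)$ is a compact endomorphism, and then rotate $\chi(\D)$ into $e$. Since $e$ is an odd self-adjoint \emph{unitary} endomorphism, the module $(\C,E_B,e)$ is degenerate --- the grading is honoured, $e=e^*$, and $e^2-1=0$ --- so it represents $0\in\KK^0(\C,B)$ (cf.\ \cite[\S17]{Blackadar98}); once the rotation is in place this forces $[\D]=0$.

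For the rotation, set $F_t:=\cos\bigl(\tfrac{\pi t}{2}\bigr)\chi(\D)+\sin\bigl(\tfrac{\pi t}{2}\bigr)e$ for $t\in[0,1]$. Each $F_t$ is odd, self-adjoint and adjointable, with $F_0=\chi(\D)$ and $F_1=e$, and
\[
F_t^2-1=\cos^2\bigl(\tfrac{\pi t}{2}\bigr)\bigl(\chi(\D)^2-1\bigr)+\cos\bigl(\tfrac{\pi t}{2}\bigr)\sin\bigl(\tfrac{\pi t}{2}\bigr)\,[\chi(\D),e]_\pm ,
\]
which (granting the claim) is compact and norm-continuous in $t$. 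Hence $\bigl(\C,C([0,1],E)_{C([0,1],B)},(F_t)_{t}\bigr)$ is an even bounded Kasparov module, i.e.\ a homotopy in the sense of \cref{defn:Kasp_mod} between $(\C,E_B,\chi(\D))$ and $(\C,E_B,e)$. Combined with the first paragraph this yields $[\D]=[\chi(\D)]=[e]=0$.

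It remains to establish the claim that $[\chi(\D),e]_\pm\in\End_B^0(E)$, which is the analytic heart of the matter; the plan is to conjugate. Since $e$ is a self-adjoint unitary preserving $\Dom\D$ (so $e\cdot\Dom\D=\Dom\D$), the operator $e\D e$ is again regular self-adjoint on $\Dom\D$ --- it is closed, symmetric, and $e\D e\pm i=e(\D\pm i)e$ is surjective --- and, writing $b:=[\D,e]_\pm=\D e+e\D$ (bounded by hypothesis), a one-line computation gives $e\D e=-\D+eb$. By hypothesis $b(\D\pm i)^{-1}$ is compact, so $eb(\D\pm i)^{-1}=e\bigl(b(\D\pm i)^{-1}\bigr)$ is compact: thus $e\D e$ is a \emph{bounded relatively compact perturbation} of $-\D$. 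Granting that such a perturbation alters a normalising function's functional calculus only modulo compacts, i.e.\ $\chi(e\D e)-\chi(-\D)\in\End_B^0(E)$, we are done: indeed $\chi(-\D)=-\chi(\D)$ since $\chi$ is odd, while $\chi(e\D e)=e\,\chi(\D)\,e$ since $e$ is a domain-preserving unitary (hence intertwines continuous functional calculi), so $e\chi(\D)e+\chi(\D)\in\End_B^0(E)$, and multiplying on the left by $e$ and using $e^2=1$ gives precisely $[\chi(\D),e]_\pm\in\End_B^0(E)$.

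The main obstacle is therefore the perturbation invariance $\chi(e\D e)-\chi(-\D)\in\End_B^0(E)$ for the bounded relatively compact perturbation $K:=eb$ of $T:=-\D$. Here one first notes via the resolvent identity $(T+K\pm i)^{-1}-(T\pm i)^{-1}=-(T+K\pm i)^{-1}K(T\pm i)^{-1}$ that the resolvents of $e\D e$ and $-\D$ differ by a compact operator; writing $\chi=\chi_{\mathrm{bt}}+g$ with $\chi_{\mathrm{bt}}(x)=x(1+x^2)^{-\frac12}$ and $g:=\chi-\chi_{\mathrm{bt}}\in C_0(\R)$, the $g$-part is handled by the usual closed-$*$-subalgebra argument (the functions $h\in C_0(\R)$ with $h(e\D e)-h(-\D)$ compact form a closed $*$-subalgebra containing the resolvents, hence all of $C_0(\R)$), and the $\chi_{\mathrm{bt}}$-part by the norm-convergent integral representation $\chi_{\mathrm{bt}}(S)=\tfrac1\pi\int_0^\infty\lambda^{-\frac12}\,S(1+\lambda+S^2)^{-1}\,d\lambda$: expanding the difference of the two integrands and commuting resolvents produces a finite sum of terms each containing a factor $K(e\D e\pm i)^{-1}$ or $K(-\D\pm i)^{-1}$ --- compact --- sandwiched between operators that are uniformly bounded and decay like $\lambda^{-\frac12}$, so the integral converges in norm to a compact operator. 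By contrast, the rotation homotopy and the identification of $(\C,E_B,e)$ as degenerate are entirely routine.
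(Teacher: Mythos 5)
Your proof is correct, but it takes a genuinely different route from the paper's, and the comparison is instructive. The paper stays entirely in the unbounded picture: observing that $e[\D,e]_\pm$ is bounded and symmetric (which uses $e(\D e + e\D) = \D + e\D e = (\D e + e\D)e$), it deforms $\D$ along the straight line $\D_t := \D - \tfrac{t}{2}\,e[\D,e]_\pm$; each $\D_t$ is regular self-adjoint by Kato--Rellich and Fredholm because $e[\D,e]_\pm$ is relatively compact (indeed any parametrix for $\D$ still works), so one lands at $\D_1 = \tfrac12(\D - e\D e)$. After identifying $E^1$ with $E^0$ via $e$, the off-diagonal block $(\D_1)_+ = \tfrac12(\D_+ - \D_-) = \tfrac12(\D_+ - \D_+^*)$ is \emph{skew-adjoint}, and the index of a skew-adjoint regular Fredholm operator vanishes, giving $[\D]=[\D_1]=0$ immediately. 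Your argument instead passes to a normalising function $\chi$, proves that $[\chi(\D),e]_\pm$ is compact by writing $e\D e = -\D + eb$ as a bounded relatively compact perturbation of $-\D$ and running a resolvent/integral-representation estimate, and then rotates $\chi(\D)$ into the degenerate symmetry $e$ along the circular arc $F_t=\cos(\tfrac{\pi t}{2})\chi(\D)+\sin(\tfrac{\pi t}{2})e$. This is essentially Bunke's original bounded-picture argument and it certainly works — the $C_0(\R)$ closed-$*$-subalgebra step and the norm-convergent integral for the $\chi_{\mathrm{bt}}$-part are both standard, and the arithmetic $F_t^2-1 = \cos^2(\tfrac{\pi t}{2})(\chi(\D)^2-1) + \cos(\tfrac{\pi t}{2})\sin(\tfrac{\pi t}{2})[\chi(\D),e]_\pm$ is right. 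What the paper's route buys is precisely the avoidance of that analytic heart: by never leaving the unbounded operators, the compactness needed for the homotopy is exactly the hypothesis $[\D,e]_\pm(\D\pm i)^{-1}\in\End_B^0(E)$, with no functional-calculus transfer required, and the vanishing at the end is an algebraic observation (skew-adjointness) rather than a degeneracy statement. Both are valid; the paper's is shorter and fits more naturally with the unbounded-Fredholm framework set up in \cref{prop:Fred_KK}.
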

\begin{proof}
Since $e[\D,e]_\pm$ is bounded and symmetric, we know that $\D_t := \D - \frac t2 e[\D,e]_\pm$ is regular self-adjoint for each $t\in[0,1]$. Furthermore, since $[\D,e]_\pm$ is relatively compact, we also know that $\D_t$ is Fredholm, and any parametrix for $\D$ is also a parametrix for $\D_t$. 
Then the operator $\D_\bullet$ on $C([0,1],E)$ is also regular self-adjoint and Fredholm, 
so it provides a homotopy between $\D_0 = \D$ and $\D_1 = \frac12(\D-e\D e)$.
Since $e$ is an odd unitary, we can identify $E^0$ with $E^1$ using $e$, so that $E\simeq E^0\oplus E^0$. Then $e$, $\D$, and $\D_1$ take the form
\begin{align*}
e &= \mattwo{0}{1}{1}{0} , & 
\D &= \mattwo{0}{\D_-}{\D_+}{0} , & 
\D_1 &= \frac12 \mattwo{0}{\D_--\D_+}{\D_+-\D_-}{0} .
\end{align*}
The class of $\D_1$ is given by $\Index((\D_1)_+) \in \K_0(B)$. Since $(\D_1)_+ = \frac12(\D_+-\D_-)$ is skew-adjoint, this index vanishes. Hence $[\D] = [\D_1] = 0$. 
\end{proof}

For $j=1,2$, let $\bF^j\to M^j$, $\D^j$, and $\pS^j(\cdot)$ be as in the \ref{ass:standing}. 
We assume that the operators $\{\pS^j(x)\}_{x\in M^j}$ act on the same Hilbert $B$-module $E$. 
Suppose we have partitions $M^j = \bar U^j \cup_{N^j} \bar V^j$, where $N^j$ are smooth compact hypersurfaces. 
Let $C^j$ be precompact open tubular neighbourhoods of $N^j$, and assume that there exists an isometry $\phi\colon C^1\to C^2$ (with $\phi(N^1)=N^2$) covered by an isomorphism $\Phi\colon\bF^1|_{C^1} \to \bF^2|_{C^2}$, such that $\D^1|_{C^1} \Phi^* = \Phi^* \D^2|_{C^2}$ and $\pS^2(\phi(x)) = \pS^1(x)$ for all $x\in C^1$. 
We will identify $N^1$ and $N^2$ via $\phi$, and we simply write $N$. Define two new Riemannian manifolds 
\begin{align*}
M^3 &:= \bar U^1 \cup_N \bar V^2 , & 
M^4 &:= \bar U^2 \cup_N \bar V^1 .
\end{align*}
Moreover, we glue the bundles using $\Phi$ to obtain hermitian vector bundles $\bF^3\to M^3$ and $\bF^4\to M^4$. For $j=3,4$, we then obtain corresponding operators $\D^j$ and $\pS^j(\cdot)$ satisfying the \ref{ass:standing}. 

For $j=1,\ldots,4$, we consider the product operators $\til\D_\pS^j := \pS^j(\cdot)\times\D^j$. By \cref{thm:sum_Fredholm}, these operators are regular self-adjoint and Fredholm. Under the standard isomorphism $\KK^0(\C,B) \simeq \K_0(B)$, the class $[\til\D_\pS^j]\in\KK^0(\C,B)$ is equal to $\Index(\D^j-i\pS^j(\cdot))\in\K_0(B)$. 

\begin{thm}[Relative index theorem]
\label{thm:rel_index}
We have $[\til\D_\pS^1]+[\til\D_\pS^2]=[\til\D_\pS^3]+[\til\D_\pS^4]\in\KK^0(\C,B)$, and hence 
\[
\Index(\D^1-i\pS^1(\cdot)) + \Index(\D^2-i\pS^2(\cdot)) = \Index(\D^3-i\pS^3(\cdot)) + \Index(\D^4-i\pS^4(\cdot)) . 
\]
\end{thm}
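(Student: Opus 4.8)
The plan is to reduce the identity to the vanishing of a single $\KK$-class and then to kill that class by an involution exchanging the common building blocks of the four manifolds, in the spirit of \cite[Lemma 1.15]{Bun95} and \cref{lem:cliff_triv}.

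Since, by \cref{thm:sum_Fredholm}, the class $[\til\D_\pS^j]$ corresponds to $\Index(\D^j-i\pS^j(\cdot))$ under the standard isomorphism $\KK^0(\C,B)\simeq\K_0(B)$, the second (index) identity follows from the first, so it suffices to prove $[\til\D_\pS^1]+[\til\D_\pS^2]=[\til\D_\pS^3]+[\til\D_\pS^4]$ in $\KK^0(\C,B)$. Fix a normalising function $\chi$, write $\mathcal{E}_j:=L^2(M^j,E\otimes\bF^j)^{\oplus2}$, and consider the bounded self-adjoint Fredholm operator
\[
\Xi:=\chi(\til\D_\pS^1)\oplus\chi(\til\D_\pS^2)\oplus\big({-}\chi(\til\D_\pS^3)\big)\oplus\big({-}\chi(\til\D_\pS^4)\big)
\]
on the countably generated $\Z_2$-graded Hilbert $B$-module $\mathcal{E}:=\mathcal{E}_1\oplus\mathcal{E}_2\oplus\mathcal{E}_3^{\op}\oplus\mathcal{E}_4^{\op}$, where the last two summands carry the opposite grading. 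By \cref{prop:Fred_KK} and the description of additive inverses following \cref{defn:Kasp_mod}, the even bounded Kasparov module $(\C,\mathcal{E},\Xi)$ represents $[\til\D_\pS^1]+[\til\D_\pS^2]-[\til\D_\pS^3]-[\til\D_\pS^4]$, so it is enough to prove $[(\C,\mathcal{E},\Xi)]=0$.

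To build the involution, recall that $\D^j$ and $\pS^j(\cdot)$ agree over $C^j$ under $\phi$ and $\Phi$, so the restriction of $\bF^j$ and of the operators to each of the four pieces $\bar U^1,\bar U^2,\bar V^1,\bar V^2$ is intrinsic to that piece. Each piece lies in exactly one of $M^1,M^2$ and in exactly one of $M^3,M^4$; since the separating hypersurface $N$ has measure zero, each $\mathcal{E}_j$ is the direct sum of the $L^2$-modules over the interiors of its two pieces, and rearranging summands yields a grading-preserving identification
\[
\mathcal{E}\;\simeq\;\bigoplus_{P\in\{U^1,U^2,V^1,V^2\}}\big(H_P\oplus H_P^{\op}\big),\qquad H_P:=L^2(P,E\otimes\bF)^{\oplus2},
\]
where $H_P$ is the $P$-summand of the $\mathcal{E}_j$ with $j\in\{1,2\}$ and $P\subset M^j$, while $H_P^{\op}$ denotes $H_P$ with the opposite grading, appearing as the $P$-summand of the $\mathcal{E}_j^{\op}$ with $j\in\{3,4\}$ and $P\subset M^j$. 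Let $e\in\End_B(\mathcal{E})$ be the operator which on each summand $H_P\oplus H_P^{\op}$ interchanges the two copies by the tautological identification; since these copies carry opposite gradings, $e$ is an odd self-adjoint unitary.

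Granting that $[\Xi,e]_\pm$ is compact, the argument in the proof of \cref{lem:cliff_triv} applies verbatim to the bounded operator $\Xi$ with the involution $e$: the straight-line homotopy $\Xi_t:=\Xi-\tfrac t2\,e[\Xi,e]_\pm$ connects $(\C,\mathcal{E},\Xi)$ to $(\C,\mathcal{E},\tfrac12(\Xi-e\Xi e))$, and the latter operator anticommutes with $e$, hence takes the form $\mattwo{0}{S}{-S}{0}$ with $S$ skew-adjoint and so represents $0$; this gives $[(\C,\mathcal{E},\Xi)]=0$ and thus the theorem. It remains to check that $[\Xi,e]_\pm$ is compact. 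Because $e$ interchanges $H_P$ and $H_P^{\op}$ we have $[\Xi,e]_\pm=e\,(e\Xi e+\Xi)$, so it is enough to show $e\Xi e+\Xi$ is compact. On the summand $H_P\oplus H_P^{\op}$ this operator compares $\chi(\til\D_\pS^j)$ and $\chi(\til\D_\pS^{j'})$ for the two manifolds $M^j$ ($j\in\{1,2\}$) and $M^{j'}$ ($j'\in\{3,4\}$) containing $P$, transported onto $H_P$; because $\D^j=\D^{j'}$ and $\pS^j(\cdot)=\pS^{j'}(\cdot)$ on $\bar P$, the two product operators agree on a neighbourhood of $P$ away from the collar, and the task is to turn this local agreement into an identity modulo compact operators. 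For this I would use finite propagation speed for the product operators: since $\pS^j(\cdot)$ is a zeroth-order fibrewise multiplication operator on $M^j$, one has $[\til\D_\pS^j,f]=[\til\D^j,f]$ for $f\in C_c^\infty(M^j)$, so the propagation of $\til\D_\pS^j$ in the $M$-directions is controlled by the principal symbol of $\D^j$ alone; choosing $\chi$ with essentially compactly supported Fourier transform, it follows that $\chi(\til\D_\pS^j)$ and $\chi(\til\D_\pS^{j'})$ have the same blocks on the interior of $P$ away from the collar modulo compacts, while the blocks joining $P$ to the complementary piece across $N$ are compressions of commutators of $\chi(\til\D_\pS^j)$ with a cutoff whose differential is supported in the precompact collar $C^j$, hence compact by the local compactness of the resolvent (\cref{prop:cpt_res}). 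Summing over the four pieces then shows $e\Xi e+\Xi$ is compact. The main obstacle is precisely this last step — establishing finite propagation speed for the fibrewise-unbounded product operators $\til\D_\pS^j$ and using it to localise functions of them modulo compacts — together with the (routine but delicate) bookkeeping of gradings and collar identifications.
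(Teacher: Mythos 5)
Your proposal takes a genuinely different route from the paper's. Both you and the paper reduce to showing that a single $\KK^0(\mathbb{C},B)$ class vanishes by exhibiting an odd symmetry $e$ with $[\cdot,e]_\pm$ bounded and relatively compact, and both invoke \cref{lem:cliff_triv}. The essential difference is in how the symmetry is built and how compactness of the (anti)commutator is established. The paper stays in the \emph{unbounded} picture and constructs $X$ from smoothly weighted identifications $\alpha_{jk}=\beta_{jk}\circ\chi_{\bullet}$, where $\chi_{U^j}^2+\chi_{V^j}^2=1$. As a result, $[\til\D_\pS,X]$ is computed exactly as a matrix of first-order commutators $[\til\D^j,\chi_\bullet]$, which are bounded and compactly supported multipliers, and relative compactness then follows immediately from \cref{prop:cpt_res}. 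You instead pass to the \emph{bounded} transforms $\chi(\til\D_\pS^j)$ first, and use a \emph{sharp} decomposition into pieces, so $e$ is the sharp swap; compactness of $[\Xi,e]_\pm$ then has to be earned by a finite-propagation-speed argument. This is Bunke's original bounded-operator strategy, and the paper explicitly departs from it precisely to avoid the step you flag.

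That step is a genuine gap, not a routine one. You need a finite propagation statement for the \emph{product operators} $\til\D_\pS^j=\til\D^j+\til\pS^j(\cdot)$ on the Hilbert $B$-module $L^2(M^j,E\otimes\bF^j)^{\oplus 2}$, not just for the underlying Dirac-type operator $\D^j$. While the observation $[\til\D_\pS^j,f]=[\til\D^j,f]$ is correct and morally says that the (fibrewise, potentially unbounded) potential does not affect propagation, the standard energy-estimate proof of finite propagation is a Hilbert-space argument; transporting it to regular self-adjoint operators on a Hilbert $C^*$-module (where the ``energy'' is a $B$-valued positive element, and where $\pS(\cdot)$ is unbounded) requires care and is not established in the paper or its cited references. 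You also need to handle the fact that a normalising function $\chi$ is not in $C_0(\R)$, so $\chi(\til\D_\pS^j)$ is not locally compact on its own; the usual remedy (write $\chi$ modulo $C_0$ as an integral over the resolvent, or impose $\supp\hat{\chi'}$ compact and argue distributionally) has to be justified in the module setting. Finally, after propagation localises the difference $e\Xi e+\Xi$ to a neighbourhood of $N$, compactness of the localised piece still needs to be tied back to \cref{prop:cpt_res}, which requires factoring through a resolvent; this does not come for free from $\chi$ being a normalising function. None of these points is obviously fatal, but collectively they amount to developing a Hilbert-module version of the Roe calculus for these product operators, which is a substantial project on its own. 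The paper's choice of smooth cutoffs $\alpha_{jk}$ is exactly what lets it sidestep all of this: the commutator $[\til\D_\pS,X]$ is \emph{identically} a compactly supported bounded multiplier, with no propagation needed.

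In short: your reduction to \cref{lem:cliff_triv} and the bookkeeping of gradings and collar identifications are sound and close to the paper's, but the sharp-cutoff/finite-propagation step you identify as the main obstacle is indeed an unfilled gap, and it is precisely the technical difficulty that the paper's smooth-cutoff construction is designed to avoid.
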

\begin{proof}
We roughly follow Bunke's proof of the $\K$-theoretic relative index theorem \cite[Theorem 1.14]{Bun95}, except that we work with unbounded operators.
For $j=1,\ldots,4$, we write $\til E^j := L^2(M^j,E\otimes\bF^j)^{\oplus2}$. Each $\til E^j$ is $\Z_2$-graded, with the grading operator $\Gamma^j$ given by $1\oplus(-1)$. 
We write $(\til E^j)^\op = \til E^j$ for the same Hilbert module with the opposite grading $(\Gamma^j)^\op = (-1)\oplus1$. 
We define the Hilbert module $\til E := \til E^1\oplus \til E^2\oplus (\til E^3)^\op\oplus (\til E^4)^\op$, which is $\Z_2$-graded with the grading operator $\Gamma = \Gamma^1\oplus \Gamma^2\oplus (\Gamma^3)^\op\oplus (\Gamma^4)^\op$. We consider the operator $\til\D_\pS := \til\D_\pS^1\oplus\til\D_\pS^2\oplus\til\D_\pS^3\oplus\til\D_\pS^4$ on $\til E$. 
Since $(\C,(\til E^j)^\op,\til\D_\pS^j)$ represents the class $-[\til\D_\pS^j]$, we need to show that $[\til\D_\pS] = [\til\D_\pS^1] + [\til\D_\pS^2] - [\til\D_\pS^3] - [\til\D_\pS^4] = 0\in\KK^0(\C,B)$. 

For $j=1,2$, we choose smooth functions $\chi_{U^j}$ and $\chi_{V^j}$ such that 
\begin{gather*}
\begin{aligned}
\supp \chi_{U^j} &\subset U^j \cup C^j , \qquad&\qquad
\supp \chi_{V^j} &\subset V^j \cup C^j , 
\end{aligned}\\
\begin{aligned}
\chi_{U^j}^2 + \chi_{V^j}^2 &= 1 , \quad&\quad
\chi_{U^1}|_{C^1} &= \phi^*\chi_{U^2}|_{C^2} , \quad&\quad
\chi_{V^1}|_{C^1} &= \phi^*\chi_{V^2}|_{C^2} .
\end{aligned}
\end{gather*}
Note that $L^2(U^1\cup C^1,E\otimes\bF^1)$, which is a subspace of $L^2(M^1,E\otimes\bF^1)$, can also be viewed as a subspace of $L^2(M^3,E\otimes\bF^3)$, since $\bF^1|_{U^1\cup C^1} \simeq \bF^3|_{U^1\cup C^1}$. We write $\beta_{13}\colon L^2(U^1\cup C^1,E\otimes\bF^1) \to L^2(U^1\cup C^1,E\otimes\bF^3)$ for this identification. Similarly, we have identifications $\beta_{14}\colon L^2(V^1\cup C^1,E\otimes\bF^1) \to L^2(V^1\cup C^1,E\otimes\bF^4)$, $\beta_{23}\colon L^2(V^2\cup C^2,E\otimes\bF^2) \to L^2(V^2\cup C^2,E\otimes\bF^3)$, and $\beta_{24}\colon L^2(U^2\cup C^2,E\otimes\bF^2) \to L^2(U^2\cup C^2,E\otimes\bF^4)$. We define 
\begin{align*}
\alpha_{13} &:= \beta_{13} \circ \chi_{U^1} , & 
\alpha_{14} &:= \beta_{14} \circ \chi_{V^1} , & 
\alpha_{23} &:= \beta_{23} \circ \chi_{V^2} , & 
\alpha_{24} &:= \beta_{24} \circ \chi_{U^2} .
\end{align*}
Consider on $\til E$ the operator 
\[
X := \matfour{0&0&-\alpha_{13}^*&-\alpha_{14}^*}{0&0&-\alpha_{23}^*&\alpha_{24}^*}{\alpha_{13}&\alpha_{23}&0&0}{\alpha_{14}&-\alpha_{24}&0&0} .
\]
Then $X=-X^*$ and $X^2=-1$. 
We calculate 
\begin{align*}
[\til\D_\pS,X] &= \matfour{0&0&-\til\D_\pS^1\alpha_{13}^*+\alpha_{13}^*\til\D_\pS^3&-\til\D_\pS^1\alpha_{14}^*+\alpha_{14}^*\til\D_\pS^4}{0&0&-\til\D_\pS^2\alpha_{23}^*+\alpha_{23}^*\til\D_\pS^3&\til\D_\pS^2\alpha_{24}^*-\alpha_{24}^*\til\D_\pS^4}{\til\D_\pS^3\alpha_{13}-\alpha_{13}\til\D_\pS^1&\til\D_\pS^3\alpha_{23}-\alpha_{23}\til\D_\pS^2&0&0}{\til\D_\pS^4\alpha_{14}-\alpha_{14}\til\D_\pS^1&-\til\D_\pS^4\alpha_{24}+\alpha_{24}\til\D_\pS^2&0&0} \\
&= \matfour{0&0&-[\til\D^1,\chi_{U^1}]&-[\til\D^1,\chi_{V^1}]}{0&0&-[\til\D^2,\chi_{V^2}]&[\til\D^2,\chi_{U^2}]}{{[\til\D^1,\chi_{U^1}]}&[\til\D^2,\chi_{V^2}]&0&0}{{[\til\D^1,\chi_{V^1}]}&-[\til\D^2,\chi_{U^2}]&0&0} .
\end{align*}
Since all entries are smooth and compactly supported, $[\til\D_\pS,X]$ is bounded. Furthermore, from \cref{prop:cpt_res} we know that $[\til\D_\pS,X](\til\D_\pS\pm i)^{-1}$ is compact. 
Let $e := \Gamma X$. 
Since $\Gamma X = -X\Gamma$, we have $e=e^*$, $e^2=1$, and $[\til\D_\pS,e]_\pm = \til\D_\pS e + e \til\D_\pS = -\Gamma [\til\D_\pS,X]$ is bounded and relatively compact. 
It follows from \cref{lem:cliff_triv} that $[\til\D_\pS] = 0 \in \KK^0(\C,B)$. 
\end{proof}

\section{The Kasparov product}
\label{sec:Kasp_prod}

Consider a Dirac-Schr\"odinger operator $\D_\pS = \D -i \pS(\cdot)$ on a Riemannian manifold $M$. 
By \cref{prop:pS_Fred}, we know that the operator $\pS(\cdot)$ on the Hilbert $C_0(M,B)$-module $C_0(M,E)$ is Fredholm, and therefore we obtain a well-defined class $[\pS(\cdot)] \in \KK^1(\C,C_0(M,B))$ as defined in \cref{prop:Fred_KK}. 
Furthermore, since $\D$ is an essentially self-adjoint first-order differential operator, and since 
the ellipticity of $\D$ ensures that $\D$ also has locally compact resolvents \cite[Proposition 10.5.2]{Higson-Roe00}, we know that $(C_0^1(M), L^2(M,\bF), \D)$ is an odd spectral triple, which represents an odd $\K$-homology class $[\D] \in \KK^1(C_0(M),\C)$. 
Finally, the product operator $\til\D_\pS = \pS(\cdot)\times\D$ on the Hilbert module $L^2(M,E\otimes\bF)^{\oplus2}$ is Fredholm by \cref{thm:sum_Fredholm}, and its class $[\til\D_\pS] \in \KK^0(\C,B)$ is given by the index of $\D-i\pS(\cdot)$. 
In this section we will show that the class $[\til\D_\pS]$ equals the (internal) Kasparov product (over $C_0(M)$) of $[\pS(\cdot)]$ with $[\D]$. 
In order to prove this, we need to strengthen assumption \ref{ass:A4}, and we will replace it by:
\begin{enumerate}
\item[(A4')]
\customlabel{ass:A4'}{(A4')}
There exists a \emph{disjoint} finite open cover $\{V_j\}$ of $M\backslash K$ with points $x_j\in\bar V_j$ and positive numbers $a_j<1$ such that $\big\| \big( \pS(x)-\pS(x_j) \big) \pS(x_j)^{-1} \big\| \leq a_j$ for all $x\in V_j$. 
\end{enumerate}

The following proposition shows that, without loss of generality, we may assume $\pS(x)$ to be constant on each disjoint open subset $V_j$. 

\begin{prop}
\label{prop:pS_cst_ends}
The operator $\pS(\cdot)$ is homotopic to an operator $\pS'(\cdot)$ which is obtained from a family $\{\pS'(x)\}_{x\in M}$ satisfying the assumptions \ref{ass:A1}-\ref{ass:A4'} with a compact subset $K'$ and open subsets $V_j' \subset M\backslash K'$, such that $\pS'(x) = \pS(x_j)$ for all $x\in V_j'$. 
\end{prop}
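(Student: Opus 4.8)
The plan is to deform the family $\{\pS(x)\}_{x\in M}$, separately over each $V_j$, so that near infinity it becomes equal to the constant value $\pS(x_j)$, while keeping it unchanged on a slightly shrunken core. First I would fix, for each $j$, a precompact open set $\Omega_j$ with $\bar V_j \subset \Omega_j$ and $\bar\Omega_j \cap \bar V_{j'} = \emptyset$ for $j'\neq j$ (possible since the cover $\{V_j\}$ of $M\backslash K$ is \emph{disjoint} by \ref{ass:A4'}), together with a subdivision of $V_j$: choose precompact open $K'_j$ with $\bar K'_j \subset V_j$ such that the sets $\{K'_j\}$ together with a slightly enlarged neighbourhood $V_0'$ of $K$ still cover a neighbourhood of $K$, and put $K' := \bar K \cup \bigcup_j \bar K'_j$ (a compact set) and $V_j' := V_j \setminus \bar K'_j$. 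The point is that on $V_j'$ we are free to move $\pS$ towards the constant $\pS(x_j)$.

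The core construction is a straight-line homotopy in the parameter $t\in[0,1]$. Pick smooth cutoff functions $\lambda_j\colon M\to[0,1]$ with $\lambda_j \equiv 1$ on $M\setminus V_j$ (in particular near $K$ and on all other $V_{j'}$) and $\lambda_j \equiv 0$ on a closed subset $\bar V_j^{(\infty)}\subset V_j'$ whose complement in $V_j$ is precompact; outside $\bigcup_j V_j$ set all $\lambda_j = 1$. Then define
\[
\pS^t(x) := \pS(x_j) + \big( (1-t) + t\,\lambda_j(x) \big)\big( \pS(x) - \pS(x_j) \big) \qquad (x\in V_j),
\]
and $\pS^t(x) := \pS(x)$ for $x\in M\setminus\bigcup_j V_j$; the two formulas agree on the overlap because $\lambda_j\equiv1$ there. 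Since $\pS(x)-\pS(x_j)$ has domain containing $W$ and is symmetric and relatively bounded with $\|(\pS(x)-\pS(x_j))\pS(x_j)^{-1}\|\le a_j<1$, multiplying it by a scalar in $[0,1]$ preserves all of these properties; so $\pS^t(x)$ is regular self-adjoint on $W$ (Kato--Rellich, as in \cref{lem:extension}), the map $(t,x)\mapsto\pS^t(x)\in\Hom_B(W,E)$ is norm-continuous, and crucially $\|(\pS^t(x)-\pS(x_j))\pS(x_j)^{-1}\|\le a_j$ for all $t$, giving uniform invertibility of $\pS^t(x)$ on $V_j$ (Neumann series) with the bound $\|\pS^t(x)^{-1}\|\le (1-a_j)^{-1}\|\pS(x_j)^{-1}\|$. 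Hence $\pS^t(x)$ is uniformly invertible on $M\setminus K$ for every $t$, so the combined family $\{\pS^t(x)\}_{(t,x)\in[0,1]\times M}$ satisfies \ref{ass:A1}--\ref{ass:A4} on $C([0,1],E)$ with compact set $[0,1]\times K$, and \ref{prop:pS_Fred} shows $\pS^\bullet(\cdot)$ is regular self-adjoint and Fredholm, i.e.\ it is a homotopy between $\pS^0(\cdot)=\pS(\cdot)$ and $\pS^1(\cdot)=:\pS'(\cdot)$. By construction $\pS'(x) = \pS(x_j)$ for all $x$ in the closed set $\bar V_j^{(\infty)}$; shrinking $V_j'$ to be an open neighbourhood of that set contained in $V_j\setminus\bar K'_j$ we get open sets $V_j'$ with $\pS'(x)=\pS(x_j)$ on $V_j'$, and one checks $\{V_j'\}$ together with the chosen $V_0'$ covers $M$, so $K':=M\setminus\bigcup_j V_j'$ is compact and $\pS'(\cdot)$ satisfies \ref{ass:A1}--\ref{ass:A4'} relative to $K'$ (assumption \ref{ass:A4'} being trivial on the $V_j'$ since $\pS'$ is constant there, so $a_j$ can be taken arbitrarily small).

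The main obstacle I anticipate is bookkeeping rather than analysis: one must arrange the shrunken core $K'$, the new ends $V_j'$, and the cutoffs $\lambda_j$ so that (i) $\{V_j'\}\cup\{V_0'\}$ genuinely covers $M$ (nothing is lost in the transition region of $\lambda_j$), (ii) the homotopy family still satisfies the compactness-of-embedding assumption \ref{ass:A1}, which follows from \cref{lem:uegn} applied to the homotopy family once one knows the graph norms of $\pS^t(x)$ are uniformly equivalent — and that in turn follows from the uniform bound $\|(\pS^t(x)-\pS(x_j))\pS(x_j)^{-1}\|\le a_j$ together with \cref{lem:dom_inc_rel_bdd} on the compact part — and (iii) the two defining formulas for $\pS^t$ patch smoothly, which is why $\lambda_j$ is taken $\equiv1$ on all of $M\setminus V_j$. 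None of these steps is hard, but getting the nested open sets consistent across all $j$ simultaneously (using disjointness of the $V_j$) is the part that needs care.
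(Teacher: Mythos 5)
Your proposal is correct and follows essentially the same route as the paper: interpolate linearly from $\pS(x)$ to $\pS(x_j)$ on each (disjoint) end $V_j$, keep $\pS$ fixed on a compact core, verify that the interpolated family $\{\pS^t(x)\}$ still satisfies \ref{ass:A1}--\ref{ass:A4'} (the Kato--Rellich and Neumann-series estimates being exactly the ones needed), and invoke \cref{prop:pS_Fred} to see the combined family gives a genuine homotopy. The only cosmetic difference is that the paper realises the interpolation coefficient through a radial coordinate $r$ on a tubular neighbourhood of $\partial U$, whereas you use smooth cutoff functions $\lambda_j$; these are interchangeable.
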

\begin{proof}
Let $U$ be a precompact open neighbourhood of $K$ with smooth compact boundary $\partial U$, and let $C \simeq \partial U \times (-1,1)$ be a tubular neighbourhood of $\partial U$. We have $\partial U = \bigsqcup_j N_j$ with $N_j := V_j\cap\partial U$. 
We define the compact subset $K' := U \cup \bar C$ and the open subsets $V_j' := V_j \backslash K'$. 
We consider the homotopy given by
\[
H^t(x) := \begin{cases}
          \pS(x) , & x\in U , \\
          (1-tr) \pS(y,r) + tr \pS(x_j) , & x=(y,r)\in N_j\times[0,1] , \\
          (1-t) \pS(x) + t \pS(x_j) , & x \in V_j' .
          \end{cases}
\]
Defining $\pS'(x) := H^1(x)$, it is clear that $\pS'(x) = \pS(x_j)$ for all $x\in V_j'$. Furthermore, $H^t(x)$ depends continuously on $t$, and the assumptions on $\pS(x)$ then ensure that the family of operators $\{H^\bullet(x)\}_{x\in M}$ on the Hilbert $C([0,1],B)$-module $C([0,1],E)$ also satisfies the assumptions \ref{ass:A1}-\ref{ass:A4'}. From \cref{prop:pS_Fred} we then know that $H^\bullet(\cdot)$ is regular self-adjoint and Fredholm, so it indeed yields a homotopy between $\pS(\cdot)$ and $\pS'(\cdot)$. 
\end{proof}

Before we continue, let us first discuss the special case $M=\R$ and $B=\C$, with the operator $\D = -i\partial_x$. Then the equality $[\til\D_\pS] = [\pS(\cdot)] \otimes_{C_0(\R)} [-i\partial_x]$ follows more or less immediately from the `index = spectral flow' theorem combined with the description of spectral flow as a Kasparov product (given in \cref{prop:SF_Kasp_prod}). The equality between index and spectral flow was first thoroughly investigated by Robbin and Salamon \cite{RS95}. Here we will consider the generalisation from \cite{Wah07,AW11}. 

\begin{thm}[`Index = spectral flow', cf.\ {\cite[Theorem 2.1]{AW11}}]
\label{thm:index=sf}
Let $\{\pS(x)\}_{x\in\R}$ be a family of self-adjoint operators on a Hilbert space $\mH$ satisfying the \ref{ass:standing} (with $B=\C$, $E=\mH$, $M=\R$, and $\D=-i\partial_x$), and satisfying \ref{ass:A4'} with the open subsets $V_1 = (-\infty,a)$ and $V_2 = (b,\infty)$ for some $a<b\in\R$. 
Then the spectral flow of $\{\pS(x)\}_{x\in\R}$ is equal to the index of the operator $\partial_x+\pS(\cdot)$ on $L^2(\R,\mH)$.
\end{thm}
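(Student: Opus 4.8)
The plan is to combine the spectral-flow results of \cref{sec:KK} with \cref{thm:sum_Fredholm} to reduce the statement to the assertion that the product operator represents the relevant Kasparov product, and then to prove that assertion by the differentiable-case-plus-homotopy method. First I would note that, with $\D=-i\partial_x$, the operator $\partial_x+\pS(\cdot)$ equals $i\,\D_\pS$ and hence has the same index as $\D_\pS=\D-i\pS(\cdot)$, which by \cref{thm:sum_Fredholm} is the class $[\til\D_\pS]\in\KK^0(\C,\C)\simeq\Z$. On the other side, since $E=\mH$ is a Hilbert space, \cref{lem:pS_loc_triv_fam} provides locally trivialising families for $\pS(\cdot)$, and by \ref{ass:A3} the family is uniformly invertible outside a compact $K\subset\R$; thus \cref{prop:ASF_R,prop:SF_Kasp_prod} apply and give $\SF(\{\pS(x)\}_{x\in\R})=\beta\big([\pS(\cdot)]\big)=[\pS(\cdot)]\otimes_{C_0(\R)}[-i\partial_x]$. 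It therefore suffices to prove $[\til\D_\pS]=[\pS(\cdot)]\otimes_{C_0(\R)}[-i\partial_x]$ in $\KK^0(\C,\C)$.

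Next I would treat the case of a differentiable family. Replacing $\pS(\cdot)$ by $f^{-1}\pS(\cdot)$ as in \cref{lem:pS_cpt_res} turns $[\pS(\cdot)]$ into a genuine odd unbounded Kasparov $\C$-$C_0(\R)$-module without changing either side of the identity (the index being localised near $K$, where $f=1$, and $[f^{-1}\pS(\cdot)]=[\pS(\cdot)]$). If $\{\pS(x)\}_{x\in\R}$ is $C^1$ as a map $\R\to\Hom_B(W,E)$, so that the commutator $[\D,\pS(\cdot)]$ extends to a bounded endomorphism, then one verifies the three hypotheses of Kucerovsky's \cref{thm:Kucerovsky} applied to the even doubles of $[\pS(\cdot)]$ and $[-i\partial_x]$ exactly as in \cite[\S8]{KL13}: the domain inclusion is supplied by \cref{prop:sum_sa}, while the connection and positivity estimates follow from the anti-commutation of $\til\pS(\cdot)$ with $\til\D$ up to the now-bounded contribution of $[\D,\pS(\cdot)]$. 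This proves the identity for differentiable families.

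Finally I would reduce the general (merely continuous) case to the differentiable one. Using \cref{prop:pS_cst_ends} I may assume $\pS(x)$ is constant (and invertible) near $\pm\infty$; then I would mollify in $x$, setting $\pS_\epsilon(x):=\int_\R\pS(x-y)\rho_\epsilon(y)\,dy$ for a smooth, compactly supported mollifier $\rho_\epsilon$. Since $\pS$ is bounded in $\Hom_B(W,E)$ (by \cref{lem:uegn}), $\pS_\epsilon$ is $C^\infty$ with bounded derivative; by convexity it still satisfies the small-variation bound \ref{ass:A4'} and the uniform invertibility near infinity, and for small $\epsilon$ it agrees with $\pS$ outside a compact set; thus the linear path $(1-t)\pS+t\pS_\epsilon$ satisfies the \ref{ass:standing} for all $t$. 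By \cref{thm:sum_Fredholm} this path is a homotopy of regular self-adjoint Fredholm operators, so $[\til\D_\pS]=[\til\D_{\pS_\epsilon}]$, and likewise $[\pS(\cdot)]=[\pS_\epsilon(\cdot)]$ in $\KK^1(\C,C_0(\R))$; the previous paragraph then closes the argument. The main obstacle is precisely this last step: one must check carefully that the whole homotopy (not just its endpoint) remains within the \ref{ass:standing}, with the constants $a_j$ staying strictly below $1$, and that the mollified family is differentiable in exactly the sense required to run the \cite{KL13} argument; as a shortcut one may instead invoke the known ``index $=$ spectral flow'' theorem \cite[Theorem 2.1]{AW11} directly, after verifying that \ref{ass:A1}--\ref{ass:A4'} imply its hypotheses (restricting the path to $[a,b]$ and using \cref{lem:cts_resolvent}).
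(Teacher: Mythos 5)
Your proposal is correct, but it takes a genuinely different (and much heavier) route than the paper. The paper proves this theorem \emph{without} the Kasparov product machinery: it applies \cref{prop:pS_cst_ends} to replace $\pS(\cdot)$ by a family $\pS'(\cdot)$ that is constant and invertible outside $[a,b]$ (so that $\SF(\{\pS(x)\}_{\R})=\SF(\{\pS'(x)\}_{\R})$ by the very definition of spectral flow on $\R$), notes that the homotopy $H^\bullet(\cdot)$ supplied by that proposition yields, via \cref{thm:sum_Fredholm}, a Fredholm homotopy between $\partial_x+\pS(\cdot)$ and $\partial_x+\pS'(\cdot)$ (hence equal indices), and then invokes \cite[Theorem 2.1]{AW11} directly. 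That is essentially the ``shortcut'' you mention at the end of your last paragraph, except that the crucial ingredient is \cref{prop:pS_cst_ends} (to control both sides under the replacement $\pS\rightsquigarrow\pS'$), not merely ``restricting the path to $[a,b]$''; you would need that step to justify $\Index(\partial_x+\pS(\cdot))=\Index(\partial_x+\pS'(\cdot))$.

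Your main proposal instead re-derives the $M=\R$ case of \cref{thm:Kasp_prod_index} (which the paper proves later, independently of \cref{thm:index=sf}, so there is no circularity). This works: on $\R$ the manifold is already complete and $-i\partial_x$ has bounded propagation speed, so you can bypass the cylindrical-end/cut-and-paste machinery; mollification is a clean replacement on $\R$ for the paper's Wockel--Steenrod approximation (\cref{lem:pS_diff}), and after \cref{prop:pS_cst_ends} the family is locally constant near $\pm\infty$ so $\pS_\epsilon$ and the linear homotopy $(1-t)\pS+t\pS_\epsilon$ satisfy \ref{ass:A4'} with $a_j=0$ on slightly shrunk ends $V_j''$, as you note. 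One small caveat: for the differentiable case it is not enough that $\pS$ be $C^1$; you need a \emph{uniformly} bounded weak derivative in the sense of \cite[\S8.3]{KL13}, and you should cite \cite[Theorem 7.10]{KL12} (or \cref{lem:comm_loc_est}/\cref{prop:diff_Kasp_prod}) for the self-adjointness/domain identification rather than \cref{prop:sum_sa}, since the latter is proven for the continuous case. What your route buys is a proof that is self-contained modulo Kucerovsky's theorem and does not cite \cite{AW11}; what it costs is that you are effectively proving a substantial portion of \cref{thm:Kasp_prod_index} where the paper makes do with a three-line reduction.
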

\begin{proof}
By \cref{lem:pS_loc_triv_fam}, there exist locally trivialising families for $\{\pS(x)\}_{x\in\R}$, so that the spectral flow is well-defined. 
We may assume (without loss of generality) that $a=0$ and $b=1$. 
Consider the family
\[
\pS'(x) := 
\begin{cases}
\pS(0) & x<0 , \\
\pS(x) & 0\leq x\leq 1 , \\
\pS(1) & x>1 .
\end{cases}
\]
By definition, $\SF\big(\{\pS(x)\}_{x\in\R}\big) = \SF\big(\{\pS(x)\}_{x\in[0,1]}\big) = \SF\big(\{\pS'(x)\}_{x\in\R}\big)$. 
Moreover, from \cref{prop:pS_cst_ends} we obtain a homotopy $H^\bullet(\cdot)$ between $\pS(\cdot)$ and $\pS'(\cdot)$, such that $H^\bullet(x)$ satisfies \ref{ass:A1}-\ref{ass:A4'}. By \cref{thm:sum_Fredholm}, $\partial_x + H^\bullet(\cdot)$ is Fredholm on $C([0,1],L^2(\R,E))$, and therefore it provides a homotopy between $\partial_x + \pS(\cdot)$ and $\partial_x + \pS'(\cdot)$. 
Hence we also have the equality $\Index\big(\partial_x+\pS(\cdot)\big) = \Index\big(\partial_x+\pS'(\cdot)\big)$. 
Finally, it has been shown in \cite[Theorem 2.1]{AW11} that the spectral flow $\SF\big(\{\pS'(x)\}_{x\in[0,1]}\big)$ is equal to $\Index\big(\partial_x+\pS'(\cdot)\big)$, which completes the proof. 
\end{proof}

\begin{coro}
Let $\{\pS(x)\}_{x\in\R}$ be as in \cref{thm:index=sf}. 
Then the index of $\partial_x+\pS(\cdot)$ on $L^2(\R,\mH)$ represents the internal Kasparov product of $[\pS(\cdot)] \in \KK^1(\C,C_0(\R))$ with $[-i\partial_x] \in \KK^1(C_0(\R),\C)$. 
\end{coro}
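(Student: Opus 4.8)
The plan is to deduce this statement directly by combining the two immediately preceding results. First I would record the inputs needed to invoke the $\KK$-theoretic description of spectral flow. Since $\{\pS(x)\}_{x\in\R}$ satisfies the \ref{ass:standing}, assumption \ref{ass:A3} provides a compact subset $K\subset\R$ outside of which $\pS(x)$ is uniformly invertible; enlarging $K$ if necessary we may take $K=[a,b]$, so that $\pS(x)$ is uniformly invertible outside a bounded interval. Moreover, as $\{\pS(x)\}_{x\in\R}$ is a family of self-adjoint operators on the Hilbert space $\mH$ satisfying \ref{ass:A1}--\ref{ass:A3}, \cref{lem:pS_loc_triv_fam} guarantees the existence of locally trivialising families for $\pS(\cdot)$, so that the spectral flow $\SF\big(\{\pS(x)\}_{x\in\R}\big)\in\K_0(\C)\simeq\Z$ is well-defined.

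With these two facts available, the second statement of \cref{prop:SF_Kasp_prod} (with $B=\C$) applies and gives
\[
\SF\big(\{\pS(x)\}_{x\in\R}\big) = [\pS(\cdot)] \otimes_{C_0(\R)} [-i\partial_x] \in \K_0(\C) .
\]
On the other hand, \cref{thm:index=sf} yields $\SF\big(\{\pS(x)\}_{x\in\R}\big) = \Index\big(\partial_x+\pS(\cdot)\big)$. Chaining the two equalities produces
\[
\Index\big(\partial_x+\pS(\cdot)\big) = [\pS(\cdot)] \otimes_{C_0(\R)} [-i\partial_x] ,
\]
which, under the standard identification $\KK^0(\C,\C)\simeq\K_0(\C)\simeq\Z$ described in \cref{sec:KK}, is precisely the assertion that the index of $\partial_x+\pS(\cdot)$ represents the internal Kasparov product of $[\pS(\cdot)]$ with $[-i\partial_x]$.

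Since both ingredients have already been established, there is no substantial obstacle; the only points requiring care are bookkeeping ones. One must keep track of the identifications $\K_0(\C)\simeq\KK^0(\C,\C)\simeq\Z$, observe that the Fredholm index of $\partial_x+\pS(\cdot)$ coincides (via the standard isomorphism of \cref{sec:KK}) with the Kasparov class $[\til\D_\pS]$ of the product operator, so that ``represents'' is understood in the intended sense, and note that here $\D=-i\partial_x$, so the relevant $\K$-homology class is the Bott generator $[-i\partial_x]$ of $\KK^1(C_0(\R),\C)$ appearing in \cref{prop:SF_Kasp_prod}. Finally, one should remark that \cref{thm:index=sf} also requires the hypothesis \ref{ass:A4'} with $V_1=(-\infty,a)$ and $V_2=(b,\infty)$, which is exactly the hypothesis imposed on $\{\pS(x)\}_{x\in\R}$ in that theorem and hence is in force here.
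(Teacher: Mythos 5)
Your proof is correct and takes essentially the same route as the paper: chain \cref{thm:index=sf} (index $=$ spectral flow) with the second statement of \cref{prop:SF_Kasp_prod} (spectral flow $=$ Kasparov product with $[-i\partial_x]$). The additional bookkeeping you carry out --- verifying that \ref{ass:A3} gives uniform invertibility outside a bounded interval and that \cref{lem:pS_loc_triv_fam} furnishes locally trivialising families, so that \cref{prop:SF_Kasp_prod} indeed applies --- is sound, though the paper leaves these checks implicit in its one-line proof.
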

\begin{proof}
We have the equalities
\[
\Index(\partial_x+\pS(\cdot)) = \SF\{\pS(x)\}_{x\in\R} = [\pS(\cdot)] \otimes_{C_0(\R)} [-i\partial_x] ,
\]
where the first equality is from \cref{thm:index=sf}, and the second equality is from \cref{prop:SF_Kasp_prod}. 
\end{proof}

In the remainder of this section we aim to prove the equality 
\[
\Index(\D-i\pS(\cdot)) = [\pS(\cdot)] \otimes_{C_0(M)} [\D] ,
\]
on a manifold $M$ of arbitrary dimension, where the operators $\pS(\cdot)$ and $\D$ satisfy the \ref{ass:standing} as well as assumption \ref{ass:A4'}. 
This equality can be interpreted as a generalisation of the `index = spectral flow' theorem to higher-dimensional manifolds. 
We point out that a similar equality has already been obtained in \cite{KL13} under the assumption that the family $\{\pS(x)\}_{x\in M}$ is suitably differentiable. First, we will adapt the methods of \cite[\S8]{KL13} to obtain the equality $\Index(\D-i\pS(\cdot)) = [\pS(\cdot)] \otimes_{C_0(M)} [\D]$ for a differentiable family $\{\pS(x)\}_{x\in M}$. Subsequently, we will show that the equality remains valid without assuming any differentiability.

\subsection{A differentiable family}
\label{sec:Kasp_prod_diff}

In this subsection, following \cite[\S8]{KL13}, we consider the special case in which the family $\{\pS(x)\}_{x\in M}$ is not only continuous but in fact differentiable. 
While in \cite[\S8]{KL13} only families of operators on a Hilbert \emph{space} were considered, we will show that the results of \cite[\S8]{KL13} remain valid for families of operators acting on a Hilbert \emph{module} over a $C^*$-algebra $B$. 

\begin{defn}[{cf.\ \cite[\S8.3]{KL13} \& \cite[\S4.3]{vdDR16}}]
Let $M$ be a smooth manifold, and let $E_1$ and $E_2$ be countably generated Hilbert $B$-modules. 
A map $S(\cdot)\colon M\to\Hom_B(E_1,E_2)$, $x\mapsto S(x)$, is said to have a \emph{uniformly bounded weak derivative} if the map is weakly differentiable (i.e.\ the map $x\mapsto \la S(x)\xi,\eta\ra$ is differentiable for each $\xi\in E_1$ and $\eta\in E_2$), the weak derivative $dS(x)\colon E_1 \to E_2\otimes T_x^*(M)$ is bounded for all $x\in M$, and the supremum $\sup_{x\in M}\|dS(x)\|$ is finite.
\end{defn}

\begin{defn}[see {\cite[Assumption 7.1]{KL12}}]
\label{defn:almost_(anti-)commute}
Let $S$ and $T$ be regular self-adjoint operators on a Hilbert $B$-module $E$. We will say that $[S,T](S-i\mu)^{-1}$ is \emph{well-defined and bounded} if
\begin{enumerate}
\item there exists a submodule $\E\subset\Dom T$ which is a core for $T$;
\item for each $\xi\in\E$ and for all $\mu\in\R\backslash\{0\}$ we have the inclusions
\begin{align*}
(S-i\mu)^{-1} \xi &\in \Dom S\cap\Dom T\quad\mbox{and}\quad  T(S-i\mu)^{-1}\xi \in \Dom S ;
\end{align*}
\item the map $[S,T] (S-i\mu)^{-1} \colon \E \to E$ extends to a bounded operator 
in $\End_B(E)$ for all $\mu\in\R\backslash\{0\}$.
\end{enumerate}
\end{defn}

\begin{assumption}
\label{ass:diff}
Let $B$ be a (trivially graded) $\sigma$-unital $C^*$-algebra, and let $E$ be a countably generated Hilbert $B$-module. 
Let $M$ be a connected \emph{complete} Riemannian manifold, and let $\D$ be a symmetric elliptic first-order differential operator \emph{with bounded propagation speed} on a hermitian vector bundle $\bF\to M$.
Let $\{\pS(x)\}_{x\in M}$ be a family of regular self-adjoint operators on $E$ satisfying assumptions \ref{ass:A1}-\ref{ass:A3}. 
\end{assumption}

Let $\sigma$ be the principal symbol of $\D$. The boundedness of the propagation speed of $\D$ means that $\sup_{\xi\in T^*M,\,\|\xi\|=1}\|\sigma(\xi)\|$ is bounded. 
We point out that the completeness of $M$ and the bounded propagation speed of $\D$ ensure that $\D$ is essentially self-adjoint \cite[Proposition 10.2.11]{Higson-Roe00}. 

\begin{lem}
\label{lem:comm_loc_est}
Let $\D$ and $\pS(\cdot)$ be as in \cref{ass:diff}. 
Suppose that the graph norms of $\pS(x)$ are uniformly equivalent, and that $\pS\colon M\to\Hom_B(W,E)$ has a uniformly bounded weak derivative. 
Let $\phi\in C^1(M)$ be any differentiable function such that $\phi(x)\geq1$ for all $x\in M$, and $d\phi$ is bounded. 
Then $\big[\D,\phi\pS(\cdot)\big] \big(\phi\pS(\cdot)\pm i\big)^{-1}$ is well-defined and bounded. 
Moreover, we have the following pointwise norm-estimates:
\begin{align*}
\left\| \big[\D,\phi\pS(\cdot)\big](x) \big(\phi(x)\pS(x)\pm i\big)^{-1} \right\| 
&\leq \|\sigma(d\phi)(x)\| + \phi(x) \left\| \big[\D,\pS(\cdot)\big] \big(\pS(\cdot)\pm i\big)^{-1}\right\| , & x&\in M , \\
\left\| \big[\D,\phi\pS(\cdot)\big](x) \big(\phi(x)\pS(x)\pm i\big)^{-1} \right\| 
&\leq \|\sigma(d\phi)(x)\| + (1+c^{-1}) \left\| \big[\D,\pS(\cdot)\big] \big(\pS(\cdot)\pm i\big)^{-1}\right\| , & x&\in M\backslash K ,
\end{align*}
where $c>0$ is chosen such that $[-c,c]$ does not intersect the spectrum of $\pS(x)$ for any $x\in M\backslash K$. 
\end{lem}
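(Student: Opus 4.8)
The plan is to reduce the whole statement to the Leibniz rule for commutators together with two applications of the continuous functional calculus for regular self-adjoint operators.

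First I would dispose of the ``well-defined'' clauses of \cref{defn:almost_(anti-)commute} for the pair $S=\phi\pS(\cdot)$, $T=\D$: for $\E$ one takes a core for $\D$ consisting of smooth, compactly-supported sections. Since $\phi$ is smooth, scalar, $\geq1$, and has bounded differential, $\phi\pS(\cdot)$ is again regular self-adjoint (by the argument of \cref{lem:pS_cpt_res}, i.e.\ \cite[Proposition~8.7]{KL13} applied with $f=\phi^{-1}$), and the required inclusions $(\phi\pS(\cdot)\pm i)^{-1}\E\subset\Dom(\phi\pS(\cdot))\cap\Dom\D$ and $\D(\phi\pS(\cdot)\pm i)^{-1}\E\subset\Dom(\phi\pS(\cdot))$ are obtained exactly as in the case $\phi\equiv1$ treated in \cite[\S8]{KL13}; carrying the extra smooth, bounded-gradient factor $\phi$ through the argument is harmless. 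Specialising to $\phi\equiv1$, this is the assertion that $[\D,\pS(\cdot)](\pS(\cdot)\pm i)^{-1}$ is well-defined, and it is moreover bounded: pointwise $[\D,\pS(\cdot)](x)$ is the zeroth-order operator $(1\otimes\sigma)(d\pS(x))$, so $\big\|[\D,\pS(\cdot)](x)(\pS(x)\pm i)^{-1}\big\|\leq\big(\sup_{\xi\in T^*M,\,\|\xi\|=1}\|\sigma(\xi)\|\big)\sup_{y\in M}\|d\pS(y)\|<\infty$ by the bounded propagation speed of $\D$ and the uniformly bounded weak derivative. Hence the number $\|[\D,\pS(\cdot)](\pS(\cdot)\pm i)^{-1}\|$ appearing in the estimates is legitimate.

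Since $\phi$ is scalar it commutes with $\pS(\cdot)$, and $[\D,\phi]=(1\otimes\sigma)(d\phi)$ because $\D$ is a first-order differential operator; hence on the core $\E$ one has the identity
\[
[\D,\phi\pS(\cdot)]\,(\phi\pS(\cdot)\pm i)^{-1} = (1\otimes\sigma)(d\phi)\;\pS(\cdot)(\phi\pS(\cdot)\pm i)^{-1} + \phi\,\big([\D,\pS(\cdot)](\pS(\cdot)\pm i)^{-1}\big)\,(\pS(\cdot)\pm i)(\phi\pS(\cdot)\pm i)^{-1}.
\]
For the first displayed estimate I would bound the two summands pointwise at $x\in M$: by the continuous functional calculus $\|\pS(x)(\phi(x)\pS(x)\pm i)^{-1}\|\leq\phi(x)^{-1}\leq1$ (since $|\lambda|\leq|\phi(x)\lambda\pm i|$), which disposes of the first summand, and $\|(\pS(x)\pm i)(\phi(x)\pS(x)\pm i)^{-1}\|\leq1$ (since $\phi(x)\geq1$ gives $\lambda^2+1\leq\phi(x)^2\lambda^2+1$), which leaves the factor $\phi(x)$ on the second summand. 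For the sharper estimate on $M\backslash K$ I would instead keep the scalar $\phi(x)$ together with the functional-calculus factor: using \ref{ass:A3} to fix $c>0$ with $\operatorname{spec}\pS(x)\subset\{|\lambda|\geq c\}$ for all $x\in M\backslash K$, and writing $t=\phi(x)\geq1$,
\[
\big\|\phi(x)(\pS(x)\pm i)(\phi(x)\pS(x)\pm i)^{-1}\big\| = \sup_{|\lambda|\geq c} t\,\frac{\sqrt{\lambda^2+1}}{\sqrt{t^2\lambda^2+1}} = \sup_{|\lambda|\geq c}\sqrt{1+\frac{t^2-1}{t^2\lambda^2+1}} \leq \sqrt{1+c^{-2}} \leq 1+c^{-1}.
\]
Feeding this into the second summand of the identity above (in place of the bound $\phi(x)$) gives the second displayed estimate. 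Global boundedness of $[\D,\phi\pS(\cdot)](\phi\pS(\cdot)\pm i)^{-1}$ then follows by taking suprema over $x$: $\|\sigma(d\phi)(x)\|$ is bounded because $d\phi$ is bounded and $\D$ has bounded propagation speed; the factor $\phi(x)$ in the first estimate is bounded on the compact set $K$; and on $M\backslash K$ the second estimate bounds the remaining term by $(1+c^{-1})\|[\D,\pS(\cdot)](\pS(\cdot)\pm i)^{-1}\|$, uniformly in $x$.

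The step I expect to be the main obstacle is the first one: carefully verifying the ``well-defined'' clauses of \cref{defn:almost_(anti-)commute} for $\phi\pS(\cdot)$ — that $\phi\pS(\cdot)$, whose function-factor need not be bounded above, is regular self-adjoint with a suitable core, and that the domain inclusions established for $\pS(\cdot)$ survive the insertion of $\phi$. The two estimates themselves are short functional-calculus computations; the one genuinely essential point there is that on $M\backslash K$ one must use the uniform bound $1+c^{-1}$ rather than $\phi(x)$, since $\phi$ may grow at infinity and only the sharper bound makes the commutator a globally bounded operator.
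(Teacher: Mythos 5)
Your proposal is correct and follows essentially the same route as the paper's own proof: both rest on the Leibniz decomposition $[\D,\phi\pS(\cdot)] = \sigma(d\phi)\pS(\cdot) + \phi\,[\D,\pS(\cdot)]$, followed by scalar functional-calculus bounds on the resolvent factors, and both defer the ``well-defined'' bookkeeping to the arguments of \cite[\S8]{KL13}. The only cosmetic difference is in how you arrange the intermediate factors: for the first summand you bound $\|\pS(x)(\phi(x)\pS(x)\pm i)^{-1}\|\leq\phi(x)^{-1}$ directly, whereas the paper inserts $(\pS(x)\pm i)^{-1}(\pS(x)\pm i)$ and bounds each piece by $1$; and on $M\backslash K$ you compute the exact supremum of $t\sqrt{\lambda^2+1}/\sqrt{t^2\lambda^2+1}$ over $|\lambda|\geq c$, obtaining the (slightly sharper) bound $\sqrt{1+c^{-2}}\leq 1+c^{-1}$, whereas the paper factors through $\pS(x)^{-1}$ and uses the triangle inequality to get $1+c^{-1}$. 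Both give the stated pointwise estimates and the same conclusion that the uniform bound on $M\backslash K$, which is independent of $\phi(x)$, makes the commutator globally bounded even when $\phi$ is unbounded.
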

\begin{proof}
The operator $\big[\D,\pS(\cdot)\big] \big(\pS(\cdot)\pm i\big)^{-1}$ is well-defined and bounded, which follows from the same argument as in the proofs of \cite[Lemma 8.5 \& Theorem 8.6]{KL13} (in fact, in \cite{KL13} only the case $B=\C$ was considered, but the same argument also works for an arbitrary $C^*$-algebra $B$).
We have
\[
[\D,\phi\pS(\cdot)] = \sigma(d\phi) \pS(\cdot) + \phi [\D,\pS(\cdot)] . 
\]
Hence for any $x\in M$ we obtain 
\begin{align*}
\left\| \big[\D,\phi\pS(\cdot)\big](x) \big(\phi(x)\pS(x)\pm i\big)^{-1} \right\| 
&\leq \|\sigma(d\phi)(x)\| \; \|\pS(x)(\pS(x)\pm i)^{-1}\| \; \|(\pS(x)\pm i)(\phi(x)\pS(x)\pm i)^{-1}\| \\
&\quad+ \phi(x) \left\| \big[\D,\pS(\cdot)\big] \big(\pS(\cdot)\pm i\big)^{-1}\right\| \; \left\| \big(\pS(x)\pm i\big) \big(\phi(x)\pS(x)\pm i\big)^{-1} \right\| .
\end{align*}
We can estimate
\[
\left\| \big(\pS(x)\pm i\big) \big(\phi(x)\pS(x)\pm i\big)^{-1} \right\| \leq \sup_{s\in\R} \left| \frac{s\pm i}{\phi(x)s\pm i} \right| = \max(1,\phi(x)^{-1}) = 1 .
\]
Thus for any $x\in M$ we obtain
\begin{align*}
\left\| \big[\D,\phi\pS(\cdot)\big](x) \big(\phi(x)\pS(x)\pm i\big)^{-1} \right\| 
\leq \|\sigma(d\phi)(x)\| \cdot 1 \cdot 1 + \phi(x) \left\| \big[\D,\pS(\cdot)\big] \big(\pS(\cdot)\pm i\big)^{-1}\right\| \cdot 1 ,
\end{align*}
which proves the first inequality. For $x\in M\backslash K$, we can use the invertibility of $\pS(x)$ to improve this inequality. Indeed, we can estimate
\begin{align*}
\left\| \big(\pS(x)\pm i\big) \big(\phi(x)\pS(x)\pm i\big)^{-1} \right\| 
\leq \phi(x)^{-1} \big\| (\pS(x)\pm i) \pS(x)^{-1} \big\| \; \big\| \pS(x) (\pS(x)\pm i\phi(x)^{-1})^{-1} \big\| \leq \phi(x)^{-1} (1+c^{-1}) .
\end{align*}
Hence for $x\in M\backslash K$ we obtain
\begin{align*}
\left\| \big[\D,\phi\pS(\cdot)\big](x) \big(\phi(x)\pS(x)\pm i\big)^{-1} \right\| 
\leq \|\sigma(d\phi)(x)\| \cdot 1 \cdot 1 + \phi(x) \left\| \big[\D,\pS(\cdot)\big] \big(\pS(\cdot)\pm i\big)^{-1}\right\| \phi(x)^{-1} (1+c^{-1}) ,
\end{align*}
which proves the second inequality. In particular, since this second inequality does not depend on the size of $\phi(x)$, we conclude that $\big\| \big[\D,\phi\pS(\cdot)\big] \big(\phi\pS(\cdot)\pm i\big)^{-1} \big\| = \sup_{x\in M} \big\| \big[\D,\phi\pS(\cdot)\big](x) \big(\phi(x)\pS(x)\pm i\big)^{-1} \big\|$ is bounded. 
\end{proof}

\begin{lem}
\label{lem:diff_Fred}
Let $\D$ and $\pS(\cdot)$ be as in \cref{ass:diff}. 
Suppose that the operator $\big[\D,\pS(\cdot)\big] \big(\pS(\cdot)\pm i\big)^{-1}$ is well-defined and bounded. 
Then there exists $\lambda_0\geq1$ such that 
the operator $\D_{\lambda\pS} = \D - i \lambda \pS(\cdot)$ is Fredholm, for any $\lambda\geq\lambda_0$. 
\end{lem}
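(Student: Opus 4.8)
The plan is to work with the product operator $\til\D_{\lambda\pS}:=\lambda\pS(\cdot)\times\D=\til\D+\lambda\til\pS(\cdot)$, whose class in $\KK^0(\C,B)$ is the index of $\D-i\lambda\pS(\cdot)$, and to show it is Fredholm for large $\lambda$ by a Callias-type coercivity-at-infinity argument. First I would record that $\til\D_{\lambda\pS}$ is regular self-adjoint on $\Dom\til\D\cap\Dom\til\pS(\cdot)$ for every $\lambda>0$: since $[\D,\pS(\cdot)](\pS(\cdot)\pm i)^{-1}$ is assumed well-defined and bounded, the commutator of the two summands $\til\D$ and $\lambda\til\pS(\cdot)$ is controlled, and one concludes by the sum theorems of \cite{KL12,KL13,Mes14}. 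By \cref{prop:cpt_res}, $\til\D_{\lambda\pS}$ also has locally compact resolvent. So it remains to show that $\til\D_{\lambda\pS}$ is invertible near infinity once $\lambda\geq\lambda_0$, and then to patch a parametrix.

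The heart of the argument is the Bochner-type identity $\til\D_{\lambda\pS}^2=\til\D^2+\lambda^2\pS(\cdot)^2-i\lambda\,\Gamma[\D,\pS(\cdot)]$, where $\Gamma=1\oplus(-1)$ is the grading of $L^2(M,E\otimes\bF)^{\oplus2}$ and $[\D,\pS(\cdot)]$ acts diagonally; here $\til\D^2$ and $\pS(\cdot)^2$ are non-negative and $-i\Gamma[\D,\pS(\cdot)]$ is formally self-adjoint (as $[\D,\pS(\cdot)]$ is skew-adjoint and commutes with $\Gamma$). Writing $[\D,\pS(\cdot)]=\big([\D,\pS(\cdot)](\pS(\cdot)\pm i)^{-1}\big)(\pS(\cdot)\pm i)$ and pairing with $\psi$ in the initial domain $C_c^1(M,W)\otimes_{C_0^1(M)}\Dom\D$, I would extract $\|\til\D_{\lambda\pS}\psi\|^2\geq\lambda^2\|\pS(\cdot)\psi\|^2-\lambda C\|\psi\|\,\|(\pS(\cdot)\pm i)\psi\|$ with $C:=\big\|[\D,\pS(\cdot)](\pS(\cdot)\pm i)^{-1}\big\|$ (where $\pS(\cdot)$ is read diagonally on the double, so $\|\pS(\cdot)\psi\|=\|\til\pS(\cdot)\psi\|$). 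For $\psi$ supported in $M\backslash K$, where $\|\pS(x)^{-1}\|\leq c^{-1}$, the pointwise bounds $\|\psi\|\leq c^{-1}\|\pS(\cdot)\psi\|$ and $\|(\pS(\cdot)\pm i)\psi\|\leq(1+c^{-1})\|\pS(\cdot)\psi\|$ then give $\|\til\D_{\lambda\pS}\psi\|^2\geq\lambda\big(\lambda-C(1+c^{-1})c^{-1}\big)\|\pS(\cdot)\psi\|^2\geq\tfrac{\lambda^2c^2}{2}\|\psi\|^2$ as soon as $\lambda\geq\lambda_0:=\max\{1,\,2C(1+c^{-1})c^{-1}\}$; by density this passes to the closed restriction $\til\D_{\lambda\pS}|_{M\backslash K}$, i.e.\ $\til\D_{\lambda\pS}$ is coercive at infinity.

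For the Fredholm conclusion I would patch a parametrix as in the proof of \cref{thm:sum_Fredholm}. Choose $\chi_0\in C_c^\infty(M)$ equal to $1$ near $K$ and $\chi_1$ with $\chi_0^2+\chi_1^2=1$, $\supp\chi_1\subset M\backslash K$. The piece $\chi_0(\til\D_{\lambda\pS}-i)^{-1}\chi_0$ handles a neighbourhood of $K$, its error being compact by \cref{prop:cpt_res}. For the complement I would extend $\pS|_{M\backslash K}$ over a collar of $\partial K$ to a family $\{\pS'(x)\}_{x\in M}$ which is globally uniformly invertible (retaining the lower bound $c$ outside a compact set), still satisfies the commutator bound, and coincides with $\pS(\cdot)$ on $\supp\chi_1$; running the estimate of the previous paragraph globally — now with no support restriction, since $\pS'(\cdot)$ is everywhere invertible — together with self-adjointness of $\til\D_{\lambda\pS'}$ for the adjoint estimate, shows $\til\D_{\lambda\pS'}$ is boundedly invertible for $\lambda\geq\lambda_0$. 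Then $Q:=\chi_0(\til\D_{\lambda\pS}-i)^{-1}\chi_0+\chi_1\til\D_{\lambda\pS'}^{-1}\chi_1$ is a parametrix for $\til\D_{\lambda\pS}$, so $\til\D_{\lambda\pS}$, and hence $\D-i\lambda\pS(\cdot)$, is Fredholm for every $\lambda\geq\lambda_0$.

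The main obstacle is the coercivity estimate of the second paragraph, i.e.\ converting the mere boundedness of $[\D,\pS(\cdot)](\pS(\cdot)\pm i)^{-1}$ into a genuine lower bound at infinity that survives taking $\lambda$ large; care is needed because the quadratic-form manipulation must be performed at the level of positive elements of $B$, not merely of their norms. A secondary technical point is producing the globally invertible comparison family $\{\pS'(x)\}$ with a still-bounded commutator; should the collar extension prove awkward, one can bypass it and instead deduce the near-infinity parametrix piece directly from the coercivity of $\til\D_{\lambda\pS}|_{M\backslash K}$ and of its adjoint (using elliptic regularity of $\D$).
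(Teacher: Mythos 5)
Your outline captures the right Callias-type strategy --- regular self-adjointness via the results of \cite{KL12}, a Bochner-type coercivity estimate at infinity, and a patched parametrix --- and this is indeed the skeleton of the paper's argument. The gap is exactly where you flagged it: you propose extending $\pS|_{M\backslash K}$ over a collar to a globally uniformly invertible family $\pS'(\cdot)$ that ``still satisfies the commutator bound.'' But \cref{ass:diff} imposes only \ref{ass:A1}--\ref{ass:A3}, and the Dugundji-type extension of \cref{lem:extension} yields merely a \emph{continuous} $\pS'$ with no control on $[\D,\pS'(\cdot)](\pS'(\cdot)\pm i)^{-1}$ across the collar where $\pS'$ interpolates; so the parametrix piece $\chi_1\til\D_{\lambda\pS'}^{-1}\chi_1$ is not available as described, and regular self-adjointness of $\til\D_{\lambda\pS'}$ is not established either. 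The fallback of arguing directly from coercivity of the restriction $\til\D_{\lambda\pS}|_{M\backslash K}$ (and its adjoint) is also delicate, since that restriction lives on an open submanifold and one must cope with boundary terms.

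The paper sidesteps this by adding a compactly supported scalar potential rather than modifying $\pS$: following \cite[Lemmas 7.5--7.6]{KL12} one sets $\mu_x := 1 + \|[\D,\lambda\pS(\cdot)](x)(\lambda\pS(x)\pm i)^{-1}\|^2$, chooses $u\in C_c^\infty(M)$ with $u(x)^2\geq\epsilon+\mu_x$ on $K$, and verifies the \emph{pointwise} inequality
\[
\tfrac12\big\la\lambda\til\pS(x)\psi(x)\bigmvert\lambda\til\pS(x)\psi(x)\big\ra - \mu_x\la\psi(x)|\psi(x)\ra + u(x)^2\la\psi(x)|\psi(x)\ra \geq \epsilon\la\psi(x)|\psi(x)\ra
\]
for every $x\in M$: on $K$ the $u^2$ term absorbs $\mu_x$, while for $x\notin K$ the $\pS$-term dominates once $\lambda\geq\lambda_0$ (using $\|\pS(x)^{-1}\|\leq c^{-1}$ exactly as you did). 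Integrating gives $\til\D_{\lambda\pS}^2+u^2\geq\epsilon$, hence a global inverse $(\til\D_{\lambda\pS}^2+u^2)^{-1}$; with $\chi\in C_c^\infty(M)$ equal to $1$ on $\supp u$ and $\chi':=\sqrt{1-\chi^2}$ (so $u\chi'=0$), the parametrix is $Q=\chi(\til\D_{\lambda\pS}-i)^{-1}\chi+\chi'\til\D_{\lambda\pS}(\til\D_{\lambda\pS}^2+u^2)^{-1}\chi'$. This also makes concrete the caveat you raised yourself: the coercivity must be argued pointwise on $B$-valued inner products and then integrated, not at the level of global norms as in your second paragraph.
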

\begin{proof}
From \cref{lem:comm_loc_est} we know that $\big[\D,\lambda\pS(\cdot)\big] \big(\lambda\pS(\cdot)\pm i\big)^{-1}$ is well-defined and bounded. It then follows from \cite[Theorem 7.10]{KL12} that $\til\D_{\lambda\pS}$ is regular self-adjoint on the domain $\Dom\til\pS(\cdot)\cap\Dom\til\D$. 

Write $\mu_x := 1 + \big\| \big[\D,\lambda\pS(\cdot)\big](x) \big(\lambda\pS(x)\pm i\big)^{-1} \big\|^2$. Using the same arguments as in the proofs of \cite[Lemmas 7.5 \& 7.6]{KL12}, 
we find for $\psi(x) \in (W \otimes F)^{\oplus2}$ the pointwise inequality
\begin{align*}
\big\la \lambda\til\pS(x) \psi(x) \bigmvert \lambda\til\pS(x) \psi(x) \big\ra + \big\la \big\{ \til\D , \lambda\til\pS(\cdot) \big\}(x) \psi(x) \bigmvert \psi(x) \big\ra 
\geq \frac12 \big\la \lambda\til\pS(x) \psi(x) \bigmvert \lambda\til\pS(x) \psi(x) \big\ra - \mu_x \la\psi(x)|\psi(x)\ra .
\end{align*}
Let $\epsilon > 0$, and pick a compactly supported smooth function $u\in C_c^\infty(M)$ such that $u(x)^2 \geq \epsilon + \mu_x$ for all $x\in K$ (note that $\sup_{x\in K}\mu_x$ is bounded by \cref{lem:comm_loc_est}, so such a function $u$ indeed exists). Then for $x\in K$ we can estimate
\begin{multline*}
\frac12 \big\la \lambda\til\pS(x) \psi(x) \bigmvert \lambda\til\pS(x) \psi(x) \big\ra - \mu_x \la\psi(x)|\psi(x)\ra + \la\psi(x)|u(x)^2\psi(x)\ra \\
\geq 0 + (u(x)^2-\mu_x) \la\psi(x)|\psi(x)\ra 
\geq \epsilon \la\psi(x)|\psi(x)\ra .
\end{multline*}
Next, let $c>0$ be such that $[-c,c]$ does not intersect the spectrum of $\pS(x)$ for any $x\in M\backslash K$, and write $\kappa := (1+c^{-1}) \left\| \big[\D,\pS(\cdot)\big] \big(\pS(\cdot)\pm i\big)^{-1}\right\|$. 
For $x\in M\backslash K$ we then know from \cref{lem:comm_loc_est} that $\mu_x\leq1+\kappa^2$, for \emph{any} choice of $\lambda$ (as long as $\lambda\geq1$). 
Now pick $\lambda_0 := \max\big(1,c^{-1} \sqrt{2(1+\kappa^2+\epsilon)}\big)$. Using the assumption that $\lambda\geq\lambda_0$, we obtain for $x\in M\backslash K$ the estimate
\begin{multline*}
\frac12 \big\la \lambda\til\pS(x) \psi(x) \bigmvert \lambda\til\pS(x) \psi(x) \big\ra - \mu_x \la\psi(x)|\psi(x)\ra + \la\psi(x)|u(x)^2\psi(x)\ra \\
\geq \frac12 \lambda_0^2 c^2 \la\psi(x)|\psi(x)\ra - \mu_x \la\psi(x)|\psi(x)\ra + 0 
\geq \epsilon \la\psi(x)|\psi(x)\ra .
\end{multline*}
Thus we have shown that for \emph{any} $x\in M$ we have the inequality
\[
\frac12 \big\la \lambda\til\pS(x) \psi(x) \bigmvert \lambda\til\pS(x) \psi(x) \big\ra - \mu_x \big\la \psi(x) \bigmvert \psi(x) \big\ra + \la\psi(x)|u(x)^2\psi\ra \geq \epsilon \la\psi(x)|\psi(x)\ra .
\]
For any $\psi\in\Dom\til\D_{\lambda\pS}^2$ we then find
\begin{align*}
\big\la \psi \bigmvert \big( \til\D_{\lambda\pS}^2 + u^2 \big) \psi \big\ra 
&= \big\la \til\D_{\lambda\pS} \psi \bigmvert \til\D_{\lambda\pS} \psi \big\ra + \la\psi|u^2\psi\ra \\
&= \big\la \til\D \psi \bigmvert \til\D \psi \big\ra + \big\la \lambda\til\pS(\cdot) \psi \bigmvert \lambda\til\pS(\cdot) \psi \big\ra + \big\la \til\D \psi \bigmvert \lambda\til\pS(\cdot) \psi \big\ra + \big\la \lambda\til\pS(\cdot) \psi \bigmvert \til\D \psi \big\ra + \la\psi|u^2\psi\ra \\
&\geq \big\la \lambda\til\pS(\cdot) \psi \bigmvert \lambda\til\pS(\cdot) \psi \big\ra + \big\la \big\{ \til\D , \lambda\til\pS(\cdot) \big\} \psi \bigmvert \psi \big\ra + \la\psi|u^2\psi\ra \\
&= \int_M \bigg( \big\la \lambda\til\pS(x) \psi(x) \bigmvert \lambda\til\pS(x) \psi(x) \big\ra + \big\la \big\{ \til\D , \lambda\til\pS(\cdot) \big\}(x) \psi(x) \bigmvert \psi(x) \big\ra \\
&\qquad\quad+ \la\psi(x)|u(x)^2\psi(x)\ra \bigg) \dvol(x) \\
&\geq \int_M \bigg( \frac12 \big\la \lambda\til\pS(x) \psi(x) \bigmvert \lambda\til\pS(x) \psi(x) \big\ra - \mu_x \la\psi(x)|\psi(x)\ra + \la\psi(x)|u(x)^2\psi(x)\ra \bigg) \dvol(x) \\
&\geq \epsilon \int_M \la\psi(x)|\psi(x)\ra \dvol(x) \\
&= \epsilon \la\psi|\psi\ra .
\end{align*}
Hence we have shown that the spectrum of $\til\D_{\lambda\pS}^2 + u^2$ is contained in $[\epsilon,\infty)$, and therefore we have a well-defined inverse $\big(\til\D_{\lambda\pS}^2 + u^2\big)^{-1} \in \End_B\big(L^2(M,E\otimes\bF)^{\oplus2}\big)$. The proof that $\til\D_{\lambda\pS}$ is Fredholm is then similar to the proof of \cref{thm:sum_Fredholm}. 
Pick a smooth function $\chi\in C_c^\infty(M)$ such that $0\leq\chi\leq1$, and $\chi(x)=1$ for all $x\in\supp u$. Write $\chi' := \sqrt{1-\chi^2}$. 
Using that $u\chi'=0$, we calculate that 
\begin{align*}
\til\D_{\lambda\pS} \chi' \til\D_{\lambda\pS} \big(\til\D_{\lambda\pS}^2+u^2\big)^{-1} \chi' 
&= [\til\D,\chi'] \til\D_{\lambda\pS} \big(\til\D_{\lambda\pS}^2+u^2\big)^{-1} \chi' + (\chi')^2 .
\end{align*}
As in the proof of \cref{thm:sum_Fredholm}, one can then check that we have a parametrix for $\til\D_{\lambda\pS}$ given by
\begin{align*}
Q &:= \chi \big(\til\D_{\lambda\pS}-i\big)^{-1} \chi + \chi' \til\D_{\lambda\pS} \big(\til\D_{\lambda\pS}^2+u^2\big)^{-1} \chi' . \qedhere
\end{align*}
\end{proof}

\begin{lem}
\label{lem:diff_homotopy}
Let $\D$ and $\pS(\cdot)$ be as in \cref{ass:diff}. 
Suppose that the graph norms of $\pS(x)$ are uniformly equivalent, and that $\pS\colon M\to\Hom_B(W,E)$ has a uniformly bounded weak derivative. 
Let $f\in C_0^1(M)$ be a differentiable function vanishing at infinity, such that $0<f(x)\leq1$ for all $x\in M$, $f(x)=1$ for all $x\in K$, and $\sup_{x\in M} \|df^{-1}(x)\| < \infty$. 
Consider the operator $\pS'(\cdot) := f^{-1}\pS(\cdot)$ corresponding to the family $\{f(x)^{-1}\pS(x)\}_{x\in M}$.
Then there exists $\lambda_0\geq1$ such that $\Index\big(\D-i\lambda\pS(\cdot)\big) = \Index\big(\D-i\pS'(\cdot)\big)$ for any $\lambda\geq\lambda_0$. 
\end{lem}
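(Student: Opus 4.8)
The plan is to produce an explicit homotopy between the product operators $\til\D_{\lambda\pS}$ and $\til\D_{\pS'}$, and then read off the equality of indices from \cref{prop:Fred_KK}. For $t\in[0,1]$ put $g_t(x):=(1-t)\lambda+tf(x)^{-1}$ and consider the two-variable family $\{\pS^t(x):=g_t(x)\pS(x)\}_{(t,x)\in[0,1]\times M}$, with associated product operator $\til\D_{\pS^\bullet}$ on the Hilbert $C([0,1],B)$-module $C\big([0,1],L^2(M,E\otimes\bF)^{\oplus2}\big)$. Since $g_0\equiv\lambda$ (so $\pS^0=\lambda\pS(\cdot)$) and $g_1=f^{-1}$ (so $\pS^1=\pS'(\cdot)$), once $\til\D_{\pS^\bullet}$ is shown to be regular self-adjoint and Fredholm it is precisely such a homotopy, and the lemma follows.

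The argument then mirrors the proof of \cref{lem:diff_Fred}, carried out uniformly in $t$. For each fixed $t$, $g_t\in C^1(M)$ satisfies $g_t\geq1$ (as $\lambda\geq1$ and $f^{-1}\geq1$) and has bounded differential $dg_t=t\,df^{-1}$, so \cref{lem:comm_loc_est} applies with $\phi=g_t$: the operator $[\D,\pS^t(\cdot)]\big(\pS^t(\cdot)\pm i\big)^{-1}$ is well-defined and bounded, and on $M\backslash K$ its $x$-wise norm is bounded by $\|\sigma(dg_t)(x)\|+(1+c^{-1})\big\|[\D,\pS(\cdot)](\pS(\cdot)\pm i)^{-1}\big\|\leq\kappa$, where $c>0$ is chosen so that $[-c,c]$ avoids the spectrum of $\pS(x)$ for $x\in M\backslash K$ and $\kappa:=\sup_x\|\sigma(df^{-1})(x)\|+(1+c^{-1})\big\|[\D,\pS(\cdot)](\pS(\cdot)\pm i)^{-1}\big\|$ is finite by the hypotheses; crucially this bound is insensitive to the (possibly large, $t$-dependent) value of $g_t(x)$. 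Hence $\mu_x^t:=1+\big\|[\D,\pS^t(\cdot)](x)(\pS^t(x)\pm i)^{-1}\big\|^2\leq1+\kappa^2$ for all $t$ and all $x\in M\backslash K$, while $\sup_{t,\,x\in L}\mu_x^t<\infty$ for every compact $L\subset M$. Now fix $\epsilon>0$, set $\rho:=c^{-1}\sqrt{2(1+\kappa^2+\epsilon)}$ and $\lambda_0:=\max(1,\rho)$; since $f^{-1}\to\infty$ at infinity and $f^{-1}=1$ on $K$, there is a precompact open set $K'\supset K$ with $f(x)^{-1}\geq\rho$ for $x\notin K'$. Choose $u\in C_c^\infty(M)$ with $u\geq0$ and $u(x)^2\geq\epsilon+\sup_{t}\mu_x^t$ for $x\in\bar{K'}$. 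For $\lambda\geq\lambda_0$ one has $g_t(x)\geq(1-t)\rho+t\rho=\rho$ on $M\backslash K'$, hence $\frac12 g_t(x)^2c^2\geq1+\kappa^2+\epsilon\geq\mu_x^t+\epsilon$ there, while $u(x)^2\geq\mu_x^t+\epsilon$ on $\bar{K'}$; feeding these into the pointwise inequalities obtained exactly as in \cref{lem:diff_Fred} (via the analogues of \cite[Lemmas 7.5 \& 7.6]{KL12}) yields $\la\psi\mvert(\til\D_{\pS^t}^2+u^2)\psi\ra\geq\epsilon\la\psi|\psi\ra$ for every $t$ and every $\psi$ in the domain. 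Thus $(\til\D_{\pS^t}^2+u^2)^{-1}$ exists and is uniformly bounded, and, picking $\chi\in C_c^\infty(M)$ with $\chi=1$ on $\supp u$ and $\chi':=\sqrt{1-\chi^2}$, the operators $\chi(\til\D_{\pS^t}-i)^{-1}\chi+\chi'\til\D_{\pS^t}(\til\D_{\pS^t}^2+u^2)^{-1}\chi'$ assemble (using the family version of \cref{prop:cpt_res}) into a parametrix for $\til\D_{\pS^\bullet}$; regular self-adjointness of $\til\D_{\pS^\bullet}$ follows from the family version of \cite[Theorem 7.10]{KL12} with the uniform commutator bound above. This produces the desired homotopy and completes the proof.

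The main obstacle is to keep all of the estimates in the proof of \cref{lem:diff_Fred} uniform in $t$, even though $g_t$ is unbounded for $t>0$ and is of size $\lambda$ for small $t$: this is exactly where the $M\backslash K$-estimate of \cref{lem:comm_loc_est}, which does not see the size of $g_t(x)$, is indispensable, and where the new step of enlarging $K$ to $K'$ so that $f^{-1}\geq\rho$ outside $K'$ — together with the choice $\lambda_0=\max(1,\rho)$, which makes $g_t\geq\rho$ off $K'$ for all $t$ simultaneously — is required. The remaining technical points, namely the strong continuity in $t$ of the resolvents $(\til\D_{\pS^t}\pm i)^{-1}$ (so that $\til\D_{\pS^\bullet}$ genuinely is a regular self-adjoint operator on the $C([0,1],B)$-module) and the family version of the local compactness statement \cref{prop:cpt_res}, are routine but should be checked.
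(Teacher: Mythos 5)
Your proposal is correct, and it gives a genuinely different route from the paper's. The paper first applies \cref{lem:diff_Fred} as a black box to the exponential family $\pS^t(\cdot) := f^{-t}\pS(\cdot)$ on $C([0,1],E)$ (which interpolates between $\pS$ and $\pS'$ while keeping the scaling factor $\lambda$ constant), obtaining $\Index(\D-i\lambda\pS(\cdot))=\Index(\D-i\lambda\pS'(\cdot))$; it then performs a \emph{second} homotopy, namely scaling $\mu\pS'(\cdot)$ from $\mu=\lambda$ to $\mu=1$, which is possible because $\pS'(\cdot)$ has compact resolvents (\cref{lem:pS_cpt_res}, \cref{prop:cpt_res}), to conclude $\Index(\D-i\lambda\pS'(\cdot))=\Index(\D-i\pS'(\cdot))$. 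You instead use the linear family $g_t := (1-t)\lambda + t f^{-1}$ and construct a \emph{single} homotopy from $\til\D_{\lambda\pS}$ to $\til\D_{\pS'}$, re-running the estimates of \cref{lem:diff_Fred} uniformly in $t$. This is not a direct invocation of \cref{lem:diff_Fred}, since the scaling $g_t(x)$ is $x$-dependent and degenerates to $1$ near $K$ as $t\to1$; the new ingredient you introduce to compensate is the enlarged compact set $K'\supset K$ on which $f^{-1}\geq\rho$, combined with $\lambda_0:=\max(1,\rho)$ so that $g_t\geq\rho$ off $K'$ for every $t$ simultaneously. Your exploitation of the second, $\phi$-size-independent estimate of \cref{lem:comm_loc_est} to get the uniform bound $\mu_x^t\leq1+\kappa^2$ off $K$ is exactly the right point to leverage, and the rest of your verification (choice of $u$ on $\bar{K'}$, the resulting positivity of $\til\D_{\pS^t}^2+u^2$, and the parametrix) goes through. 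The trade-off is transparency: the paper's version is more modular (reusing \cref{lem:diff_Fred} verbatim plus the compact-resolvent rescaling), while yours is more self-contained and makes explicit where the lower bound outside a compact set comes from — the $\lambda$-scaling near $K$ and the blow-up of $f^{-1}$ near infinity — but at the price of reopening the internals of \cref{lem:diff_Fred}. As you flag at the end, the family version of \cite[Theorem 7.10]{KL12} and of \cref{prop:cpt_res} should be stated, but these are the same routine points the paper itself implicitly relies on when applying \cref{lem:diff_Fred} to a family over $[0,1]\times M$.
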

\begin{proof}
From \cref{lem:comm_loc_est} we know that $\big[\D,f^{-1}\pS(\cdot)\big] \big(f^{-1}\pS(\cdot)\pm i\big)^{-1}$ is well-defined and bounded. The same holds for the functions $f_t(x) := f(x)^t$ for $t\in[0,1]$. Therefore, considering the family of operators $\{\pS^\bullet(x)\}_{x\in M}$ on $C([0,1],E)$, given by $\pS^t(x) := f(x)^{-t}\pS(x)$, we find that $[\D,\pS^\bullet(\cdot)](\pS^\bullet(\cdot)\pm i)^{-1}$ is also well-defined and bounded. 
By \cref{lem:diff_Fred}, there exists a $\lambda_0\geq1$ such that $\D_{\lambda\pS^\bullet} = \D - i \lambda \pS^\bullet(\cdot)$ on $C([0,1],L^2(M,E\otimes\bF)^{\oplus2})$ is Fredholm for all $\lambda\geq\lambda_0$. This proves that $\Index\big(\D-i\lambda\pS(\cdot)\big) = \Index\big(\D-i\lambda\pS'(\cdot)\big)$. 
Finally, we know from \cref{lem:pS_cpt_res,prop:cpt_res} that $\til\D_{\lambda\pS'}$ has compact resolvents for any $\lambda>0$. Hence $\til\D_{\lambda\pS'}$ is homotopic to $\til\D_{\pS'}$, and $\Index\big(\D-i\lambda\pS(\cdot)\big) = \Index\big(\D-i\lambda\pS'(\cdot)\big) = \Index\big(\D-i\pS'(\cdot)\big)$. 
\end{proof}

\begin{prop}
\label{prop:diff_Kasp_prod}
Let $\D$ and $\pS(\cdot)$ be as in \cref{ass:diff}. 
Suppose that the graph norms of $\pS(x)$ are uniformly equivalent, and that $\pS\colon M\to\Hom_B(W,E)$ has a uniformly bounded weak derivative. 
Then there exists $\lambda_0\geq1$ such that for any $\lambda\geq\lambda_0$, the class $[\til\D_{\lambda\pS}] \in \KK^0(\C,B)$ is the Kasparov product of $[\pS(\cdot)] \in \KK^1(\C,C_0(M,B))$ with $[\D] \in \KK^1(C_0(M),\C)$. 
\end{prop}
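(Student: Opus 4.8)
The plan is to reduce to a family with compact resolvents and then invoke Kucerovsky's \cref{thm:Kucerovsky}. First I would choose a differentiable function $f\in C_0^1(M)$ with $0<f(x)\leq1$ for all $x\in M$, $f(x)=1$ for all $x\in K$, and $\sup_{x\in M}\|df^{-1}(x)\|<\infty$ (such an $f$ exists since $M$ is complete), and set $\pS'(\cdot):=f^{-1}\pS(\cdot)$. By \cref{lem:pS_cpt_res} the operator $\pS'(\cdot)$ has compact resolvents, so $\big(\C,C_0(M,E)_{C_0(M,B)},\pS'(\cdot)\big)$ is an odd unbounded Kasparov module with $[\pS'(\cdot)]=[\pS(\cdot)]\in\KK^1(\C,C_0(M,B))$, while $\big(C_0^1(M),L^2(M,\bF),\D\big)$ is an odd spectral triple representing $[\D]\in\KK^1(C_0(M),\C)$. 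Applying \cref{lem:comm_loc_est} with $\phi=f^{-1}$ shows that $[\D,\pS'(\cdot)](\pS'(\cdot)\pm i)^{-1}$ is well-defined and bounded; hence, by \cite[Theorem 7.10]{KL12} as in the proof of \cref{lem:diff_Fred}, the product operator $\til\D_{\pS'}$ is regular self-adjoint on $\Dom\til\pS'(\cdot)\cap\Dom\til\D$, and by \cref{lem:pS_cpt_res} together with \cref{prop:cpt_res} it has compact resolvents, so it too defines an unbounded Kasparov module.

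The main step is to show that $\til\D_{\pS'}$ represents the Kasparov product $[\pS'(\cdot)]\otimes_{C_0(M)}[\D]$, which I would do by verifying the three conditions of \cref{thm:Kucerovsky} applied to the even doubles (\cref{eq:even_double}) of the two odd modules. Condition (2), the domain inclusion $\Dom\til\D_{\pS'}\subset\Dom(\pS'(\cdot)\hot1)$, is immediate from $\Dom\til\D_{\pS'}=\Dom\til\pS'(\cdot)\cap\Dom\til\D$. For condition (1) one takes $\psi$ in the dense subspace $C_c^1(M,W)\subset C_0(M,E)$: the graded commutator of $\til\D_{\pS'}$ with the connection operator $T_\psi(\xi)=\psi\hot\xi$ is then controlled by $\sigma(d\psi)$ and by $T_{\pS'(\cdot)\psi}$, both bounded because $d\psi$ is compactly supported, $\sigma$ is bounded, and $\pS'(\cdot)\psi\in C_c(M,E)$; this is exactly the connection estimate from the standard construction of the unbounded Kasparov product in \cite[\S8]{KL13} (see also \cite{Mes14,BMS16,MR16}), which now goes through over $B$ thanks to the control of $[\D,\pS'(\cdot)]$ in \cref{lem:comm_loc_est}. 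For condition (3), writing $\til\D_{\pS'}=\til\D+\til\pS'(\cdot)$ on the common core and expanding the cross term,
\[
\la\til\pS'(\cdot)\psi\mvert\til\D_{\pS'}\psi\ra+\la\til\D_{\pS'}\psi\mvert\til\pS'(\cdot)\psi\ra=2\,\la\til\pS'(\cdot)\psi\mvert\til\pS'(\cdot)\psi\ra+\la\psi\mvert\{\til\D,\til\pS'(\cdot)\}\psi\ra ,
\]
the pointwise quadratic-form estimate of \cite[Lemmas 7.5 \& 7.6]{KL12} used in the proof of \cref{lem:diff_Fred} gives $\la\psi\mvert\{\til\D,\til\pS'(\cdot)\}\psi\ra\geq-\tfrac12\la\til\pS'(\cdot)\psi\mvert\til\pS'(\cdot)\psi\ra-C\,\la\psi\mvert\psi\ra$, where $C:=\sup_{x\in M}\mu_x<\infty$ by \cref{lem:comm_loc_est} (with $\mu_x$ as in the proof of \cref{lem:diff_Fred}); hence the cross term is $\geq-C\,\la\psi\mvert\psi\ra$. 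Thus \cref{thm:Kucerovsky} applies and $[\til\D_{\pS'}]=[\pS'(\cdot)]\otimes_{C_0(M)}[\D]=[\pS(\cdot)]\otimes_{C_0(M)}[\D]$.

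It then remains to transfer this equality from $\pS'(\cdot)$ to $\lambda\pS(\cdot)$: by \cref{lem:diff_homotopy} there is $\lambda_0\geq1$ such that $\Index(\D-i\lambda\pS(\cdot))=\Index(\D-i\pS'(\cdot))$, i.e.\ $[\til\D_{\lambda\pS}]=[\til\D_{\pS'}]$ in $\KK^0(\C,B)$, for all $\lambda\geq\lambda_0$; combining with the previous paragraph yields $[\til\D_{\lambda\pS}]=[\pS(\cdot)]\otimes_{C_0(M)}[\D]$ for all $\lambda\geq\lambda_0$. The step I expect to require the most care is condition (3) of \cref{thm:Kucerovsky}: the anticommutator $\{\til\D,\til\pS'(\cdot)\}$ is genuinely unbounded (it contains the term $\sigma(df^{-1})\,\pS(\cdot)$), so its lower bound must come from the pointwise form inequalities of \cite{KL12} combined with the finiteness of $\sup_x\mu_x$ supplied by \cref{lem:comm_loc_est}, rather than from a crude norm bound; the remaining bookkeeping of even doubles and graded tensor products needed to realise the pairing of $[\pS(\cdot)]$ with $[\D]$ is routine in view of \cref{eq:even_double}.
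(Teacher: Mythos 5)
Your proposal is correct and follows essentially the same route as the paper: pass to $\pS'(\cdot)=f^{-1}\pS(\cdot)$ to get compact resolvents via \cref{lem:pS_cpt_res}, establish self-adjointness of $\til\D_{\pS'}$ on $\Dom\til\pS'(\cdot)\cap\Dom\til\D$ via \cref{lem:comm_loc_est} and \cite[Theorem 7.10]{KL12}, verify Kucerovsky's three conditions for the even doubles, and transfer the conclusion to $\lambda\pS(\cdot)$ via \cref{lem:diff_homotopy}. The paper's verification of condition (1) is somewhat more explicit (it writes out the graded commutator matrix in terms of $T_{\pS'(\cdot)\psi}$, $T_{\sigma(d\psi)}$ and $T^*_{\sigma(d\psi)}$), and for condition (3) it cites \cite[Lemma 7.6]{KL12} directly as a single inequality, whereas you unfold the underlying pointwise form estimate and use $\sup_x\mu_x<\infty$ from \cref{lem:comm_loc_est}; both are the same argument.
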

\begin{proof}
Let $f\in C_0^1(M)$ be a differentiable function vanishing at infinity, such that $0<f(x)\leq1$ for all $x\in M$, $f(x)=1$ for all $x\in K$, and $\sup_{x\in M} \|df^{-1}(x)\| < \infty$ (as in \cite[Lemma 8.10]{KL13}, such functions exist). 
By \cref{lem:pS_cpt_res}, the operator $\pS'(\cdot) := f^{-1}\pS(\cdot)$ corresponding to the family $\{f(x)^{-1}\pS(x)\}_{x\in M}$ defines an odd unbounded Kasparov module $(\C,C_0(M,E)_{C_0(M,B)},\pS'(\cdot))$, and we have $[\pS'(\cdot)] = [\pS(\cdot)]$. From \cref{lem:diff_homotopy} we also know that $[\til\D_\pS'] = [\til\D_{\lambda\pS}]$. Since $f^{-1}\pS(\cdot)$ has compact resolvents, 
we know from \cref{prop:cpt_res} that $\til\D_\pS'$ has compact resolvents, and therefore $\til\D_\pS'$ defines an (even) unbounded Kasparov $\C$-$\C$-module $(\C,L^2(M,E\otimes\bF)^{\oplus2},\til\D_\pS')$. 

Thus it remains to be proven that $[\til\D_\pS']$ is the Kasparov product of $[\pS'(\cdot)]$ with $[\D]$, and for this purpose we need to check the three conditions in Kucerovsky's \cref{thm:Kucerovsky}. 

Let $\psi,\eta\in C_c^\infty(M,W)$ and $\xi_1,\xi_2\in\Dom\D$, and let $\sigma$ denote the principal symbol of $\D$. Then 
\begin{align*}
\left[ \mattwo{\D\pm i\pS'(\cdot)}{0}{0}{\D} , \mattwo{0}{T_\psi}{T_\psi^*}{0} \right] \vectwo{\eta\otimes\xi_1}{\xi_2} 
&= \vectwo{(\D\pm i\pS'(\cdot)) (\psi\otimes\xi_2) - \psi\otimes\D\xi_2}{\D \la\psi|\eta\ra \xi_1 - \la\psi|\pm i\pS'(\cdot)\eta\ra \xi_1 - \la\psi|\sigma(d\eta)\ra \xi_1 - \la\psi|\eta\ra \D\xi_1 } \\
&= \vectwo{\pm iT_{\pS'(\cdot)\psi} \xi_2 + T_{\sigma(d\psi)} \xi_2}{\mp iT_{\pS'(\cdot)\psi}^*(\eta\otimes\xi_1) + T^*_{\sigma(d\psi)}(\eta\otimes\xi_1) } \\
&= \mattwo{0}{\pm iT_{\pS'(\cdot)\psi} + T_{\sigma(d\psi)}}{\mp iT_{\pS'(\cdot)\psi}^* + T^*_{\sigma(d\psi)}}{0} \vectwo{\eta\otimes\xi_1}{\xi_2} .
\end{align*}
Here we have used the notation 
$T_{\sigma(\phi\otimes\alpha)} = T_\phi \sigma(\alpha)$, 
$T^*_{\sigma(\phi\otimes\alpha)} = T^*_\phi \sigma(\alpha)$, and 
$\la\phi|\sigma(\eta\otimes\alpha)\ra = \la\phi|\eta\ra \sigma(\alpha)$, 
for any $\phi,\eta\in C_c^\infty(M,W)$ and $\alpha\in \Gamma_c^\infty(T^*M)$. 
For each $\psi\in C_c^\infty(M,W)$, the operators $T_{\pS'(\cdot)\psi}$, $T_{\sigma(d\psi)}$, and $T^*_{\sigma(d\psi)}$ are bounded. Hence the above commutator is bounded on $C_c^\infty(M,W) \otimes \Dom\D$. Since $C_c^\infty(M,W) \otimes \Dom\D$ is a core for $\D\pm i\pS'(\cdot)$, we conclude that for each $\psi\in C_c^\infty(M,W)$ (which is dense in $C_0(M,E)$) this commutator is bounded on $\Dom(\D\pm i\pS'(\cdot)) \oplus \Dom\D$, which shows the first condition.

The second condition requires the domain inclusion $\Dom\til\D_\pS'\subset\Dom\til\D$. 
From \cref{lem:comm_loc_est} we know that $\big[\D,\pS'(\cdot)\big] \big(\pS'(\cdot)\pm i\big)^{-1}$ is well-defined and bounded. It then follows from \cite[Theorem 7.10]{KL12} that $\til\D_\pS'$ is regular self-adjoint on the domain $\Dom\til\pS'(\cdot)\cap\Dom\til\D$. In particular, the second condition holds. 

Finally, using again the boundedness of $[\D,\pS'(\cdot)](\pS'(\cdot)\pm i)^{-1}$, we know from \cite[Lemma 7.6]{KL12} that there exists a constant $c>0$ such that 
\[
\big\la (\D\pm i\pS'(\cdot))\psi | (\D\pm i\pS'(\cdot))\psi \big\ra \geq \frac12 \big\la \pS'(\cdot)\psi | \pS'(\cdot)\psi \big\ra + \la\D\psi|\D\psi\ra - c \la\psi|\psi\ra . 
\]
for all $\psi\in\Dom(\D\pm i\pS'(\cdot))$. 
This implies that
\[
2 \la\pS'(\cdot)\psi|\pS'(\cdot)\psi\ra \mp i \la\pS'(\cdot)\psi|\D\psi\ra \pm i \la\D\psi|\pS'(\cdot)\psi\ra 
\geq -c \la\psi|\psi\ra ,
\]
which shows the third condition. 
\end{proof}

\subsection{A continuous family}

We now consider the general case without assuming any differentiability for the family $\{\pS(x)\}_{x\in M}$. 
We will show that $\pS(\cdot)$ is nevertheless homotopic to an operator obtained from a differentiable family. 
For this purpose, 
we cite (a special case of) Wockel's generalisation of Steenrod's approximation theorem.

\begin{thm}[Generalised Steenrod Approximation Theorem \cite{Woc09}]
\label{thm:Wockel-Steenrod}
Let $M$ be a finite-dimensional connected manifold, $\pi\colon E\to M$ be a locally trivial smooth bundle with a locally convex manifold $N$ as typical fibre, and $\sigma\colon M\to E$ be a continuous section. 
If $C\subset M$ is closed such that $\sigma$ is smooth on a neighbourhood of $C$, then for each open neighbourhood $O$ of $\sigma(M)$ in $E$, there exists a section $\tau\colon M\to O$ which is smooth on $M$ and equals $\sigma$ on $C$. 
Furthermore, there exists a homotopy $F\colon[0,1]\times M\to O$ between $\sigma$ and $\tau$ such that each $F(t,\cdot)$ is a section of $\pi$ and $F(t,x) = \sigma(x) = \tau(x)$ if $(t,x)\in[0,1]\times C$. 
\end{thm}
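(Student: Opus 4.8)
The plan is to follow the classical strategy for Steenrod-type approximation, localised over the (finite-dimensional) base $M$ and carried out fibrewise inside convex chart domains of the typical fibre $N$; since $\sigma$ is already smooth on a neighbourhood of the closed set $C$, only the part of $M$ away from $C$ will need to be altered. The statement is really \emph{local} in $M$ and \emph{convex} in $N$, and these are the two structural features I would exploit throughout.

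First I would fix a locally finite atlas $\{(U_i,\psi_i)\}_{i\in I}$ of $M$ that trivialises $\pi\colon E\to M$, refined so that every $U_i$ over which $\sigma$ is \emph{not} already smooth is disjoint from $C$ (possible because $\sigma$ is smooth on an open neighbourhood of $C$), and chosen so that in each trivialisation the relevant piece of $\sigma$ becomes a continuous map into a \emph{convex} open subset of a locally convex model space of $N$. Over such a chart, using that $\dim M<\infty$, I would smooth $\sigma|_{U_i}$ by mollification in the base directions, i.e.\ by convolving with a smooth compactly supported approximate identity on $\R^{\dim M}$; shrinking the mollification parameter makes the resulting smooth local section $\tau_i$ uniformly close to $\sigma|_{U_i}$ in the $N$-chart, and in particular keeps its image inside the prescribed open neighbourhood $O$ of $\sigma(M)$. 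For a chart lying inside the neighbourhood where $\sigma$ is already smooth I would simply set $\tau_i:=\sigma|_{U_i}$.

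Next I would patch these local approximations together. Choosing a smooth partition of unity $\{\rho_i\}$ subordinate to $\{U_i\}$ (so that $\rho_i\equiv 0$ near $C$ whenever the $i$-th chart was modified), I would define the global section by successive replacements, replacing $\sigma$ on $U_i$ by the convex combination $(1-\rho_i)\,\sigma+\rho_i\,\tau_i$ computed in the $N$-chart. This is well defined precisely because the chart images of $N$ and of $O$ are convex and because at each point only finitely many charts are involved, so the combination stays inside $O$ once the $\tau_i$ are close enough to $\sigma$; an induction over the locally finite index set, together with an exhaustion of the non-compact $M$ by compact sets, then yields a smooth section $\tau$ with $\tau=\sigma$ on $C$. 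The homotopy $F$ is obtained from the very same construction with $\rho_i$ replaced throughout by $t\rho_i$: then $F(0,\cdot)=\sigma$, $F(1,\cdot)=\tau$, each $F(t,\cdot)$ is again a fibrewise convex combination of sections valued in $O$, hence a section into $O$, and $F(t,x)=\sigma(x)=\tau(x)$ for $x\in C$ since the relevant $\rho_i$ vanish there; continuity of $F$ in $(t,x)$ is immediate from the chart formulas and local finiteness.

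The main obstacle is that $N$ is only a locally convex manifold, so the standard finite-dimensional devices — a Riemannian metric and its exponential map, a tubular neighbourhood, an isometric embedding into Euclidean space — are unavailable, and all smoothing and interpolation must instead be performed through the \emph{linear} structure of the model spaces within charts. This is why convexity of the chart domains, and of $O$ restricted to charts, is indispensable, and it forces one to verify with care that mollification and convex interpolation are genuinely smooth in the relevant (Michal--Bastiani / convenient) calculus on infinite-dimensional locally convex spaces, and that the cover can be arranged so that these operations never push the section out of $O$. Making these uniform estimates precise, and organising the induction over a locally finite atlas, is the technical heart of Wockel's argument; the finite-dimensionality of $M$ is what keeps the base-direction mollification elementary.
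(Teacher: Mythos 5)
The paper does not prove this theorem; it is quoted verbatim as a black box from Wockel's article \cite{Woc09}, so there is no internal proof to compare against. Your sketch does reproduce the global shape of Wockel's argument: work over a locally finite trivialising atlas of the finite-dimensional base, leave the charts meeting a neighbourhood of $C$ untouched, smooth locally, patch via a smooth partition of unity using convexity of the chart images in the model spaces of $N$, and scale the partition weights by $t$ to get a homotopy whose stages remain sections into $O$.

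The place where your sketch would actually break down, in the stated generality, is the choice of local smoothing device. You propose mollification, i.e.\ convolving the locally trivialised continuous section with a smooth bump on $\R^{\dim M}$. But a vector-valued Riemann integral of a continuous compactly supported map into a locally convex space need not exist when the target is not (sequentially or Mackey) complete, and the theorem makes no completeness assumption on $N$ or its model spaces — so there is no guarantee the convolution integral converges, even before one worries about smoothness in the Michal--Bastiani sense. Wockel's proof sidesteps integration entirely: after refining the cover to a locally finite family $\{W_j\}$ on which $\sigma$ oscillates little, he chooses sample points $y_j\in W_j$ and a subordinate smooth partition of unity $\{\rho_j\}$ (with the indices near $C$ using the already-smooth $\sigma$ itself rather than a constant), and defines $\tau(x)$ as the locally finite \emph{finite convex combination} $\sum_j \rho_j(x)\,\sigma(y_j)$ in chart coordinates. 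This is manifestly smooth, stays in $O$ by convexity and the fineness of the cover, and requires no vector-valued integral whatsoever. That substitution — sampled values weighted by a partition of unity in place of a convolution — is precisely what makes the theorem hold for an arbitrary locally convex fibre, and it is the idea missing from your proposal. In the paper's actual application the fibre is the Banach space $\Hom_B^s(W,E)$, so your mollification would happen to work there; but as a proof of the cited theorem as stated it has a genuine gap.
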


From here on, we consider $M$, $\D$, and $\pS(\cdot)$ satisfying the \ref{ass:standing} as well as assumption \ref{ass:A4'}. 

\begin{lem}
\label{lem:pS_diff}
There exists a family $\{H^t(x)\}_{(t,x)\in[0,1]\times M}$, 
such that $H^0(x) = \pS(x)$ for all $x\in M$, 
such that $H^1(\cdot)\colon M\to\Hom_B(W,E)$ has a uniformly bounded weak derivative, 
and such that the family of operators $\{H^\bullet(x)\}_{x\in M}$ on the Hilbert $C([0,1],B)$-module $C([0,1],E)$ satisfies the assumptions \ref{ass:A1}-\ref{ass:A4'}. 
\end{lem}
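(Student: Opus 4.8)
The plan is to approximate $\pS(\cdot)$ by a differentiable family with the help of Wockel's generalised Steenrod theorem (\cref{thm:Wockel-Steenrod}), applied to an appropriate Banach manifold of potentials; the key preparatory step is \cref{prop:pS_cst_ends}, which makes the family locally constant near infinity so that the approximation can be kept stationary there. Concretely, I would first invoke \cref{prop:pS_cst_ends} to obtain a homotopy $G^\bullet(\cdot)$ from $\pS(\cdot)$ to a family $\pS'(\cdot)$ satisfying \ref{ass:A1}--\ref{ass:A4'} with a compact set $K'$ and a disjoint open cover $\{V_j'\}$ of $M\backslash K'$, with $\pS'(x)=\pS(x_j)$ on $V_j'$, with $\{G^\bullet(x)\}_{x\in M}$ satisfying \ref{ass:A1}--\ref{ass:A4'} over $C([0,1],B)$, and (inspecting the construction) with $x_j\notin K'$ and $G^t(x)=(1-t)\pS(x)+t\pS(x_j)$ on $V_j'$. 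Disjointness of $\{V_j'\}$ makes each $V_j'$ clopen in $M\backslash K'$, so $\pS'(\cdot)$ is locally constant --- hence smooth --- on the open set $M\backslash K'$. Fix a precompact open $V_0\supset K'$ with $\bar V_0$ disjoint from the finitely many points $x_j$, and set $C:=M\backslash V_0$; then $\pS'(\cdot)$ is smooth on the neighbourhood $M\backslash K'$ of the closed set $C$.

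Next I would set up the bundle for \cref{thm:Wockel-Steenrod}. Fix $x_0\in M$, view $W=\Dom\pS(x_0)$ as a Hilbert $B$-module with the graph norm, and let $N:=\Hom_B^s(W,E)$ be the real Banach space of symmetric operators on $E$ with domain $W$. By the Kato--Rellich theorem, for small $\epsilon>0$ the set $O:=\{(x,T)\in M\times N:\|(T-\pS'(x))(\pS'(x)\pm i)^{-1}\|<\epsilon\}$ --- which is open by \cref{lem:cts_resolvent} and contains the graph of $\sigma:=\pS'(\cdot)$ --- consists of pairs with $T$ regular self-adjoint on $W$. Applying \cref{thm:Wockel-Steenrod} to the trivial bundle $M\times N\to M$, the section $\sigma$, the closed set $C$, and the neighbourhood $O$, I obtain a smooth section $\tau$ with $\tau=\pS'$ on $C$, together with a homotopy $F^\bullet$ of sections of $O$ with $F^0=\pS'$, $F^1=\tau$, and $F^t=\pS'$ on $C$ for all $t$. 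Let $H^\bullet$ be the concatenation of $G^\bullet$ and $F^\bullet$; then $H^0=\pS(\cdot)$ and $H^1=\tau(\cdot)$.

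It then remains to verify the three assertions. That $\tau$ has a uniformly bounded weak derivative holds because $\tau$ is a smooth map into the Banach space $N$, so each $d\tau(x)\colon W\to E\otimes T_x^*M$ is bounded, while $\tau=\pS'$ is locally constant outside the precompact set $V_0$, so $d\tau$ vanishes there and is bounded on $\bar V_0$ by compactness. Assumptions \ref{ass:A1}--\ref{ass:A2} for $\{H^\bullet(x)\}_{x\in M}$ over $C([0,1],B)$ are immediate: all $H^t(x)$ have domain $W$ (by the choice of $O$ for the $F$-part, by \cref{prop:pS_cst_ends} for the $G$-part), and $(t,x)\mapsto H^t(x)$ is jointly continuous, hence defines a norm-continuous map $M\to\Hom_{C([0,1],B)}(C([0,1],W),C([0,1],E))$. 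For \ref{ass:A3} and \ref{ass:A4'}, take the compact set $\bar V_0$, the disjoint open cover $\{V_j'\backslash\bar V_0\}$ of $M\backslash\bar V_0$, and the reference points $x_j$ (which lie in $\overline{V_j'\backslash\bar V_0}$ since $x_j\in\bar V_j'$ and $x_j\notin\bar V_0$). On $V_j'\backslash\bar V_0$ one has $H^t(x_j)=\pS(x_j)$ for all $t$, while $H^t(x)$ equals $(1-2t)\pS(x)+2t\pS(x_j)$ for $t\le\tfrac12$ and $\pS(x_j)$ for $t\ge\tfrac12$; hence $\|(H^t(x)-H^t(x_j))H^t(x_j)^{-1}\|\le a_j<1$ uniformly in $t$ (giving \ref{ass:A4'}) and $\|H^t(x)^{-1}\|\le(1-a_j)^{-1}\|\pS(x_j)^{-1}\|$ (giving \ref{ass:A3}); the value of $\epsilon$ is irrelevant here, since on $V_0$ neither \ref{ass:A3} nor \ref{ass:A4'} imposes anything.

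I expect the main obstacle to be conceptual rather than computational: obtaining a \emph{differentiable} approximation whose weak derivative is \emph{uniformly} bounded forces the approximation to be stationary --- hence locally constant --- near infinity, which is possible only after the reduction of \cref{prop:pS_cst_ends}, and this is exactly where the strengthened hypothesis \ref{ass:A4'} (a \emph{disjoint} cover, rather than the mere \ref{ass:A4}) enters. Once that is arranged, propagating \ref{ass:A1}--\ref{ass:A4'} through the concatenated homotopy is routine bookkeeping, the one point requiring care being the choice of $V_0$ avoiding the points $x_j$, so that these remain valid common reference points for the small-variation condition along the whole homotopy.
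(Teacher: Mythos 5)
Your proposal takes essentially the same route as the paper: reduce via \cref{prop:pS_cst_ends} to a family that is locally constant on the disjoint ends, then apply Wockel's theorem (\cref{thm:Wockel-Steenrod}) to the trivial bundle $M\times\Hom_B^s(W,E)$ with a small-variation open neighbourhood $O$, using Kato--Rellich to control the domain, and concatenate the two homotopies. Your write-up is somewhat more explicit than the paper's sketch --- notably in tracking the concatenation and in choosing $V_0$ to avoid the $x_j$ so that the original reference points remain valid for \ref{ass:A4'} along the whole homotopy --- but the strategy and ingredients are the same.
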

\begin{proof}
Let $\Hom_B^s(W,E)$ denote the (real) Banach space of symmetric operators $S$ on $E$ with $\Dom S = W$ (equipped with the operator norm of maps $W\to E$). 
We consider the globally trivial bundle $M\times \Hom_B^s(W,E)$. Then $\pS$ is a continuous section of $M\times \Hom_B^s(W,E)$. 
For any $x\in M$, consider the open neighbourhood $O_x$ of $\pS(x)$ in $\Hom_B^s(W,E)$ given by
\[
O_x := \left\{ T\in \Hom_B^s(W,E) : \|(T-\pS(x)) (\pS(x)\pm i)^{-1}\| < \frac12 \right\} .
\]
Let $O$ be any open neighbourhood of 
$\pS(M) \subset M\times \Hom_B^s(W,E)$ 
which is contained in $\bigsqcup_{x\in M} O_x$. 

Using \cref{prop:pS_cst_ends}, we may assume (without loss of generality) that $\pS(x) = \pS(x_j)$ for all $x\in V_j$. 
Let $U$ be a precompact open neighbourhood of $K$. 
Applying \cref{thm:Wockel-Steenrod}, there exists a homotopy $H\colon[0,1]\times M\to O$ such that $H^0(\cdot) = \pS(\cdot)$, $\pS'(\cdot) := H^1(\cdot)$ is smooth, and $H^t(x) = \pS(x) = \pS'(x)$ for all $x\in M\backslash U$. 
Since $H^t(x)$ is a symmetric perturbation of $\pS(x)$ such that $\|(H^t(x)-\pS(x)) (\pS(x)\pm i)^{-1}\| < \frac12$, 
it follows from the Kato-Rellich theorem that $H^t(x)$ is self-adjoint with $\Dom H^t(x) = \Dom\pS(x) = W$ for any $x\in M$. 
Furthermore, the family $\{H^\bullet(x)\}_{x\in M}$ again satisfies the assumptions \ref{ass:A1}-\ref{ass:A4'}. 
By compactness of $\bar U$, and the fact that $\pS'(x)$ is locally constant outside of $\bar U$, we know that the derivative of $\pS'$ is uniformly bounded on all of $M$. 
\end{proof}

\begin{prop}
\label{prop:Kasp_prod_complete}
Suppose that $M$ is complete, and that $\D$ has bounded propagation speed. 
Then the class $[\til\D_\pS] \in \KK^0(\C,B)$ is the (internal) Kasparov product of $[\pS(\cdot)] \in \KK^1(\C,C_0(M,B))$ with $[\D] \in \KK^1(C_0(M),\C)$. 
\end{prop}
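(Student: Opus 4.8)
The plan is to reduce the general continuous case to the differentiable case already treated in \cref{prop:diff_Kasp_prod}. The key tool is \cref{lem:pS_diff}, which produces a homotopy $\{H^\bullet(x)\}_{x\in M}$ on the Hilbert $C([0,1],B)$-module $C([0,1],E)$, satisfying the \ref{ass:standing} together with \ref{ass:A4'}, connecting $\pS(\cdot) = H^0(\cdot)$ to a family $\pS'(\cdot) := H^1(\cdot)$ whose map $M\to\Hom_B(W,E)$ has a uniformly bounded weak derivative. First I would invoke \cref{thm:sum_Fredholm} applied to the family $\{H^\bullet(x)\}$ over $[0,1]\times M$: the product operator $\til\D_{H^\bullet}$ on $C([0,1],L^2(M,E\otimes\bF)^{\oplus2})$ is regular self-adjoint and Fredholm, so it is a homotopy (in the sense of the relevant definition in \cref{sec:KK}) between $\til\D_\pS$ and $\til\D_{\pS'}$. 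By \cref{prop:Fred_KK}, this gives $[\til\D_\pS] = [\til\D_{\pS'}] \in \KK^0(\C,B)$. Similarly, the operator $H^\bullet(\cdot)$ on $C_0([0,1]\times M, E)$ is Fredholm by \cref{prop:pS_Fred}, so $[\pS(\cdot)] = [\pS'(\cdot)] \in \KK^1(\C,C_0(M,B))$.

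Next I would check that $\D$ and $\pS'(\cdot)$ satisfy the hypotheses needed to apply \cref{prop:diff_Kasp_prod}. Since $M$ is assumed complete and $\D$ has bounded propagation speed, $\D$ fits the setting of \cref{ass:diff} (and, as remarked there, $\D$ is then essentially self-adjoint). The family $\pS'(\cdot)$ satisfies \ref{ass:A1}-\ref{ass:A3} by construction; moreover \cref{lem:uegn} (applicable because $\pS'(\cdot)$ still satisfies \ref{ass:A4'}, indeed \ref{ass:A4}) shows the graph norms of $\pS'(x)$ are uniformly equivalent, and \cref{lem:pS_diff} gives the uniformly bounded weak derivative. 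Thus \cref{prop:diff_Kasp_prod} applies and yields a $\lambda_0\geq1$ such that for all $\lambda\geq\lambda_0$,
\[
[\til\D_{\lambda\pS'}] = [\pS'(\cdot)] \otimes_{C_0(M)} [\D] \in \KK^0(\C,B) .
\]

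Finally I would remove the rescaling parameter $\lambda$. The family $\{\pS'^\bullet(x)\}$ given by $\pS'^t(x) := (1-t+t\lambda)\pS'(x)$ still satisfies \ref{ass:A1}-\ref{ass:A4} (the variation bound in \ref{ass:A4} is invariant under positive rescaling, as in \cref{lem:Fredholm_rescaling}), so \cref{thm:sum_Fredholm} gives a Fredholm homotopy showing $[\til\D_{\lambda\pS'}] = [\til\D_{\pS'}]$; this is exactly \cref{lem:Fredholm_rescaling} applied to $\pS'(\cdot)$. Combining the three equalities,
\[
[\til\D_\pS] = [\til\D_{\pS'}] = [\til\D_{\lambda\pS'}] = [\pS'(\cdot)] \otimes_{C_0(M)} [\D] = [\pS(\cdot)] \otimes_{C_0(M)} [\D] ,
\]
which is the claim. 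The one point that requires a little care — and the place where I expect the only real friction — is verifying that the homotopy $\til\D_{H^\bullet}$ genuinely has the form required by the definition of homotopy of Fredholm operators, i.e.\ that its endpoint evaluations $\ev_0,\ev_1$ are unitarily equivalent to $\til\D_\pS$ and $\til\D_{\pS'}$ respectively; this follows because the balanced tensor product construction of $L^2(M,E\otimes\bF)$ commutes with evaluation, but it should be spelled out. Everything else is a direct citation of results established earlier in the paper.
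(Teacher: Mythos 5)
Your proof is correct and follows essentially the same approach as the paper: smooth the family via \cref{lem:pS_diff}, use \cref{thm:sum_Fredholm} to homotope the product operators, invoke \cref{prop:diff_Kasp_prod} for the differentiable family, and remove the rescaling via \cref{lem:Fredholm_rescaling}. The only (cosmetic) difference is the order: the paper introduces the rescaling parameter $\lambda$ \emph{before} smoothing (applying \cref{lem:pS_diff} to $\lambda\pS(\cdot)$, homotoping $\til\D_{\lambda\pS}$ to $\til\D_{\lambda\pS'}$, and then using \cref{lem:Fredholm_rescaling} on $\pS$), whereas you smooth first and then rescale $\pS'$; your ordering is arguably slightly cleaner since $\lambda_0$ is determined by $\pS'$, which is only available after smoothing.
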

\begin{proof}
Let $\lambda>0$, and note that $\lambda\pS(\cdot)$ still satisfies the assumptions \ref{ass:A1}-\ref{ass:A4'}. 
By \cref{lem:pS_diff}, we have a family $\{H^t(x)\}_{(t,x)\in[0,1]\times M}$, such that $H^0(x) = \lambda\pS(x)$ for all $x\in M$, such that 
$\lambda\pS'(\cdot) := H^1(\cdot)\colon M\to\Hom_B(W,E)$ has a uniformly bounded weak derivative, 
and such that the family of operators $\{H^\bullet(x)\}_{x\in M}$ on the Hilbert $C([0,1],B)$-module $\til E := C([0,1],E)$ satisfies the assumptions \ref{ass:A1}-\ref{ass:A4'}. Let $\til\D_{H^\bullet} = H^\bullet(\cdot)\times\D$ be the corresponding product operator. 
By \cref{thm:sum_Fredholm}, $\til\D_{H^\bullet}$ is regular self-adjoint and Fredholm 
on $L^2(M,\til E\otimes\bF)^{\oplus2} \simeq C([0,1],L^2(M,E\otimes\bF)^{\oplus2})$. 
Hence $\til\D_{H^\bullet}$ is a homotopy between $\til\D_{\lambda\pS'} = \til\D_{H^1}$ and $\til\D_{\lambda\pS} = \til\D_{H^0}$, and therefore $[\til\D_{\lambda\pS'}] = [\til\D_{\lambda\pS}]$. 
By \cref{prop:diff_Kasp_prod}, there exists a $\lambda\geq1$ such that 
\begin{align*}
[\pS(\cdot)] \otimes_{C_0(M)} [\D] &= [\pS'(\cdot)] \otimes_{C_0(M)} [\D] = [\til\D_{\lambda\pS'}] = [\til\D_{\lambda\pS}] .
\end{align*}
Since $[\til\D_{\lambda\pS}] = [\til\D_{\pS}]$ by \cref{lem:Fredholm_rescaling}, this completes the proof. 
\end{proof}

The following useful consequence of the relative index theorem allows us to replace the manifold $M$ by a manifold with cylindrical ends. 

\begin{prop}
\label{prop:index_cut-and-paste}
There exist a precompact open subset $U$ of $M$ and a Dirac-Schr\"odinger operator $\D_\pS'$ on the manifold $M' := \bar U \cup_{\partial U} \big(\partial U\times[0,\infty)\big)$ satisfying the \ref{ass:standing} and assumption \ref{ass:A4'}, such that 
\begin{enumerate}
\item the operators $\D'$ and $\pS'(\cdot)$ on $M'$ agree with $\D$ and $\pS(\cdot)$ on $M$ when restricted to $U$;
\item the metric and the operators $\D'$ and $\pS'(\cdot)$ on $M'$ are of product form on $\partial U\times[1,\infty)$;
\item we have the equality $[\til\D_\pS'] = [\til\D_\pS] \in \KK^0(\C,B)$. 
\end{enumerate}
\end{prop}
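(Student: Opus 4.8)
The plan is to deduce the statement from the relative index theorem (\cref{thm:rel_index}), by cutting $M$ along a compact hypersurface near infinity and pasting it against a model cylinder. First I would use \cref{prop:pS_cst_ends} to replace $\pS(\cdot)$ by a homotopic operator — which leaves $[\til\D_\pS]$ unchanged — for which $\pS(x)=\pS(x_j)$ is constant on each of the disjoint open sets $V_j$ of assumption \ref{ass:A4'}. Then I would choose a connected precompact open neighbourhood $U$ of $K$ with smooth compact boundary $N:=\partial U=\bigsqcup_k N_k$, small enough that a collar $C\simeq N\times(-\epsilon,\epsilon)$ of $N$ in $M$ lies in $M\setminus K=\bigcup_j V_j$; since each $N_k$ is connected it lies in a single $V_j$, so the family $\pS(\cdot)$ is then constant on each component of $C$. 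Put $M^1:=M=\bar U\cup_N\bar V^1$ with $\bar V^1:=\overline{M\setminus U}$.

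Next I would build a model cylinder $M^2:=N\times\R$, carrying a hermitian bundle, a metric, an elliptic first-order operator $\D^2$ and a family $\pS^2(\cdot)$, which on $C^2:=N\times(-\epsilon,\epsilon)$ are exact copies (via the collar identification) of the data on $C\subset M$, which are of product form on $N\times((-\infty,-1]\cup[1,\infty))$ (product metric, a product-type operator $\sigma(dr)(\partial_r+\D_N)$ for a suitable self-adjoint elliptic $\D_N$ on $\bF|_N$, and $\pS^2(\cdot)$ extended constantly), and which interpolate in between — a standard deformation — with $\pS^2(\cdot)$ kept constant on each component of $M^2$ throughout; since $M^2$ is complete with $\D^2$ of finite propagation speed, $\D^2$ is essentially self-adjoint, and one checks that $\D^2$, $\pS^2(\cdot)$ satisfy the \ref{ass:standing}. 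Cutting $M^2$ along $N^2:=N\times\{0\}$ into $\bar U^2:=N\times(-\infty,0]$ and $\bar V^2:=N\times[0,\infty)$, the identity maps of $C^1=C$ and $C^2$ provide the isometry and bundle isomorphism required by \cref{thm:rel_index}, and the pasted manifolds are $M^3:=\bar U\cup_N(N\times[0,\infty))$ and $M^4:=(N\times(-\infty,0])\cup_N\overline{M\setminus U}$. Here $M^3$, together with $\D':=\D^3$ and $\pS'(\cdot):=\pS^3(\cdot)$, is precisely a manifold of the required form $M'=\bar U\cup_{\partial U}(\partial U\times[0,\infty))$: since $\D^3$ and $\pS^3(\cdot)$ agree with $\D$ and $\pS(\cdot)$ on $\bar U$, item (1) holds; since the metric and operators are of product form on $N\times[1,\infty)$, item (2) holds; and since $M'$ is complete with $\D'$ of finite propagation speed, a routine verification (with compact set $K$ and an open cover refining $\{V_j\cap U\}$ together with the cylinder components) shows that $M'$, $\D'$, $\pS'(\cdot)$ satisfy the \ref{ass:standing} and \ref{ass:A4'}. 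It then remains to prove item (3).

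By \cref{thm:rel_index} we have $[\til\D_\pS^1]+[\til\D_\pS^2]=[\til\D_\pS^3]+[\til\D_\pS^4]$ in $\KK^0(\C,B)$, so item (3), namely $[\til\D_\pS^3]=[\til\D_\pS^1]$, is equivalent to $[\til\D_\pS^2]=[\til\D_\pS^4]=0$. This is where the normalisation of \cref{prop:pS_cst_ends} pays off: on $M^2$ the family $\pS^2(\cdot)$ is, by construction, constant on each connected component, while on $M^4$ the family $\pS^4(\cdot)$ is locally constant — it is constant on each cylinder component $N_k\times(-\infty,0]$, and on $\overline{M\setminus U}\subset\bigcup_j V_j$ it is the restriction of $\pS(\cdot)$, which is constant on each $V_j$, the two values matching across $N_k$ because $N_k$ lies in a single $V_j$ — hence $\pS^4(\cdot)$ is constant on each connected component of $M^4$ as well. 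On each such component $\pS^2(\cdot)$ resp.\ $\pS^4(\cdot)$ is a constant, uniformly invertible family, so \cref{lem:inv_glob} (with $a=0$ and $x_0$ any point of the component) shows that $\til\D_\pS^2$ and $\til\D_\pS^4$ are invertible on each component, hence invertible; their $\KK$-classes therefore vanish. Substituting into the displayed identity gives $[\til\D_\pS']=[\til\D_\pS^3]=[\til\D_\pS^1]=[\til\D_\pS]$, which is item (3).

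The main obstacle is not the cutting-and-pasting, which is essentially forced once one decides to use \cref{thm:rel_index}, but the fact that this theorem produces a residual manifold $M^4$ on which the potential need not be non-invertible anywhere, so that the vanishing of its class must be established independently. The crucial point that makes this work is that, after \cref{prop:pS_cst_ends}, the potential on both auxiliary manifolds $M^2$ and $M^4$ is locally constant and hence constant on connected components, which is exactly the situation to which \cref{lem:inv_glob} applies. A secondary, purely technical, ingredient is the deformation of the metric and of $\D$ in the collar region used to construct $\D^2$; this is standard and, since $M^2$ is built afresh, plays no role in any index computation. (If $N$ is disconnected the construction is carried out componentwise, which is harmless.)
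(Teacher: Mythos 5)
Your cut--and--paste strategy via \cref{thm:rel_index} is the same as the paper's, with the same decomposition $M^1=M$, $M^2=\partial U\times\R$, $M^3=M'$, $M^4=(\partial U\times(-\infty,0])\cup_{\partial U}(M\backslash U)$, and the same reduction of item (3) to the vanishing of $[\til\D_\pS^2]$ and $[\til\D_\pS^4]$ via \cref{lem:inv_glob}. The difference is your initial normalisation of $\pS(\cdot)$ via \cref{prop:pS_cst_ends}, which the paper does not perform.

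That normalisation is where a genuine gap appears, and it concerns item (1). \cref{prop:pS_cst_ends} produces a family $\pS''(\cdot)$ that is constant on shrunken subsets $V_j'=V_j\backslash K'$, where $K'=U_0\cup\bar C_0$ strictly contains the original $K$ and a collar $C_0$, and the modification $\pS''\neq\pS$ takes place precisely on $C_0$ and on the $V_j$. If you then require your collar $C$ of $\partial U$ to lie in $\bigsqcup_j V_j'$ (so that $\pS''$ is constant on $C$, which is what lets you invoke \cref{lem:inv_glob} with $a=0$), you are forced to take $U\supset K'$, and therefore $\pS''|_U\neq\pS|_U$. The family on $M'$ you produce restricts over $U$ to $\pS''$, not to the original $\pS$, so item (1) as stated does not hold. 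Conversely, if you arrange the normalisation so that $\pS''|_U=\pS|_U$ (e.g.\ by taking $U_0=U$), then the transition region $C_0$ sits exactly where your collar $C$ is, and $\pS''$ is not constant there, so you no longer get the $a=0$ invertibility argument on $M^4$ for free. The two requirements --- leave $\pS|_U$ untouched, and make $\pS$ constant near $\partial U$ --- cannot be met simultaneously by \cref{prop:pS_cst_ends}.

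The fix is to drop the normalisation entirely, which is the route the paper takes. Choose $U$ so that the points $x_j$ of \ref{ass:A4'} lie on $\partial U$, set $N_j:=V_j\cap\partial U$ (a disjoint decomposition of $\partial U$ since the $V_j$ are disjoint), and define $\pS'(y,r):=\pS(y)$ on the cylinder. Then on each component of $M^2$ and $M^4$ the relative-variation estimate $\|(\pS(x)-\pS(x_j))\pS(x_j)^{-1}\|\leq a_j<1$ from \ref{ass:A4'} already holds, so \cref{lem:inv_glob} applies with $a=a_j$ directly, no constancy needed. This keeps $\pS'|_U=\pS|_U$ on the nose, so item (1) holds literally. (Also note: \cref{prop:pS_cst_ends} makes $\pS$ constant on $V_j'\subsetneq V_j$, not on $V_j$ as you wrote, and the claim $[\til\D_\pS]=[\til\D_{\pS''}]$, while true, requires invoking \cref{thm:sum_Fredholm} for the homotopy family $H^\bullet$, which you should state rather than take for granted.)
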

\begin{proof}
Let $U$ be a precompact open neighbourhood of the interior of $K$, with smooth compact boundary $\partial U$. 
We may choose $U$ such that $x_j\in\partial U$ (where $x_j$ are the points given in assumption \ref{ass:A4'}). 
By assumption, $\partial U$ is the finite disjoint union of $N_j := V_j\cap \partial U$. 
Consider the manifold $M' := \bar U \cup_{\partial U} \big(\partial U\times[0,\infty)\big)$ with cylindrical ends. 
Let $C\simeq\partial U\times(-\frac12,\frac12)$ be a tubular neighbourhood of $\partial U$, such that there exists an isometry $\phi\colon U\cup C \to \bar U\cup_{\partial U}\big(\partial U\times[0,\frac12)\big) \subset M'$ (which preserves the subset $U$). 
Equip $M'$ with a Riemannian metric which is of product form on $\partial U\times[1,\infty)$, and which agrees with $g|_U$ on $U$. Let $\bF'\to M'$ be a hermitian vector bundle which agrees with $\bF|_U$ on $U$. Let $\D'$ be a symmetric elliptic first-order differential operator on $\bF'\to M'$, which is of product form on $\partial U\times[1,\infty)$, and which agrees with $\D|_{U\cup C}$ on $U\cup C$. Consider the family $\{\pS'(x)\}_{x\in M'}$ given by
\[
\pS'(x) := \begin{cases}
            \pS(x) , & x\in U , \\
            \pS(y) , & x=(y,r)\in \partial U\times[0,\infty) . 
            \end{cases}
\]
Then the family $\{\pS'(x)\}_{x\in M'}$ also satisfies the assumptions \ref{ass:A1}-\ref{ass:A4'}, where the open subsets $V_j' = N_j\times(0,\infty)$ satisfy assumption \ref{ass:A4'} (with the same points $x_j\in N_j := V_j\cap\partial U$). 
Thus we have constructed a Dirac-Schr\"odinger operator $\til\D_\pS' := \pS'(\cdot) \times \D'$ on $M'$, satisfying the desired properties 1) and 2). 
It remains to prove the equality $[\til\D_\pS'] = [\til\D_\pS]$, for which we invoke the relative index theorem. 

Let $M^1 := M$ and $M^2 := \partial U \times \R$ (i.e., $M^2$ is the finite disjoint union of the cylindrical manifolds $N_j\times\R$). 
Let $C' = \phi(C)$ be the collar neighbourhood of $\partial U$ in $M'$. 
We equip $M^2$ with a complete Riemannian metric which agrees with the metric of $M'$ on $C' \cup \big(\partial U\times(0,\infty)\big)$, and which is of product form on $(-\infty,-1]\times\partial U$. We extend the vector bundle $\bF'|_{C'\cup (\partial U\times(0,\infty))}$ to a bundle $\bF^2\to M^2$, and we pick an operator $\D^2$ (satisfying the \ref{ass:standing}) such that $\D^2|_{C'\cup(\partial U\times(0,\infty))} = \D'|_{C'\cup(\partial U\times(0,\infty))}$ (for instance, we can take $\D^2$ to be of product form on $(-\infty,-1]\times\partial U$). 
We define a family $\{\pS^2(x)\}_{x\in M^2}$ by $\pS^2(y,r) := \pS(y)$ for all $y\in\partial U$ and $r\in\R$. 
Then $\bF^2\to M^2$, $\D^2$, and $\pS^2(\cdot)$ satisfy the \ref{ass:standing}. By cutting and pasting along $\partial U$, we obtain manifolds $M^3 = M'$ and $M^4 = \big(\partial U\times(-\infty,0]\big) \cup_{\partial U} (M\backslash U)$, with corresponding operators $\D^3$, $\pS^3(\cdot)$, $\D^4$, and $\pS^4(\cdot)$. By \cref{thm:rel_index} we have $[\til\D_\pS^1] + [\til\D_\pS^2] = [\til\D_\pS^3] + [\til\D_\pS^4] \in \KK^0(\C,B)$. 
The manifolds $M^2$ and $M^4$ are both given by a finite disjoint union of manifolds satisfying the assumptions of \cref{lem:inv_glob}, and therefore $[\til\D_\pS^2] = [\til\D_\pS^4] = 0$. 
Since $M^1 = M$ and $M^3 = M'$, we conclude that $[\til\D_\pS] = [\til\D_\pS']$. 
\end{proof}

\begin{thm}
\label{thm:Kasp_prod_index}
Let $M$ be any connected Riemannian manifold, and consider operators $\pS(\cdot)$ and $\D$ satisfying the \ref{ass:standing} and assumption \ref{ass:A4'}. 
Then $[\til\D_\pS] \in \KK^0(\C,B)$ is the (internal) Kasparov product (over $C_0(M)$) of $[\pS(\cdot)] \in \KK^1(\C,C_0(M,B))$ with $[\D] \in \KK^1(C_0(M),\C)$. 
\end{thm}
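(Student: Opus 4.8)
The plan is to deduce the general statement from the two special cases already established: \cref{prop:Kasp_prod_complete}, which settles the case of a complete manifold carrying an operator of bounded propagation speed, and \cref{prop:index_cut-and-paste}, which lets us pass to a manifold with a cylindrical end without changing the class $[\til\D_\pS]$.

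First I would apply \cref{prop:index_cut-and-paste} to obtain a precompact open subset $U\subset M$ and a Dirac--Schr\"odinger operator $\til\D_\pS' := \pS'(\cdot)\times\D'$ on the manifold $M' := \bar U\cup_{\partial U}\big(\partial U\times[0,\infty)\big)$, satisfying the \ref{ass:standing} and assumption \ref{ass:A4'}, such that $\D'$ and $\pS'(\cdot)$ agree with $\D$ and $\pS(\cdot)$ on $U$, the metric and operators are of product form on $\partial U\times[1,\infty)$, and $[\til\D_\pS'] = [\til\D_\pS] \in \KK^0(\C,B)$. I would arrange (as in the construction in the proof of \cref{prop:index_cut-and-paste}) that $U$ is an open neighbourhood of the compact set $K$ from assumption \ref{ass:A3}; then $K$, viewed inside $M'$, is a compact set outside of which $\pS'(\cdot)$ is uniformly invertible, so $U$ is an open neighbourhood of a valid ``bad set'' for $\pS'(\cdot)$ as well. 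Next I would note that $M'$ is complete, since its compact core $\bar U$ together with the product cylindrical end is geodesically complete, and that $\D'$ has bounded propagation speed: on the compact part $\bar U\cup\big(\partial U\times[0,1]\big)$ the principal symbol $\sigma'$ of $\D'$ is a continuous bundle map, hence bounded on unit covectors, and on $\partial U\times[1,\infty)$ it is of product form, so $\sup_{\xi\in T^*M',\,\|\xi\|=1}\|\sigma'(\xi)\|$ is finite. Hence \cref{prop:Kasp_prod_complete} applies on $M'$ and gives $[\til\D_\pS'] = [\pS'(\cdot)] \otimes_{C_0(M')} [\D']$.

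The remaining task is to identify $[\pS'(\cdot)] \otimes_{C_0(M')} [\D']$ with $[\pS(\cdot)] \otimes_{C_0(M)} [\D]$, and here I would exploit that both the $\K$-theory class of the potential and the $\K$-homology class of $\D$ are local near the bad set. Write $\iota_U^M\colon C_0(U)\into C_0(M)$ and $\iota_U^{M'}\colon C_0(U)\into C_0(M')$ for the inclusions, and observe that $U$ carries exactly the same Riemannian metric, hermitian bundle and operators in both pictures. By \cref{lem:pS_res}, $[\pS(\cdot)] = [\pS(\cdot)|_U]\otimes_{C_0(U)}[\iota_U^M]$ and $[\pS'(\cdot)] = [\pS'(\cdot)|_U]\otimes_{C_0(U)}[\iota_U^{M'}]$; by \cref{prop:D_2_res}, $[\iota_U^M]\otimes_{C_0(M)}[\D] = [\D|_U]$ and $[\iota_U^{M'}]\otimes_{C_0(M')}[\D'] = [\D'|_U]$. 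Using associativity of the Kasparov product,
\[
[\pS(\cdot)]\otimes_{C_0(M)}[\D] = [\pS(\cdot)|_U]\otimes_{C_0(U)}[\D|_U] , \qquad [\pS'(\cdot)]\otimes_{C_0(M')}[\D'] = [\pS'(\cdot)|_U]\otimes_{C_0(U)}[\D'|_U] .
\]
Since $\pS(\cdot)|_U = \pS'(\cdot)|_U$ as families on $U$, and $\D|_U = \D'|_U$ because this restricted operator is intrinsic to $U$ with its metric and bundle (cf.\ the remark following \cref{prop:D_2_res}), the two right-hand sides coincide, and chaining all the equalities gives $[\til\D_\pS] = [\til\D_\pS'] = [\pS'(\cdot)]\otimes_{C_0(M')}[\D'] = [\pS(\cdot)]\otimes_{C_0(M)}[\D]$, as claimed.

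I expect the main obstacle to be the bookkeeping in this last step: one must choose the open set $U$ in the cut-and-paste construction so that it is simultaneously an honest open neighbourhood of $K$ (so that \cref{lem:pS_res} applies on both $M$ and $M'$) and has the collar structure needed to glue in a cylindrical end with $\partial U$ passing through the points $x_j$. Arranging this cleanly may require first invoking \cref{prop:pS_cst_ends} to make $\pS(\cdot)$ locally constant on the ends and enlarging $K$ accordingly. Once $U$ is fixed, everything else is formal manipulation with the Kasparov product together with the two cited propositions.
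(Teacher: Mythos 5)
Your proposal is correct and follows essentially the same route as the paper's own proof: invoke \cref{prop:index_cut-and-paste} to pass to the cylindrical-end manifold $M'$, apply \cref{prop:Kasp_prod_complete} there, and use \cref{lem:pS_res} together with \cref{prop:D_2_res} to identify both Kasparov products with the common class $[\pS(\cdot)|_U]\otimes_{C_0(U)}[\D|_U]$ over the shared open subset $U$. The extra verifications you sketch (completeness of $M'$, bounded propagation speed of $\D'$) are correct and merely make explicit what \cref{prop:index_cut-and-paste} already guarantees.
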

\begin{proof}
From \cref{prop:index_cut-and-paste}, 
we obtain a Dirac-Schr\"odinger operator $\til\D_\pS'$ on a complete manifold $M' = \bar U \cup_{\partial U} \big(\partial U\times[0,\infty)\big)$ with (finitely many) cylindrical ends $N_j\times[0,\infty)$ satisfying the \ref{ass:standing} and \ref{ass:A4'}, such that $\D'$ has bounded propagation speed
and we have the equality $[\til\D_\pS'] = [\til\D_\pS]$. 
In particular, the product operator $\til\D_\pS' := \pS'(\cdot) \times \D'$ on $M'$ satisfies the assumptions of \cref{prop:Kasp_prod_complete}. 

Recall from \cref{sec:symm_ell} that, for any open subset $U\subset M$, the restriction $\D|_U$ of $\D$ to $L^2(U,\bF|_U)$ defines a class $[\D|_U] \in \KK^1(C_0(U),\C)$. 
From \cref{lem:pS_res,prop:D_2_res}, we have the equalities
\[
[\pS(\cdot)] \otimes_{C_0(M)} [\D] = [\pS(\cdot)|_U] \otimes_{C_0(U)} [\iota_U] \otimes_{C_0(M)} [\D] = [\pS(\cdot)|_U] \otimes_{C_0(U)} [\D|_U] .
\]
Since we have the same equalities on $M'$, we find that
\begin{align*}
[\pS(\cdot)] \otimes_{C_0(M)} [\D] &= [\pS(\cdot)|_U] \otimes_{C_0(U)} [\D|_U] = [\pS'(\cdot)] \otimes_{C_0(M')} [\D'] .
\end{align*}
Hence from \cref{prop:Kasp_prod_complete} we conclude that 
\begin{align*}
[\pS(\cdot)] \otimes_{C_0(M)} [\D] &= [\pS'(\cdot)] \otimes_{C_0(M')} [\D'] = [\til\D_\pS'] = [\til\D_\pS] .
\qedhere
\end{align*}
\end{proof}

Combining \cref{thm:Kasp_prod_index} with \cref{prop:SF_Kasp_prod}, we obtain as a corollary the following `index = spectral flow' equality, generalising 
\cref{thm:index=sf} to the case of Hilbert modules. 

\begin{coro}[`Index = spectral flow']
\label{coro:ind_sf}
Let $\{\pS(x)\}_{x\in \R}$ be a family of regular self-adjoint operators on a Hilbert $B$-module $E$, satisfying the \ref{ass:standing} (with $M=\R$ and $\D=-i\partial_x$), and satisfying \ref{ass:A4'} with the open subsets $V_1 = (-\infty,a)$ and $V_2 = (b,\infty)$ for some $a<b\in\R$. 
Suppose furthermore that there exist locally trivialising families for $\{\pS(x)\}_{x\in\R}$. 
Then we have the equality 
\[
\Index \big( \partial_x + \pS(\cdot) \big) = \SF\big(\{\pS(x)\}_{x\in\R}\big) \in \K_0(B) .
\]
\end{coro}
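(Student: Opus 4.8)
The plan is to deduce the corollary by specialising \cref{thm:Kasp_prod_index,prop:SF_Kasp_prod} to $M=\R$ with $\D=-i\partial_x$, so that essentially no new argument is needed. First I would note that the hypotheses place us exactly in the setting of \cref{thm:Kasp_prod_index}: on $M=\R$ the bundle $\bF$ is the trivial line bundle, so $L^2(\R,E\otimes\bF)\simeq L^2(\R,E)$; the family $\{\pS(x)\}_{x\in\R}$ satisfies the \ref{ass:standing} together with assumption \ref{ass:A4'} by hypothesis; and $(C_0^1(\R),L^2(\R),-i\partial_x)$ is the standard odd spectral triple representing the generator $[-i\partial_x]\in\KK^1(C_0(\R),\C)$. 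Hence \cref{thm:Kasp_prod_index} gives
\[
[\til\D_\pS] = [\pS(\cdot)] \otimes_{C_0(\R)} [-i\partial_x] \in \KK^0(\C,B) \simeq \K_0(B) .
\]

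Next I would identify the left-hand side with $\Index(\partial_x+\pS(\cdot))$. By \cref{thm:sum_Fredholm}, the product operator $\til\D_\pS$ is regular self-adjoint and Fredholm, and under $\KK^0(\C,B)\simeq\K_0(B)$ its class $[\til\D_\pS]$ is, by definition, $\Index(\D-i\pS(\cdot))$. Since $\D=-i\partial_x$ we have $\D-i\pS(\cdot)=-i\bigl(\partial_x+\pS(\cdot)\bigr)$; multiplication by the scalar $-i$ is a unitary of $L^2(\R,E)$ carrying a parametrix of $\partial_x+\pS(\cdot)$ to a parametrix of $\D-i\pS(\cdot)$ and back, so $\partial_x+\pS(\cdot)$ is Fredholm with $\Index(\partial_x+\pS(\cdot))=\Index(\D-i\pS(\cdot))=[\til\D_\pS]$. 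This is the same identification already used in \cref{thm:index=sf}.

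For the right-hand side I would apply the second statement of \cref{prop:SF_Kasp_prod}. By \cref{prop:pS_Fred}, $\pS(\cdot)$ is a regular self-adjoint Fredholm operator on $C_0(\R,E)$. Assumption \ref{ass:A3} provides a compact set $K\subset\R$ outside which $\pS(x)$ is uniformly invertible; enclosing $K$ in a bounded interval then shows $\pS(x)$ is uniformly invertible outside a bounded interval. Together with the hypothesis that locally trivialising families exist for $\{\pS(x)\}_{x\in\R}$, \cref{prop:SF_Kasp_prod} yields
\[
\SF\big(\{\pS(x)\}_{x\in\R}\big) = [\pS(\cdot)] \otimes_{C_0(\R)} [-i\partial_x] \in \K_0(B) .
\]
Chaining the three displayed equalities gives $\Index(\partial_x+\pS(\cdot)) = \SF(\{\pS(x)\}_{x\in\R})$, as claimed.

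There is no genuine obstacle in this argument: all of the analysis has been absorbed into \cref{thm:Kasp_prod_index} (and, behind it, into \cref{prop:diff_Kasp_prod} and the relative index theorem). The only points deserving a word of care are bookkeeping ones — fixing the scalar convention relating $\partial_x+\pS(\cdot)$ to $\D-i\pS(\cdot)$, and checking that the shape of the cover in \ref{ass:A4'} (with $V_1=(-\infty,a)$, $V_2=(b,\infty)$) together with \ref{ass:A3} indeed places us in the ``uniformly invertible outside a bounded interval'' hypothesis of \cref{prop:SF_Kasp_prod,prop:ASF_R}. It is also worth noting why ``locally trivialising families'' appears here as an explicit hypothesis rather than being automatic as in \cref{thm:index=sf}: \cref{lem:pS_loc_triv_fam} is proved only for families of operators on a Hilbert space, and it is not known whether it extends to Hilbert modules.
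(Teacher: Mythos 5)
Your proposal is correct and follows the paper's own route exactly: the corollary is obtained by combining \cref{thm:Kasp_prod_index} (which gives $[\til\D_\pS]=[\pS(\cdot)]\otimes_{C_0(\R)}[-i\partial_x]$) with the second statement of \cref{prop:SF_Kasp_prod} (which gives $\SF(\{\pS(x)\}_{x\in\R})=[\pS(\cdot)]\otimes_{C_0(\R)}[-i\partial_x]$), together with the identification $[\til\D_\pS]=\Index(\D-i\pS(\cdot))=\Index(\partial_x+\pS(\cdot))$ from \cref{thm:sum_Fredholm}. The side remarks you add — the scalar $-i$ being harmless, enclosing $K$ in a bounded interval to meet the hypotheses of \cref{prop:SF_Kasp_prod}, and the reason the locally-trivialising-families hypothesis must be assumed rather than deduced from \cref{lem:pS_loc_triv_fam} in the Hilbert-module case — are all accurate and consistent with the paper's own comments.
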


\begin{example}
Consider a cylindrical manifold $M=\R\times N$, where $N$ is a smooth compact manifold. 
We assume that $M$ is equipped with the product metric obtained from the standard metric on $\R$ and some fixed metric on $N$ (in particular, $M$ is complete). 
Let $\D_N$ be a symmetric elliptic first-order differential operator on $N$, which yields a $\K$-homology class $[\D_N] \in \KK^1(C(N),\C)$. 
Consider the standard spectral triple $(C_c^\infty(\R) , L^2(\R) , -i\partial_r )$ on the real line, which yields a class $[-i\partial_r] \in \KK^1(C_0(\R),\C)$. 
From $-i\partial_r$ and $\D_N$ we construct the (essentially) self-adjoint elliptic first-order differential operator $\D$ which is of product form on $M$. This implies that $[\D] = [-i\partial_r] \otimes [\D_N] \in \KK^1(C_0(M),\C)$. 

Consider a family of self-adjoint operators $\{\pS(r,y)\}_{(r,y)\in\R\times N}$ on a Hilbert space $\mH$, satisfying assumptions \ref{ass:A1}-\ref{ass:A4'}. For simplicity, let us assume that $\pS(r,y) = \pS(0,y)$ for all $r\leq0$, and $\pS(r,y) = \pS(1,y)$ for all $r\geq1$. 
We then obtain a regular self-adjoint Fredholm operator $\pS(\cdot,\cdot)$ on $C_0(M,\mH)$, which yields a class $[\pS(\cdot,\cdot)] \in \KK^1(\C,C_0(M)) = \KK^1(\C,C_0(\R,C(N)))$. 
As in \cref{eg:sf_cyl}, writing $\pS^y(r) := \pS(r,y)$, we obtain a family of regular self-adjoint operators $\{\pS^\bullet(r)\}_{r\in\R}$ on the Hilbert $C(N)$-module $C(N,\mH)$. The class $[\pS^\bullet(\cdot)] \in \KK^1(\C,C_0(\R,C(N)))$ then corresponds to the spectral flow of $\{\pS^\bullet(r)\}_{r\in[0,1]}$. 

For $C^*$-algebras $A$, $B$, and $C$, recall the map $\tau_C\colon \KK(A,B)\to\KK(A\otimes C,B\otimes C)$ given by the external Kasparov product with the identity element $1_C\in\KK(C,C)$. 
Using the properties of the Kasparov product, we then obtain
\begin{align*}
\Index \D_\pS &= [\pS(\cdot,\cdot)] \otimes_{C_0(M)} [\D] 
= [\pS(\cdot,\cdot)] \otimes_{C_0(M)} \big( [-i\partial_r] \otimes [\D] \big) 
= [\pS^\bullet(\cdot)] \otimes_{C_0(M)} \tau_{C(N)}( [-i\partial_r] ) \otimes_{C(N)} [\D_N] \\
&= \big( [\pS^\bullet(\cdot)] \otimes_{C_0(\R)} [-i\partial_r] \big) \otimes_{C(N)} [\D_N] 
= \beta\big(\ASF(\pS^\bullet(\cdot))\big) \otimes_{C(N)} [\D_N] ,
\end{align*}
where on the last line we used \cref{prop:SF_Kasp_prod}. 
Assuming there exist locally trivialising families for $\{\pS^\bullet(r)\}_{r\in[0,1]}$, we obtain from \cref{prop:ASF_R} the equality 
\[
\Index \D_\pS = \SF\big(\{\pS^\bullet(r)\}_{r\in[0,1]}\big) \otimes_{C(N)} [\D_N] .
\]
\end{example}

\subsection{Incomplete manifolds and manifolds with boundary}
\label{sec:index_non-sa}

From \cref{prop:index_cut-and-paste} we know that we can cut and paste certain open ends of the manifold $M$ without changing the index of $\D-i\pS(\cdot)$. 
More precisely, suppose that $\pS(\cdot)$ is uniformly invertible outside a compact set $K$, and let $U$ be a precompact open neighbourhood of $K$ with smooth boundary $\partial U$, such that $M\backslash\bar U$ is a finite disjoint union of open subsets on which the variation of $\pS(\cdot)$ is `sufficiently small' (as in assumption \ref{ass:A4'}). 
Then in particular the index of $\D-i\pS(\cdot)$ is completely determined by its restriction $\D-i\pS(\cdot)|_U$ to the subset $U$, even though $\D-i\pS(\cdot)|_U$ may not be Fredholm. This observation allows us to obtain a well-defined index also more generally, starting with a possibly \emph{non-self-adjoint} operator $\D$. 

Consider the following setup, in which we drop the self-adjointness of $\D$ from the \ref{ass:standing}. 
Let $(M,g)$ be a Riemannian manifold. Let $\D$ be a symmetric elliptic first-order differential operator on a hermitian vector bundle $\bF\to M$. Let $\{\pS(x)\}_{x\in M}$ be a family of regular self-adjoint operators on a Hilbert $B$-module $E$ satisfying the assumptions \ref{ass:A1}-\ref{ass:A3}. We fix a compact subset $K$ of $M$ such that $\pS(\cdot)$ is uniformly invertible outside of $K$, and a precompact open neighbourhood $U$ of $K$ with smooth compact boundary $\partial U = \bigsqcup_j N_j$ which consists of finitely many connected components $\{N_j\}$. 
Assume that there exist points $x_j\in N_j$ and positive numbers $a_j<1$ such that $\big\| \big( \pS(x)-\pS(x_j) \big) S(x_j)^{-1} \big\| \leq a_j$ for all $x\in N_j$. 

Consider the manifold $M' := \bar U \cup_{\partial U} (\partial U\times[0,\infty))$. Equip $M'$ with a Riemannian metric which is of product form on $\partial U\times[1,\infty)$, and which agrees with $g|_U$ on $U$. Let $\bF'\to M'$ be a hermitian vector bundle which agrees with $\bF|_U$ on $U$. Let $\D'$ be a symmetric elliptic first-order differential operator of bounded propagation speed on $\bF'\to M'$, which agrees with $\D|_U$ on $U$. Consider the family $\{\pS'(x)\}_{x\in M'}$ given by
\[
\pS'(x) := \begin{cases}
            \pS(x) , & x\in U , \\
            \pS(y) , & x=(y,r)\in \partial U\times[0,\infty) . 
            \end{cases}
\]
Then the family $\{\pS'(x)\}_{x\in M'}$ satisfies assumptions \ref{ass:A1}-\ref{ass:A4'} (for assumption \ref{ass:A4'}, we can take the open subsets $V_j' := N_j\times(0,\infty)$). 
Hence we obtain a regular self-adjoint Fredholm operator $\til\D_\pS' := \pS'(\cdot)\times\D'$, and therefore we have a well-defined class
\[
[\til\D_\pS]^\cyl := \Index\big(\D'-i\pS'(\cdot)\big) \in \K_0(B) .
\]
From \cref{lem:pS_res,prop:D_2_res,thm:Kasp_prod_index}, we obtain the equality $[\til\D_\pS]^\cyl = [\pS(\cdot)] \otimes_{C_0(M)} [\D]$. The main difference is that $[\til\D_\pS]^\cyl$ can no longer be expressed as the index of $\D-i\pS(\cdot)$. 

A similar procedure as above also allows us to deal with a Dirac-Schr\"odinger operator on a compact manifold with boundary. 
Indeed, viewing $\bar U$ as a compact manifold with boundary $\partial U$ and interior $U$, we obtain a class $[\til\D_\pS]^\cyl \in \K_0(B)$ by attaching a cylindrical end (as described above). 
In this case, it would be interesting to see if this class $[\til\D_\pS]^\cyl$ is equal to the index of a Fredholm operator on $\bar U$, obtained as an extension of $(\D-i\pS(\cdot))|_U$ by imposing suitable boundary conditions on $\partial U$. 
In the case of a \emph{classical} Dirac-Schr\"odinger operator (in which case the potential acts on a \emph{finite-rank} bundle), it has been shown in \cite{Rad94} that this is indeed the case (using a natural choice of boundary conditions on $\bar M$). It is left as an open problem whether or not this result can be generalised to our setup.


\providecommand{\noopsort}[1]{}\providecommand{\vannoopsort}[1]{}
\providecommand{\bysame}{\leavevmode\hbox to3em{\hrulefill}\thinspace}
\providecommand{\MR}{\relax\ifhmode\unskip\space\fi MR }
\providecommand{\MRhref}[2]{%
  \href{http://www.ams.org/mathscinet-getitem?mr=#1}{#2}
}
\providecommand{\href}[2]{#2}

\end{document}